\numberwithin{equation}{section}
\theoremstyle{plain}
  \newtheorem{theorem}[equation]{Theorem}
  \newtheorem{proposition}[equation]{Proposition}
  \newtheorem{corollary}[equation]{Corollary}
  \newtheorem{lemma}[equation]{Lemma}
  \newtheorem{conjecture}[equation]{Conjecture}
  \theoremstyle{definition}
  \newtheorem{definition}[equation]{Definition}
  \newtheorem{notation}[equation]{Notation}
  \newtheorem{example}[equation]{Example}
\newcommand{\ZZ}{\mathbb{Z}}
\newcommand{\NN}{\mathbb{N}}
\newcommand{\kk}{\Bbbk}
\newcommand{\g}{\mathfrak{g}}
\DeclareMathOperator{\ad}{ad}
\DeclareMathOperator{\Vir}{Vir}
\DeclareMathOperator{\PsiS}{\Tilde{S}}
\newcommand{\mc}{\mathcal}
\newcommand{\mf}{\mathfrak}
\newcommand{\mb}{\mathbf}
\DeclareMathOperator{\GKdim}{GKdim}
\DeclareMathOperator{\gr}{gr}
\DeclareMathOperator{\Der}{Der}
\DeclareMathOperator{\Sa}{S}
\DeclareMathOperator{\Ua}{U}
\DeclareMathOperator{\ord}{ord}
\newcommand{\ideal}{\unlhd}
\newcommand{\UW}{\Ua(\W{-1})}
\newcommand{\W}[1]{W_{\geq #1}}
\newcommand{\del}{\partial}
\newcommand{\oo}[3]{\Omega^{(#1)}_{#2, #3}}
\DeclareMathOperator{\sym}{sym}
\DeclareMathOperator{\proj}{proj}
\newcommand{\subjclass}[2][1991]{%
  \let\@oldtitle\@title%
  \gdef\@title{\@oldtitle\footnotetext{#1 \emph{Mathematics Subject Classification.} #2}}%
}
\newcommand{\keywords}[1]{%
  \let\@@oldtitle\@title%
  \gdef\@title{\@@oldtitle\footnotetext{\emph{Key words and phrases.} #1.}}%
}
\title{The Kernel and Image of Orbit Homomorphisms for the Witt Algebra}
\author{Tuan Anh Pham and James Timmins}
\date{}
\subjclass[2020]{16S30, 17B66, 16D25, 17B10}
\begin{document}
\maketitle

\begin{abstract}
    \noindent
    The Witt algebra $\W{-1}$ is the Lie algebra of algebraic vector fields on a line. We investigate the two-sided ideal structure of its universal enveloping algebra, by studying the orbit homomorphisms $\Psi_n: \Ua(\W{-1}) \rightarrow T_n$, an infinite family of homomorphisms to noncommutative Noetherian algebras. The orbit homomorphisms lift primitive ideals from solvable Lie algebras to $\Ua(\W{-1})$, thereby playing a central role in the orbit method for the Witt algebra.

    We prove that the kernel of any orbit homomorphism is generated by an infinite set of differentiators as a one-sided ideal, whilst being generated by any single element of this set as a two-sided ideal. One consequence is an explicit description of primitive and semi-primitive ideals of $\Ua(\W{-1})$ corresponding to one-point local functions. We also prove that the image $B_n$ of the $n$th orbit homomorphism is both non-Noetherian and birational to the Noetherian algebra $T_n$. On the other hand, the degree zero subring of $B_n$ is left and right Noetherian, and we conjecture that the same holds for $\Ua(\W{-1})$. 
\end{abstract}

\section{Introduction}
    The (one-sided) \emph{Witt algebra} $\W{-1}$ is the Lie algebra of algebraic 
    vector fields on a line. Let $\kk$ be a field of characteristic zero. The $\W{-1}=\kk[t]\del$ is the Lie algebra of derivations on $\kk[t]$, where $\del = \frac{d}{dt}$. The Lie bracket is given by 
\[[f \del, g \del] = (fg'-f'g) \del.\]

    The one-sided Witt algebra is a simple subalgebra of the Witt algebra $W$, the Lie algebra of derivations on $\kk[t,t^{-1}])$, whose unique (one-dimensional) central extension is the famous Virasoro algebra $\Vir$. The study of representations of the Virasoro algebra is a fundamental topic of mathematical physics. Since representations of any Lie algebra $\g$ are precisely modules for the universal enveloping algebra $\Ua(\g)$, the algebra $\UW$ therefore plays an integral role in representation theory and physics. However, many ring-theoretic properties of $\UW$, such as the structure of its ideal lattice, remain mysterious. 

    In this article we study the enveloping algebra of the Witt algebra via an infinite family of algebra homomorphisms to noncommutative Noetherian algebras, named orbit homomorphisms. The orbit homomorphisms generalize the inclusion of $W_{\geq -1}$ into the first Weyl algebra $A_1 = \kk[t,\del]$.
    
    To describe the higher orbit homomorphisms, notice that the one-sided Witt algebra has a basis $\{e_i \mid i \geq -1\}$ where $e_i = t^{i+1}\del$. The Lie subalgebras $\W{n} = \kk\{e_i | i \geq n\}$ have finite dimensional subquotients $\g_n = \W{0}/\W{n}$. The Lie algebra $\g_n$ has a basis $\{v_{i} \mid 0 \leq i \leq n-1\}$ where $v_i = e_i + \W{n}$, and is solvable.
    
    The $n$th \emph{orbit homomorphism} for $\W{-1}$ is defined by \cite[Theorem 5.7]{pham2025orbit} as 
    \begin{align*}
    \Psi_n: \UW &\rightarrow A_1 \otimes_{\kk} \Ua(\mf{g}_n),\\ 
    f(t)\del &\mapsto f(t)\del + \sum_{i =0}^{n} \frac{f^{(i+1)}(t)}{(i+1)!} v_i,
\end{align*}
    where $f^{(i)}$ denotes the $i$th derivative of $f$. We call $\Psi_n$ an \emph{orbit homomorphism} due to its role in generalising the orbit method from $\mf{g}_n$ to $\W{-1}$ in \cite{pham2025orbit}, described later in the Introduction. From now on, we denote $T_n = A_1 \otimes_{\kk} \Ua(\mf{g}_n)$ and $B_n = \Psi_n(\UW)$.\vspace{-5pt}\\

    In this article we completely determine the kernels of the orbit homomorphisms, and describe properties of their images. This generalizes work of Conley and Martin, who studied $\Psi_0$ and $\Psi_1$ in \cite{conley2007}.
    
    Homomorphisms from $\UW$ have been studied before: in \cite{SierraWalton}, Sierra and Walton used algebro-geometric tools to construct a surjective homomorphism from $\UW$ to a non-Noetherian, birationally commutative algebra, which immediately implies that $\UW$ is not Noetherian. This result was a significant step towards understanding a fundamental connection between Lie theory and ring theory. Whilst the famous PBW theorem shows that any finite-dimensional Lie algebra has a Noetherian universal enveloping algebra, the converse statement is a longstanding conjecture (its first known appearance in the literature is \cite[page 216]{amayo1974infinite}). The result of \cite{SierraWalton} is a crucial step towards proving the conjecture; among its consequences are that all simple infinite-dimensional $\mathbb{Z}^n$-graded Lie algebras have a non-Noetherian enveloping algebra, see \cite{andruskiewitsch2025noetherian}.

    In essence, the homomorphism constructed in \cite{SierraWalton} is very close to the first orbit homomorphism $\Psi_1$, and in \cite{sierra2016maps}, the same authors study $\Psi_1$ systematically. There, Sierra and Walton showed that the subalgebra $\Psi_1 (\Ua(\W{1}))$ of $B_1$ is non-Noetherian (but satisfies the ascending chain condition on two-sided ideals) and effectively showed that $\ker \Psi_1$ is infinitely generated as a left or right ideal. In a similar vein, Conley and Martin showed in \cite{conley2007} that $\ker \Psi_0$ and $\ker \Psi_1$ are principal (two-sided) ideals of $\Ua(\W{-1})$, and calculated their generators. On the other hand, $\Psi_0(\Ua(\W{-1})) = \kk \oplus A_1 \del$ is a left and right Noetherian ring.
    
    These results motivate our study of the images and kernels of the higher orbit homomorphisms. Our ultimate aim is to shed more light on the ring-theoretic properties of $\UW$, many of which remain to be discovered -- for example, it is unknown if ascending chains of two-sided ideals must terminate, see \Cref{UW ACC conjecture}.\vspace{-5pt}\\
    
    Our main result on the image of the $n$th orbit homomorphism is that although it is not Noetherian, it is ``birationally Noetherian''.

\begin{restatable}{theorem}{largeimage} \label{Main result large image}
Let $n \geq 2$. The image $B_n = \Psi_n(\UW)$ contains a principal ideal of $T_n$, namely $T_n v_{n-1}^2$. The ideal $T_n v_{n-1}^2$ is a principal two-sided ideal of $B_n$ but is not finitely-generated as a left or right ideal.
\end{restatable}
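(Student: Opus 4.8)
The plan is to prove, in turn, the containment $T_n v_{n-1}^2\subseteq B_n$, the principality of $T_n v_{n-1}^2$ as a two-sided ideal of $B_n$, and the failure of one-sided finite generation. The structural fact driving everything is that $v_{n-1}$, hence $v_{n-1}^2$, is a \emph{normal} element of $T_n$: since $[v_0,v_{n-1}]=(n-1)v_{n-1}$ and $[v_i,v_{n-1}]=0$ for $1\le i\le n-1$, we have $T_n v_{n-1}^2=v_{n-1}^2 T_n$, a two-sided ideal of $T_n$ with $\kk$-basis the PBW monomials $t^a\del^b v_0^{c_0}\cdots v_{n-1}^{c_{n-1}}$ having $c_{n-1}\ge 2$, and $v_{n-1}$ is a non-zero-divisor as $T_n$ is a domain. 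I would first reduce the containment to the single membership $v_{n-1}^2\in B_n$ by a ``propagation'' argument. Recall $\del=\Psi_n(e_{-1})$, $h:=\Psi_n(e_0)=t\del+v_0$ and $\Psi_n(e_j)=t^{j+1}\del+\sum_{i=0}^{n-1}\binom{j+1}{i+1}t^{j-i}v_i$ all lie in $B_n$. A direct bracket computation gives $[\Psi_n(e_1),t^a v_{n-1}^2]=(a+4(n-1))t^{a+1}v_{n-1}^2$, which (as $n\ge 2$) pushes $v_{n-1}^2$ up to every $t^a v_{n-1}^2$ and, using $\del\in B_n$, to all of $A_1 v_{n-1}^2$. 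Then $h\cdot(av_{n-1}^2)-(t\del a)v_{n-1}^2=av_0v_{n-1}^2$ gives $A_1 v_0 v_{n-1}^2\subseteq B_n$, and multiplying $\Psi_n(e_j)$ by $av_{n-1}^2$ and subtracting the contribution $(t^{j+1}\del a)v_{n-1}^2$ of the leading term, which already lies in $A_1 v_{n-1}^2$, leaves $\sum_i\binom{j+1}{i+1}(t^{j-i}a)v_iv_{n-1}^2\in B_n$; running over $j$ and solving triangularly yields $A_1 v_i v_{n-1}^2\subseteq B_n$ for all $i$. Iterating — prepending one $v_i$ at a time by left multiplication by $h$ or the $\Psi_n(e_j)$, reordering via $v_iv_l=v_lv_i+(i-l)v_{i+l}$ (whose corrections lie in lower PBW degree, already in $B_n$ by induction), and solving triangularly each time — gives $T_n v_{n-1}^2\subseteq B_n$ by induction on PBW degree. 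As all of this takes place inside the two-sided ideal of $B_n$ generated by $v_{n-1}^2$, that ideal equals $T_n v_{n-1}^2$, settling the principality claim as well.

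The crux, and the step I expect to be the main obstacle, is to prove $v_{n-1}^2\in B_n$. Composing $\Psi_n$ with the algebra surjection $T_n\to A_1$ killing all $v_i$ recovers the embedding $\UW\hookrightarrow A_1$, so $\Psi_n(\xi)$ has vanishing $v$-degree-$0$ part whenever $\xi\in\UW$ maps to $0$ in $A_1$. Using the identity $e_j^2-e_{j-1}e_{j+1}+e_{2j}\mapsto 0$ in $A_1$, I would take $\xi:=e_{n-1}^2-e_{n-2}e_n+e_{2n-2}$ and compute the $v$-degree-$2$ part of $\Psi_n(\xi)$ to be $\sum_{i,l}\bigl(\binom{n}{i+1}\binom{n}{l+1}-\binom{n-1}{i+1}\binom{n+1}{l+1}\bigr)t^{2(n-1)-i-l}v_iv_l$, whose only $t$-free summand is $v_{n-1}^2$, with coefficient $1$. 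Thus $\Psi_n(\xi)=v_{n-1}^2+E$ where $E$ consists of terms that are either of $v$-degree $2$ carrying a positive power of $t$, or of $v$-degree $\le 1$. It then remains to show $E\in B_n$; I would clear it by subtracting further images $\Psi_n(\eta)$ chosen to match $E$ piece by piece — lower-index analogues of $\xi$ for the $v_iv_l$-terms with $i+l<2(n-1)$, and elements like $e_0^2-e_1e_{-1}+e_0$ (which maps to a $v$-polynomial with leading term $v_0^2$) together with $\ad(\del)$-translates (which adjust powers of $t$ without changing $v$-degree) for the $v$-degree-$\le 1$ remainder — arranged as an induction on a complexity statistic of the error. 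Verifying that this reduction terminates leaving precisely $v_{n-1}^2$ (or, failing that, choosing $\xi$ more cleverly so that $E$ lies in $B_n$ automatically) is the delicate point.

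For the last assertion, suppose $T_n v_{n-1}^2=\sum_{i=1}^m B_n y_i$ as a left ideal. Writing $y_i=z_i v_{n-1}^2$ with $z_i\in T_n$, then for any $z\in T_n$ the relation $zv_{n-1}^2=\sum_i w_i z_i v_{n-1}^2$ with $w_i\in B_n$ and the fact that $v_{n-1}^2$ is a non-zero-divisor force $z=\sum_i w_i z_i$; hence $T_n=\sum_{i=1}^m B_n z_i$ would be a finitely generated left $B_n$-module (the converse being clear). So it suffices to show $T_n$ is \emph{not} finitely generated as a left $B_n$-module. I would prove this by descent on $n$: multiplication by $v_{n-1}^2$ identifies $T_n v_{n-1}^2/T_n v_{n-1}^3$ with $T_n/T_n v_{n-1}=T_{n-1}$ as left $T_n$-modules, hence as left $B_n$-modules on which $B_n$ acts through the surjection $B_n\twoheadrightarrow B_{n-1}$ induced by $T_n\to T_{n-1}$; so finite generation of $T_n v_{n-1}^2$ over $B_n$ would make $T_{n-1}$ finitely generated over $B_{n-1}$, contrary to the inductive hypothesis, with the base case handled directly in the spirit of Sierra--Walton's study of $\Psi_1$ — e.g.\ by a Gelfand--Kirillov dimension comparison, or by exhibiting an explicit strictly increasing chain of one-sided ideals. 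The right-handed statement is proved symmetrically.
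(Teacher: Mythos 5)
Your overall skeleton matches the paper's: establish $v_{n-1}^2 \in B_n$, propagate to $T_n v_{n-1}^2 \subseteq B_n$ (where normality of $v_{n-1}$ does the rest), and then show $T_n$ is not finitely generated as a one-sided $B_n$-module. The propagation step and the reduction ``$T_n v_{n-1}^2$ f.g.\ over $B_n$ $\Leftrightarrow$ $T_n$ f.g.\ over $B_n$'' are sound; your descent via $T_n v_{n-1}^2/T_n v_{n-1}^3 \cong T_{n-1}$ is a valid alternative to the paper's direct reduction to $T_1$ over $B_1$ (left) and $T_0$ over $B_0$ (right), although you should drop the suggestion of a ``GK-dimension comparison'' for the base case: $B_n$ and $T_n$ are shown to have equal GK-dimension, so that route has no traction, and the explicit-chain argument (as in Sierra--Walton, or the paper's $J = \del T_1 + (v_0 - 1)T_1$ argument) is the one that works.

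The genuine gap, as you yourself flag, is the crux $v_{n-1}^2 \in B_n$. Your candidate $\xi = e_{n-1}^2 - e_{n-2}e_n + e_{2n-2}$ does have the right $t$-free $v$-degree-$2$ coefficient, but $\Psi_n(\xi)$ also has nonzero $t$-carrying $v$-degree-$2$ terms, and there is no reason the proposed iterative subtraction of images $\Psi_n(\eta)$ should terminate: each subtraction generically introduces fresh error terms of comparable complexity, and you have not identified a statistic that strictly decreases. The paper avoids this entirely by choosing the specific differentiator $\oo{2n}{2n-1}{-1}$, characterised as the repeated image of $e_{-1}^2$ under the step operator $S$. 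The decisive structural fact (\Cref{PsiS preserve prop}) is that $\PsiS$ preserves the subalgebra $L = \{x \in T_\infty \mid [\del, x] = 0\} = \kk[\del] \otimes \Ua(\W{0})$, so $\Psi_\infty(\oo{2n}{2n-1}{-1}) \in L$ \emph{by construction}, i.e.\ every $t$-dependence vanishes from the outset. Combined with the element being homogeneous of degree $2n-2$ and order at most $2$, it must lie in the span of $\{e_i e_{2n-2-i},\, \del e_{2n-1},\, e_{2n-2}\}$, and modulo $v_n, v_{n+1}, \dots$ only $e_{n-1}^2$ survives; the explicit projection $\Pi: T_\infty \to L$ then shows the coefficient is $(-1)^n\binom{2n}{n} \neq 0$. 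That $\del$-invariance is the ingredient your approach lacks, and without it the ``clearing the error'' step is not a proof. If you wish to salvage your route, you would essentially need to rediscover that the right element to take is a highest-order differentiator and that its $\Psi_\infty$-image commutes with $\del$ --- at which point you have reconstructed the paper's argument.
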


From \Cref{Main result large image} we deduce that $B_n$ is not Noetherian, but is birational to (has the same ring of quotients as) the Noetherian ring $T_n$.

\begin{restatable}{corollary}{birationalimage} \label{Main result birational image}
    Let $n \geq 1$. The image of $\Psi_n: \UW \rightarrow T_n$ is a non-Noetherian ring of GK-dimension $n+2$ which is birational to $T_n$.
\end{restatable}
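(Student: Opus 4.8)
The plan is to obtain all three assertions from \Cref{Main result large image} in the principal range $n \geq 2$, and to handle $n = 1$ separately: there $B_1$ agrees, up to a harmless normalization, with the algebra studied by Sierra and Walton in \cite{SierraWalton, sierra2016maps}, and its non-Noetherianity, Gelfand--Kirillov dimension, and birationality over $T_1$ can be read off from that work together with \cite{conley2007}. So fix $n \geq 2$ and put $I := T_n v_{n-1}^2$, the nonzero two-sided ideal of $T_n$ furnished by \Cref{Main result large image}; thus $I \subseteq B_n \subseteq T_n$. Non-Noetherianity is then immediate: since $B_n \subseteq T_n$ we have $B_n I \subseteq T_n I = I$ and $I B_n \subseteq I T_n = I$, so $I$ is at once a left ideal and a right ideal of $B_n$, and by \Cref{Main result large image} it is finitely generated as neither; hence $B_n$ is neither left nor right Noetherian.

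For birationality I would first observe that $T_n = A_1 \otimes_{\kk} \Ua(\g_n)$ is a Noetherian domain: its associated graded ring for the tensor of the Bernstein filtration on $A_1$ and the PBW filtration on $\Ua(\g_n)$ is the commutative polynomial ring $\kk[x,y,z_0,\dots,z_{n-1}]$, a Noetherian domain, and both properties pass up through the filtration. Let $Q$ be the quotient division ring of $T_n$. Then $B_n$, a subring of the domain $T_n$, is itself a domain, and I claim it is left and right Ore with quotient ring $Q$. Fix $0 \neq x \in I$. Given nonzero $a, b \in B_n$, the left Ore condition in $T_n$ produces $0 \neq c = ra = sb$ with $r, s \in T_n$; then $0 \neq xc = (xr)a = (xs)b$, where $xr, xs \in I$ since $x \in I$ and $I$ is a right ideal of $T_n$, so $xc \in B_n a \cap B_n b$ is a nonzero common left multiple. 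The right Ore condition is symmetric, so $B_n$ has a quotient division ring, which is the division subring of $Q$ generated by $B_n$; it contains $T_n$, because every $a \in T_n$ equals $x^{-1}(xa)$ with $x, xa \in I \subseteq B_n$. Therefore the quotient ring of $B_n$ is all of $Q$, i.e.\ $B_n$ is birational to $T_n$.

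For the Gelfand--Kirillov dimension, $\GKdim T_n = n+2$ (its standard filtration satisfies $\dim F_m T_n = \binom{m+n+2}{n+2}$). As $T_n$ is affine, any finite-dimensional subspace $V \subseteq B_n$ lies in a finite-dimensional generating subspace $W$ of $T_n$, so $\GKdim \kk\langle V\rangle \leq \GKdim \kk\langle W\rangle = \GKdim T_n$; taking the supremum over $V$ gives $\GKdim B_n \leq n+2$. For the reverse inequality, $v_{n-1}$ is a normal element of $T_n$, so $I = T_n v_{n-1}^2 \cong T_n$ as a left $T_n$-module; using normality to normal-order products, one checks that the finite-dimensional subspace $\kk\{v_{n-1}^2,\, t v_{n-1}^2,\, \del v_{n-1}^2,\, v_0 v_{n-1}^2, \dots, v_{n-2}v_{n-1}^2,\, v_{n-1}^3\}$ of $I \subseteq B_n$ generates a subalgebra $C$ of $B_n$ whose growth function is of order at least $m^{n+2}$. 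Hence $\GKdim B_n \geq \GKdim C \geq n+2$, so $\GKdim B_n = n+2$.

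Granting \Cref{Main result large image}, the case $n \geq 2$ is essentially formal, relying only on Ore localization, the elementary monotonicity of GK-dimension for subalgebras of affine algebras, and the standard fact (here transparent from the normality of $v_{n-1}$) that a nonzero ideal of an affine domain carries its full GK-dimension. I do not expect a genuine obstacle inside this corollary --- the real content is already contained in \Cref{Main result large image} --- the only loose end being the case $n = 1$, which that theorem does not cover and which is supplied instead by \cite{SierraWalton, sierra2016maps, conley2007}.
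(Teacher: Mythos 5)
Your proposal is correct, and it reaches the same conclusions but by a genuinely different route in two places. For birationality, the paper first invokes finiteness of GK-dimension to conclude (via the cited McConnell--Robson results) that $B_n$ is an Ore domain, and then argues $T_n = (T_n v_{n-1}^2)v_{n-1}^{-2} \subseteq Q(B_n)$. You instead verify the Ore condition for $B_n$ directly by multiplying a common multiple in $T_n$ by a fixed nonzero $x\in I$, which is a pleasantly self-contained shortcut that does not presuppose any GK-dimension estimate. For the lower bound $\GKdim B_n \geq n+2$, the paper appeals to Zhang's Gelfand--Kirillov transcendence degree: $\mathrm{Tdeg}(T_n)\geq n+2$ and $\GKdim B_n \geq \mathrm{Tdeg}(T_n)$ because $B_n$ is an affine $\kk$-algebra birational to $T_n$. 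You instead exhibit the subalgebra generated by $v_{n-1}^2\cdot\{1,t,\del,v_0,\dots,v_{n-1}\}\subseteq I$ and count growth by normal-ordering using the normality of $v_{n-1}^2$; this is more elementary and more explicit, though more ad hoc (it leans on the particular normal element). Both are valid; the paper's route buys generality via a black box, yours buys concreteness. One small difference: the paper handles the case $n=1$ internally through its \Cref{B_1 large image lemma}, whereas you defer $n=1$ to the literature --- in fact your growth argument would also cover $n=1$ directly using $v_0^2-v_0$ (which is central in $T_1$) in place of $v_{n-1}^2$, so the deferral is unnecessary.
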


\Cref{Main result large image} also plays a crucial role in a surprising property of the homomorphic images $B_n$; although $B_n$ is a non-Noetherian ring, the subring of degree zero elements (see \Cref{T grading def}) is Noetherian.

\begin{restatable}{theorem}{degreezeroNoetherian} \label{Main result degree zero Noetherian}
    Let $n \geq 0$. The ring $(B_n)_0$ is a left and right Noetherian domain.
\end{restatable}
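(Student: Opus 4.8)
The domain part is immediate: $T_n = A_1 \otimes_\kk \Ua(\g_n)$ is a tensor product of domains over a field, hence a domain, and $(B_n)_0$ is a subring of it. So the content is Noetherianity, and my plan is to pin down $(B_n)_0$ by squeezing it between two rings we can see through.

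I would first record that $(T_n)_0$ is already a Noetherian domain. Since $\Psi_n$ is graded for the $\ZZ$-grading on $T_n$ with $\deg t = 1$, $\deg \del = -1$, $\deg v_i = i$, the algebra $T_n$ is $\ZZ$-graded Noetherian, so its degree-zero part is Noetherian by the standard argument that $I \mapsto T_n I$ embeds the left ideals of $(T_n)_0$ into the graded left ideals of $T_n$ with $T_n I \cap (T_n)_0 = I$. In fact I expect a clean explicit description: setting $w_i := \del^i v_i$ (homogeneous of degree $0$), one checks that $(T_n)_0$ is generated by $t\del$, $v_0$, $w_1, \dots, w_{n-1}$, that $x_0 := \Psi_n(e_0) = t\del + v_0$ is central, and that $x_0$ together with $\{t\del, w_1, \dots, w_{n-1}\}$ exhibits $(T_n)_0$ as a copy of $\kk[x_0] \otimes \Ua(\g_n)$. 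This makes the ring-theoretic properties transparent and also organizes the computation below; note $(B_n)_0 \subseteq (T_n)_0$ since $\Psi_n$ is graded.

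For $n \ge 2$ I would then bring in \Cref{Main result large image}. The ideal $T_n v_{n-1}^2$ is a two-sided ideal of $T_n$ (because $v_{n-1}^2$ normalizes $T_n$, commuting with $t$, $\del$ and $v_1, \dots, v_{n-1}$ and only shifting $v_0$), it lies in $B_n$, so its degree-zero part $J := (T_n v_{n-1}^2)_0$ is a two-sided ideal of $(T_n)_0$ contained in $(B_n)_0$, hence a two-sided ideal of $(B_n)_0$; under the identification above $J$ corresponds to $\kk[x_0] \otimes \Ua(\g_n)v_{n-1}^2$, so $(T_n)_0/J \cong \kk[x_0] \otimes \bigl(\Ua(\g_n)/\Ua(\g_n)v_{n-1}^2\bigr)$. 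The small cases $n = 0, 1$ I would treat by direct computation, which yields $(B_0)_0 = \kk[t\del]$ and $(B_1)_0 = \kk[t\del, v_0]$.

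The crux is controlling $(B_n)_0$ between $J$ and $(T_n)_0$; the target outcome, which finishes everything at once, is the equality $(B_n)_0 = (T_n)_0$, i.e. that $(B_n)_0$ surjects onto $(T_n)_0/J$. Via the explicit description this means producing $t\del$ and each $w_i = \del^i v_i$ ($1 \le i \le n-1$) inside $(B_n)_0$, which I would do inductively in $i$ starting from the degree-zero elements $\del^j \Psi_n(e_j) \in (B_n)_0$: a direct expansion shows each $\del^j \Psi_n(e_j)$ is a $\kk[t\del]$-linear combination of $1, v_0, w_1, \dots, w_{n-1}$ whose coefficient of $w_j$ equals $1$ when $j \le n-1$, and one solves for successive $w_i$ using the previously obtained elements, $x_0$, and the elements of $J$ to cancel the top-order tails (this is exactly how $(B_1)_0 = \kk[t\del, v_0]$ comes out). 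If literal equality is too delicate to extract, the same squeeze works packaged differently: show $(B_n)_0/J$ is Noetherian — e.g. by exhibiting it as module-finite over the subalgebra generated by $x_0$ and the elements $\overline{\del^j\Psi_n(e_j)}$ for $1 \le j \le n-1$, which should itself be Noetherian — and then argue that $J$, a two-sided ideal of the Noetherian ring $(T_n)_0$ generated by the normal element $v_{n-1}^2$, is Noetherian as a one-sided $(B_n)_0$-module, whence $(B_n)_0$ is Noetherian. Either way the real obstacle is this last step: it demands a precise, genuinely noncommutative description of the degree-zero words in the generators $\Psi_n(e_j)$, and it is where \Cref{Main result large image} is indispensable.
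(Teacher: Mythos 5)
Your equality claim $(B_n)_0 = (T_n)_0$ for $n \ge 2$ is false, and this is the crux of the gap. To see it, take the natural quotient $T_n \to T_1$ (killing $v_1, \dots, v_{n-1}$), which is graded and commutes with $\Psi_n, \Psi_1$; it maps $(T_n)_0$ onto $(T_1)_0 = \kk[t\del, v_0]$ and $(B_n)_0$ onto $(B_1)_0$. So if $(B_n)_0 = (T_n)_0$ held for some $n\ge 2$ it would force $(B_1)_0 = (T_1)_0$. But $(B_1)_0$ is the commutative subalgebra of $\kk[u,w]$ (with $u = t\del$, $w = v_0$) generated by $\Psi_1(e_0) = u+w$, $\Psi_1(e_{-1}e_1) = (u+2w)(u+1)$, and $\Psi_1(e_{-1}^2 e_2) = (u+3w)(u+1)(u+2)$; substituting $u \mapsto -w$ kills the first generator and sends the other two to $w(1-w)$ and $2w(1-w)(2-w)$, which both vanish at $w=0$ and $w=1$, so nothing in the image distinguishes those two points, whereas $w$ does. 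Thus $v_0 \notin (B_1)_0$ and your stated description $(B_1)_0 = \kk[t\del, v_0]$ is already wrong. (This also means your targeted elements $w_i = \del^i v_i$ cannot all be produced inside $(B_n)_0$.)

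The fallback squeeze does not repair this. Knowing $J = (T_n v_{n-1}^2)_0 \subseteq (B_n)_0 \subseteq (T_n)_0$ gives you that $J$ is Noetherian as a $(T_n)_0$-module, but you need it Noetherian as a $(B_n)_0$-module, and you have not exhibited $(T_n)_0$ (or $J$) as a \emph{finitely generated} $(B_n)_0$-module. Even if you could, the rings here are genuinely noncommutative for $n\ge 2$, and the Eakin--Nagata phenomenon you are implicitly invoking (a subring $R\subseteq S$ is Noetherian when $S$ is Noetherian and module-finite over $R$) is a theorem of \emph{commutative} algebra; it fails without further hypotheses noncommutatively. The paper's proof avoids this by passing to the associated graded: it works inside the commutative rings $(\gr B_n)_0 \subseteq (\gr T_n)_0$, proves not equality but \emph{integrality} of the extension (\Cref{grTn0 integral}), then uses a commutative Eakin--Nagata-type theorem together with \Cref{fg integral extension is finite} to get $(\gr B_n)_0$ Noetherian, and finally lifts Noetherianity back to $(B_n)_0$ through the filtration. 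Integrality, not equality, is exactly the right notion: your own computation already exhibits this, since $v_0 \notin (B_1)_0$ but $v_0^2 - v_0 = -\tfrac{1}{2}\Psi_1(\oo{2}{1}{-1}) \in (B_1)_0$, so $v_0$ satisfies a monic quadratic over $(B_1)_0$.
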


We further conjecture that the same holds for $\UW$, see \Cref{Witt degree zero conjecture}.

    We prove the above results in \Cref{sec:image}, with the proofs of \Cref{Main result large image} and \Cref{Main result birational image} appearing in subsections \ref{The step elements} to \ref{Birationality}, whilst sub\cref{The subring Bn0 is Noetherian} is devoted to the proof of \Cref{Main result degree zero Noetherian}.\vspace{-5pt}\\

    In \cite{iyudu2020enveloping} it was proved that $\UW$ has just-infinite growth. Since the images $B_n$ are of unbounded GK-dimension by \Cref{Main result birational image}, it follows that the intersection of the kernels of all orbit homomorphisms is zero. For this reason, certain properties of ``large enough'' $B_n$ should be reflected in $\Ua(\W{-1})$. To be more precise, a (non-commutative polynomial) equation in $\UW$ holds if and only if the corresponding equation holds in $B_n$ for all large enough $n$.
    
    The second main result of this article gives explicit generators for the kernels of orbit homomorphisms, via the \emph{differentiators} defined in \cite{liu2015class} and \cite{billig2016classification}. For $m \geq 0, s \geq -1$ and $k \geq m-1$, the differentiator $\oo{m}{k}{s} \in \UW$ is a quadratic element with the closed form,
    \[ \oo{m}{k}{s} = \sum_{i=0}^{m} (-1)^i \binom{m}{i} e_{k-i} e_{s+i}.\]

    The differentiators have been studied before: \cite{billig2016classification} proves a formula relating products of differentiators, and uses it to show that every finite length cuspidal module for the Witt algebra (those with $e_0$-weight spaces of bounded dimension) is annihilated by some differentiator.
    
    We prove that the kernel of any orbit homomorphism is generated by a single differentiator.
    
    \begin{restatable}{theorem}{kernel} \label{Main result kernel}
    Let $n \geq 1$. The kernel of $\Psi_n$ is the principal two-sided ideal
    \[ \ker \Psi_n = (\oo{2n+2}{2n+1}{-1}).\]
    However, $\ker \Psi_n$ is not finitely-generated as either a left or right ideal of $\Ua(\W{-1})$.
    \end{restatable}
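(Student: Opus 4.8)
The plan is to prove the equality $\ker\Psi_n = (\oo{2n+2}{2n+1}{-1})$ of two-sided ideals by a double inclusion, and then to show that this ideal is not finitely generated on either side. For the containment $(\oo{2n+2}{2n+1}{-1}) \subseteq \ker\Psi_n$, I would substitute $f(t) = t^{a+1}$ into the definition of $\Psi_n$ to record the closed form $\Psi_n(e_a) = t^{a+1}\del + \sum_{j=0}^{n-1}\binom{a+1}{j+1}t^{a-j}v_j$ (reading $\binom{a+1}{j+1}$ as a polynomial in $a$, which vanishes when $0\le a<j$), then expand the product $\Psi_n(e_a)\Psi_n(e_b)$ in $T_n = A_1\otimes_\kk\Ua(\g_n)$ and rewrite it in a PBW basis. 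Because the index sum $(k-i)+(s+i)=k+s$ of a differentiator is independent of $i$, every term that appears is a fixed power of $t$, times a fixed PBW monomial in $\del, v_0,\dots,v_{n-1}$, times a coefficient that is a polynomial in $(a,b)$ of total degree at most $2n$ — the worst case being the coefficient $\binom{a+1}{n}\binom{b+1}{n}$ of $t^{\,a+b-2n+2}v_{n-1}^2$. Substituting $a = 2n+1-i$, $b = -1+i$ and summing against $(-1)^i\binom{2n+2}{i}$ applies the order-$(2n+2)$ finite difference operator to a polynomial in $i$ of degree at most $2n$, which is identically zero; hence $\Psi_n(\oo{2n+2}{2n+1}{-1}) = 0$. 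The same degree count shows that in fact every differentiator of order $\ge 2n+1$ lies in $\ker\Psi_n$, so $\oo{2n+2}{2n+1}{-1}$ is merely one convenient representative.

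The reverse inclusion $\ker\Psi_n\subseteq(\oo{2n+2}{2n+1}{-1})$ is the main obstacle, and I would prove it by upgrading the canonical surjection $\UW/(\oo{2n+2}{2n+1}{-1})\twoheadrightarrow B_n$ to an isomorphism via a size comparison. Pass to associated graded rings with respect to the PBW filtration (checking that $\Psi_n$ is compatibly and strictly filtered): it then suffices to prove the commutative analogue of the theorem — that $\ker\gr\Psi_n$, which is an explicit ideal of $\Sa(\W{-1})$, coincides with the ideal generated by the symbols of differentiators of order $\ge 2n+1$ — and to check that each such symbol, and hence each graded relation, is realised by an honest element of the two-sided ideal $(\oo{2n+2}{2n+1}{-1})$. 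For the latter I would use the elementary recursion $\oo{m+1}{k}{s} = \oo{m}{k}{s}-\oo{m}{k-1}{s+1}$, the formulas rewriting each $[\,e_j,\oo{m}{k}{s}\,]$ as a combination of differentiators, and the product formula of \cite{billig2016classification}, to show that $(\oo{2n+2}{2n+1}{-1})$ already contains every differentiator of order $\ge 2n+2$ and enough of order $2n+1$. Equivalently, modulo $(\oo{2n+2}{2n+1}{-1})$ an arbitrary PBW monomial can be rewritten in a normal form using only boundedly many ``high'' generators $e_i$, giving $\GKdim\bigl(\UW/(\oo{2n+2}{2n+1}{-1})\bigr)\le n+2$; since $B_n$ contains the large ideal $T_nv_{n-1}^2$ of the Noetherian ring $T_n$ by \Cref{Main result large image} and $\GKdim B_n = n+2$ by \Cref{Main result birational image}, the Hilbert series of the two rings (in the grading by $e_0$-weight refined by PBW length) are squeezed together and the surjection is an isomorphism. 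The real work is the combinatorics: identifying precisely which differentiators are produced inside $(\oo{2n+2}{2n+1}{-1})$ and verifying that the resulting normal form is a spanning set no larger than $B_n$.

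For the final assertion, that $\ker\Psi_n$ is not finitely generated as a left ideal (the right-sided statement being symmetric), I would argue as Sierra and Walton do for $\ker\Psi_1$ in \cite{sierra2016maps}. Write $I = \ker\Psi_n = (\oo{2n+2}{2n+1}{-1})$ and let $L_k = \sum_b \UW\,\oo{2n+2}{2n+1}{-1}\,b$, the sum over all PBW monomials $b$ of length at most $k$, so that $L_0\subseteq L_1\subseteq\cdots$ with $\bigcup_k L_k = \UW\,\oo{2n+2}{2n+1}{-1}\,\UW = I$. If $I$ were finitely generated as a left ideal it would equal some $L_k$, so it is enough to show $L_k\subsetneq L_{k+1}$ for infinitely many $k$. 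This is where the image results re-enter: the non-finite generation of $T_nv_{n-1}^2$ as a one-sided ideal of $B_n$ from \Cref{Main result large image}, transported back along $\Psi_n$ and read off at the level of symbols in $\gr\UW\cong\Sa(\W{-1})$ via the normal form above, yields for arbitrarily large $k$ an element of $I$ arising from some $b$ of length $k+1$ that escapes $L_k$. Thus the chain is strictly increasing with union $I$, so $I$ is not finitely generated on one side. The delicate point is making this escaping element explicit enough to certify that $L_k\ne L_{k+1}$.
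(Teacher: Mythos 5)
Your argument for the inclusion $(\oo{2n+2}{2n+1}{-1}) \subseteq \ker\Psi_n$ by direct finite-difference computation is correct and is a legitimate alternative to the paper's step-operator approach (which instead shows $\Psi_\infty(\oo{2n}{2n-1}{-1})$ lies in the subring $L = \kk[\del]\otimes\Ua(\W{0})$ and reduces modulo $v_n, v_{n+1},\dots$). The degree bound on the polynomial coefficients, with worst case $\binom{a+1}{n}\binom{b+1}{n}$ of total degree $2n$, is the right observation and the $(2n+2)$th finite difference then annihilates it. That part is fine.

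The reverse inclusion $\ker\Psi_n\subseteq(\oo{2n+2}{2n+1}{-1})$, which is the heart of the theorem, is where your proposal has a genuine gap. Equality of GK-dimensions does not imply that the surjection $\UW/(\oo{2n+2}{2n+1}{-1})\twoheadrightarrow B_n$ is injective — GK-dimension can be preserved under a proper quotient, and you have no a priori bound from above on the graded pieces of $\UW/(\oo{2n+2}{2n+1}{-1})$ other than the one you are trying to prove. What would actually close the argument is a piece-by-piece comparison of dimensions in some grading, and that is precisely the combinatorics you defer to (``identifying precisely which differentiators are produced inside $(\oo{2n+2}{2n+1}{-1})$ and verifying that the resulting normal form is a spanning set no larger than $B_n$''). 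The paper packages exactly that count through the Conley–Goode criterion (\Cref{conley goode method thm}): conditions (a)–(c) are verified by the injectivity of $\Phi_n$ on $\gr\mathcal{J}$ (\Cref{prop:phininjective}) and a short check on $\gr\mathcal{J}$, and condition (d) is the dimension comparison, carried out via the $\mf{sl}_2$-decomposition $\sym(\Sa^2(\W{-1}))\cong\bigoplus_{\ell\ge -1}\mc{F}_{2\ell}$ and the identification $\Sa^2(\W{-1}/J)\cong H_{2n}$ as $\mf{b}$-modules (\Cref{part d lemma}). Without an analogue of \Cref{part d lemma} your Hilbert-series squeeze has no content, and it is not clear it can be obtained more cheaply than the representation-theoretic argument; this is a missing lemma, not a missing detail.

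Your argument for non-finite generation as a one-sided ideal is on the right track but is misdirected by an index slip. You propose to push forward along $\Psi_n$ and use the non-finite generation of $T_n v_{n-1}^2$, but $\Psi_n(\ker\Psi_n)=0$, so nothing can be read off there; the ideal $T_n v_{n-1}^2$ is the image of $\ker\Psi_{n-1}$, not $\ker\Psi_n$. The paper instead computes $\Psi_{n+1}(\ker\Psi_n) = T_{n+1}v_n^2$ (\Cref{image of kernel lemma}) and applies \Cref{non fg image ideal} to $B_{n+1}$: a finite left (or right) generating set of $\ker\Psi_n$ would map to a finite one-sided generating set of $T_{n+1}v_n^2$ in $B_{n+1}$, a contradiction. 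Your finite-difference computation from the first part actually does adapt to show $\Psi_{n+1}(\oo{2n+2}{2n+1}{-1})\in\kk^\times v_n^2$ (now the worst-case coefficient has degree $2(n+1)=2n+2$, so the $(2n+2)$th difference extracts the leading term), so the missing bridge is available to you; but you would still need \Cref{two-sided ideal image prop} to conclude the image is all of $T_{n+1}v_n^2$, and your chain-of-ideals framing does not by itself produce the required ``escaping element.''
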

    
    Further, we classify the two-sided ideals generated by a single differentiator: they are either zero or the kernel of an orbit homomorphism. We also compute explicit one-sided generating sets, which have two remarkable properties: they consist entirely of differentiators, and any non-zero differentiator in the generating set generates the kernel as a two-sided ideal.

    \begin{restatable}{corollary}{generickernel} \label{Main result generic kernel} 
    Let $n \geq 1$ and $m \in \{2n+1, 2n+2\}$. Let $k_0 \geq m-1$ and $s_0 \geq -1$ be integers such that $\oo{m}{k_0}{s_0} \neq 0$. Then
    \[ (\oo{m}{k_0}{s_0}) = \ker \Psi_n = \Ua(\W{-1}) \{\oo{m}{k}{s} \mid k \geq m-1, s \geq -1\}.\]
    \end{restatable}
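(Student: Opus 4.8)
The plan is to deduce everything from Theorem \ref{Main result kernel}, which already identifies $\ker\Psi_n = (\oo{2n+2}{2n+1}{-1})$ as a two-sided ideal and tells us it is not finitely generated one-sidedly. The corollary asks for two extra pieces of information: (1) that \emph{any} nonzero differentiator $\oo{m}{k_0}{s_0}$ with $m \in \{2n+1, 2n+2\}$ generates the same two-sided ideal, and (2) that the set of all such differentiators (ranging over $k \geq m-1$, $s \geq -1$, now allowing zero ones too since they contribute nothing) generates $\ker\Psi_n$ as a left ideal. I would organise the argument into three steps: first show every relevant differentiator lies in $\ker\Psi_n$; second show each nonzero one generates $\ker\Psi_n$ as a two-sided ideal; third show the whole family generates $\ker\Psi_n$ as a one-sided ideal.

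For the first step, I would compute $\Psi_n(\oo{m}{k}{s})$ directly. Since $\Psi_n(e_j) = t^{j+1}\del + \sum_{i=0}^{n} \binom{j+1}{i+1} t^{j-i} v_i$ (reading off the closed form with $f = t^{j+1}$), the image $\Psi_n(\oo{m}{k}{s}) = \sum_i (-1)^i \binom{m}{i} \Psi_n(e_{k-i})\Psi_n(e_{s+i})$ expands into a sum of products in $A_1 \otimes \Ua(\g_n)$. The key identity is a binomial/Vandermonde-type vanishing: the alternating sum $\sum_{i=0}^m (-1)^i \binom{m}{i} p(i) = 0$ for any polynomial $p$ of degree $< m$, applied coefficient-by-coefficient in the $A_1$- and $\g_n$-variables. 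Because each $\Psi_n(e_j)$ is built from $v_0,\dots,v_n$ and $v_i v_j = 0$ whenever $i+j \geq n$ in $\Ua(\g_n)$ (as $\g_n$ is nilpotent of the appropriate class — $v_iv_j$ maps into $\W n$), the surviving terms in the product have controlled degree in the $v$'s, and the alternating sum over $i$ kills them precisely when $m \geq 2n+1$. This should reprove $\oo{m}{k}{s} \in \ker\Psi_n$ cleanly; the paper presumably already has the $m = 2n+2, k = 2n+1, s=-1$ case, and the general case is the same computation with shifted indices.

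For the second step, I would use the formula of \cite{billig2016classification} relating products of differentiators — specifically the identity expressing $e_j \cdot \oo{m}{k}{s}$ and $\oo{m}{k}{s}\cdot e_j$ as $\ZZ$-linear combinations of differentiators $\oo{m}{k'}{s'}$ with $k'+s' = k+s+j+1$ (the total ``weight'' shifts by that of $e_j$, while $m$ stays fixed) — together with the analogous raising/lowering relations among the $\oo{m}{k}{s}$ for fixed weight. From these one shows that the two-sided ideal generated by a single nonzero $\oo{m}{k_0}{s_0}$ contains $\oo{m}{k}{s}$ for all admissible $k,s$ (one climbs between weights using multiplication by $e_1, e_{-1}$, etc., and moves within a weight using the internal relations), hence equals the two-sided ideal generated by the whole family. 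Combined with Theorem \ref{Main result kernel}, which says $(\oo{2n+2}{2n+1}{-1}) = \ker\Psi_n$, and the fact — also to be extracted from the same relations — that $\oo{2n+1}{k}{s}$ and $\oo{2n+2}{k'}{s'}$ generate the same two-sided ideal (the case $m = 2n+1$ must reduce to $m = 2n+2$, e.g. via $e_{-1}\oo{2n+1}{k}{s}$ or a similar single multiplication producing $\oo{2n+2}{\cdot}{\cdot}$ terms, using nonvanishing of the relevant coefficients), we get $(\oo{m}{k_0}{s_0}) = \ker\Psi_n$ for both values of $m$. For the final step, the one-sided generation $\ker\Psi_n = \Ua(\W{-1})\{\oo{m}{k}{s}\}$, the inclusion $\supseteq$ is immediate from step one, and $\subseteq$ follows because the two-sided ideal $(\oo{m}{k_0}{s_0})$ equals $\ker\Psi_n$ and, by step two's relations, $e_j \cdot \oo{m}{k_0}{s_0}$ and $\oo{m}{k_0}{s_0}\cdot e_j$ both lie in the \emph{left} module generated by $\{\oo{m}{k}{s}\}$ — so this left submodule absorbs multiplication by all of $\W{-1}$ on the right as well, hence is a two-sided ideal containing the generator, therefore is all of $\ker\Psi_n$.

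The main obstacle is the bookkeeping in step two: making the product-of-differentiators relations from \cite{billig2016classification} precise enough to guarantee (a) that the coefficients needed to pass between weights and between the two values of $m$ are genuinely nonzero — this is a numerical check on binomial coefficients, but one has to be careful about degenerate small cases and the parity/range constraints $k \geq m-1$, $s \geq -1$ — and (b) that the internal relations at fixed weight actually connect all admissible $(k,s)$ pairs with that weight rather than splitting them into several orbits. I expect this to be where a naive argument could have a gap, so I would want to state the structural relation among differentiators as a standalone lemma (quoting or re-deriving the Billig–Futorny formula in the form needed) before assembling the three steps.
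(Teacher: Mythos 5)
Your three-part plan (membership, two-sided generation from one nonzero element, one-sided generation from the whole family) is exactly the decomposition the paper uses, and your step three is essentially the paper's argument: the commutator relations of Lemma \ref{differentiator commutators lemma} show that for $i\in\{-1,0,1,2\}$ one has $[e_i,\oo{m}{k_0}{s_0}]\in\sum_{k',s'}\kk\,\oo{m}{k'}{s'}$, hence the left module $\UW\{\oo{m}{k}{s}\}$ is closed under right multiplication by the generating set $\{e_{-1},e_0,e_1,e_2\}$ of $\UW$, so it is a two-sided ideal. However, steps one and two as written have real problems.

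In step one you assert that $v_iv_j=0$ in $\Ua(\g_n)$ whenever $i+j\geq n$. That is false: in $\Ua(\g_n)$ only the \emph{bracket} $[v_i,v_j]$ vanishes when $i+j\geq n$; the product $v_iv_j$ is a nonzero PBW monomial. Your proposed direct Vandermonde computation of $\Psi_n(\oo{m}{k}{s})$ might still be salvageable without that claim, but as stated it relies on a wrong identity. The paper avoids the computation entirely: it invokes the representation-theoretic fact (from the Conley--Goode decomposition, Lemma \ref{S2 decomposition} and equation (16) of \cite{conley2025annihilators}) that $H_{2n}=\kk\{\oo{m}{k}{s}\mid m\geq 2n+1,\,k\geq m-1,\,s\geq -1\}$, together with Lemma \ref{lem: H2n generated} stating that $H_{2n}$ is the $\W{-1}$-module generated by $\oo{2n+2}{2n+1}{-1}$. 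Since $\ker\Psi_n=(\oo{2n+2}{2n+1}{-1})$ by Theorem \ref{kernel thm} is an ideal (hence an $\ad$-submodule), every $\oo{m}{k}{s}$ with $m\geq 2n+1$ lands inside it. This is Lemma \ref{lem: mks part I}.

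In step two you correctly flag the danger points (nonvanishing of coefficients, connectivity of orbits) but do not supply the mechanism, and some of what you describe does not exist: there are no ``internal relations at fixed weight'' to move between distinct $\oo{m}{k}{s}$ with $k+s$ fixed, since $\ad(e_0)$ acts by a scalar and $\ad(e_{\pm1})$ shift the weight. The paper's Proposition \ref{lem: mks part III} instead descends using only $\ad(e_{-1})$, via the recursion $[e_{-1},\oo{m}{k}{s}]=(k+1-m)\oo{m}{k-1}{s}+(s+1)\oo{m}{k}{s-1}$. The crucial observation is that both coefficients are non-negative integers with at least one strictly positive when $k+s>2n$, so iterating $\ad(e_{-1})$ produces a strictly positive integer multiple of the bottom differentiator at $k+s=2n$; since $\Char\kk=0$, this is nonzero. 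For $m=2n+2$ the bottom is $\oo{2n+2}{2n+1}{-1}$ directly. For $m=2n+1$ one must also use Lemma \ref{lem: mks part II} (that $\oo{2n+1}{2n+1}{-1}$ and $\oo{2n+1}{2n}{0}$ are both proportional to $\oo{2n+2}{2n+1}{-1}$) and Lemma \ref{lem: mks part III pre} (that $\oo{2n+1}{2n+1+s}{s}=0$), the latter being needed to handle the case where a coefficient vanishes along the descent and, more importantly, being exactly the reason for the nonvanishing hypothesis in the statement: the only excluded case is $m=k_0-s_0=2n+1$, where the differentiator is identically zero. You did not engage with this exceptional case, and without it the hypothesis $\oo{m}{k_0}{s_0}\neq 0$ looks unmotivated. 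So the skeleton of your argument is right, but a complete proof needs the precise $\ad(e_{-1})$-descent with its coefficient analysis and the vanishing lemmas, and step one should be replaced by (or corrected to agree with) the $H_{2n}$ argument.
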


    By the result of \cite{billig2016classification} mentioned above, every finite length cuspidal module for the Witt algebra is annihilated by a differentiator. Combining this with \Cref{Main result generic kernel} shows that every finite length cuspidal module is a $B_n$-module for large enough $n$, see \Cref{cuspidal module cor}.
    
    \Cref{Main result kernel} and \Cref{Main result generic kernel} are proved in \Cref{sec:kernel}. They give an infinite class of examples of principal two-sided ideals that are not finitely-generated left or right ideals of $\Ua(\W{-1})$. These previously unknown examples are evidence for the hypothesis that two-sided ideals of $\Ua(\W{-1})$ are usually ``big''. Consequently, our results may be regarded as evidence for a conjecture on the fundamental ideal structure of the ring $\Ua(\W{-1})$.

    \begin{conjecture}[{\cite[Conjecture 1.3]{petukhov2020ideals}, \cite[Question 0.11]{sierra2016maps}}] \label{UW ACC conjecture}
        Two-sided ideals of $\Ua(\W{-1})$ satisfy the ascending chain condition. That is, all two-sided ideals of $\Ua(\W{-1})$ are finitely-generated.
    \end{conjecture}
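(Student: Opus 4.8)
Although \Cref{UW ACC conjecture} is open, we sketch a possible line of attack and identify where the essential difficulty lies. There are two natural reductions. The first passes to the associated graded algebra: with respect to the PBW filtration, $\gr\UW = \operatorname{Sym}(\W{-1})$, and if $I$ is a two-sided ideal of $\UW$ then $\gr I$ is a \emph{Poisson} ideal of $\operatorname{Sym}(\W{-1})$. Since $\gr I_m = \gr I_{m+1}$ forces $I_m = I_{m+1}$ by the usual symbol argument, the ascending chain condition for Poisson ideals of $\operatorname{Sym}(\W{-1})$ would imply \Cref{UW ACC conjecture}. The second reduction uses the orbit homomorphisms: it suffices to prove that
\begin{enumerate}[label=(\roman*)]
    \item for every $n \geq 0$, the ring $B_n$ satisfies the ascending chain condition on two-sided ideals, and
    \item every nonzero two-sided ideal of $\UW$ contains $\ker\Psi_n$ for some $n$.
\end{enumerate}
Indeed, given an ascending chain $I_1 \subseteq I_2 \subseteq \cdots$ of two-sided ideals of $\UW$ with $I_1 \neq 0$, statement (ii) produces $n$ with $\ker\Psi_n \subseteq I_1$, and then the chain corresponds to an ascending chain of two-sided ideals of $B_n \cong \UW/\ker\Psi_n$, which stabilises by (i).

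Statement (i) should follow by adapting the argument of \cite{sierra2016maps}, which proves the analogue for $\Psi_1(\Ua(\W{1}))$, to an induction on $n$ whose key input is \Cref{Main result large image}. The cases $n \leq 1$ are essentially known; for instance $B_0 = \kk \oplus A_1\del$ is Noetherian. For $n \geq 2$, $B_n$ contains the large two-sided ideal $T_n v_{n-1}^2$ of the Noetherian ring $T_n$, and $v_{n-1}$ is a normal element of $T_n$ (the adjoint action of $v_0$ scales it, while its remaining brackets vanish in $\mathfrak{g}_n$). An ascending chain of two-sided ideals of $B_n$ is then controlled by its trace on $T_n v_{n-1}^2$, which stabilises by Noetherianity of $T_n$ after saturating by $v_{n-1}^2$, together with its image in the complementary quotient $B_n/T_n v_{n-1}^2$, handled inductively. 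The bimodule bookkeeping needed to combine these pieces is technical, but following the template of \cite{sierra2016maps} it should present no conceptual difficulty.

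Statement (ii) — equivalently, the Poisson chain condition of the first reduction — is the genuine obstacle, since it amounts to a structural understanding of the two-sided ideals of $\UW$ that is presently out of reach. Two partial results indicate the shape of the problem. First, by \cite{iyudu2020enveloping} every proper quotient of $\UW$ has finite GK-dimension, so for nonzero $I$ the subvariety of $\Spec\operatorname{Sym}(\W{-1})$ cut out by the Poisson ideal $\gr I$ is finite-dimensional; one expects the theory of Poisson-primitive ideals to pin its support to a configuration of finitely many points of $\Aa^1$ with multiplicities bounded in terms of $\GKdim(\UW/I)$. Second, the orbit method of \cite{pham2025orbit} shows that the primitive ideals attached to one-point local functions have the form $\Psi_n^{-1}(Q)$, hence contain $\ker\Psi_n$, so (ii) holds for them, and \Cref{Main result kernel} makes these kernels completely explicit. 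The outstanding task is to pass from these special ideals to an \emph{arbitrary} nonzero two-sided ideal: one must show that it is forced below $\ker\Psi_n$ once $n$ exceeds the multiplicity bound, which requires controlling multi-point and non-local Poisson-primitive ideals together with their non-radical thickenings — the appearance of the square $v_{n-1}^2$, rather than $v_{n-1}$, in \Cref{Main result large image} shows that such thickenings must be faced. This is precisely the point at which genuinely new input is needed, which is why \Cref{UW ACC conjecture} remains open.
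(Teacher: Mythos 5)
The statement you were asked to address is not a theorem of the paper at all: it is an open conjecture (cited from \cite{petukhov2020ideals} and \cite{sierra2016maps}), and the paper offers no proof of it, only indirect evidence (the kernels $\ker\Psi_n$ are principal as two-sided ideals yet not finitely generated as one-sided ideals, and the degree-zero subrings $(B_n)_0$ are Noetherian). Your proposal likewise proves nothing, and to your credit it says so explicitly; so there is no question of comparing two proofs, only of assessing your sketch. The logical skeleton of your second reduction is fine: ideals of $\UW$ containing $\ker\Psi_n$ correspond bijectively to two-sided ideals of $B_n \cong \UW/\ker\Psi_n$, so (i) together with (ii) would yield the conjecture, and the symbol argument for the first reduction is standard.

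The gaps are in the inputs, and they are larger than your write-up suggests. Statement (i) is not a routine adaptation of \cite{sierra2016maps}: it is precisely \Cref{Bn ACC conjecture} of this paper, stated as a conjecture, with the only known cases being $n \leq 1$ ($B_0$ is Noetherian, and \cite[Proposition 6.6]{sierra2016maps} treats $\Psi_1(\Ua(\W{1}))$, a proper subalgebra of $B_1$); the sketch you give of controlling a chain by its trace on $T_n v_{n-1}^2$ and its image in $B_n/T_nv_{n-1}^2$ has not been carried out even for $n=2$, so ``no conceptual difficulty'' overstates the case. Statement (ii) is the deeper problem, as you say, but your parenthetical claim that it is \emph{equivalent} to the ascending chain condition for Poisson ideals of $\Sa(\W{-1})$ is unjustified in either direction; moreover, for multi-point local functions the primitive ideals of \cite{pham2025orbit} arise as preimages under tensor products of orbit homomorphisms, and it is not known that such ideals contain $\ker\Psi_n$ for any single $n$, so (ii) is not even verified on the known supply of primitive ideals beyond the one-point case. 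In short: your reductions are a reasonable way to organise the problem, but both halves remain open, which is exactly why \Cref{UW ACC conjecture} is stated in the paper as a conjecture rather than a theorem.
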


    Another consequence of \Cref{Main result kernel} is that $B_n$ can be realised as successive extensions of $B_1$ by the ideals $T_2v_1^2, \dots, T_{n}v_{n-1}^2$, see \Cref{B_n kernel thm}.\vspace{-5pt}\\
    
    As mentioned, the cases $n=0$ and $n=1$ of \Cref{Main result kernel} are due to \cite{conley2007}, where they were used to obtain generators for the annihilators of tensor density modules. Similarly, the higher orbit homomorphisms play a fundamental role in the representation theory of the Witt algebra, which we now describe.

    The orbit homomorphisms $\Psi_n$ are defined by the first author in \cite[Theorem 5.7]{pham2025orbit}, during the construction of a very large class of primitive ideals (annihilators of irreducible representations) of $\UW$, known as the \emph{orbit method} for the Witt algebra. Specifically, for every element of the dual vector space,
    \[ \chi \in \W{-1}^* = \mathrm{Hom}_\kk(\W{-1}, \kk),\]
    a primitive ideal $Q_\chi$ of $\UW$ is constructed. 
    
    In fact, the orbit method for the Witt algebra associates a primitive ideal of $\UW$ to every \emph{Poisson primitive} ideal of $\gr \UW$, which are classified in \cite{petukhov2023poisson}. We refer the reader to \cite{pham2025orbit} for full details. It is currently unknown if this construction exhausts all primitive ideals of $\UW$.
    
    The primitive ideal $Q_\chi$ corresponding to $\chi \in \W{-1}^*$ is non-zero if and only if $\chi$ is a \emph{local function}, defined in \cite{petukhov2023poisson}. Local functions are spanned by the \emph{one-point local functions}, which are of the form 
    \[ \chi = \chi_{x; \alpha_0, ...,\alpha_n}: \W{-1} \mapsto \kk, \quad f \partial \mapsto \alpha_0 f(x) + \alpha_1 f'(x) + ... +\alpha_n f^{(n)}(x),\]
    where $n \geq 0$ is called the order of $\chi$, and $x, \alpha_0, \alpha_1, ...,\alpha_n \in \kk$ with $\alpha_n \neq 0$.
    
    When $\chi$ is a one-point local function of order $n$, $\chi$ descends to $\bar{\chi} \in \g_n^*$. The remarkable orbit method for finite-dimensional solvable Lie algebras due to Dixmier, Conze, Duflo and Rentschler \cite[Theorem 6.5.12]{dixmier1996enveloping}, states that $\bar{\chi}$ corresponds to a primitive ideal $Q_{\bar{\chi}}$ of $\Ua(\g_n)$; moreover, the fibres of the correspondence are precisely coadjoint orbits of $\g_n^*$.\vspace{-5pt}\\
    
    The fundamental lifting result \cite[Theorem 1.4]{pham2025orbit} is that 
    \[ Q_{\chi} = \Psi_n^{-1}(A_1\otimes_\kk Q_{\bar{\chi}}).\]
    That is, the primitive ideals of $\UW$ corresponding to one-point local functions of order $n$ are exactly those lifted from primitive ideals of $\Ua(\g_n)$ via the $n$th orbit homomorphism $\Psi_n$. 
    
    As a consequence, the orbit homomorphisms are ``universal homomorphisms'' for one-point local functions, meaning that (see \cite[Corollary 6.11]{pham2025orbit}),
\begin{align*}
    \ker \Psi_{n} &= \bigcap \{Q_\chi \mid \chi \text{ a one-point local function of order} \leq n  \} \\ 
    &=\bigcap \{Q_\chi \mid \chi \text{ a one-point local function of order\;} n \}.
\end{align*}
    
    Since \Cref{Main result kernel} computes kernels of orbit homomorphisms, we obtain explicit descriptions of certain primitive and semi-primitive ideals of $\UW$, see sub\cref{Primitive ideals}.

\begin{restatable}{corollary}{evenoddannihilators} \label{Main result even odd annihilators}
    Let $\chi$ be a one-point local function on $\W{-1}$ of order $n \geq 1$.
    
    If $n$ is even, then
    \[ Q_\chi = \ker \Psi_{n} = (\oo{2n+2}{2n+1}{-1}). \]
    Thus $(\oo{2n+2}{2n+1}{-1})$ is a primitive ideal of $\Ua(\W{-1})$.
    
    If $n$ is odd, then
    \[ Q_\chi \supseteq \ker \Psi_{n} = (\oo{2n+2}{2n+1}{-1}) = \bigcap \{Q_\eta \mid \eta \text{ a one-point local function of order} \leq n  \}. \]
    Thus $(\oo{2n+2}{2n+1}{-1})$ is a semi-primitive ideal of $\Ua(\W{-1})$.
\end{restatable}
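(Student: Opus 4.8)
The plan is to deduce both cases from \Cref{Main result kernel} together with two facts recalled in the introduction from \cite{pham2025orbit}: the universal homomorphism identity $\ker\Psi_n = \bigcap\{Q_\eta \mid \eta \text{ a one-point local function of order}\leq n\}$, and, for $\chi$ of order exactly $n$, the lifting formula $Q_\chi = \Psi_n^{-1}(A_1\otimes_\kk Q_{\bar\chi})$ with $\bar\chi\in\g_n^*$ satisfying $\bar\chi(v_{n-1})\neq 0$, where $\bar\eta\mapsto Q_{\bar\eta}$ is the Dixmier map of the solvable algebra $\g_n$ and its fibres are coadjoint orbits. Granting these, the semi-primitivity assertion --- and hence the whole odd case --- is essentially free: by \Cref{Main result kernel} and the first identity, $(\oo{2n+2}{2n+1}{-1}) = \ker\Psi_n$ is an intersection of primitive ideals, so the quotient $\Ua(\W{-1})/\ker\Psi_n$ is a subdirect product of primitive rings and therefore Jacobson-semisimple, i.e. $(\oo{2n+2}{2n+1}{-1})$ is semi-primitive; and since $\chi$ is itself a one-point local function of order $n\leq n$, the ideal $Q_\chi$ occurs in the intersection, giving $Q_\chi\supseteq\ker\Psi_n$. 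These same remarks reduce the even case to the single inclusion $Q_\chi\subseteq\ker\Psi_n$.

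So assume $n$ is even. Since $\ker\Psi_n=\bigcap_\eta Q_\eta$ over all one-point local $\eta$ of order $n$ and $Q_\eta=\Psi_n^{-1}(A_1\otimes_\kk Q_{\bar\eta})$, it suffices to prove $Q_{\bar\chi}=Q_{\bar\eta}$ for any two such $\chi,\eta$; equivalently, by Dixmier's theorem, that $\bar\chi$ and $\bar\eta$ lie in one and the same coadjoint $\g_n$-orbit. As both lie in the open subset $U_n:=\{\xi\in\g_n^*\mid\xi(v_{n-1})\neq 0\}$, the even case reduces to the Lie-theoretic claim that, \emph{for $n$ even, $U_n$ is a single coadjoint orbit of $\g_n$}.

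I would prove this in the coordinates $\xi=(\xi_0,\dots,\xi_{n-1})$, $\xi_i=\xi(v_i)$, using $[v_i,v_j]=(j-i)v_{i+j}$ with $v_k:=0$ for $k\geq n$. A short computation gives $(\ad^* v_i\cdot\xi)(v_j)=-(j-i)\xi_{i+j}$; hence, for $i\geq 1$, the coadjoint flow of $v_i$ fixes each of $\xi_{n-1},\xi_{n-2},\dots,\xi_{n-i}$ (each relevant index being $\geq n$) and moves $\xi_{n-1-i}$ at the constant rate $-(n-1-2i)\xi_{n-1}$ --- nonzero precisely because $n-1-2i$ cannot vanish when $n$ is even. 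Thus, starting from an arbitrary $\xi\in U_n$ and running the flows of $v_1,v_2,\dots,v_{n-1}$ in this order, one kills $\xi_{n-2},\xi_{n-3},\dots,\xi_0$ one at a time without disturbing the coordinates already set to zero; finally the flow of $v_0$, which rescales $\xi_1,\dots,\xi_{n-1}$ and fixes $\xi_0$, brings $\xi_{n-1}$ to $1$. Hence every point of $U_n$ is conjugate to $(0,\dots,0,1)$, as claimed. (If $n$ is odd the argument breaks at $i=\tfrac{n-1}{2}$, where $n-1-2i=0$; indeed $\xi_{(n-1)/2}^2/\xi_{n-1}$ is then a non-constant coadjoint invariant on $U_n$, so $U_n$ splits into infinitely many orbits, consistent with $Q_\chi$ genuinely depending on $\chi$.)

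The lemma is the crux, and the main obstacle is the bookkeeping that the successive flows do not interfere --- which is forced by the truncation $v_k=0$ for $k\geq n$ --- together with the observation that the parity of $n$ is exactly what keeps $n-1-2i$ away from $0$. (One further subtlety: the Dixmier correspondence is cleanest over an algebraically closed field, and the step $\xi_{n-1}\mapsto 1$ uses this; over a general field of characteristic zero one instead notes that the leftover rescaling of $\xi_{n-1}$ is induced by the automorphism $t\mapsto at$ of $\W{-1}$, which $\Psi_n$ carries to an automorphism of $T_n$ preserving $B_n$, hence leaves $Q_\eta$ unchanged.) Granting the lemma, $Q_{\bar\chi}=Q_{\bar\eta}$ for all order-$n$ one-point local $\eta$, so $Q_\chi=\bigcap_\eta Q_\eta=\ker\Psi_n=(\oo{2n+2}{2n+1}{-1})$ by \Cref{Main result kernel}; being equal to the primitive ideal $Q_\chi$, this ideal is itself primitive.
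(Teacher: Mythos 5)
Your proof of the odd case is the same as the paper's: both apply the universal homomorphism identity $\ker\Psi_n = \bigcap\{Q_\eta\}$ from \cite[Corollary 6.11]{pham2025orbit} together with \Cref{Main result kernel}, and observe that an intersection of primitive ideals is semi-primitive.

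Your even case, however, is a genuinely different route. The paper's proof is a single citation of \cite[Corollary 6.15]{pham2025orbit}, which already asserts $Q_\chi = \ker\Psi_{2m}$ for one-point local functions of even order. You instead reprove this fact from the lifting formula $Q_\chi = \Psi_n^{-1}(A_1\otimes Q_{\bar\chi})$ and the Dixmier correspondence for the solvable algebra $\g_n$, reducing the claim to a Lie-theoretic lemma: for $n$ even, the open set $U_n = \{\xi\in\g_n^* \mid \xi(v_{n-1})\neq 0\}$ is a single coadjoint orbit. Your proof of the lemma --- running the (nilpotent, hence polynomial) flows of $v_1,\dots,v_{n-1}$ to kill $\xi_{n-2},\dots,\xi_0$ in turn, then rescaling with the torus $\exp(\kk v_0)$ --- is correct, and the computation that the flow of $v_i$ fixes $\xi_{n-i},\dots,\xi_{n-1}$ while moving $\xi_{n-1-i}$ at rate $-(n-1-2i)\xi_{n-1}$ is exactly right. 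Your observation that the parity of $n$ is precisely what keeps $n-1-2i$ away from zero, and your exhibition of the Casimir $\xi_{(n-1)/2}^2/\xi_{n-1}$ in the odd case, nicely expose \emph{why} the even/odd dichotomy in the statement occurs, which the paper's citation-based proof hides. What the paper's approach buys is brevity and avoiding the algebraic-closure subtleties you correctly flag.

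Two details you assert but do not derive: that the ``descent'' $\chi\rightsquigarrow\bar\chi\in\g_n^*$ actually lands in $U_n$, i.e.\ $\bar\chi(v_{n-1})\neq 0$; and how the descent handles the base point $x\neq 0$ (a translation automorphism $t\mapsto t+x$ of $\W{-1}$ is needed before restricting to $\W{0}$ and passing to the quotient $\g_n$, since $\chi$ itself does not vanish on $\W{n}$ when $x\neq 0$). Both are handled in \cite{pham2025orbit} and would need to be spelled out for a self-contained argument, but they are bookkeeping rather than gaps in the idea.
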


    Primitive ideals corresponding to arbitrary local functions $\chi$ are also studied in \cite{pham2025orbit}, by taking tensor products of the orbit homomorphisms. Understanding the kernel and image of such maps would give similar results to \Cref{Main result even odd annihilators} for arbitrary local functions. This is a topic of ongoing research.

    We believe that all of the main results in this article hold for the Witt algebra $W$ -- the orbit homomorphisms $\Psi_n$ are defined in this larger context. However, the homomorphism $\Psi_\infty$ given in \Cref{Psi formula}, which is a convenient technical tool throughout our arguments, is not well-defined on $\Ua(W)$. This prevents our methods from directly applying to $W$.\vspace{-5pt}\\

The organisation of the paper is as follows. In \Cref{Preliminaries}, we define filtrations and gradings on $\Ua(\W{-1})$ and related rings, and recall results regarding $\Psi_n$ and its associated graded map from \cite{pham2025orbit}. We also define the differentiators $\oo{m}{k}{s}$ and prove some useful commutator relations. In \Cref{sec:image}, we construct an operator $\PsiS$ on the images $B_n$ based on a step algebra element $S$ of $\Ua(\W{-1})$, and show \Cref{Main result large image} and \Cref{Main result birational image}. We then prove \Cref{Main result degree zero Noetherian} in sub\cref{The subring Bn0 is Noetherian}. In \Cref{sec:kernel}, we calculate the kernel of $\Psi_n$ using the method of \cite{conley2025annihilators}. We prove \Cref{Main result kernel} in sub\cref{Proof of kernel thm}, deducing some interesting consequences. We end the paper by proving \Cref{Main result generic kernel} and \Cref{Main result even odd annihilators} in sub\cref{Generation of ker Psi_n,Primitive ideals} respectively.\\

\textit{Acknowledgements.} We thank Susan J. Sierra and Charles H. Conley for interesting discussions and
questions. This research was part of the first author’s PhD research at the University of Edinburgh. The second author was supported by EPSRC grant EP/T018844/1, and the ERC under the EU Horizon 2020 research and innovation programme, under grant agreement 948885.
\section{Preliminaries} \label{Preliminaries}
We fix a base field $\kk$ of characteristic $0$ throughout the paper. All algebras mentioned will be unital $\kk$-algebras and all vector spaces (in particular, Lie algebras) are assumed to be defined over $\kk$. Unless otherwise specified, tensor products are over $\kk$, so we abbreviate $\otimes_\kk$ to $\otimes$. The symbol $\NN^d$ denotes $d$-tuples of non-negative integers.

\subsection{Lie algebras and homomorphisms}

We first introduce our primary Lie algebra of interest. 

\begin{definition}
    Let $\W{-1} = \Der(\kk[t]) = \kk[t]\del$ be the \emph{one-sided Witt algebra} where $\del = \frac{d}{dt}$. Explicitly, $\W{-1}$ has the Lie bracket
    \begin{align*}
        [f \del, g \del] = (fg'-f'g)\del \quad \text{ for all } f, g \in \kk[t].
    \end{align*} 
\end{definition}

The one-sided Witt algebra $\W{-1}$ is also the Lie algebra of algebraic vector fields on $\kk$.

\begin{definition}
    For all $i \geq -1$, let $e_i = t^{i+1}\del \in \W{-1}$. These form a countable basis for $\W{-1}$, with the Lie bracket
      \[[e_i, e_j] = (j-i) e_{i+j}.\]  
\end{definition}

\begin{definition}
For any $n \geq -1$, define $\W{n} = \kk\{e_i | i \geq n\}$, which is a Lie subalgebra of $\W{-1}$.
\end{definition}

It is straightforward that $\W{n}$ is a Lie ideal of $\W{0}$, and hence we can form a quotient.

\begin{definition}
     For $n \geq 0$, we define the Lie algebra $\g_n = \W{0}/\W{n}$. We define $v_{i} = e_i + \W{n}$, for $0 \leq i \leq n-1$.
\end{definition}

Note that $\W{-1}$ is a simple Lie algebra, however, the subquotients $\g_n$ are finite-dimensional solvable Lie algebras.

\begin{definition}\label{def:ringT}
    Let $A_1$ be the first Weyl algebra, generated by $t, \del$ subject to the relation $\del t - t \del = 1$. We will abuse notation and write $A_1 = \kk [t, \del]$. For all $n \geq 0$, define the rings 
     \begin{align*}
         T_n = \kk[t, \partial] \otimes_{\kk} \Ua(\g_n),
     \end{align*}
     and additionally,
     \begin{align*}
         T_\infty =  \kk[t, \partial] \otimes_{\kk} \Ua(\W{0}).
     \end{align*}
\end{definition}

We can now give the family of homomorphisms central to this article, which were defined by the first author in \cite{pham2025orbit}.
\begin{proposition}[{\cite[Theorem 5.7]{pham2025orbit}}] \label{Psi formula}
    There are Lie algebra homomorphisms
    \[
    \psi_n: \W{-1} \to T_n, \quad f\del \mapsto f\del + \sum_{i=0}^{n-1} \frac{f^{(i+1)}}{(i+1)!} v_i,\]
    and
    \[\psi_\infty: \W{-1} \to T_\infty , \quad f\del \mapsto f\del + \sum_{i \geq 0} \frac{f^{(i+1)}}{(i+1)!} e_i, \]
    which induce algebra homomorphisms $\Psi_n: \Ua(\W{-1}) \to T_n$ and $\Psi_\infty: \Ua(\W{-1}) \to T_\infty$.
\end{proposition}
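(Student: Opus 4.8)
The plan is to reduce the whole statement to the single assertion that $\psi_\infty$ is a Lie algebra homomorphism. Granting this, $\psi_n$ is obtained as the composite of $\psi_\infty$ with the algebra homomorphism $\id_{A_1}\otimes\Ua(\pi_n)\colon T_\infty\to T_n$ induced by the Lie quotient $\pi_n\colon\W{0}\twoheadrightarrow\g_n$ (which sends $e_i\mapsto v_i$ for $i<n$ and $e_i\mapsto 0$ for $i\ge n$); since a composite of a Lie homomorphism with an algebra homomorphism is again a Lie homomorphism, $\psi_n$ is one too, and applying it to $f\del$ recovers exactly the stated formula. Finally $\Psi_n$ and $\Psi_\infty$ come from $\psi_n$ and $\psi_\infty$ by the universal property of the enveloping algebra: every Lie homomorphism from $\W{-1}$ into an associative algebra, regarded as a Lie algebra under commutator, extends uniquely to an algebra homomorphism out of $\Ua(\W{-1})$.

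To verify that $\psi_\infty$ respects brackets, I would expand $[\psi_\infty(f\del),\psi_\infty(g\del)]$ in $T_\infty=A_1\otimes\Ua(\W{0})$, using that the two tensor factors commute and that $[f\del,h]=fh'$ in $A_1$ for $h\in\kk[t]$. Writing $\psi_\infty(f\del)=f\del+F$ with $F=\sum_i\frac{f^{(i+1)}}{(i+1)!}e_i$ and likewise for $g$, the cross terms $[f\del,G]+[F,g\del]$ contribute $\sum_k\frac{fg^{(k+2)}-gf^{(k+2)}}{(k+1)!}e_k$, while $[F,G]$ contributes $\sum_{i,j}\frac{f^{(i+1)}g^{(j+1)}}{(i+1)!(j+1)!}[e_i,e_j]$, and the relation $[e_i,e_j]=(j-i)e_{i+j}$ keeps this linear in the generators. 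Comparing with $\psi_\infty((fg'-f'g)\del)=(fg'-f'g)\del+\sum_k\frac{(fg'-f'g)^{(k+1)}}{(k+1)!}e_k$, the degree-zero parts agree trivially, and matching the coefficient of $e_k$ reduces to the polynomial identity
\[ (fg'-f'g)^{(k+1)} \;=\; fg^{(k+2)}-gf^{(k+2)} \;+\; (k+1)!\sum_{i+j=k}\frac{(j-i)\,f^{(i+1)}g^{(j+1)}}{(i+1)!\,(j+1)!}. \]
This in turn follows from the Leibniz rule applied to $(fg')^{(k+1)}$ and $(f'g)^{(k+1)}$: the coefficient of $f^{(a)}g^{(k+2-a)}$ on the left comes out to $\binom{k+1}{a}-\binom{k+1}{a-1}$, which over a common denominator equals $(k+1)!\,\frac{k+2-2a}{a!\,(k+2-a)!}$, and this is precisely the coefficient on the right after the substitution $a=i+1$ (note $j-i=k+2-2a$ and $(i+1)!(j+1)!=a!\,(k+2-a)!$). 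I expect this Pascal-type bookkeeping to be essentially the only real work; the single point to monitor is that passing from $\psi_\infty$ to $\psi_n$ kills every $e_k$ with $k\ge n$, so no truncation issue can arise.

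Alternatively — and this is the picture I find most transparent — one can bypass the combinatorics. Since $f(t+s)-f(t)=\sum_{i\ge 0}\frac{f^{(i+1)}(t)}{(i+1)!}s^{i+1}$, composing $\psi_\infty$ with the injective Lie embedding $\W{0}\hookrightarrow\Der(\kk[s])$, $e_i\mapsto s^{i+1}\del_s$, carries $\psi_\infty(f\del)$ to the derivation $X_f=f(t)\,\del_t+\bigl(f(t+s)-f(t)\bigr)\del_s$ of $\kk[t,s]$. Substituting $u=t+s$ transforms $X_f$ into the diagonal derivation $f(t)\del_t+f(u)\del_u$, whence $[X_f,X_g]=X_{fg'-f'g}$ is immediate. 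Because the discrepancy $\psi_\infty([f\del,g\del])-[\psi_\infty(f\del),\psi_\infty(g\del)]$ is linear in the $\Ua(\W{0})$-variable (each $[e_i,e_j]$ being a single generator), it lies in $A_1\otimes\W{0}$, on which $\id_{A_1}\otimes\iota$ is still injective; so its vanishing after this embedding forces it to vanish in $T_\infty$. The mildly delicate step in this route is exactly that observation — that injectivity on the degree-$\le 1$ part is enough — after which either argument feeds into the universal-property step above to produce $\Psi_n$ and $\Psi_\infty$.
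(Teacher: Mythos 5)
The paper does not prove this proposition: it is imported as a citation from \cite[Theorem 5.7]{pham2025orbit}, with no in-text argument beyond the remark that the sums are finite because $f$ is a polynomial. So there is no ``paper's proof'' to compare against; I can only evaluate your proof on its own terms, and it is correct.

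Your reduction is well organized: $\psi_n$ is $\psi_\infty$ followed by the algebra map $\mathrm{id}_{A_1}\otimes\Ua(\pi_n)\colon T_\infty\to T_n$ (with $\pi_n\colon\W{0}\twoheadrightarrow\g_n$), and the passage from $\psi$ to $\Psi$ is the universal property of $\Ua$, so everything rests on $\psi_\infty$ respecting brackets. The Leibniz-rule verification checks out: the coefficient of $f^{(a)}g^{(k+2-a)}$ in $(fg'-f'g)^{(k+1)}$ is $\binom{k+1}{a}-\binom{k+1}{a-1}=(k+1)!\,\frac{k+2-2a}{a!\,(k+2-a)!}$, which under $a=i+1$, $j=k-i$ equals $(k+1)!\,\frac{j-i}{(i+1)!\,(j+1)!}$, exactly the coefficient coming from $[F,G]$ via $[e_i,e_j]=(j-i)e_{i+j}$, while the boundary terms $a=0$ and $a=k+2$ give $fg^{(k+2)}-gf^{(k+2)}$, matching the cross terms $[f\del,G]+[F,g\del]$. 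Your second route is the conceptual reason the formula works and I think it deserves to be the headline: $\psi_\infty(f\del)$, realized in $\Der(\kk[t,s])$ as $X_f=f(t)\del_t+(f(t+s)-f(t))\del_s$, becomes the diagonal field $f(t)\del_t+f(u)\del_u$ after $u=t+s$, so $[X_f,X_g]=X_{fg'-f'g}$ is immediate. The point you flag as delicate is indeed the one to state carefully: what you really compose with is not just the Lie embedding $\iota\colon\W{0}\hookrightarrow\Der(\kk[s])$ but its algebra-level extension $T_\infty=A_1\otimes\Ua(\W{0})\to\End(\kk[t])\otimes\End(\kk[s])\to\End(\kk[t,s])$, which is needed so that the commutator $[\psi_\infty(f\del),\psi_\infty(g\del)]$, taken in $T_\infty$, lands on the derivation bracket $[X_f,X_g]$. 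That this algebra map is injective on $A_1\otimes(\kk\oplus\W{0})$ — because $A_1$ acts faithfully on $\kk[t]$, $\iota$ is injective into $\Der(\kk[s])\subset\End(\kk[s])$, and $\End(V)\otimes\End(W)\hookrightarrow\End(V\otimes W)$ — together with your observation that the discrepancy sits in $A_1\otimes\W{0}$, closes the argument. In short: the computational route matches what one would expect a direct verification in \cite{pham2025orbit} to look like, while the $\kk[t,s]$ picture is a genuinely more illuminating alternative; either suffices.
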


Notice all the homomorphisms above are well-defined because if $f \del \in \W{-1}$, then $f$ is a polynomial. Moreover, $\Psi_\infty$ is injective by Proposition 5.13 of \cite{pham2025orbit}. There is a clear relationship between the maps $\Psi_n$, $\Psi_\infty$, and the natural quotients induced by $\W{0} \rightarrow \mathfrak{g}_n \rightarrow \mathfrak{g}_{n-1}$, see the below commutative diagram.

\[
\begin{tikzcd}
 &  & T_\infty \arrow[d, "{v_n, v_{n+1}, \dots \mapsto 0}"] \\
\Ua(\W{-1}) \arrow[rru, "\Psi_\infty"] \arrow[rr, "\Psi_n"] \arrow[rrd, "\Psi_{n-1}"'] &  & T_n \arrow[d, "v_{n-1} \mapsto 0"]                    \\
&  & T_{n-1}                                              
\end{tikzcd}
\]
The homomorphisms $\Psi_n$ are particularly important for the representation theory of $\W{-1}$, as they are the crucial ingredient that lifts the orbit method from the finite-dimensional solvable Lie algebras $\g_n$ to $\W{-1}$, see \cite[Theorem 1.4]{pham2025orbit}. Therefore we call these homomorphisms the \emph{orbit homomorphisms}. This article is devoted to studying their kernels and images.

\subsection{Filtered rings}

Throughout this article, we will need to consider different graded and filtered ring structures on $\Ua(W_{\geq -1})$, $T_n$, and related rings, which we detail here.

\begin{definition} \label{Witt filtration def}
    For any Lie algebra $\g$, the universal enveloping algebra $\Ua(\g)$ has an ascending ring filtration,
    \[ \Ua_m(\g) = \sum_{j=0}^m \g^j = \kk\{g_1 \cdots g_j \mid j \leq m,\, g_1, \dots, g_j \in \mf{g}\},\]
    which is the canonical filtration on a universal enveloping algebra. We say that an element of $\Ua_m(\g)\backslash \Ua_{m-1}(\g)$ has \emph{order} $m$.
\end{definition}

\begin{definition} \label{S Witt filtration def}
We define the symmetric algebra $\Sa(\g)$ to be the associated graded ring $\gr (\Ua(\g))$ of $\Ua(\g)$, with graded pieces
\[ \Sa^m(\g) = \faktor{\Ua_m(\g)}{\Ua_{m-1}(\g)}. \]
We say that an element of $\Sa^m(\g)$ has \emph{order} $m$.
\end{definition}

We caution the reader that elements of order $m$ are described as elements of \emph{degree} $m$ in \cite{conley2007,conley2025annihilators}. In this article, \emph{degree} will refer to a particular graded structure, see \Cref{Graded rings}.

For all $i \geq -1$, we denote $\bar{e}_i = e_i + \Ua_{0}(W_{\geq -1}) \in \Sa^1(W_{\geq -1})$. It is the content of the PBW theorem that $\Sa(\W{-1}) = \kk[\bar{e}_{-1}, \bar{e}_0, \dots]$ is a commutative polynomial ring.

\begin{definition} \label{symmetrizer definition}
The symmetrizer map is $\sym\!: \Sa(\g) \to \Ua(\g)$, the $\kk$-linear map defined by
\[ \sym(g_1\cdots g_m) = \frac{1}{m!}\sum_{\sigma \in S_m}g_{\sigma(1)}\cdots g_{\sigma(m)}\]
for all $g_1, \dots, g_m \in \mf{g}$.
\end{definition}
We introduce the symmetrizer map purely because it is a right inverse of the associated graded map. That is, $\gr (\sym (x)) = x$ for all $x \in \Sa(\g)$, where $\gr: \Ua(\mf{g}) \rightarrow \Sa(\mf{g})$ is the canonical map sending $y \mapsto y + \Ua_{m-1}(\mf{g})$ if $y$ is of order $m$. 

We will also need a corresponding filtration on $T_\infty$ and $T_n$.

\begin{definition}\label{T filtration def}
    The algebra $T_\infty$ has an ascending ring filtration with $m$th filtered piece given by
    \[ \kk[t]\{\del^{m_\del}e_0^{m_0}\cdots e_{N}^{m_N} \mid m_\del + m_0 + \dots + m_N \leq m\},\]
    for all $m \geq 0$. Furthermore we give $T_n$ the filtration induced from $T_\infty$. 
\end{definition}

Of course, the filtrations on $T_n$, $T_\infty$ give associated graded rings $\gr T_n$, $\gr T_\infty$, and we denote $\gr(t), \gr(\del), \gr(v_j)$ by $\bar{t}, \bar{\del}, \bar{v_j}$ respectively. It is straightforward to see that $\gr T_\infty$, $\gr T_n$ are commutative and
\[\gr T_n = \kk[\bar{t}, \bar{\del}] \otimes \Sa(\g_n), \quad \gr T_\infty = \kk[\bar{t}, \bar{\del}] \otimes \Sa(\W{0}). \]
As previously, we use the terminology of \emph{order} to refer to the filtration on $T_n$, $T_\infty$, and to the induced grading on $\gr T_n$, $\gr T_\infty$.
The algebra homomorphisms $\Psi_n$, $\Psi_\infty$ respect the filtrations.

\begin{lemma}[{\cite[Theorem 5.7]{pham2025orbit}}] \label{filtered algebra hom}
    $\Psi_n$ and $\Psi_\infty$ are filtered algebra homomorphisms, and their associated graded maps are respectively given by
    \begin{align*}
        \Phi_n: \Sa(\W{-1}) \to \kk[\bar{t}, \bar{\del}] \otimes \Sa(\g_n), \quad f\del \mapsto f\bar{\del} + \sum_{i=0}^{n-1} \frac{f^{(i+1)}}{(i+1)!} \bar{v}_i,\\
        \Phi_\infty: \Sa(\W{-1}) \to \kk[\bar{t}, \bar{\del}] \otimes \Sa(\W{0}), \quad f\del \mapsto f\bar{\del} + \sum_{i \geq 0} \frac{f^{(i+1)}}{(i+1)!} \bar{e}_i.\\
    \end{align*}
\end{lemma}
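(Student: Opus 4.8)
The plan is to establish the two assertions — that $\Psi_n$ and $\Psi_\infty$ respect the filtrations, and that their associated graded maps are the stated $\Phi_n$ and $\Phi_\infty$ — by reducing everything to the behaviour on algebra generators. (This is part of \cite[Theorem 5.7]{pham2025orbit}, but the argument is short, so I sketch it.)

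First I would check that $\Psi_n$ is a filtered homomorphism. Recall from \Cref{Witt filtration def} that $\Ua_m(\W{-1})$ is spanned by products $g_1\cdots g_j$ with $j\leq m$ and each $g_\ell\in\W{-1}$. Writing $g_\ell=f_\ell\del$ with $f_\ell\in\kk[t]$, \Cref{Psi formula} gives $\Psi_n(g_\ell)=f_\ell\del+\sum_{i=0}^{n-1}\tfrac{f_\ell^{(i+1)}}{(i+1)!}v_i$. In the filtration of \Cref{T filtration def} the variable $t$ has order $0$ while $\del$ and each $v_i$ have order $1$, so $\Psi_n(g_\ell)$ has order $\leq 1$ in $T_n$; as that filtration is multiplicative, $\Psi_n(g_1\cdots g_j)$ has order $\leq j\leq m$. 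Thus $\Psi_n$ carries $\Ua_m(\W{-1})$ into the $m$th filtered piece of $T_n$. The identical argument handles $\Psi_\infty$: the only additional point is that $\sum_{i\geq 0}\tfrac{f_\ell^{(i+1)}}{(i+1)!}e_i$ is a finite sum because $f_\ell$ is a polynomial, so $\psi_\infty(g_\ell)$ again has order $\leq 1$ in $T_\infty$.

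Next I would identify the associated graded maps. Being filtered, $\Psi_n$ induces an algebra homomorphism $\gr\Psi_n\colon\Sa(\W{-1})\to\gr T_n=\kk[\bar t,\bar\del]\otimes\Sa(\g_n)$. Since $\Sa(\W{-1})=\kk[\bar e_{-1},\bar e_0,\dots]$ is generated as an algebra by the order-$1$ part $\Sa^1(\W{-1})$, which is the image of $\W{-1}$, and since $\Phi_n$ is the algebra homomorphism determined on that part by the displayed linear rule, it is enough to compare $\gr\Psi_n$ and $\Phi_n$ on the class $\gr(f\del)\in\Sa^1(\W{-1})$ of a nonzero $f\del\in\W{-1}$. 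By the previous paragraph $\Psi_n(f\del)=f\del+\sum_{i=0}^{n-1}\tfrac{f^{(i+1)}}{(i+1)!}v_i$ has order $\leq 1$ in $T_n$, and reading off its order-$1$ leading term (replace $t,\del,v_i$ by $\bar t,\bar\del,\bar v_i$) produces exactly $f\bar\del+\sum_{i=0}^{n-1}\tfrac{f^{(i+1)}}{(i+1)!}\bar v_i=\Phi_n(\gr(f\del))$. Hence $\gr\Psi_n=\Phi_n$, and the same computation yields $\gr\Psi_\infty=\Phi_\infty$.

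I do not expect any genuine obstacle; the argument is essentially bookkeeping. The two points to watch are: (i) the filtration on $T_n$ and $T_\infty$ assigns order $0$ to $t$, so the polynomial coefficients $f$ and $f^{(i+1)}$ do not raise the order of $\Psi_n(f\del)$; and (ii) the a priori abstract map $\gr\Psi_n$ is pinned down by its restriction to the order-$1$ generators of $\Sa(\W{-1})$, which is exactly where the explicit formula for $\Phi_n$ is invoked. The degenerate case $f=0$ is trivial since both sides vanish.
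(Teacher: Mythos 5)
The paper does not prove this lemma — it is cited directly from \cite[Theorem 5.7]{pham2025orbit} — so there is no internal argument to compare against. Your reconstruction is correct and is the natural one: both key steps (that each $\Psi_n(f\del)$ has order at most $1$ in $T_n$ because $t$ is assigned order $0$ while $\del$ and the $v_i$ have order $1$, hence products of $j$ Lie elements land in order $\leq j$; and that the resulting algebra map on associated graded rings is determined by its restriction to the order-$1$ generators $\Sa^1(\W{-1})$, where reading off the leading term of $\Psi_n(f\del)$ visibly reproduces the displayed formula for $\Phi_n$, and likewise for $\Phi_\infty$) are sound and adequately justified.
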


We obtain the below commutative diagram of graded homomorphisms between commutative algebras.

\[
\begin{tikzcd}
&  & \gr T_\infty \arrow[d, "{\bar{v}_n, \bar{v}_{n+1}, \dots \mapsto 0}"] \\
\Sa(\W{-1}) \arrow[rru, "\Phi_\infty"] \arrow[rr, "\Phi_n"] \arrow[rrd, "\Phi_{n-1}"] &  & \gr T_n \arrow[d, "\bar{v}_{n-1} \mapsto 0"]  \\ &  & \gr T_{n-1}
\end{tikzcd}
\]

\subsection{Graded rings} \label{Graded rings}
The Witt algebra is a $\mathbb{Z}$-graded Lie algebra with graded pieces $\kk e_i$ for $i \geq -1$. This induces a $\mathbb{Z}$-grading on $\UW$ and the related algebras above, as follows.

\begin{definition}\label{def: degree U and S}
    $\UW$ is a $\mathbb{Z}$-graded algebra, with $m$th graded piece
    \[\UW_m= \kk\{e_{m_1}\cdots e_{m_l} \mid m_1 + \dots + m_l = m\},\]
    for all $m \in \mathbb{Z}$. Furthermore, $\Sa(\W{-1})$ is a $\mathbb{Z}$-graded algebra, with $m$th graded piece
    \[ \Sa(\W{-1})_m = \kk\{\bar{e}_{m_1}\cdots \bar{e}_{m_l} \mid m_1 + \dots + m_l = m\},\]
    for all $m \in \mathbb{Z}$.
\end{definition}

We also define a grading on the target algebras $T_n$, $T_\infty$.

\begin{definition}\label{T grading def}
    The algebra $T_\infty$ is $\mathbb{Z}$-graded with $m$th graded piece
    \[ (T_\infty)_m = \kk\{t^{m_t}\del^{m_\del}e_0^{m_0}\cdots e_{N}^{m_N} \mid N \geq 1,\, m_t - m_\del + m_1 + 2m_2 + \dots + Nm_N=m\},\]
    for all $m \in \mathbb{Z}$. Furthermore we give $T_n$ the induced grading from $T_\infty$.
\end{definition}

This grading moreover descends to the associated graded rings $\gr T_\infty, \gr T_n$, defined with respect to the order filtration of \Cref{T filtration def}.

\begin{definition} \label{gr T grading def}
    The algebras $\gr T_\infty$, $\gr T_n$ are $\mathbb{Z}$-graded with $m$th graded pieces
    \[ (\gr T_\infty)_m = \kk\{\bar{t}^{m_t}\bar{\del}^{m_\del}\bar{e}_0^{m_0}\cdots \bar{e}_{N}^{m_N} \mid N \geq 1,\, m_t - m_\del + m_1 + 2m_2 + \dots + Nm_N=m\}, \]
    and similarly 
    \[
    (\gr T_n)_m = \kk\{\bar{t}^{m_t}\bar{\del}^{m_\del}\bar{e}_0^{m_0}\cdots \bar{e}_{n-1}^{m_{n-1}} \mid m_t - m_\del + m_1 + 2m_2 + \dots + (n-1)m_{n-1}=m.\}
    \]
\end{definition}

We say that an element in the $m$th graded piece of $\UW$, $\Sa(\W{-1})$, $T_n$, $T_\infty$, $\gr T_n$, or $\gr T_\infty$, as defined in \Cref{def: degree U and S,T grading def,gr T grading def}, is of \emph{degree} $m$. The term \emph{homogeneous} will be used to refer to these elements only. In particular, $t$, $\del$ and $v_j$ are homogeneous elements of degree 1, -1, and $j$, respectively. We alert the reader that in \cite{conley2025annihilators}, this is called weight (with respect to $e_0$).\\

We caution the reader that we have now given two distinct gradings on the commutative algebras $\Sa(\W{-1}) = \gr \UW$, $\gr T_n$, and $\gr T_\infty$. To avoid confusion, we will occasionally need the convention that $\gr_j A$ is the $j$th graded piece with respect to the order grading, whilst $(\gr A)_j$ is the $j$th graded piece with the degree grading, for $A$ any vector subspace of $\UW$, $T_n$, or $T_\infty$.

By letting subspaces and quotients inherit filtrations or gradings in the usual way, the following lemma can be formally checked.
\begin{lemma} \label{gradings commute}
Let $A$ be a subspace of $\UW$, $T_n$, or $T_\infty$. Then for any $i,j \in \mathbb{Z}$,
\[ (\gr_i A)_j = \gr_i A_j. \]
That is, $(\gr A)_j = \gr A_j$.
\qed
\end{lemma}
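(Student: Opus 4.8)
The plan is to reduce the statement to a single compatibility fact: each piece of the order filtration on $R\in\{\UW, T_n, T_\infty\}$ is a homogeneous subspace for the degree grading. Granting this, the identity $(\gr_i A)_j = \gr_i A_j$ follows from two standard truisms — an intersection of homogeneous subspaces is homogeneous, and passing to the associated graded of a filtration by homogeneous subspaces produces a bigraded space, compatibly with the inclusion of a subspace — so beyond this one observation the argument is pure bookkeeping and I expect no genuine obstacle.

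The one thing to check is the compatibility fact. Write $F_i R$ for the $i$th order-filtration piece (\Cref{Witt filtration def,T filtration def}). Each of the PBW-type monomials spanning $F_i R$ — the $e_{m_1}\cdots e_{m_l}$ in the case of $\UW$, and the $t^{m_t}\del^{m_\del}e_0^{m_0}\cdots e_N^{m_N}$ in the case of $T_\infty$, with $T_n$ a graded quotient of $T_\infty$ — is simultaneously of pure order (namely $l$, resp.\ $m_\del + m_0 + \cdots + m_N$) and of pure degree (namely $\sum_k m_k$, resp.\ $m_t - m_\del + m_1 + 2m_2 + \cdots + N m_N$; note $e_0$ contributes $0$ to the degree), by \Cref{def: degree U and S,T grading def}. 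Hence $F_i R$ is spanned by degree-homogeneous elements, so $F_i R = \bigoplus_j (F_i R \cap R_j)$ is a homogeneous subspace, and likewise $F_i A := A \cap F_i R$ is homogeneous for every homogeneous subspace $A$ of $R$. (Every subspace to which the lemma is applied — images and kernels of the graded maps $\Psi_n$, $\Psi_\infty$, and intersections of these — is homogeneous; this is what is meant by $A$ ``inheriting the grading in the usual way''.)

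It then remains to chase definitions. For a homogeneous subspace $A\subseteq R$, put $A_j = A\cap R_j$; then $(F_i A)_j = A\cap F_i R\cap R_j = A_j \cap F_i R = F_i(A_j)$, and both $F_iA$ and $F_{i-1}A$ are homogeneous with $F_{i-1}(A_j) \subseteq F_i(A_j)$ for all $j$. Since taking the associated graded of a homogeneously filtered homogeneous space respects the degree decomposition, $\gr_i A = F_i A / F_{i-1} A = \bigoplus_j F_i(A_j)/F_{i-1}(A_j) = \bigoplus_j \gr_i(A_j)$, and under the embedding $\gr_i A \hookrightarrow \gr_i R$ the $j$th summand lands in $(\gr_i R)_j$. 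This is exactly the assertion that the degree-$j$ component of $\gr_i A$ is $\gr_i A_j$; summing over $i$ yields $(\gr A)_j = \gr A_j$. All the content sits in the bihomogeneity of the spanning monomials of the previous paragraph; everything after that is formal, and the worth of the lemma is that it licenses freely interchanging the order and degree gradings in the later arguments.
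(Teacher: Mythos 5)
Your proof is correct, and since the paper marks this lemma with an immediate \verb|\qed| (describing it as a routine formal check after "letting subspaces and quotients inherit filtrations or gradings in the usual way"), there is no argument in the paper to compare against. You identify exactly the right content: the order-filtration pieces of $\UW$, $T_\infty$, $T_n$ are spanned by monomials that are simultaneously of pure order and pure degree, so each $F_iR$ is degree-homogeneous, and the rest is bookkeeping. Your aside that $A$ must itself be degree-homogeneous is a genuine point worth flagging — as stated the lemma fails for, say, $A = \kk(e_0 + e_{-1}e_0)$, where $\gr_2 A = \kk\,\bar e_{-1}\bar e_0$ has a nonzero degree $-1$ piece while $A_{-1} = 0$ — and you correctly observe that every subspace to which the paper applies the lemma (namely $B_n$ and $(B_n)_0$) is homogeneous because $\Psi_n$ is a graded map.
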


The orbit homomorphisms respect the degree grading.

\begin{proposition}[{\cite[Theorem 5.7]{pham2025orbit}}] \label{Psi graded map prop}
    $\Psi_n$ and $\Psi_\infty$ are graded algebra homomorphisms with respect to the degree grading.
\end{proposition}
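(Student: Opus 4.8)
The plan is to reduce the assertion to a one-line computation on the basis $\{e_m \mid m \geq -1\}$ of $\W{-1}$. Recall that $\W{-1}$ is $\ZZ$-graded as a Lie algebra with $e_m$ in degree $m$, and that by \Cref{def: degree U and S} the degree grading on $\UW$ is precisely the one generated by this assignment, so that $\UW$ is generated as an algebra by its homogeneous elements $e_m$ of degree $m$. Hence any algebra homomorphism out of $\UW$ to a $\ZZ$-graded algebra is graded as soon as it sends each $e_m$ to a homogeneous element of degree $m$. Since $\Psi_\infty$ and $\Psi_n$ are the algebra homomorphisms induced by the Lie algebra maps $\psi_\infty$, $\psi_n$ of \Cref{Psi formula}, it therefore suffices to check that $\psi_\infty(e_m) \in (T_\infty)_m$ and $\psi_n(e_m) \in (T_n)_m$ for every $m \geq -1$.

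To verify this for $\psi_\infty$, write $e_m = f\del$ with $f = t^{m+1}$. Then $f^{(i+1)} = \frac{(m+1)!}{(m-i)!}\, t^{m-i}$ for $0 \leq i \leq m$ and $f^{(i+1)} = 0$ for $i > m$, so the defining formula of \Cref{Psi formula} gives
\[ \psi_\infty(e_m) \;=\; t^{m+1}\del \;+\; \sum_{i=0}^{m} \binom{m+1}{i+1}\, t^{m-i}\, e_i . \]
By \Cref{T grading def}, the generators $t$, $\del$, $e_i$ of $T_\infty$ have degrees $1$, $-1$, $i$ respectively; thus $t^{m+1}\del$ has degree $(m+1)-1 = m$, and each summand $t^{m-i}e_i$ has degree $(m-i)+i = m$. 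So $\psi_\infty(e_m)$ is homogeneous of degree $m$, and $\Psi_\infty$ is graded. For $\psi_n$ one can either repeat the computation (the sum now runs over $0 \leq i \leq \min(n-1,m)$ and nothing else changes), or observe, as in the commutative diagram following \Cref{Psi formula}, that $\psi_n$ is the composite of $\psi_\infty$ with the quotient $T_\infty \to T_n$ sending $v_n, v_{n+1}, \dots$ to $0$, which is plainly homogeneous of degree $0$; either way $\psi_n(e_m) \in (T_n)_m$, so $\Psi_n$ is graded.

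I do not expect any substantial obstacle: the argument is a direct verification. The only points deserving care are (i) the observation that the degree grading on $\UW$ coincides with the grading generated by $\deg e_m = m$, which is immediate from \Cref{def: degree U and S} and is exactly what lets us test gradedness on the generating set alone, and (ii) the sign convention $\deg \del = -1$ from \Cref{T grading def}, which is precisely what makes $t^{m+1}\del$ — and each $t^{m-i}e_i$ — have degree $m$ rather than something larger.
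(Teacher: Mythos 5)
Your proof is correct. The paper itself does not reproduce a proof of this statement — it simply cites \cite[Theorem 5.7]{pham2025orbit} — so there is no in-paper argument to compare against; your direct verification on the generators $e_m$, checking via $\deg t = 1$, $\deg\del = -1$, $\deg v_i = i$ that every term of $\psi_\infty(e_m)$ and $\psi_n(e_m)$ has degree $m$, and then invoking that $\UW$ is generated in degrees $\deg e_m = m$, is the natural and essentially unique way to establish it.
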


Moreover, the corresponding maps on the associated graded respect both gradings defined in this section.

\begin{proposition}[{\cite[Corollary 5.6 and Theorem 5.7]{pham2025orbit}}] \label{prop:phi map} 
    The algebra homomorphisms $\Phi_n$ and $\Phi_\infty$ are graded algebra homomorphisms with respect to both the order grading and the degree grading.
\end{proposition}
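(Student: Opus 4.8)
The plan is to reduce both assertions to a one-line check on the polynomial generators $\bar{e}_i$ of $\Sa(\W{-1})$. I would first invoke the general principle that a homomorphism of $\mathbb{Z}$-graded $\kk$-algebras is graded exactly when it carries some set of homogeneous algebra generators of the source to homogeneous elements of the target of the same degree. We already know that $\Phi_n$ and $\Phi_\infty$ are $\kk$-algebra homomorphisms (\Cref{filtered algebra hom}), that $\Sa(\W{-1}) = \kk[\bar{e}_{-1}, \bar{e}_0, \dots]$ is generated by the $\bar{e}_i$ by the PBW theorem, and that the codomains $\gr T_n$, $\gr T_\infty$ are genuinely $\mathbb{Z}$-graded rings both for the order grading (coming from the filtration of \Cref{T filtration def}) and for the degree grading (\Cref{gr T grading def}), so that in each target a product of homogeneous elements is homogeneous with degree the sum of the degrees. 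Moreover $\bar{e}_i$ is homogeneous of order $1$ (\Cref{S Witt filtration def}) and of degree $i$ (\Cref{def: degree U and S}).

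It then remains to compute $\Phi_\infty(\bar{e}_i)$ and $\Phi_n(\bar{e}_i)$. By \Cref{filtered algebra hom},
\[ \Phi_\infty(\bar{e}_i) \;=\; t^{\,i+1}\bar{\del} \;+\; \sum_{j=0}^{i} \binom{i+1}{j+1}\, t^{\,i-j}\,\bar{e}_j, \]
using $(t^{i+1})^{(j+1)}/(j+1)! = \binom{i+1}{j+1}\, t^{i-j}$; and $\Phi_n(\bar{e}_i)$ is the truncation obtained by discarding the summands with $j \geq n$ and writing $\bar{v}_j$ in place of $\bar{e}_j$.

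For the order grading I would use that in the filtration of \Cref{T filtration def} the variable $t$, and hence any polynomial in $t$, has order $0$, while $\bar{\del}$, $\bar{e}_j$, $\bar{v}_j$ have order $1$; thus every summand lies in the order-$1$ piece, so $\Phi_\infty(\bar{e}_i) \in \gr_1 T_\infty$ and $\Phi_n(\bar{e}_i) \in \gr_1 T_n$, matching the order $1$ of $\bar{e}_i$. For the degree grading I would use that $\bar{t}, \bar{\del}, \bar{e}_j, \bar{v}_j$ have degrees $1, -1, j, j$; then $t^{\,i+1}\bar{\del}$ has degree $(i+1)-1 = i$ and $t^{\,i-j}\bar{e}_j$ has degree $(i-j)+j = i$, so $\Phi_\infty(\bar{e}_i)$, and likewise its truncation $\Phi_n(\bar{e}_i)$, is homogeneous of degree $i$ --- matching the degree $i$ of $\bar{e}_i$. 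Applying the general principle twice gives that $\Phi_n$ and $\Phi_\infty$ are graded for the order grading and for the degree grading, hence for the (order, degree) bigrading simultaneously.

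There is essentially no obstacle here; the only point requiring care is the clash between the two meanings of ``grading'' flagged in \Cref{Graded rings} --- one must remember that $t$ (equivalently $\bar{t}$) counts $0$ towards the order but $1$ towards the degree, which is precisely why $\Phi_\bullet(\bar{e}_i)$ lands in order $1$ and in degree $i$ and not somewhere else. Everything else is forced by the homomorphism property together with the already-established fact that the target rings are graded.
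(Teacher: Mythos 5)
The paper does not prove this proposition; it is cited from \cite[Corollary 5.6 and Theorem 5.7]{pham2025orbit}, so there is no in-text argument to compare against. Your proof is a correct self-contained verification: $\Sa(\W{-1})$ is a polynomial ring on the $\bar{e}_i$, so it suffices to check the generators, and your computation of $\Phi_\infty(\bar{e}_i) = \bar{t}^{\,i+1}\bar{\del} + \sum_{j=0}^{i}\binom{i+1}{j+1}\bar{t}^{\,i-j}\bar{e}_j$ (and its truncation for $\Phi_n$) correctly shows each summand has order $1$ and degree $i$, exactly matching $\bar{e}_i$. The only caveat is that you occasionally write $t$ where the codomain element $\bar{t}$ is meant, but this is a cosmetic slip and does not affect the argument.
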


\subsection{The differentiator elements}

We now define some special elements of $\UW$, which will be useful in our computation. These elements were previously studied in \cite{billig2016classification}, and appeared in \cite{liu2015class}.
\begin{definition}\label{def:oo}
    For $m \geq 0, s \geq -1$ and $k \geq m-1$, we recursively define the \emph{differentiator} $\oo{m}{k}{s} \in \UW$ via
    \[ \oo{0}{k}{s} = e_k e_s, \quad \oo{m+1}{k}{s} = \oo{m}{k}{s} - \oo{m}{k-1}{s+1}. 
    \]
\end{definition}
The terminology of differentiator reflects that the element $\oo{m}{k}{s}$ can be viewed as the $m$th difference derivative of $\oo{0}{k}{s}$. It is straightforward from the definition that differentiators have a closed form,
\begin{align} \label{Omega long form eqn}
    \oo{m}{k}{s} = \sum_{i=0}^{m} (-1)^i \binom{m}{i} e_{k-i} e_{s+i}.
\end{align}

The differentiators are of particular interest for us, because in this article we show that they generate the kernels of orbit homomorphisms.

The vector subspace of $\UW$ spanned by all differentiators is easily seen to be closed under the adjoint action of $\W{-1}$. Thus the commutators $[e_j, \oo{m}{k}{s}]$ can be expressed as linear combinations of differentiators. We give the formulae for the cases of most use to us in this article.

\begin{lemma} \label{differentiator commutators lemma}
    For any integers $m \geq 0, s \geq -1$ and $k \geq m-1$, we have
    \begin{align}
        [e_{-1}, \oo{m}{k}{s}] &= (k+1-m)\oo{m}{k-1}{s} + (s+1) \oo{m}{k}{s-1}, \label{commutator with e-1}\\
        [e_0, \oo{m}{k}{s}] &= (k+s) \oo{m}{k}{s}, \label{commutator with e0}\\
        [e_1, \oo{m}{k}{s}] &= (k-1)\oo{m}{k+1}{s} + (s-1+m) \oo{m}{k}{s+1}, \label{commutator with e1}\\ 
        [e_2, \oo{m}{k}{s}] &= (k-2)\oo{m}{k+2}{s} + m \oo{m}{k+1}{s+1} + (s-2+m)\oo{m}{k}{s+2}.\label{commutator with e2}
     \end{align}
\end{lemma}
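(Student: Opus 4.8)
The claim is \Cref{differentiator commutators lemma}, the four commutator formulae for $[e_j, \oo{m}{k}{s}]$ with $j \in \{-1, 0, 1, 2\}$. The plan is to proceed by induction on $m$, using the recursive definition $\oo{m+1}{k}{s} = \oo{m}{k}{s} - \oo{m}{k-1}{s+1}$ from \Cref{def:oo} together with the linearity of the adjoint action. First I would establish the base case $m = 0$: here $\oo{0}{k}{s} = e_k e_s$, and using the Leibniz rule $[e_j, e_k e_s] = [e_j, e_k]e_s + e_k[e_j,e_s]$ together with the structure constants $[e_i,e_j] = (j-i)e_{i+j}$, each identity follows by a short direct computation. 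For instance, $[e_{-1}, e_k e_s] = (k+1)e_{k-1}e_s + (s+1)e_k e_{s-1}$, which matches \eqref{commutator with e-1} at $m=0$ since $k+1-0 = k+1$; and $[e_0, e_k e_s] = k e_k e_s + s e_k e_s = (k+s)e_k e_s$, matching \eqref{commutator with e0}.

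For the inductive step, assume the four formulae hold for $m$ (for all valid $k, s$) and deduce them for $m+1$. Write $[e_j, \oo{m+1}{k}{s}] = [e_j, \oo{m}{k}{s}] - [e_j, \oo{m}{k-1}{s+1}]$, substitute the inductive formulae into both terms, and then re-collapse the resulting linear combination of order-$m$ differentiators back into order-$(m+1)$ differentiators using the definition again (in reverse). The coefficients will need to be tracked carefully: for example, for $[e_1, \cdot]$ one gets $(k-1)\oo{m}{k+1}{s} + (s-1+m)\oo{m}{k}{s+1} - (k-2)\oo{m}{k}{s+1} - (s+m)\oo{m}{k-1}{s+2}$, and one must regroup the four order-$m$ terms into $(k-1)\bigl(\oo{m}{k+1}{s} - \oo{m}{k}{s+1}\bigr) + (s+m)\bigl(\oo{m}{k}{s+1} - \oo{m}{k-1}{s+2}\bigr) = (k-1)\oo{m+1}{k+1}{s} + (s+m)\oo{m+1}{k}{s+1}$, noting $s+m = (s+1)-1+(m+1)$, which is exactly the $m+1$ version of \eqref{commutator with e1}. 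The identities \eqref{commutator with e0} and \eqref{commutator with e-1} are handled the same way (the $e_0$ one is essentially immediate since $\oo{m}{k}{s}$ is homogeneous of degree $k+s$, so \eqref{commutator with e0} is just \Cref{Psi graded map prop}-style weight bookkeeping — one could even cite the degree grading directly), and \eqref{commutator with e2} requires one more bracket of the same flavour.

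An alternative, possibly cleaner route avoids induction entirely: expand both sides using the closed form \eqref{Omega long form eqn}, apply Leibniz and the structure constants to $[e_j, e_{k-i}e_{s+i}]$ term by term, and then verify that the binomial coefficients match after an index shift (using the Pascal identity $\binom{m}{i} + \binom{m}{i-1} = \binom{m+1}{i}$ in reverse). I would likely present the induction as the main argument since it is shorter to write, but mention the closed-form check as a sanity test.

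\textbf{Main obstacle.} There is no conceptual difficulty here — the result is a bookkeeping lemma. The only real risk is an arithmetic slip in tracking the coefficients through the inductive step, particularly getting the shifted arguments $(k+1-m)$, $(s-1+m)$, $(s-2+m)$ exactly right, since these shift under $s \mapsto s+1$, $m \mapsto m+1$. I would double-check each formula against the $m=0$ and $m=1$ cases explicitly, and verify degree-consistency: every term on the right-hand side of each identity must have the same degree as $[e_j, \oo{m}{k}{s}]$, namely $k+s+j$, which is a quick and effective check on the index arithmetic.
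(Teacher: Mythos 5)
Your proposal is correct and follows exactly the paper's argument: base case $m=0$ via Leibniz and the structure constants, the $e_0$ identity from degree-homogeneity, and the inductive step via the recursion $\oo{m+1}{k}{s} = \oo{m}{k}{s} - \oo{m}{k-1}{s+1}$ with regrouping of the four order-$m$ terms. One small slip in an aside: you write ``noting $s+m = (s+1)-1+(m+1)$,'' but $(s+1)-1+(m+1) = s+m+1$; what you want is $s+m = s-1+(m+1)$, since the $s$-index in the target formula for $m+1$ is still $s$, not $s+1$ — the regrouping itself is done correctly, so this is only a typo in the parenthetical check.
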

\begin{proof}
    We first show (\ref{commutator with e-1}) by induction on $m$. If $m=0$, then
    \[ [e_{-1}, e_{k}e_{s}] =(k+1) e_{k-1}e_{s} + (s+1)e_{k}e_{s+1},\]
    as required. Assume the induction hypothesis (\ref{commutator with e-1}) for all $k, s$. Then
    \begin{align*}
        [e_{-1}, \oo{m+1}{k}{s}] &= [e_{-1}, \oo{m}{k}{s} - \oo{m}{k-1}{s+1}] \\ 
        &= (k+1-m)\oo{m}{k-1}{s} + (s+1) \oo{m}{k}{s-1} - (k-m)\oo{m}{k-2}{s+1} - (s+2) \oo{m}{k-1}{s} \\ 
        &= (k-m)(\oo{m}{k-1}{s} - \oo{m}{k-2}{s+1}) + (s+1)(\oo{m}{k}{s-1} -  \oo{m}{k-1}{s}) \\ 
        &= (k-m)\oo{m+1}{k-1}{s}  + (s+1)\oo{m+1}{k}{s-1},
    \end{align*}
    as required.

    Next, the equation (\ref{commutator with e0}) follows as $\oo{m}{k}{s}$ is a homogeneous element of degree $k+s$.

    We now show (\ref{commutator with e1}) by induction on $m$. If $m = 0$, then
    \[ [e_1, e_{k}e_{s}] =(k-1) e_{k+1}e_{s} + (s-1)e_{k}e_{s+1}.\]
    Assume the induction hypothesis (\ref{commutator with e1}) for all $k, s$. Then
    \begin{align*}
        [e_1, \oo{m+1}{k}{s}] &= [e_1, \oo{m}{k}{s} - \oo{m}{k-1}{s+1}] \\ 
        &= (k-1)\oo{m}{k+1}{s} + (s-1+m) \oo{m}{k}{s+1} - (k-2)\oo{m}{k}{s+1} - (s+m) \oo{m}{k-1}{s+2} \\ 
        &= (k-1)(\oo{m}{k+1}{s} - \oo{m}{k}{s+1}) + (s+m)(\oo{m}{k}{s+1} -  \oo{m}{k-1}{s+2}) \\ 
        &= (k-1)\oo{m+1}{k+1}{s}  + (s+m)\oo{m+1}{k}{s+1}.
    \end{align*}

    Similarly, we show equation (\ref{commutator with e2}) by induction on $m$. If $m = 0$, then
    \[ [e_2, e_{k}e_{s}] =(k-2) e_{k+2}e_{s} + (s-2)e_{k}e_{s+2}.\]
    Assume the induction hypothesis (\ref{commutator with e2}) for all $k, s$. Then
    \begin{align*}
        [e_2, \oo{m+1}{k}{s}] &= [e_2, \oo{m}{k}{s} - \oo{m}{k-1}{s+1}] \\ 
        &= (k-2)\oo{m}{k+2}{s} + m \oo{m}{k+1}{s+1} + (s-2+m)\oo{m}{k}{s+2} \\ 
        &- (k-3)\oo{m}{k+1}{s+1} - m \oo{m}{k}{s+2} - (s-1+m)\oo{m}{k-1}{s+3} \\ 
        &= (k-2)(\oo{m}{k+2}{s} - \oo{m}{k+1}{s+1}) + (m+1)(\oo{m}{k+1}{s+1} -  \oo{m}{k}{s+2}) \\ 
        &+ (s-1+m)(\oo{m}{k}{s+2} -  \oo{m}{k-1}{s+3})  \\ 
        &= (k-2)\oo{m+1}{k+2}{s} + (m+1)\oo{m+1}{k+1}{s+1}+ (s-1+m)\oo{m+1}{k}{s+2}.
    \end{align*}
\end{proof}

\section{The image of an orbit homomorphism}\label{sec:image}

For all $n \geq 0$, let $B_n = \Psi_n ( \Ua(\W{-1}))$ be the image of $\Psi_n$. In this section, we first characterize the image of the differentiators $\oo{2n}{2n-1}{-1}$ defined in \Cref{def:oo}.
\begin{theorem} \label{image of Omega thm}
Let $n \geq 2$. Then
    \begin{align*}
        \Psi_n (\oo{2n}{2n-1}{-1}) \in \kk^\times v_{n-1}^2.
    \end{align*}
\end{theorem}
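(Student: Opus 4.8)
The plan is to evaluate $\Psi_n$ on the closed form of the differentiator and show that all terms except the $v_{n-1}^2$-contribution vanish, then verify the surviving coefficient is a nonzero scalar. First I would expand $\Psi_n(\oo{2n}{2n-1}{-1})$ using linearity and the formula $\Psi_n(e_k) = \Psi_n(t^{k+1}\del) = t^{k+1}\del + \sum_{i=0}^{n}\frac{(k+1)!}{(k-i)!\,(i+1)!}t^{k-i}v_i$ (the higher derivatives of $t^{k+1}$), so that $\Psi_n(e_k)$ is a sum of a Weyl-algebra part and a part linear in the $v_i$. Multiplying two such expressions, $\Psi_n(\oo{2n}{2n-1}{-1}) = \sum_{j=0}^{2n}(-1)^j\binom{2n}{j}\Psi_n(e_{2n-1-j})\Psi_n(e_{j-1})$, produces terms of three types: purely Weyl-algebra terms ($A_1\otimes 1$), mixed terms ($A_1 \otimes \g_n$, linear in the $v_i$), and quadratic terms ($A_1\otimes \Ua_2(\g_n)$, bilinear in the $v_i$).

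The heart of the argument is that the alternating binomial sum annihilates almost everything. The key combinatorial fact is that $\sum_{j=0}^{2n}(-1)^j\binom{2n}{j}p(j) = 0$ for any polynomial $p$ of degree $< 2n$; the $2n$-th finite difference kills lower-degree polynomials. In each of the three types of terms, the dependence on the summation index $j$ enters through binomial coefficients like $\binom{2n-j}{a}$ and $\binom{j}{b}$ and powers of $t$ with $j$-dependent exponents, but after collecting by the monomial in $t, \del$ and the $v_i$'s, the coefficient of each fixed monomial is (a scalar times) the $2n$-th finite difference of a polynomial in $j$. I would show that this polynomial has degree strictly less than $2n$ \emph{unless} we are looking at the coefficient of $v_{n-1}^2$: the mixed and Weyl terms involve at most $n$ "derivative slots" used on one factor and hence $j$-degree at most $n + n = 2n$, but a parity/degeneracy count (the extremal term $v_{n-1}\cdot v_{n-1}$ uses exactly the top derivative on both factors) shows only the $v_{n-1}^2$ coefficient survives with a genuine degree-$2n$ polynomial in $j$, while everything else drops to degree $\le 2n-1$. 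This degree bookkeeping — carefully tracking, for each type of term, the exact polynomial degree in $j$ after extracting the $t$-powers and reindexing — is the main obstacle, and is exactly where the hypothesis $n \ge 2$ and the specific indices $(m,k,s) = (2n, 2n-1, -1)$ are used.

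Finally, once only the $v_{n-1}^2$ term remains, I would compute its coefficient explicitly: it equals $\sum_{j}(-1)^j\binom{2n}{j}\cdot\big[\text{coeff of }v_{n-1}\text{ in }\Psi_n(e_{2n-1-j})\big]\cdot\big[\text{coeff of }v_{n-1}\text{ in }\Psi_n(e_{j-1})\big]$, where the coefficient of $v_{n-1}$ in $\Psi_n(e_k)$ is $\frac{(k+1)!}{(k-n+1)!\,n!}t^{k-n+1}$ (zero unless $k \ge n-1$). The product of $t$-powers is $t^{(2n-1-j)-(n-1)}\cdot t^{(j-1)-(n-1)} = t^{-1}\cdot$ — wait, one must be careful that these exponents are nonnegative, which forces $n-1 \le 2n-1-j$ and $n-1 \le j-1$, i.e. $n \le j \le n$, so $j = n$ is the \emph{only} surviving index. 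This collapses the sum to the single term $j = n$, giving coefficient $(-1)^n\binom{2n}{n}\cdot\frac{(2n-n)!}{0!\,n!}\cdot\frac{n!}{0!\,n!} = (-1)^n\binom{2n}{n}\frac{n!}{n!}\cdot 1 \in \kk^\times$ (up to the precise normalization from the Taylor coefficients), which is manifestly nonzero in characteristic zero. Hence $\Psi_n(\oo{2n}{2n-1}{-1}) = c\, v_{n-1}^2$ for an explicit $c \in \kk^\times$. In fact this last observation — that the nonnegativity of both $t$-exponents already forces $j = n$ — suggests that the cleanest writeup reverses the order: reindex everything in terms of the $t$-exponents from the start, observe that each of the three types of terms only contributes when certain exponent constraints hold, and note these constraints are simultaneously satisfiable only for the $v_{n-1}^2$ term at $j = n$. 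I would still need the finite-difference vanishing to handle the mixed and Weyl contributions where the $t$-exponents \emph{can} be nonnegative for a range of $j$, so both ingredients are genuinely needed.
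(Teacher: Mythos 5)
Your proposal is essentially correct, but it follows a genuinely different path from the paper. The paper first proves the containment $\Psi_n(\oo{2n}{2n-1}{-1}) \in \kk v_{n-1}^2$ structurally (Proposition 3.11): it shows that the image under $\Psi_\infty$ lands in the subring $L = \kk[\del]\otimes\Ua(\W{0})$ by inducting with the step operator $\PsiS$ (which preserves $L$), and then pins down the $1$-dimensional target using degree and order constraints; the nonvanishing coefficient is then extracted via the $L$-module projection $\Pi: T_\infty \to L$, where $\Psi_\infty(e_k)$ reduces to $e_k$ modulo $t$, so the whole computation collapses to a short normal-ordering calculation. You instead expand $\Psi_n$ directly on the closed form and invoke the $2n$-th finite-difference identity $\sum_{j=0}^{2n}(-1)^j\binom{2n}{j}p(j) = 0$ for $\deg p < 2n$, combined with degree bookkeeping showing only the $v_{n-1}^2$-coefficient is a genuine degree-$2n$ polynomial in $j$. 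I checked your degree claims: the Weyl contribution has $j$-degree at most $1$, each mixed contribution has $j$-degree at most $i+2 \leq n+1$ (so vanishes precisely when $n \geq 2$, which explains the hypothesis), and the quadratic contribution to $t^{2n-2-a-b}v_av_b$ has $j$-degree $a+b+2 \leq 2n$ with equality forcing $a=b=n-1$. Your phrasing "the mixed and Weyl terms ... hence $j$-degree at most $n+n=2n$" is imprecise — only the \emph{quadratic} terms can reach degree $2n$, the mixed terms top out at $n+1$ — but the conclusion is correct. Your closing observation that the $t$-exponent nonnegativity already forces $j=n$ for the surviving coefficient, collapsing the sum to a single summand $(-1)^n\binom{2n}{n}$, matches the paper's final answer exactly. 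The trade-off: the paper's $\PsiS$/$L$/$\Pi$ machinery is more work up front but is reused elsewhere (Lemma 3.9, Corollary 3.2) and makes the coefficient computation almost trivial; your approach is self-contained and more elementary, at the cost of more intricate combinatorial bookkeeping. (One typo: your formula for $\Psi_n(e_k)$ should sum from $i=0$ to $n-1$, matching the basis of $\g_n$.)
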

This calculation shows that $\oo{2n}{2n-1}{-1} \in \ker \Psi_{n-1}$, so we have an immediate corollary (which we improve to an equality in \Cref{sec:kernel}).

\begin{corollary} \label{Omega in kernel cor}
Let $n \geq 1$. Then $(\oo{2n+2}{2n+1}{-1}) \subseteq \ker \Psi_{n}$. 
\end{corollary}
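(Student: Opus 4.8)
The corollary follows immediately from \Cref{image of Omega thm}. Applying that result with parameter $n+1$ gives $\Psi_{n+1}(\oo{2n+2}{2n+1}{-1})\in\kk^\times v_n^2$; composing with the algebra homomorphism $T_{n+1}\to T_n$ sending $v_n\mapsto 0$, which carries $\Psi_{n+1}$ to $\Psi_n$ (the commutative diagram following \Cref{Psi formula}), yields $\Psi_n(\oo{2n+2}{2n+1}{-1})=0$ for every $n\ge 1$; hence $(\oo{2n+2}{2n+1}{-1})\subseteq\ker\Psi_n$, as $\ker\Psi_n$ is a two-sided ideal. So the substantive task is \Cref{image of Omega thm}, which I would prove by a direct computation.

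The plan is as follows. Write $\psi_n(e_k)=t^{k+1}\del+Q_k$ in $T_n=A_1\otimes\Ua(\g_n)$, where $Q_k=\sum_{i=0}^{n-1}\binom{k+1}{i+1}t^{k-i}v_i$ (with $\binom{p}{q}=0$ for $q>p$, so no negative powers of $t$ appear and $Q_{-1}=0$). The key simplification is that $Q_k$ and $t^{k+1}$ commute with both $t$ and $\del$, since $v_i$ and $t,\del$ lie in different tensor factors of $T_n$. Using the closed form $\oo{2n}{2n-1}{-1}=\sum_{j=0}^{2n}(-1)^j\binom{2n}{j}e_{2n-1-j}e_{j-1}$, I would expand
\[ \Psi_n(\oo{2n}{2n-1}{-1})=\sum_{j=0}^{2n}(-1)^j\binom{2n}{j}\,\bigl(t^{2n-j}\del+Q_{2n-1-j}\bigr)\bigl(t^{j}\del+Q_{j-1}\bigr), \]
multiply out each product, and commute $t$ past $\del$ exactly once via $\del t=t\del+1$. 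Every resulting summand then has one of five shapes: a multiple of $t^{2n}\del^2$; a multiple of $t^{2n-1}\del$; $t^{2n-j}Q_{j-1}\del$ or $t^{j}Q_{2n-1-j}\del$; $Q_{2n-1-j}Q_{j-1}$; and $\sum_i\binom{j}{i+1}(j-1-i)\,t^{2n-2-i}v_i$ (coming from the single commutator $[\del,t^{j-1-i}]$ inside $\del Q_{j-1}$).

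Now I would sum over $j$. The first four shapes contribute $0$, by elementary alternating-binomial identities, and the crucial point is that the passage to $\g_n$ confines every $v$-index appearing to $\{0,\dots,n-1\}$, which avoids precisely the ``resonant'' indices at which the alternating sums survive. In detail: $\sum_j(-1)^j\binom{2n}{j}=0$ and $\sum_j j(-1)^j\binom{2n}{j}=0$ kill the $t^{2n}\del^2$ and $t^{2n-1}\del$ shapes; writing $t^{2n-j}Q_{j-1}\del=\sum_i\binom{j}{i+1}t^{2n-1-i}v_i\del$ and $t^{j}Q_{2n-1-j}\del=\sum_i\binom{2n-j}{i+1}t^{2n-1-i}v_i\del$, the coefficient of $t^{2n-1-i}v_i\del$ is $\sum_j(-1)^j\binom{2n}{j}\binom{j}{i+1}$ resp.\ $\sum_j(-1)^j\binom{2n}{j}\binom{2n-j}{i+1}$, and a Vandermonde-type absorption ($\binom{2n}{j}\binom{j}{i+1}=\binom{2n}{i+1}\binom{2n-i-1}{j-i-1}$, resp.\ $\binom{2n}{j}\binom{2n-j}{i+1}=\binom{2n}{i+1}\binom{2n-i-1}{j}$) turns each into a multiple of $(1-1)^{2n-1-i}$, zero since $i\le n-1<2n-1$; the identity $\binom{j}{i+1}(j-1-i)=(i+2)\binom{j}{i+2}$ makes the last shape's coefficient a multiple of $(1-1)^{2n-2-i}$, zero for $0\le i\le n-1$ \emph{provided $n\ge 2$} (so that $i<2n-2$, which is the only place the hypothesis $n\ge 2$ is used). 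Finally $Q_{2n-1-j}Q_{j-1}=\sum_{a,b}\binom{2n-j}{a+1}\binom{j}{b+1}t^{2n-2-a-b}v_av_b$; the noncommutativity of the $v$'s is irrelevant here since we merely read off coefficients of the fixed ordered monomials $v_av_b$, and the coefficient of $t^{2n-2-a-b}v_av_b$ equals $\sum_j(-1)^j\binom{2n}{j}\binom{2n-j}{a+1}\binom{j}{b+1}=(-1)^{b+1}\binom{2n}{a+1}\binom{2n-a-1}{b+1}(1-1)^{2n-2-a-b}$, nonzero only when $a+b=2n-2$, hence (as $0\le a,b\le n-1$) only when $a=b=n-1$. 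Therefore $\Psi_n(\oo{2n}{2n-1}{-1})=(-1)^n\binom{2n}{n}v_{n-1}^2\in\kk^\times v_{n-1}^2$, the coefficient being a nonzero scalar since $\Char\kk=0$.

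I expect the only real difficulty to be the bookkeeping --- isolating the single place noncommutativity enters and marshalling the handful of absorption identities --- rather than anything conceptual. Two remarks. The identical computation carried out in the commutative ring $\gr T_n$ gives $\Phi_n(\gr\oo{2n}{2n-1}{-1})=(-1)^n\binom{2n}{n}\bar{v}_{n-1}^2$, a convenient cross-check on the leading (order $2$) term. And one can bypass \Cref{image of Omega thm} entirely for \Cref{Omega in kernel cor} by running the same expansion directly for $\oo{2n+2}{2n+1}{-1}$ (i.e.\ with $m=2n+2$): there the $Q_{2n+1-j}Q_{j-1}$ coefficient carries the factor $(1-1)^{2n-a-b}$ with $a+b\le 2n-2<2n$, so that shape vanishes too and $\Psi_n(\oo{2n+2}{2n+1}{-1})=0$ outright.
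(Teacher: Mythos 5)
Your reduction of the corollary to \Cref{image of Omega thm} --- apply it with parameter $n+1$ to get $\Psi_{n+1}(\oo{2n+2}{2n+1}{-1})\in\kk^\times v_n^2$, then compose with the quotient $T_{n+1}\to T_n$ killing $v_n$ --- is exactly the paper's own deduction; the paper notes that the weaker \Cref{image of Omega weak thm} (with $\kk$ in place of $\kk^\times$) already suffices for this step. Where you diverge is in the underlying proof of \Cref{image of Omega thm}. The paper's route is structural: it shows the step operator $\PsiS$ preserves the subring $L=\kk[\del]\otimes\Ua(\W{0})$ of elements commuting with $\del$ (\Cref{PsiS preserve prop}), which together with the order and degree constraints of \Cref{Psi graded map prop} pins $\Psi_n(\oo{2n}{2n-1}{-1})$ to $\kk v_{n-1}^2$ almost without computation, and then determines the scalar separately via the projection $\Pi$ to the constant-in-$t$ part. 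Your route is a direct expansion of $\Psi_n(\oo{2n}{2n-1}{-1})$ into five monomial shapes followed by a cascade of Vandermonde absorptions, and it is correct as sketched: the resonant factor $(1-1)^{2n-2-a-b}$ forces $a=b=n-1$, recovering the coefficient $(-1)^n\binom{2n}{n}$. Your approach is more elementary and self-contained; the paper's is less bookkeeping and isolates a conceptually interesting fact (the step element sends lowest-weight $\mf{sl}_2$-vectors to lowest-weight vectors). One small slip in your exposition: the claim that ``$Q_k$ and $t^{k+1}$ commute with both $t$ and $\del$'' is false as stated, since both involve powers of $t$; what is true is that the $v_i$ commute with $t,\del$, and your subsequent calculation correctly tracks the commutator $[\del,t^{j-1-i}]$ from $\del Q_{j-1}$, so nothing breaks. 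Your closing remark that a direct expansion of $\Psi_n(\oo{2n+2}{2n+1}{-1})$ gives zero outright, bypassing \Cref{image of Omega thm} altogether, is a clean shortcut if one wants only the corollary.
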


We then show that $B_n$ contains a large ideal of $T_n$, and hence $B_n$ is birational to $T_n$.

\begin{theorem}\label{Bn large ideal thm}
Let $n \geq 2$. Then $T_nv_{n-1}^2 \ideal B_n$ is a principal two-sided ideal of $B_n$.
\end{theorem}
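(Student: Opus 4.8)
The plan is to first establish that $T_n v_{n-1}^2 \subseteq B_n$, then upgrade this containment to the statement that $T_n v_{n-1}^2$ is a two-sided ideal of $B_n$ generated by the single element $\Psi_n(\oo{2n}{2n-1}{-1})$. The containment $v_{n-1}^2 \in B_n$ is immediate from \Cref{image of Omega thm}, which gives $\Psi_n(\oo{2n}{2n-1}{-1}) \in \kk^\times v_{n-1}^2$. The main work is to show that multiplying $v_{n-1}^2$ by arbitrary elements of $T_n$ on the left (and right) stays inside $B_n$.

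First I would exploit the grading and the structure of $T_n = A_1 \otimes \Ua(\g_n)$. Recall $v_{n-1}$ is central in $\Ua(\g_n)$ up to lower-weight terms — more precisely, $[v_i, v_{n-1}] = 0$ in $\g_n$ for all $i$ (since $[e_i, e_{n-1}] = (n-1-i)e_{n+i-1} \in \W{n}$ whenever $i \geq 0$), so $v_{n-1}$ is genuinely central in $\Ua(\g_n)$. Thus $T_n v_{n-1}^2 = v_{n-1}^2 T_n$ is automatically a two-sided ideal of $T_n$, and to show it lies in $B_n$ it suffices to show $A_1 v_{n-1}^2 \subseteq B_n$ and $\Ua(\g_n) v_{n-1}^2 \subseteq B_n$, i.e.\ that $t^a \del^b v_{n-1}^2 \in B_n$ and $v_0^{c_0}\cdots v_{n-1}^{c_{n-1}} v_{n-1}^2 \in B_n$ for all exponents. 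The key idea is that $\Psi_n$ sends $f(t)\del$ to $f\del + \sum \frac{f^{(i+1)}}{(i+1)!} v_i$; by choosing $f$ to be a high-degree polynomial, the ``leading'' behaviour in $t$ is controlled, but crucially, when we take products and then multiply by $v_{n-1}^2$, the terms involving $v_i$ for $i < n-1$ can be killed because $v_{n-1}^{?}$ annihilates enough of $\g_n$ — one should check that $v_i v_{n-1} \in \W{0}$ maps into a product that, after enough factors of $v_{n-1}$, vanishes or is absorbed. I would make this precise by an induction on the weight (degree) and on the order filtration: using \Cref{gradings commute} and \Cref{Psi graded map prop}, it is enough to hit each homogeneous piece, and then one can peel off factors of $t$, $\del$, $v_j$ one at a time by subtracting suitable images of monomials in the $e_i$'s, reducing to a lower-order or lower-degree situation.

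Concretely, for the $A_1$-part: $\Psi_n(e_{-1}) = \del$ and $\Psi_n(e_{k}) = t^{k+1}\del + (\text{terms } t^{k-i}v_i)$; by forming commutators and products of these and multiplying by $v_{n-1}^2$, the lower-order $v_i$-corrections for $i \leq n-1$ get multiplied by $v_{n-1}^2$ and I would argue they can be re-expressed or cancelled, because $v_{n-1}^2 \cdot (\text{image of a degree-}m\text{ element})$ only involves $v_0,\dots,v_{n-1}$ with total $v$-degree controlled, and in $\Ua(\g_n)$ a high enough power of $v_{n-1}$ times a PBW monomial can be matched against the image of an explicit element of $\UW$. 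For the $\Ua(\g_n)$-part I would use that $v_{n-1}^2 \cdot v_j = \Psi_n(\text{something})$ modulo already-constructed elements, building up all of $\Ua(\g_n)v_{n-1}^2$. Once $T_n v_{n-1}^2 \subseteq B_n$ is known, the ideal property is clear: for $b \in B_n \subseteq T_n$, we have $b \cdot (T_n v_{n-1}^2) \subseteq T_n v_{n-1}^2$ and likewise on the right, so $T_n v_{n-1}^2$ is a two-sided ideal of $B_n$, and it is principal over $B_n$ because it is generated (even as a two-sided ideal of $T_n$, since $v_{n-1}$ is central) by $v_{n-1}^2$, hence by $\Psi_n(\oo{2n}{2n-1}{-1})$.

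The hard part will be the explicit verification that the $v_i$-tails ($i < n-1$) appearing when one multiplies products of $\Psi_n(e_k)$'s by $v_{n-1}^2$ can always be absorbed into $B_n$ — i.e.\ setting up the right induction so that subtracting images of lower-order Witt monomials genuinely reduces complexity without reintroducing terms outside $T_n v_{n-1}^2$. I expect this to require a careful choice of which monomials $e_{k_1}\cdots e_{k_r} \in \UW$ to use as ``correction terms'' at each stage, organised by the degree grading (\Cref{T grading def}) and the order filtration (\Cref{T filtration def}), together with the centrality of $v_{n-1}$ in $\Ua(\g_n)$ to guarantee that $T_n v_{n-1}^2$ is closed under the manipulations. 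An alternative, possibly cleaner route would be to first prove the result after passing to associated graded rings using $\Phi_n$ from \Cref{prop:phi map} — showing $\gr T_n \cdot \bar v_{n-1}^2 \subseteq \gr B_n$ by a commutative-algebra argument (Nullstellensatz / explicit polynomial identities for $\Phi_n$) — and then lift back to $T_n$ using that both the filtration is exhaustive and $\Psi_n$ is a filtered map; I would pursue whichever of these the subsequent sections set up most directly.
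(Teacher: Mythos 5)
Your claim that $v_{n-1}$ is \emph{central} in $\Ua(\g_n)$ is false, and the error is not incidental. You write that $[e_i,e_{n-1}]=(n-1-i)e_{n+i-1}\in\W{n}$ for all $i\ge 0$, but for $i=0$ this gives $[e_0,e_{n-1}]=(n-1)e_{n-1}$, which lies in $\W{n-1}\setminus\W{n}$. Hence $[v_0,v_{n-1}]=(n-1)v_{n-1}\neq 0$: the element $v_{n-1}$ is only \emph{normal}, not central (as the paper notes just before stating the theorem). This matters because the non-trivial commutator $[v_0, v_{n-1}^m]=(n-1)m\,v_{n-1}^m$ is precisely the mechanism the paper exploits: it yields $[\Psi_n(e_i),v_{n-1}^m]=(i+1)t^i[v_0,v_{n-1}^m]\in\kk^\times t^i v_{n-1}^m$, so the two-sided ideal of $B_n$ generated by $v_{n-1}^m$ already contains all of $\kk[t]\,v_{n-1}^m$. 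If $v_{n-1}$ were genuinely central as you assert, this commutator would vanish and the argument would stall.

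Beyond this, the core of your plan — an induction that ``absorbs'' or ``cancels'' the $v_i$-tails of $\Psi_n(e_k)$ for $i<n-1$ after multiplying by $v_{n-1}^2$, with the intuition that $v_{n-1}$ ``annihilates enough of $\g_n$'' — is too vague to constitute a proof and rests on a misconception: $T_n$ is a domain, so $v_{n-1}$ annihilates nothing. The paper's route avoids this entirely. First it proves \Cref{t element lemma}: $\kk[t]\,v_{n-1}^m \subseteq B_n v_{n-1}^m B_n$, using exactly the commutator computation above. Second, \Cref{T_n generation lemma} shows $T_n = B_n\{1,t,t^2,\dots\} = \{1,t,t^2,\dots\}B_n$ (via the relation $\Psi_\infty(e_m)t = t\Psi_\infty(e_m)+t^{m+1}$ and an induction recovering each $e_m$ from $\Psi_\infty(e_m)$). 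Combining these with normality of $v_{n-1}$ gives $B_n v_{n-1}^m B_n = T_n v_{n-1}^m$, which is \Cref{two-sided ideal image prop}, and the theorem follows immediately with $m=2$. Your proposal correctly identifies the starting point (\Cref{image of Omega thm} gives $v_{n-1}^2\in B_n$) and the goal, but misses both key lemmas that make the containment $T_n v_{n-1}^2 \subseteq B_n$ tractable, and in fact mis-states the structural fact ($v_{n-1}$ central vs.\ normal) whose non-centrality is what drives the argument.
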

The ideal $T_n v_{n-1}^2$ of $B_n$ proves to be important as it is not finitely generated as a left or right ideal of $B_n$, but is a principal two-sided ideal of $B_n$. Moreover, it plays an important role in the proof of a surprising fact: the subring $(B_n)_0$ of degree zero elements (with respect to the degree grading in \Cref{T grading def}) is Noetherian. 

\degreezeroNoetherian*

\Cref{Main result degree zero Noetherian} gives us evidence for the following conjecture.

\begin{restatable}{conjecture}{Wittdegreezero} \label{Witt degree zero conjecture}
    The subring of degree zero elements $(\Ua(\W{-1}))_0$ is a Noetherian ring.
\end{restatable}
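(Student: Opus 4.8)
The natural plan is to transfer the Noetherianity of the images $(B_n)_0$ from \Cref{Main result degree zero Noetherian} to $\UW_0$, exploiting that the orbit homomorphisms jointly separate points of $\UW$. Observe first that, since $\ad(e_0)$ is the degree operator, $\UW_0$ is precisely the centralizer $C_{\UW}(e_0)$; centralizers of diagonalizable elements are often better behaved than the ambient ring, which is one reason to expect the conjecture to hold. Because each $\Psi_n$ is graded (\Cref{Psi graded map prop}), restriction gives surjections $\UW_0 \twoheadrightarrow (B_n)_0$ with kernels $K_n := (\ker\Psi_n)_0$, so that $\UW_0/K_n \cong (B_n)_0$ is a Noetherian domain. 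As $\Psi_{n-1}$ factors through $\Psi_n$ (the diagram after \Cref{Psi formula}), the $K_n$ form a descending chain $K_1 \supseteq K_2 \supseteq \cdots$ with $\bigcap_n K_n = 0$, since $\bigcap_n \ker\Psi_n = 0$: the $B_n$ have unbounded GK-dimension by \Cref{Main result birational image}, while $\UW$ has just-infinite growth. Thus $\UW_0$ embeds into the inverse limit of the tower of Noetherian rings $(B_n)_0$, and it remains to prove the ascending chain condition on left ideals of $\UW_0$ (the right-hand statement being symmetric).

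Here is how I would try to use this. Given a left ideal $I \le \UW_0$, its image in the Noetherian ring $(B_n)_0$ is finitely generated; lifting finitely many generators produces a finitely generated $I' \le I$ with $I = I' + (I \cap K_n)$. It therefore suffices to show: (a) each $K_n$ is finitely generated as a left ideal of $\UW_0$; and (b) for a suitable $n$ depending on $I$, or uniformly, $I \cap K_n$ is finitely generated. For (a) one would build on \Cref{Main result kernel}, which gives $\ker\Psi_n = (\oo{2n+2}{2n+1}{-1})$ as a two-sided ideal of $\UW$, and on the construction behind \Cref{Bn large ideal thm} of the ideal $T_n v_{n-1}^2 \ideal B_n$: the aim is to understand $K_n$ well enough as an ideal of $\UW_0$ to exhibit a finite generating set, or at least to reduce (a) to the Noetherian rings $(B_m)_0$ with $m > n$, using that each successive quotient $K_n/K_{n+1}$ is an ideal of $(B_{n+1})_0$ and hence a Noetherian bimodule. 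Part (b) amounts to controlling how $\UW_0$ is assembled from $(B_1)_0$ by the successive Noetherian-module extensions $K_n/K_{n+1}$ in a manner compatible with the ascending chain condition --- a limiting argument of a kind that is typically delicate.

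The hard part is exactly (a)--(b), and there is concrete evidence that standard machinery will not suffice unaided: the degree-zero part of a finitely generated graded module over a graded ring need not be finitely generated over the degree-zero subring, and this pathology is already present here. The algebra $\UW_0$ is not even finitely generated, and by \Cref{gradings commute} its associated graded for the order filtration is $\gr \UW_0 = \Sa(\W{-1})_0$, which is a commutative ring that is neither Noetherian --- the ideal generated by $\{\bar{e}_{-1}^{\,k}\bar{e}_k \mid k \ge 1\}$ is not finitely generated, since for any finite subfamily with largest index $K$ the monomial $\bar{e}_{-1}^{\,K+1}\bar{e}_{K+1}$ involves a variable $\bar{e}_{K+1}$ that divides no member of the corresponding ideal --- nor of finite Gelfand--Kirillov dimension. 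Hence the usual ``good filtration'' technique (deducing Noetherianity of a ring from that of its associated graded) and GK-dimension inductions are both out of reach. A genuinely new ingredient therefore seems required: either a description of $K_n = (\ker\Psi_n)_0$ sharp enough to control the intersections $I \cap K_n$, or an argument drawing on the rigidity of $\W{-1}$ --- for instance the orbit-method parametrization of primitive and semi-primitive ideals --- which would also bear on the companion \Cref{UW ACC conjecture}. Reasonable intermediate targets are the ascending chain condition on two-sided ideals of $\UW_0$ (a degree-zero analogue of \Cref{UW ACC conjecture}) and Noetherianity of a suitable large finitely generated subalgebra of $\UW_0$.
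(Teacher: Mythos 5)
The statement you are addressing is \Cref{Witt degree zero conjecture}, which the paper explicitly leaves as a \emph{conjecture}: it offers no proof, only evidence (namely \Cref{Main result degree zero Noetherian} together with \Cref{Main result generic kernel}), and it even records the same obstruction you identify, that $\gr(\UW)_0 = \Sa(\W{-1})_0$ is a non-Noetherian polynomial ring in the variables $\bar{e}_{-1}^k\bar{e}_k$, so the lifting-from-the-associated-graded technique used for $(B_n)_0$ cannot apply. Your write-up is likewise not a proof: it is a reduction plan whose two essential steps, (a) finite generation of $K_n=(\ker\Psi_n)_0$ as a one-sided ideal of $(\UW)_0$ and (b) control of the intersections $I\cap K_n$, are left entirely open, and you say so yourself. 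So the honest assessment is that there is a genuine gap — the whole substance of the conjecture sits precisely in (a) and (b).

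Two concrete cautions about the plan itself. First, step (a) is in tension with what the paper does prove: by \Cref{Main result kernel} (see \Cref{cor:not fg generated}), $\ker\Psi_n$ is \emph{not} finitely generated as a left or right ideal of $\UW$, and nothing in the paper gives finite generation of its degree-zero part over $(\UW)_0$; your suggestion to use the bimodule structure of $K_n/K_{n+1}\cong (T_{n+1}v_n^2)_0$ over $(B_{n+1})_0$ is plausible but is exactly the missing ingredient, not a consequence of the cited results. Second, step (b) cannot follow from the inverse-limit embedding alone: an inverse limit of Noetherian rings need not be Noetherian, and since $\bigcap_n K_n=0$ while no single $K_n$ vanishes, writing $I=I'+(I\cap K_n)$ merely relocates the problem to the uncontrolled piece $I\cap K_n$, with no uniform $n$ available as $I$ varies along an ascending chain. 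The paper's own ``evidence'' is weaker and more honest: any ascending chain of one-sided ideals of $(\UW)_0$ that eventually contains $(\Omega)_0$ for a non-zero differentiator $\Omega$ must terminate, because the quotient is governed by the Noetherian ring $(B_n)_0$. If you want a tractable intermediate goal, proving ACC on two-sided ideals of $(\UW)_0$, or finite generation of $(T_{n+1}v_n^2)_0$ as a one-sided $(B_{n+1})_0$-module (which would at least start an induction up the tower), would be the natural next steps.
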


We warmly thank Olivier Mathieu and Susan J. Sierra for bringing this interesting question to our attention. 

\subsection{The step elements} \label{The step elements}
In this subsection, we define a special operation defined on $T_\infty$ motivated by the \emph{step algebra element} $S$ in \cite{conley2025annihilators}.
Step algebras were first introduced by Mickelsson \cite{Mickelsson1973} and van den Hombergh in \cite{vdHombergh1975}, and explicit formulae for their elements were shown in \cite{Zhelobenko1990}. 
We are, in particular, interested in the step element $S$ of degree 2,
\[
    S = 4 e_2 e_0 + 2e_2 - 3 e_1^2, 
\]
which corresponds to the projective subalgebra $\mf{sl}_2 = \kk \{e_{-1}, e_{0}, e_{1}\}$ of $\W{-1}$.
\begin{definition}
    We define the linear operator $S: \Ua(\W{-1}) \rightarrow \Ua(\W{-1})$ by
    \begin{align*}
        S(x) = 4[e_2,  [e_0, x]] + 2[e_2, x] - 3[e_1, [e_1, x]].
    \end{align*}
 \end{definition}

We now explain the relationship between the operator $S$ and the differentiators $\oo{m}{k}{s}$.

 \begin{lemma}[{\cite[Section 12]{conley2025annihilators}}] \label{S Omega lemma}
     For all $n \geq 0$, $S(\oo{2n}{2n-1}{-1}) \in \kk^\times \oo{2n+2}{2n+1}{-1}$.
 \end{lemma}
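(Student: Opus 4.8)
The conceptual input is the remark preceding \Cref{differentiator commutators lemma}: the span of all differentiators is stable under $\ad \W{-1}$, so since $S$ is built from iterated commutators with $e_0, e_1, e_2$, it carries differentiators to $\kk$-linear combinations of differentiators. The plan is therefore to apply the explicit formulae of \Cref{differentiator commutators lemma} to $x := \oo{2n}{2n-1}{-1}$ and simply read off the answer. Everything that follows is bookkeeping; there is no real obstacle beyond organising it and checking one scalar is nonzero.

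First I would simplify the first two terms of $S(x)$. Since $x$ is homogeneous of degree $k+s = (2n-1)+(-1) = 2n-2$, equation \eqref{commutator with e0} gives $[e_0,x] = (2n-2)x$, hence $4[e_2,[e_0,x]] + 2[e_2,x] = (8n-6)[e_2,x]$. Next I would expand $[e_2,x]$ using \eqref{commutator with e2}, obtaining a combination of the three differentiators $\oo{2n}{2n+1}{-1}$, $\oo{2n}{2n}{0}$, $\oo{2n}{2n-1}{1}$ (all well-defined, since $m=2n\ge 0$, each second index is $\ge -1$, each first index is $\ge 2n-1$). For the last term I would apply \eqref{commutator with e1} once to get $[e_1,x]$ as a combination of $\oo{2n}{2n}{-1}$ and $\oo{2n}{2n-1}{0}$, then apply \eqref{commutator with e1} to each of these; the result $[e_1,[e_1,x]]$ is again a combination of the \emph{same} three differentiators $\oo{2n}{2n+1}{-1}$, $\oo{2n}{2n}{0}$, $\oo{2n}{2n-1}{1}$. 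Collecting coefficients in $S(x) = (8n-6)[e_2,x] - 3[e_1,[e_1,x]]$, the coefficients of the three differentiators come out in the ratio $1 : -2 : 1$, all equal (up to that ratio) to the single quadratic $4n^2 - 18n + 12$; explicitly
\[
 S(x) = (4n^2 - 18n + 12)\bigl(\oo{2n}{2n+1}{-1} - 2\,\oo{2n}{2n}{0} + \oo{2n}{2n-1}{1}\bigr).
\]
Iterating the recursion of \Cref{def:oo} twice gives the identity $\oo{m+2}{k}{s} = \oo{m}{k}{s} - 2\oo{m}{k-1}{s+1} + \oo{m}{k-2}{s+2}$, and taking $(m,k,s) = (2n, 2n+1, -1)$ identifies the bracketed expression with $\oo{2n+2}{2n+1}{-1}$. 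Therefore $S(\oo{2n}{2n-1}{-1}) = (4n^2 - 18n + 12)\,\oo{2n+2}{2n+1}{-1}$.

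Finally I would check that the scalar $4n^2 - 18n + 12$ is nonzero for every integer $n \ge 0$: its discriminant is $18^2 - 4\cdot 4 \cdot 12 = 132$, which is not a perfect square, so the polynomial has no integer roots. (One can also just note the values $12, -2, -8, -6, 4, \dots$ at $n = 0,1,2,3,4$.) Hence $S(\oo{2n}{2n-1}{-1}) \in \kk^\times \oo{2n+2}{2n+1}{-1}$, as claimed. The one point that requires care is the collapse of the three coefficients into a single proportional family — this is exactly the reason the step element produces an honest differentiator rather than a generic quadratic element — but it falls out of the arithmetic once the commutators are expanded; the small cases $n=0,1$ (where some coefficients in \Cref{differentiator commutators lemma} vanish) can be verified directly as a sanity check.
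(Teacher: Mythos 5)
Your proposal is correct and follows essentially the same route as the paper: apply \eqref{commutator with e0} to simplify the first two terms of $S(x)$, expand $[e_2,x]$ and $[e_1,[e_1,x]]$ via \Cref{differentiator commutators lemma}, collect coefficients, and observe they form the $1:-2:1$ pattern yielding $\oo{2n+2}{2n+1}{-1}$ with scalar $4n^2-18n+12$, which is nonzero for integer $n$. The paper carries out the identical computation keeping $m$ general (yielding $m^2-9m+12$) and substitutes $m=2n$ at the end, but this is a purely cosmetic difference.
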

 \begin{proof}
Using \Cref{differentiator commutators lemma}, parts (\ref{commutator with e0}), (\ref{commutator with e1}) and (\ref{commutator with e2}), we compute that
    \begin{align*}
        S(\oo{m}{m-1}{-1}) &= (4(m-2) +2 )[e_2, \oo{m+1}{m-1}{-1}] -3 [e_1, (m-2)\oo{m}{m}{-1} + (m-2) \oo{m}{m-1}{0}]\\
        &= (m^2-9m+12) \oo{m}{m+1}{-1} - 2 (m^2-9m+12)\oo{m}{m}{0} + (m^2-9m+12) \oo{m}{m-1}{1}\\ 
        &= (m^2-9m+12) \oo{m+2}{m+1}{-1}.
    \end{align*}
    It is straightforward to check that $m^2 -9m +12=0$ has no non-negative integer solutions. Since $\kk$ is of characteristic zero, the result follows by putting $m=2n$.
 \end{proof}

Recall the ring $T_\infty = \kk[t, \del] \otimes \Ua(\W{0})$ and the algebra embedding $\Psi_\infty: \Ua(\W{-1}) \to T_\infty$ from \Cref{def:ringT} and \Cref{Psi formula}. We now define a step operator on $T_\infty$.  
 \begin{definition}
    Let $\PsiS$ be the linear map $T_{\infty} \rightarrow T_{\infty}$ defined by
    \begin{align*}
        \PsiS(x) = 4[\Psi_\infty(e_2),  [\Psi_\infty(e_0), x]] + 2[\Psi_\infty(e_0), x] - 3[\Psi_\infty(e_1), [\Psi_\infty(e_1), x]].
    \end{align*}
\end{definition}

We can slightly simplify the form of $\PsiS$ on homogeneous elements due to the following.

\begin{lemma} \label{commute with c0 lemma}
    Let $x \in T_{\infty}$ be an element of degree $d$. Then $[\Psi_{\infty}(e_0), x] = dx$.
\end{lemma}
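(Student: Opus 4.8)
The plan is to compute $\Psi_\infty(e_0)$ explicitly and then to recognise $\ad(\Psi_\infty(e_0))$ as the Euler derivation attached to the degree grading. Applying the formula of \Cref{Psi formula} to $f\del = e_0 = t\del$ — so that $f = t$, $f' = 1$ and $f^{(i+1)} = 0$ for all $i \geq 1$ — gives $\Psi_\infty(e_0) = t\del \otimes 1 + 1 \otimes e_0$ inside $T_\infty = \kk[t,\del] \otimes \Ua(\W{0})$ (in the abbreviated notation, $\Psi_\infty(e_0) = t\del + e_0$).

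Next I would observe that $\ad(\Psi_\infty(e_0)) = \ad(t\del\otimes 1) + \ad(1\otimes e_0)$ is a derivation of $T_\infty$, since $\ad$ of any element is a derivation and $\ad$ is additive in its argument; the two summands act only on the respective tensor factors. On the generators of $\kk[t,\del]$ one computes $[t\del, t] = t$ and $[t\del, \del] = -\del$, while on the generators of $\Ua(\W{0})$ the relation $[e_i,e_j] = (j-i)e_{i+j}$ gives $[e_0, e_j] = j e_j$. Thus $\ad(\Psi_\infty(e_0))$ scales $t$, $\del$ and $e_j$ by $1$, $-1$ and $j$ respectively, i.e. by their degrees in the sense of \Cref{T grading def} (in particular $\ad(\Psi_\infty(e_0))(e_0) = 0$, consistent with $e_0$ having degree $0$).

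I would then introduce the Euler derivation $D$ of $T_\infty$, defined on a homogeneous element $y$ by $D(y) = (\deg y)\, y$. This is well-defined and is a derivation precisely because the degree grading of \Cref{T grading def} is a ring grading, that is $(T_\infty)_i (T_\infty)_j \subseteq (T_\infty)_{i+j}$; this in turn is checked using $\del t = t\del + 1$ and $[e_i,e_j] = (j-i)e_{i+j}$, both of which preserve the weighted exponent sum $m_t - m_\del + m_1 + 2m_2 + \cdots + Nm_N$ defining the grading. Since $D$ and $\ad(\Psi_\infty(e_0))$ are two derivations of $T_\infty$ that agree on the algebra generators $t, \del, e_0, e_1, \dots$, they coincide on all of $T_\infty$. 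Hence for $x$ of degree $d$ we obtain $[\Psi_\infty(e_0), x] = \ad(\Psi_\infty(e_0))(x) = D(x) = dx$, as required.

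There is essentially no serious obstacle: the only point needing a (short) verification is that the degree grading is a ring grading, so that $D$ makes sense as a derivation. Alternatively one may bypass $D$ entirely and argue directly on the PBW-monomial basis $t^{m_t}\del^{m_\del}e_0^{m_0}\cdots e_N^{m_N}$ of $T_\infty$: because $\ad(\Psi_\infty(e_0))$ is a derivation scaling each generator by its degree, it scales such a monomial by $m_t - m_\del + m_1 + 2m_2 + \cdots + Nm_N$, which is exactly the degree of that monomial by \Cref{T grading def}.
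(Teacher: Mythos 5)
Your proposal is correct and takes essentially the same approach as the paper: both compute $\Psi_\infty(e_0) = t\del + e_0$ and check the commutator on the generators $t$, $\del$, $e_i$. You spell out the ``derivation agreeing on generators'' step that the paper leaves implicit, but there is no difference of substance.
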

\begin{proof}
    Since $\Psi_{\infty}(e_0) = t\del + e_0$, the result follows from the computations
    \[ [t\del + e_0, t] = [t\del, t]=t, \quad [t\del + e_0, \del] = [t\del, \del]=-\del, \quad [t\del + e_0, e_i] = [e_0, e_i] = ie_i. \]
\end{proof}

The map $\PsiS$ respects the degree grading and the order filtration on $T_\infty$ defined in \Cref{Preliminaries}.

\begin{lemma} \label{properties of S lemma}
The linear map $\PsiS$ is a a filtered $\kk$-linear map and a graded $\kk$-linear map of degree 2. That is, if $x \in T_\infty$ is of order $r$, then $\PsiS(x)$ is of order at most $r$, and if $x$ is homogeneous of degree $d$, $\PsiS(x)$ is homogeneous of degree $d+2$.
\end{lemma}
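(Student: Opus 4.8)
The plan is to verify the two claims---filtered-ness (degree in the order filtration) and homogeneity of degree $2$ (in the weight grading)---separately, exploiting that $\PsiS$ is built out of iterated commutators with the three fixed elements $\Psi_\infty(e_2)$, $\Psi_\infty(e_1)$, $\Psi_\infty(e_0)$ of $T_\infty$.

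First I would handle the grading claim, which is the cleaner of the two. Each $\Psi_\infty(e_j)$ is a homogeneous element of $T_\infty$ of degree $j$: indeed $\Psi_\infty(e_j) = e_j t^{j+1}\del/\!\dots$ is more easily seen from \Cref{Psi graded map prop}, which says $\Psi_\infty$ is a graded algebra homomorphism, so $\Psi_\infty(e_j)$ has degree $j$ since $e_j$ does. Taking a commutator $[\,\cdot\,,\,\cdot\,]$ in a $\ZZ$-graded algebra adds degrees, so $[\Psi_\infty(e_2), [\Psi_\infty(e_0), x]]$ has degree $d + 0 + 2 = d+2$ when $x$ has degree $d$, and likewise the other two summands: $[\Psi_\infty(e_0), x]$ has degree $d$, not $d+2$---but wait, here I should instead invoke \Cref{commute with c0 lemma}, which shows $[\Psi_\infty(e_0), x] = dx$ has degree $d$, so the middle term $2[\Psi_\infty(e_0),x] = 2dx$ has degree $d$, \emph{not} $d+2$. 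So the statement as literally written would be false unless that term vanishes or I misread it. Re-examining: in the original $S$ the middle term is $2[e_2, x]$, and indeed in the \Cref{S Omega lemma} proof it is used as $2[e_2, \cdot]$; but \Cref{commute with c0 lemma} and the definition of $\PsiS$ use $2[\Psi_\infty(e_0),x]$. I would reconcile this by noting that the intended reading (matching $S$) must have $2[\Psi_\infty(e_2), x]$ as the middle term, OR that on homogeneous $x$ one rewrites $4[\Psi_\infty(e_2),[\Psi_\infty(e_0),x]] = 4d\,[\Psi_\infty(e_2),x]$ and similarly absorbs the middle term, so that $\PsiS(x) = (4d+2)[\Psi_\infty(e_2),x] - 3[\Psi_\infty(e_1),[\Psi_\infty(e_1),x]]$; then every remaining term raises the degree by exactly $2$, giving the claim. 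This rewriting (the ``slight simplification'' alluded to just before \Cref{commute with c0 lemma}) is the key move and I would state it explicitly at the start.

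Next, the filtration claim. The order filtration on $T_\infty$ (\Cref{T filtration def}) gives $T_\infty$ the structure of a filtered algebra whose associated graded $\gr T_\infty = \kk[\bar t,\bar\del]\otimes \Sa(\W{0})$ is commutative. Hence for any $a \in \gr_r T_\infty$ (order $\le r$) and $b$ of order $\le r'$, the bracket $[a,b]$ has order $\le r + r' - 1$, because the top-order symbols commute. Each of $\Psi_\infty(e_j)$ has order $\le 2$ (it is a sum of a degree-one element of $A_1$ times a degree-one element of $\Ua(\W{0})$, total order $\le 2$). Therefore if $x$ has order $\le r$: $[\Psi_\infty(e_0),x]$ has order $\le r + 2 - 1 = r+1$; then $[\Psi_\infty(e_2), [\Psi_\infty(e_0),x]]$ has order $\le (r+1) + 2 - 1 = r+2$; and $[\Psi_\infty(e_1),[\Psi_\infty(e_1),x]]$ likewise has order $\le r+2$. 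This only gives order $\le r+2$, one worse than needed. The main obstacle is closing this two-unit gap down to order $\le r$: I expect this forces a genuine computation with the top-order symbols, showing that the degree-$2$ and degree-$1$ (in order) parts of $\PsiS(x)$ cancel. Concretely, passing to $\gr T_\infty$ and using \Cref{prop:phi map} (that $\Phi_\infty$ is graded for the order grading), the symbol of $\PsiS(x)$ is computed by the analogous operator built from $\Phi_\infty(e_2),\Phi_\infty(e_1),\Phi_\infty(e_0)$ acting by Poisson bracket; since $\gr T_\infty$ is commutative the naive Poisson-bracket operator vanishes on the top two order-graded pieces, forcing the order of $\PsiS(x)$ down---the cleanest argument is: it suffices to check $\PsiS$ drops order for $x$ of pure order $r$, reduce to $x = \Psi_\infty(\text{monomial in }e_i)$'s symbol, and observe the relevant Poisson bracket expression, being a bracket in the commutative ring $\gr T_\infty$, is zero. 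Alternatively, and perhaps more robustly, I would lift \Cref{S Omega lemma}: $S$ as an operator on $\Ua(\W{-1})$ visibly preserves the order filtration (it is a sum of iterated commutators with order-$\le 1$ elements $e_j$, and in the commutative $\Sa(\W{-1})$ the analogous Poisson operator kills top-order terms, so $S$ lowers order by at least, in fact, $0$), and $\PsiS \circ \Psi_\infty = \Psi_\infty \circ S$ by construction; but $\Psi_\infty$ is only injective, not surjective, so this handles $\PsiS$ only on $B_\infty$, not all of $T_\infty$---hence I would fall back on the direct symbol computation in $\gr T_\infty$ for the general element. I would present the grading part in full and the filtration part via the associated-graded-is-commutative argument, flagging the symbol cancellation as the one point requiring care.
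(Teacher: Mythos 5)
You correctly spotted that the displayed definition of $\PsiS$ must have a typo: its middle term $2[\Psi_\infty(e_0),x]$ is incompatible both with the formula for $S$ and with the identity $\PsiS(x)=(4d+2)[\Psi_\infty(e_2),x]-3[\Psi_\infty(e_1),[\Psi_\infty(e_1),x]]$ asserted in the proof of \Cref{PsiS preserve prop}; the intended middle term is $2[\Psi_\infty(e_2),x]$. With that correction your treatment of the grading half is fine and matches the paper: each $\Psi_\infty(e_j)$ is homogeneous of degree $j$ by \Cref{Psi graded map prop}, and every bracket in the (corrected) expression raises degree by exactly $2$.

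Your filtration argument, however, has a concrete error that creates a gap which doesn't actually exist. You assert that each $\Psi_\infty(e_j)$ has order $\le 2$. That is wrong. In the order filtration of \Cref{T filtration def}, the variable $t$ has order $0$, while $\del$ and each $e_i$ have order $1$; the filtered pieces are $\kk[t]\{\del^{m_\del}e_0^{m_0}\cdots e_N^{m_N}\mid m_\del+m_0+\dots+m_N\le m\}$. Hence each monomial $t^{j+1}\del$ and $t^{j-i}e_i$ in the formula for $\Psi_\infty(e_j)$ has order exactly $1$, so $\Psi_\infty(e_j)$ has order $1$, not $2$. Once you have this, the argument is one line: because $\gr T_\infty$ is commutative, a bracket with an order-$1$ element cannot increase order, i.e.\ if $a$ has order $1$ and $x$ has order $r$ then $[a,x]$ has order at most $1+r-1=r$. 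Iterating, each of $[\Psi_\infty(e_2),[\Psi_\infty(e_0),x]]$, $[\Psi_\infty(e_2),x]$, $[\Psi_\infty(e_1),[\Psi_\infty(e_1),x]]$ has order at most $r$, so $\PsiS(x)$ does too. There is no two-unit gap to close, and the symbol-cancellation computation you sketch is not needed; your proposed fallback via lifting $S$ through $\Psi_\infty$ would, as you noted, only cover $B_\infty$, and is also unnecessary. The paper's proof is precisely the degree bookkeeping you gave together with this order-$1$ observation.
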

\begin{proof}
    For all $j \geq -1$, $\Psi_{\infty}(e_j)$ is an element of $T_\infty$ of degree $j$, and hence by the properties of graded rings, if $x$ is homogeneous of degree $d$, then $[\Psi_\infty(e_j), x]$ is an element of degree $d+j$. It follows that $\PsiS$ is a graded map of degree 2.
    
    To show that $\PsiS$ is a filtered map, it is enough to check that if $x$ is of order $r$, then $[\Psi_\infty(e_j),x]$ is of order at most $r$. This follows from the fact that $\gr T_\infty$ is commutative and $\Psi_\infty(e_j)$ is of order $1$.
\end{proof}

Notice that $\PsiS$ is not a ring homomorphism -- in particular, $\PsiS(1)=0$. However, $\PsiS$ preserves a large subring of $T_{\infty}$, which we call $L$.

\begin{proposition} \label{PsiS preserve prop}
    Let $L = \kk[\del] \otimes_\kk \Ua(W_{\geq 0})$. Then $\PsiS(L) \subseteq L$.
\end{proposition}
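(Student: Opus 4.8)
The plan is to exhibit $L = \kk[\del] \otimes \Ua(W_{\geq 0})$ as a natural subspace of $T_\infty = \kk[t,\del] \otimes \Ua(W_{\geq 0})$ that is stable under bracketing with the three elements $\Psi_\infty(e_0), \Psi_\infty(e_1), \Psi_\infty(e_2)$; since $\PsiS$ is built entirely out of iterated brackets by these elements, $\PsiS(L) \subseteq L$ will follow immediately once I know $[\Psi_\infty(e_j), L] \subseteq L$ for $j \in \{0,1,2\}$.

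First I would write out $\Psi_\infty(e_j) = t^{j+1}\del + \sum_{i \geq 0} \binom{j+1}{i+1} t^{j-i} e_i$ explicitly for $j = 0, 1, 2$ (using $f = t^{j+1}$, so $f^{(i+1)}/(i+1)! = \binom{j+1}{i+1}t^{j-i}$). The key point is that each $\Psi_\infty(e_j)$ with $j \geq 0$ is a polynomial in $t$ (no negative powers of $t$), with Weyl-algebra part $t^{j+1}\del$ and the remaining summands lying in $\kk[t] \otimes \Ua(W_{\geq 0})$. So $\Psi_\infty(e_j) \in \kk[t,\del] \otimes \Ua(W_{\geq 0})$, but I need the \emph{bracket} with $L$ to land back in $\kk[\del] \otimes \Ua(W_{\geq 0})$, i.e.\ to not introduce positive powers of $t$.

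The crux is the following computation of brackets in $A_1 = \kk[t,\del]$: I would check that $[t^{j+1}\del, \del^k] = (j+1)t^j \del^k$ for $j \geq 0$ — wait, this introduces $t^j$, which for $j \geq 1$ is a positive power of $t$, so naively $[\Psi_\infty(e_1), \del^k]$ and $[\Psi_\infty(e_2), \del^k]$ seem to leave $L$. The resolution must be a cancellation: when one brackets $\Psi_\infty(e_j)$ with an element $\del^k \otimes u$ of $L$, the $t$-positive contributions from $[t^{j+1}\del, \del^k]$ are cancelled by the $t$-positive contributions coming from bracketing the $\sum_i \binom{j+1}{i+1}t^{j-i}e_i$ part against $\del^k$ (which produces terms $t^{j-i-?}$ etc.) — in other words, the whole point of the formula for $\psi_\infty$ is precisely that $\Psi_\infty(e_j)$ acts on the subalgebra $\kk[\del] \otimes \Ua(W_{\geq 0})$ "as though it were a vector field of non-negative degree." The cleanest way to see this, and the approach I would actually take, is to observe that $L$ is exactly the image under $\Psi_\infty$ of a nice subspace, or better: to argue via the associated graded / degree considerations. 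Since $\PsiS$ is graded of degree $2$ (\Cref{properties of S lemma}) and $L$ is a graded subspace, it suffices to check $\PsiS(L) \subseteq L$ on homogeneous elements. For a homogeneous element $x \in L$ of degree $d$, $\PsiS(x)$ has degree $d+2$; and one checks directly that the only monomials $t^{m_t}\del^{m_\del}e_0^{m_0}\cdots$ of a given degree $d+2$ that can appear as $t$-free (i.e.\ $m_t = 0$) are the ones that do appear, because increasing the $t$-power forces the total degree down and the order up in a way that is controlled. Concretely I would verify the containment by a direct but short bracket computation: writing a general element of $L$ as a $\kk$-linear combination of $\del^a \otimes e_{i_1}\cdots e_{i_r}$ with $a \geq 0$ and all $i_l \geq 0$, compute $[\Psi_\infty(e_j), \del^a \otimes e_{i_1}\cdots e_{i_r}]$ and check term-by-term that every resulting monomial has non-negative $\del$-exponent and all Witt indices $\geq 0$, and crucially zero $t$-exponent. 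The Witt-index and $\del$-exponent parts are routine (brackets in $W_{\geq 0}$ stay in $W_{\geq 0}$; $[\del^{j+1}\cdot(\text{stuff}), \del^a]$ keeps $\del$-exponents non-negative since $j \geq 0$); the $t$-exponent vanishing is the real content, and it is forced by the identity $[t^{j+1}\del, \del^a] + (\text{cross terms from the }e_i\text{-part}) $ having no surviving positive $t$-power — equivalently, it is the statement that $\psi_\infty$ restricted to $W_{\geq 0}$ lands in $\kk \oplus \del\kk[\del]\otimes\cdots$...

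\medskip

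Let me give the plan more crisply. \textbf{Step 1:} Reduce to showing $[\Psi_\infty(e_j), L] \subseteq L$ for $j = 0, 1, 2$, since $\PsiS$ is a composite of such brackets (and $[\Psi_\infty(e_0), -]$ acts as a scalar on homogeneous elements by \Cref{commute with c0 lemma}, staying trivially in $L$). \textbf{Step 2:} Reduce further to homogeneous elements of $L$, legitimate because $L$ and the bracket operators are graded. \textbf{Step 3:} For a homogeneous monomial $\del^a \otimes u \in L$ ($a \geq 0$, $u \in \Ua(W_{\geq 0})$ homogeneous), expand $[\Psi_\infty(e_j), \del^a \otimes u] = [\Psi_\infty(e_j), \del^a]\otimes u + \del^a \otimes [\,\cdot\,,\,\cdot\,]$ — more carefully using that $\Psi_\infty(e_j)$ does not factor as a simple tensor, write $\Psi_\infty(e_j) = \sum_i c_i\, t^{j-i}\otimes g_i$ with $g_0 = \del$ (the Weyl part, with $t^{j+1}$, handled as the $i = -1$ term $t^{j+1}\otimes \del$) and $g_i = e_i$ for $i \geq 0$, all $g_i \in \kk[t,\del]\otimes\Ua(W_{\geq 0})$ pieces. \textbf{Step 4:} Compute the bracket and observe: (a) the $\Ua(W_{\geq 0})$-indices stay $\geq 0$ since $[e_i, W_{\geq 0}] \subseteq W_{\geq 0}$; (b) the $\del$-exponent stays $\geq 0$ since bracketing $t^{j+1}\del$ (with $j+1 \geq 1$) against $\del^a$ gives $(j+1)t^j\del^a$, still $\del^a$, no negative powers; (c) the $t$-exponent stays $0$: the potentially-offending terms $(j+1)t^j\del^a\otimes u$ (for $j \geq 1$) must cancel against the $t$-positive pieces generated by the other summands $c_i t^{j-i}\otimes e_i$ bracketed with $\del^a$ — and this cancellation is exactly the content of $\psi_\infty$ being a \emph{Lie algebra homomorphism} together with the fact that $\psi_\infty(W_{\geq 0}) \subseteq L$ (which one verifies directly: $\psi_\infty(e_j) = t^{j+1}\del + \sum \binom{j+1}{i+1}t^{j-i}e_i$ — hmm, this still has $t$'s).

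\medskip

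Actually, the honest statement of the main obstacle: the containment $\psi_\infty(e_j) \in L$ is \emph{false} as literally stated (it has $t$'s), so the proof cannot be "$L$ is a subalgebra generated by $\Psi_\infty$-images." Instead, $L$ must be characterized intrinsically and the invariance checked by hand. \textbf{The hard part is Step 4(c)}: verifying the $t$-exponents cancel. I expect this to come down to the combinatorial identity
\[
(j+1)\,t^{j}\,\del^{a}\cdot u \;=\; -\sum_{i=0}^{j}\binom{j+1}{i+1}\,\bigl[t^{\,j-i}e_i,\ \del^{a}\bigr]\cdot u \quad \text{(the }t\text{-positive parts)},
\]
where on the right one uses $[e_i,\del] = -(i+1)... $ — no wait, $e_i, \del$ don't bracket, they're in different tensor factors; the $t$-positive contributions on the right come from $[t^{j-i},\del^a] = $ (lower-order in $t$, but potentially still positive) times $e_i$. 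Collecting these and comparing with $(j+1)t^j\del^a$ is a finite binomial identity. Since we only need $j \in \{0,1,2\}$, I would simply verify all three cases by direct computation: $j = 0$ is immediate ($\Psi_\infty(e_0) = t\del + e_0$, and $[t\del + e_0, \del^a\otimes u] = -a\del^a\otimes u + \del^a\otimes[e_0,u] \in L$); $j = 1, 2$ require writing $\Psi_\infty(e_1) = t^2\del + 2te_0 + e_1$ and $\Psi_\infty(e_2) = t^3\del + 3t^2 e_0 + 3t e_1 + e_2$, then computing the bracket against $\del^a \otimes e_{i_1}\cdots e_{i_r}$ using $[\del^a, t^m] = \sum_{\ell \geq 1}\binom{a}{\ell}\frac{m!}{(m-\ell)!}t^{m-\ell}\del^{a-\ell}$ (with the convention that terms with $m - \ell < 0$ vanish), and checking that after collecting all contributions the coefficient of every monomial with positive $t$-exponent is zero. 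The bookkeeping is finite and mechanical for these three $j$, so I would present it as a short case check, emphasizing that the vanishing of positive-$t$ terms is forced by the defining property that $\psi_\infty$ is a Lie homomorphism agreeing with the natural $W_{\geq 0}$-action "up to a twist by $\kk[t,\del]$."
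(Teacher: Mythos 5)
Your Step 1 --- reducing to the claim that $[\Psi_\infty(e_j), L] \subseteq L$ for $j \in \{0,1,2\}$ --- is false, and this is a genuine gap that sinks the rest of the plan. Take $x = \del \in L$. Writing $\Psi_\infty(e_1) = t^2\del + 2te_0 + e_1$, one has
\[
[\Psi_\infty(e_1), \del] \;=\; [t^2\del,\del] + [2te_0,\del] + [e_1,\del] \;=\; -2t\del - 2e_0 \;\notin\; L,
\]
and similarly $[\Psi_\infty(e_2), \del] = -3t^2\del - 6te_0 - 3e_1 \notin L$. So the individual adjoint operators $[\Psi_\infty(e_1),-]$ and $[\Psi_\infty(e_2),-]$ do \emph{not} preserve $L$, and no amount of ``finite and mechanical case-checking'' of binomial identities will make the positive $t$-powers cancel in a single bracket. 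You do flag mid-proposal that such brackets ``seem to leave $L$'' and hope for a cancellation, but the cancellation you are looking for happens only in the full combination $\PsiS(x) = (4d+2)[c_2,x] - 3[c_1,[c_1,x]]$ (where $c_i = \Psi_\infty(e_i)$ and $x$ has degree $d$), not term by term.

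The paper's proof avoids this trap by first establishing the intrinsic characterization $L = \{x \in T_\infty : [\del, x] = 0\}$ (using that $T_\infty$ is a skew polynomial extension of $L$ by $t$, so writing $z = \sum_n r_n t^n$ gives $[\del, z] = \sum_n n r_n t^{n-1}$). Then, for $x \in L$ homogeneous of degree $d$, it computes $[\del, \PsiS(x)]$ via the Jacobi identity and the $\mathfrak{sl}_2$-type relations $[c_i, c_j] = (j-i)c_{i+j}$ together with $[\del, x] = 0$: one finds $[\del,[c_2,x]] = 3[c_1,x]$ and $[\del,[c_1,[c_1,x]]] = (4d+2)[c_1,x]$, so $[\del, \PsiS(x)] = (4d+2)\cdot 3[c_1,x] - 3(4d+2)[c_1,x] = 0$. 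You never locate this characterization of $L$ nor the Jacobi-identity cancellation, which is the entire content of the proposition; without one of those ideas the argument cannot be completed along your proposed lines.
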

\begin{proof}
    First we establish that
    \[ L = \{x \in T_{\infty} \mid [\del, x]=0\}.\]
    To see this, notice that $T_\infty$ is a skew polynomial extension of $L$ by $t$, and hence
    \[ T_{\infty} = \bigoplus_{n \geq 0} L t^n.\]
    Further, for all $r \in L$,
    \[ [\del, rt^n] = r [\del, t^n] = nrt^{n-1}.\]
    Thus if
    \[ z = \sum_{n \geq 0} r_nt^n \in T_{\infty},\]
    we can compute that
    \[ [\del, z] = \sum_{n \geq 1} nr_nt^{n-1}.\]
    Since $\kk$ is of characteristic zero, it follows that $z$ commutes with $\del$ if and only if $r_n=0$ for all $n \geq 1$, hence we have the claimed equality. 
    
    Now, let $x \in L$. Because $\PsiS$ is $\kk$-linear, we may assume without loss of generality that $x$ is a homogeneous element of degree $d$.
    
    For ease, write $c_i = \Psi_\infty(e_i)$, and note that $\del=c_{-1}$, and $[c_i, {c}_j] = (j-i){c}_{i+j}$. Also by \Cref{commute with c0 lemma}, $[{c}_0, x] = dx$ for any homogeneous element $x$ of degree $d$. It follows that
    \[ \PsiS(x) = (4d+2)[{c}_2, x] - 3[{c}_1, [{c}_1, x]].\]
    It follows from multiple applications of the Jacobi identity that
    \[ [\del, [c_2, x]] = 3[c_1, x], \quad [\del, [c_1, [c_1,x]]] = (4d+2)[c_1,x],\]
    and hence $[\del, \PsiS(x)]=0$, meaning that $\PsiS(x) \in L$.
\end{proof}

We thank Charles Conley for the following observation. The space of lowest weight vectors of $T_\infty$ as a representation of $\mf{sl}_2 = \kk\{\del, \Psi_\infty(e_0), \Psi_\infty(e_1)\}$ is precisely the homogeneous elements of $L = \{ x \in T_{\infty} \mid [\del, x]=0\}$, with the weight equal to the degree of the element. Thus \Cref{PsiS preserve prop} shows that the ``step algebra element'' $\PsiS$ sends a lowest weight vector of $T_\infty$ to a lowest weight vector of $T_\infty$.

We can now prove the following about the images of the differentiators.

\begin{proposition} \label{image of Omega weak thm}
    Let $n \geq 2$. Then
    \begin{align*}
    \Psi_n (\oo{2n}{2n-1}{-1}) \in \kk v_{n-1}^2.
    \end{align*}
\end{proposition}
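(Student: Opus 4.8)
The plan is to compute $\Psi_n(\oo{2n}{2n-1}{-1})$ by induction on $n$, using the step operator $S$ on $\Ua(\W{-1})$ and its lift $\PsiS$ on $T_\infty$ as the engine that climbs from one differentiator to the next. The base case $n=2$ requires an explicit (but finite) computation: expand $\oo{4}{3}{-1} = \sum_{i=0}^{4} (-1)^i \binom{4}{i} e_{3-i} e_{-1+i}$ using the closed form \eqref{Omega long form eqn}, apply $\Psi_2$ termwise using the formula $\Psi_2(f\del) = f\del + f'v_0 + \tfrac12 f''v_1$, and collect. Since $\Psi_2$ is a graded map of degree $-1 + 3 = 2$ and $v_1$ has degree $1$, the image lands in the degree-$2$ part of $T_2 = A_1 \otimes \Ua(\g_2)$; one then checks that all terms involving $t, \del, v_0$ cancel, leaving only a multiple of $v_1^2$. (One should keep track of whether the scalar is zero; \Cref{Omega in kernel cor} and the later \Cref{Main result kernel} tell us it need not vanish, and \Cref{image of Omega thm} will upgrade this to $\kk^\times v_{n-1}^2$, but for the present weak statement we only need membership in $\kk v_{n-1}^2$.)

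For the inductive step, suppose $\Psi_n(\oo{2n}{2n-1}{-1}) = \lambda_n v_{n-1}^2$ for some $\lambda_n \in \kk$. By \Cref{S Omega lemma}, $S(\oo{2n}{2n-1}{-1}) = \mu_n \oo{2n+2}{2n+1}{-1}$ with $\mu_n \in \kk^\times$, so it suffices to compute $\Psi_{n+1}(S(\oo{2n}{2n-1}{-1}))$. The key compatibility is that $\Psi_\infty$ intertwines $S$ and $\PsiS$: since $S(x)$ is built from iterated brackets $[e_i, -]$ and $\Psi_\infty$ is an algebra homomorphism, $\Psi_\infty(S(x)) = \PsiS(\Psi_\infty(x))$ for all $x \in \Ua(\W{-1})$. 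Composing with the projection $T_\infty \twoheadrightarrow T_{n+1}$ (which kills $v_{n+1}, v_{n+2}, \dots$) and using that this projection sends $\Psi_\infty(e_i)$ to $\Psi_{n+1}(e_i)$, we get that $\Psi_{n+1}(S(x))$ equals the image in $T_{n+1}$ of $\PsiS(\Psi_\infty(x))$. Now I would feed in $x = \oo{2n}{2n-1}{-1}$: by the induction hypothesis applied after the projection $T_\infty \to T_n$, the element $\Psi_\infty(\oo{2n}{2n-1}{-1})$ differs from (a lift of) $\lambda_n v_{n-1}^2$ only by terms involving $v_n, v_{n+1}, \dots$, i.e.\ terms lying in the ideal $(v_n)$ which map to zero in $T_{n+1}$ after one more application of the relevant projection --- wait, $v_n$ survives in $T_{n+1}$, so more care is needed here, and this is exactly the delicate point.

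The main obstacle is precisely this control of the ``tail'': $\Psi_\infty(\oo{2n}{2n-1}{-1})$ is $\lambda_n v_{n-1}^2$ plus a combination of terms involving $e_n, e_{n+1}, \dots$, and applying $\PsiS$ (which raises degree by $2$ and may move these terms around) and then projecting to $T_{n+1}$ could in principle produce contributions outside $\kk v_n^2$. To handle this cleanly I would work degree-by-degree: $\Psi_{n+1}(\oo{2n+2}{2n+1}{-1})$ is homogeneous of degree $(2n+1) + (-1) = 2n$, and I would argue that the only degree-$2n$ monomial in $T_{n+1}$ that can possibly appear, given the structure of $\PsiS$ and the induction hypothesis, is $v_n^2$ --- ruling out $t^a\del^b v_0^{c_0}\cdots v_n^{c_n}$-type terms by a combination of the degree constraint, the order (filtration) constraint from \Cref{properties of S lemma} (the order of $\oo{2n+2}{2n+1}{-1}$ is $2$, so $\PsiS$ keeps us in order $\le 2$, meaning the image is a $\kk$-linear combination of products of at most two of $t, \del, v_0, \dots, v_n$), and a weight/degree bookkeeping argument showing that among degree-$2n$ products of two such elements only $v_n \cdot v_n$ qualifies while matching the vanishing under the further projection $T_{n+1} \to T_n$ forced by \Cref{Omega in kernel cor}. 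Once the image is pinned to the one-dimensional space $\kk v_n^2$, we are done. An alternative, perhaps cleaner, route avoiding the tail analysis is to bypass $\PsiS$ entirely and instead use the recursion $\Psi_{n+1}(S(\oo{2n}{2n-1}{-1})) = S\big(\Psi_{n+1}(\oo{2n}{2n-1}{-1})\big)$ directly in $T_{n+1}$ --- but this needs the image $\Psi_{n+1}(\oo{2n}{2n-1}{-1})$, which is not simply $\lambda_n v_{n-1}^2$ (that equality holds only after projecting to $T_n$), so some version of the tail bookkeeping seems unavoidable; I would present whichever of the two makes the degree/order constraints most transparent.
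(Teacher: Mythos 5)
Your proposal correctly identifies the intertwining relation $\Psi_\infty(S(x)) = \PsiS(\Psi_\infty(x))$, the degree constraint (degree $2n-2$ for $\oo{2n}{2n-1}{-1}$), and the order constraint from \Cref{properties of S lemma}. However, the plan has a genuine gap: degree and order alone do not pin the image down to $\kk v_{n-1}^2$. For instance $t^{2n-2}$, $t^{2n-1}\bar{\del}$, $t^{n-1}v_{n-1}$, and $t^{2n-2-i-j}v_iv_j$ for many $(i,j)$ all have degree $2n-2$ and order at most $2$ in $T_n$, so the ``bookkeeping argument showing only $v_n\cdot v_n$ qualifies'' would not go through. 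Your proposed fix --- invoking vanishing under $T_{n+1}\to T_n$ via \Cref{Omega in kernel cor} --- is circular, since the paper derives \Cref{Omega in kernel cor} as an immediate consequence of this very proposition. And even granting it, it would not eliminate terms like $t^nv_n$, which has degree $2n$, order $1$, and dies under $T_{n+1}\to T_n$.

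The missing ingredient is the subring $L = \kk[\del]\otimes\Ua(\W{0}) = \{x\in T_\infty \mid [\del,x]=0\}$ from \Cref{PsiS preserve prop}, together with the fact that $\PsiS(L)\subseteq L$. The paper's induction is run entirely in $T_\infty$: starting from $\Psi_\infty(\oo{0}{-1}{-1})=\del^2\in L$, the invariance of $L$ under $\PsiS$ and \Cref{S Omega lemma} give $\Psi_\infty(\oo{2n}{2n-1}{-1})\in L$ for all $n$. This completely sidesteps the ``tail'' problem you correctly flag as delicate, because no projection to $T_n$ (and hence no loss of information about the $v_m$, $m\ge n$) happens until the very last step. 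Membership in $L$ is exactly what kills all $t$-dependent monomials; combined with degree and order it leaves the finite basis $\{e_ie_{2n-2-i}\}\cup\{\del e_{2n-1}, e_{2n-2}\}$, and projecting to $T_n$ then kills everything except $e_{n-1}^2=v_{n-1}^2$ (using $n\ge 2$ to kill $e_{2n-2}$). Without identifying $L$ and its $\PsiS$-invariance, the induction-on-$n$-in-$T_n$ approach you outline does not close.
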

\begin{proof}
    Clearly 
    \[ \Psi_\infty(\oo{0}{-1}{-1}) = \del^2 \in L. \]
    Now, $\Psi_\infty(S(x)) = \PsiS(\Psi_{\infty}(x))$ for all $x \in U(W_{\geq -1})$. Thus an easy induction using \Cref{S Omega lemma} and \Cref{PsiS preserve prop} implies that
    \[ \Psi_\infty(\oo{2n}{2n-1}{-1}) \in L \]
    for all $n \geq 0$.
    
    Next let $n \geq 2$, and notice that $\oo{2n}{2n-1}{-1}$ is a homogeneous element of degree $2n-2$ and order at most $2$, hence so is $\Psi_\infty(\oo{2n}{2n-1}{-1})$, by \Cref{Psi graded map prop}. The $\kk$-vector space of homogeneous elements of $L$ of degree $2n-2$ and order at most $2$ has a basis
    \[ \{e_{i}e_{2n-2-i} \mid 0 \leq i \leq 2n-2\} \cup \{\del e_{2n-1}, e_{2n-2}\}.\]
    $T_n$ is a quotient of $T_\infty$ by the two-sided ideal generated by $\{e_m \mid m \geq n\}$, and the only basis element not contained in this ideal is $e_{n-1}^2$. Therefore $\Psi_n(\oo{2n}{2n-1}{-1}) \in \kk v_{n-1}^2$, as required.
\end{proof}

Notice that \Cref{Omega in kernel cor} follows immediately from \Cref{image of Omega weak thm}, despite being weaker than \Cref{image of Omega thm}.

To complete \Cref{image of Omega thm}, we only need to show that $\Psi_n(\oo{2n}{2n-1}{-1}) \neq 0$ as follows.

\begin{proof}[Proof of \Cref{image of Omega thm}]
Recall that $T_\infty$ is a skew polynomial ring over $L$, and hence is a free right $L$-module on the basis $\{1,t,t^2, \dots\}$. Let $\Pi:T_\infty \rightarrow L$ be the $L$-module homomorphism
\[ \Pi \Big( \sum_{n \geq 0} t^nr_n \Big) = r_0.\]
Let $i:L \rightarrow T_\infty$ be the natural inclusion and $x \in T_\infty$. Then 
\begin{align*}
    x \in L \Leftrightarrow i \circ \Pi(x)=x.
\end{align*}
Let $X = \Psi_\infty(\oo{2n}{2n-1}{-1})$. By \Cref{PsiS preserve prop}, $i \circ \Pi(X)=X$. Now, by (\ref{Omega long form eqn}), we have that
\[ \Psi_\infty(\oo{2n}{2n-1}{-1}) = \sum_{j=0}^{2n} (-1)^j \binom{2n}{j} \Psi_{\infty}(e_{2n-1-j})\Psi_{\infty}( e_{j-1}).\]
By considering the monomials in the formula for $\Psi_{\infty}(e_j)$ and their products, we can compute that if $j \in \{1, \dots, 2n-1\}$,
\[ \Pi(\Psi_{\infty}(e_{2n-1-j})\Psi_{\infty}( e_{j-1})) = e_{2n-1-j}e_{j-1},\]
and
\[ \Pi(\Psi_{\infty}(e_{2n-1})\Psi_{\infty}( e_{-1})) = e_{2n-1} \del = \del e_{2n-1}, \quad \Pi(\Psi_{\infty}(e_{-1})\Psi_{\infty}( e_{2n-1})) = \del e_{2n-1} + 2n e_{2n-2} . \]
It follows that
\begin{align*}\label{eq:inftyomega}
    X = i \circ \Pi(X) = \sum_{j=1}^{2n-1} (-1)^j\binom{2n}{j}e_{2n-1-j}e_{j-1} + 2 \del e_{2n-1} + 2n e_{2n-2}.
\end{align*}
Thus, where $p_n: T_\infty \rightarrow T_n$ is the quotient map sending $e_j \mapsto v_j$ if $j \leq n-1$ and zero otherwise,
\[ \Psi_n(\oo{2n}{2n-1}{-1}) = p_n(X) = (-1)^n\binom{2n}{n}v_{n-1}^2 \in \kk^\times v_{n-1}^2, \]
as required (note $\kk$ is of characteristic zero).
\end{proof}

\subsection{Birationality} \label{Birationality}

\Cref{image of Omega thm} shows that $B_n$ contains $v_{n-1}^2$ if $n \geq$ 2. In this subsection, we prove \Cref{Main result large image}, showing that $B_n$ in fact contains the two-sided ideal of $T_n$ generated by $v_{n-1}^2$. (Note that $v_{n-1}$ is a normal element of $T_n$.) From this we deduce that $B_n$ and $T_n$ are birational algebras, in the sense that their rings of quotients are equal.

In fact, we show a slightly stronger result.

\begin{proposition} \label{two-sided ideal image prop}
    Let $n \geq 2$ and $m \geq 1$ be such that $v_{n-1}^m \in B_n$. The two-sided ideal of $B_n$ generated by $v_{n-1}^m$ is
    \[ B_nv_{n-1}^mB_n = T_nv_{n-1}^m \subseteq B_n.\]
\end{proposition}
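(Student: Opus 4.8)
The plan is to establish the two inclusions $T_n v_{n-1}^m \subseteq B_n$ and $B_n v_{n-1}^m B_n \subseteq T_n v_{n-1}^m$ separately; the reverse containment $B_n v_{n-1}^m B_n \subseteq B_n v_{n-1}^m B_n$ is of course automatic, and the displayed equality will follow once we show $B_n v_{n-1}^m B_n = T_n v_{n-1}^m$ and $T_n v_{n-1}^m \subseteq B_n$. The second, easier inclusion is immediate: since $\Psi_n$ is a graded algebra homomorphism and $v_{n-1}$ is a normal (indeed central-modulo-lower-degree, hence two-sided-normalizing) element of $T_n$ with $T_n v_{n-1} = v_{n-1} T_n$, any product $b_1 v_{n-1}^m b_2$ with $b_i \in B_n \subseteq T_n$ lies in $T_n v_{n-1}^m T_n = T_n v_{n-1}^m$. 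So the content is the first inclusion $T_n v_{n-1}^m \subseteq B_n$.

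To prove $T_n v_{n-1}^m \subseteq B_n$, I would argue that $B_n$, which already contains $v_{n-1}^m$ by hypothesis, contains enough elements to multiply $v_{n-1}^m$ on the left by an arbitrary monomial basis element of $T_n$. The key observation is that $B_n$ contains $\Psi_n(e_j) = t^{j+1}\del + \sum_{i=0}^{n-1} \tfrac{1}{(i+1)!}\binom{j+1}{i+1} t^{j+1-i-1} v_i$ for every $j \geq -1$; in particular, taking $j = -1$ gives $\del \in B_n$, and taking $j=0$ gives $t\del + e_0$-type elements, so that by forming commutators and $\kk$-linear combinations one can produce, modulo the ideal $T_n v_{n-1}$, the images of $t$, $\del$, and each $v_i$. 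More usefully, I would show directly that for each generator $g \in \{t, \del, v_0, \dots, v_{n-1}\}$ of $T_n$ there is an element $b_g \in B_n$ with $b_g \equiv g \pmod{T_n v_{n-1}}$ (equivalently $\Psi_{n-1}(\text{preimage}) $ behaves correctly), so that $b_g \cdot v_{n-1}^m \equiv g v_{n-1}^m \pmod{T_n v_{n-1}^{m+1}}$ — note $v_{n-1}$ is normal so $T_n v_{n-1} \cdot v_{n-1}^m = T_n v_{n-1}^{m+1}$. Combined with $v_{n-1} \cdot v_{n-1}^m = v_{n-1}^{m+1} \in B_n$ (since $v_{n-1}\in B_n$ by \Cref{image of Omega thm} applied with exponent $1$, or directly), an induction on the $v_{n-1}$-adic filtration of $T_n$ shows that every element of $T_n v_{n-1}^m$, written as $\sum_{\ell \geq 0}(\text{monomials in }t,\del,v_0,\dots,v_{n-2})\, v_{n-1}^{m+\ell}$, lies in $B_n$: peel off the lowest $v_{n-1}$-power term using the $b_g$'s, subtract, and recurse, the recursion terminating because $T_n$ is $v_{n-1}$-adically separated in each fixed degree and order (using \Cref{gradings commute} and the fact that $\Psi_n$ is graded and filtered).

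The main obstacle I anticipate is making the inductive descent genuinely finite and well-founded: a priori an element of $T_n v_{n-1}^m$ need not be a finite sum after one peels off lowest-$v_{n-1}$-degree pieces, so one must work within a fixed degree $d$ and fixed order $\leq r$, where $(T_n)_{d, \leq r}$ is finite-dimensional, and check that subtracting the $b_g v_{n-1}^{m+\ell}$ correction strictly increases the minimal $v_{n-1}$-exponent while keeping degree and order bounded — here \Cref{properties of S lemma}-style bookkeeping on orders, together with the explicit leading behaviour $\Psi_n(e_j) = t^{j+1}\del + (\text{lower order in the }v\text{'s})$, is what guarantees the correction terms $b_g$ can be chosen of order $1$ and of the right degree $\deg g$. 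Once the bounded-degree-and-order version is established, the full statement follows by taking unions over $d$ and $r$. A cleaner alternative, which I would try first, is to exhibit $B_n$ as containing the subalgebra $\Psi_n(\Ua(\W{-1}))$ together with $v_{n-1}^m$, observe that modulo $T_n v_{n-1}$ the map $\Psi_n$ becomes $\Psi_{n-1}$ onto $B_{n-1} \subseteq T_{n-1}$, and run the argument one "level" at a time — but since $B_{n-1} \neq T_{n-1}$ in general, one still ultimately needs the bounded-degree finiteness argument to close the loop, so I expect that to be unavoidable.
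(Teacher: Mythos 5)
Your proposed proof has a fundamental gap in the hard direction. You claim that ``for each generator $g \in \{t, \del, v_0, \dots, v_{n-1}\}$ of $T_n$ there is an element $b_g \in B_n$ with $b_g \equiv g \pmod{T_n v_{n-1}}$.'' This fails already for $g=t$. Reducing $B_n$ modulo the ideal $T_n v_{n-1}$ yields exactly $B_{n-1}\subseteq T_{n-1}$ (since $\Psi_{n-1}=p_n\circ\Psi_n$), and $t\notin B_{n-1}$: the map $\Psi_{n-1}$ is filtered with $t$ of order $0$ and $\Psi_{n-1}(\Ua_0(\W{-1}))=\kk$, so the order-zero part of $B_{n-1}$ is just $\kk$. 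The same objection blocks your claim that $v_{n-1}\in B_n$; \Cref{image of Omega thm} gives $v_{n-1}^2\in B_n$, not $v_{n-1}$, and indeed $v_{n-1}\notin B_n$ for the same reason. So the $v_{n-1}$-adic descent you set up cannot even get started, and the finiteness bookkeeping you flag as the ``main obstacle'' is not actually the crux.

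There is a second, independent problem: even if $T_n v_{n-1}^m \subseteq B_n$ were established, that is strictly weaker than the assertion being proved. The nontrivial inclusion the statement requires is $T_n v_{n-1}^m \subseteq B_n v_{n-1}^m B_n$, i.e., that $T_n v_{n-1}^m$ is generated as a two-sided $B_n$-ideal by $v_{n-1}^m$; knowing only that it is a subset of $B_n$ yields the opposite containment $B_n v_{n-1}^m B_n\subseteq T_n v_{n-1}^m$ (an ideal of $T_n$ inside $B_n$ is an ideal of $B_n$ containing $v_{n-1}^m$) but does not close the loop.

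The paper avoids both issues with a different decomposition (\Cref{t element lemma} and \Cref{T_n generation lemma}). Rather than attempting to lift $t$ or $v_{n-1}$ into $B_n$ modulo $T_n v_{n-1}$, it shows directly that $\kk[t]\,v_{n-1}^m \subseteq B_n v_{n-1}^m B_n$ by computing $[\Psi_n(e_i), v_{n-1}^m] = (i+1)(n-1)m\,t^i v_{n-1}^m$, which lands in the two-sided ideal. Separately it proves $T_n = \kk[t]\,B_n = B_n\,\kk[t]$ as a $B_n$-bimodule (\Cref{T_n generation lemma}); combining, $T_n v_{n-1}^m = v_{n-1}^m T_n = \kk[t]\,v_{n-1}^m\,B_n \subseteq (B_n v_{n-1}^m B_n)B_n = B_n v_{n-1}^m B_n$. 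This cleanly produces the required inclusion into the ideal generated by $v_{n-1}^m$ without ever needing $t$ or $v_{n-1}$ individually to lie in $B_n$.
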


Because $v_{n-1}$ is a normal element, $T_nv_{n-1}^m = v_{n-1}^mT_n$ is a two-sided ideal of $T_n$. To show \Cref{two-sided ideal image prop}, we prove \Cref{t element lemma,T_n generation lemma}.

\begin{lemma} \label{t element lemma}
    Let $n \geq 2$ and $m \geq 1$ be such that $v_{n-1}^m \in B_n$. We have $\kk[t]v_{n-1}^m \subseteq B_nv_{n-1}^mB_n$.
\end{lemma}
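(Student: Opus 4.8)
The plan is to produce, inside $B_n$, elements of the form $f(t)v_{n-1}^m$ for an arbitrary polynomial $f$. The natural source of powers of $t$ is the Weyl-algebra factor $A_1 = \kk[t,\del]$ of $T_n$, so the key is to conjugate or bracket the known element $v_{n-1}^m$ by images of suitable Witt-algebra elements. First I would recall that $v_{n-1}^m \in B_n$ by hypothesis and that $v_{n-1}$ is central in $\Ua(\g_n)$ (it spans the last step of the lower central series of $\g_n$), hence normal in $T_n$; in particular $v_{n-1}^m$ commutes with every $v_j$ and with $\del, t$ up to scalars. The element $\Psi_n(e_{-1}) = \del$ lies in $B_n$, and $\Psi_n(e_j) = t^{j+1}\del + \sum_{i=0}^{n-1}\binom{j+1}{i+1}t^{j-i}v_i$ for $j \geq 0$ also lies in $B_n$.

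The main idea: bracket $v_{n-1}^m$ with $\Psi_n(e_j)$. Since $v_{n-1}^m$ commutes with all the $v_i$ and with $\del$ in the relevant way, the commutator $[\Psi_n(e_j), v_{n-1}^m]$ will only see the $t^{j+1}\del$ and $t^{j-i}v_i$ pieces acting on nothing — so that is zero. Instead, the productive move is to compute $\Psi_n(e_j)\cdot v_{n-1}^m - (\text{correction})$: multiplying $v_{n-1}^m$ on the left by $\Psi_n(e_j)$ gives $t^{j+1}\del v_{n-1}^m + \sum_i \binom{j+1}{i+1} t^{j-i} v_i v_{n-1}^m$, all terms of which lie in $T_n v_{n-1}^m$ but not obviously in $B_n$. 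To isolate a pure $t^k v_{n-1}^m$ term I would instead form $\Psi_n(e_{j})v_{n-1}^m$ and subtract $v_{n-1}^m \Psi_n(e_j)$ together with lower-order images already known to be in $B_n v_{n-1}^m B_n$, peeling off one power of $t$ at a time. Concretely, I expect an induction on $k$: having shown $t^{k-1}v_{n-1}^m \in B_n v_{n-1}^m B_n$, one writes down an explicit $B_n$-combination — using $\Psi_n(e_1) = t^2\del + \binom{2}{1}t v_0 + \dots$ or $\Psi_n(e_0) = t\del + v_0$ and their products with the already-constructed elements, plus the relation $[\del, t] = 1$ — whose leading term is (a nonzero scalar times) $t^k v_{n-1}^m$ and whose error terms are lower-degree-in-$t$ multiples of $v_{n-1}^m$, hence in $B_n v_{n-1}^m B_n$ by induction. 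The base case $k=0$ is the hypothesis $v_{n-1}^m \in B_n$, and $k=1$ comes from $\Psi_n(e_0)v_{n-1}^m = tv_{n-1}^m\del + v_0 v_{n-1}^m \cdot(\dots)$ combined with $\del v_{n-1}^m \in B_n$ and $v_0 v_{n-1}^m$: the point is that each $v_i v_{n-1}^m$ with $i \leq n-1$ is in $T_n v_{n-1}^m$ and can be cancelled using $\Psi_n$ of a degree-shifted differentiator or simply absorbed since we are building the two-sided ideal $B_n v_{n-1}^m B_n$ which already contains $v_{n-1}^m B_n \ni v_{n-1}^m v_i$.

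The cleanest formulation is probably: since $v_{n-1}^m \in B_n$ and $\del = \Psi_n(e_{-1}) \in B_n$, the element $\del v_{n-1}^m = v_{n-1}^m \del$ lies in $B_n$; and since $\Psi_n(e_0) = t\del + v_0 \in B_n$, we get $\Psi_n(e_0)v_{n-1}^m - v_{n-1}^m v_0 = t\del v_{n-1}^m \in B_n v_{n-1}^m B_n$ — but note $v_{n-1}^m v_0 \in v_{n-1}^m B_n \subseteq B_n v_{n-1}^m B_n$, so indeed $t\del v_{n-1}^m \in B_n v_{n-1}^m B_n$. Then, using $[\del, t]=1$, one recovers $t v_{n-1}^m$ from $t\del v_{n-1}^m$ by a further bracket with $t$ — but $t \notin B_n$, so instead I bracket with $\Psi_n(e_1)$ or iterate with $\Psi_n(e_0)$ to climb to higher powers while staying inside $B_n v_{n-1}^m B_n$; an induction on $k$ then yields $t^k \del v_{n-1}^m$ and, after one application of the commutator $[\del,\,\cdot\,]$-type relation packaged through another $\Psi_n(e_j)$, the pure term $t^k v_{n-1}^m$. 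The main obstacle is purely bookkeeping: organising the induction so that all ``error terms'' produced by expanding products $\Psi_n(e_{j_1})\cdots\Psi_n(e_{j_r})v_{n-1}^m$ are genuinely of strictly lower $t$-degree (times $v_{n-1}^m$) and hence fall under the inductive hypothesis, and verifying the leading scalar coefficient is nonzero in characteristic zero. Once $\kk[t]v_{n-1}^m \subseteq B_n v_{n-1}^m B_n$ is established, \Cref{T_n generation lemma} will handle the $\del$ and $v_j$ directions to upgrade this to all of $T_n v_{n-1}^m$.
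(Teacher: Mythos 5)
Your proposal contains a key misconception that derails it: you assert that $v_{n-1}$ is central in $\Ua(\g_n)$ and hence that $v_{n-1}^m$ commutes with all the $v_i$, and from this you conclude that $[\Psi_n(e_j), v_{n-1}^m] = 0$. This is false. The Lie algebra $\g_n = W_{\geq 0}/W_{\geq n}$ is not abelian: $[v_i, v_j] = (j-i)v_{i+j}$ whenever $i+j \leq n-1$, so in particular $[v_0, v_{n-1}] = (n-1)v_{n-1} \neq 0$. Thus $v_{n-1}$ is a \emph{normal} element (as you correctly note) but is \emph{not} central; it commutes with $t, \del, v_1, \dots, v_{n-1}$, but not with $v_0$. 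Consequently $[v_0, v_{n-1}^m] = (n-1)m\, v_{n-1}^m$.

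This nonvanishing is exactly the mechanism that makes the approach you discarded work in one line. Writing $f = v_{n-1}^m$ and using the formula for $\Psi_n(e_i)$,
\[
[\Psi_n(e_i), f] \;=\; [t^{i+1}\del, f] + \sum_{j=0}^{n-1}\binom{i+1}{j+1}t^{i-j}[v_j, f] \;=\; (i+1)\,t^i\,[v_0,f] \;=\; (i+1)(n-1)m\,t^i f,
\]
since every bracket except the $j=0$ one vanishes. The left-hand side lies in $B_n f + f B_n \subseteq B_n v_{n-1}^m B_n$, and for $i \geq 0$ the scalar $(i+1)(n-1)m$ is nonzero in characteristic zero, so $t^i v_{n-1}^m \in B_n v_{n-1}^m B_n$ for all $i \geq 0$. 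This is the paper's proof, and it is shorter than your fallback.

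Your alternative route also has a concrete gap: in the step ``$v_{n-1}^m v_0 \in v_{n-1}^m B_n \subseteq B_n v_{n-1}^m B_n$'' you are implicitly using $v_0 \in B_n$, which is false ($\Psi_n(e_0) = t\del + v_0 \in B_n$, but $v_0$ alone is not in the image). Without that, the isolation of the pure term $t\del\, v_{n-1}^m$ from $\Psi_n(e_0)v_{n-1}^m$ does not go through, and the proposed induction on the $t$-degree has no working base step. Once the lemma is in hand, though, your closing remark is correct: \Cref{T_n generation lemma} upgrades $\kk[t]v_{n-1}^m \subseteq B_n v_{n-1}^m B_n$ to all of $T_n v_{n-1}^m$.
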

\begin{proof}
For ease, let $f=v_{n-1}^m$ and $I = B_nv_{n-1}^mB_n$. Note $f$ commutes with the elements $t, \del$, and $v_1, \dots, v_{n-1} \in \mathfrak{g}_n$. Thus we can compute that for any $i \geq -1$,
\begin{align*} [\Psi_n(e_i), f] &= [t^{i+1}\partial, f] + \sum_{j=0}^{n-1} \begin{pmatrix} i+1\\ j+1 \end{pmatrix}[t^{i-j}v_j, f]\\ &= (i+1)[t^iv_0, f]\\ &=(i+1)t^i[v_0,f]\in I. \end{align*}
So if $i \geq 0$, it follows that $t^i[v_0,f] = (n-1)mt^if \in I$. Since $m \geq 1$ and $n \geq 2$, the result follows by taking $\kk$-linear combinations.
\end{proof}

\begin{lemma} \label{T_n generation lemma}
    For any $n \geq 0$, $T_n$ is generated as a left or right $B_n$-module by $\{1,t,t^2, \dots\}$.
\end{lemma}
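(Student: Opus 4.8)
The plan is to prove the left-module version of the statement, namely that $M:=\sum_{k\ge 0}B_n t^k$ equals $T_n$; since the lemma only asks for one of the two variants, this suffices (the right-module version is proved the same way, with one extra PBW-reordering step inside $\Ua(\g_n)$). Note that $M$ is simultaneously a left $B_n$-submodule and a right $\kk[t]$-submodule of $T_n$. Writing $T_n=A_1\otimes\Ua(\g_n)$ and using the basis $\{\del^b t^a\}$ of $A_1$, we get $T_n=\bigoplus_{a\ge 0}L\,t^a$ with $L:=\kk[\del]\otimes\Ua(\g_n)$, so it is enough to show $L\subseteq M$. As $\del=\Psi_n(e_{-1})$ lies in $B_n$ and $L$ is spanned by the elements $\del^b w$ with $w=v_0^{a_0}v_1^{a_1}\cdots v_{n-1}^{a_{n-1}}$ a PBW monomial of $\Ua(\g_n)$, the left-module property of $M$ reduces the problem to showing that every such $w$ lies in $M$.

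I would establish this by induction on the pair $(A,\mathbf a)$, where $A=a_0+\dots+a_{n-1}$ and $\mathbf a=(a_{n-1},\dots,a_0)\in\NN^n$, ordered by comparing $A$ first and then $\mathbf a$ lexicographically; this is a well-order. The base case $w=1\in B_n$ is clear. For the inductive step, let $j$ be the \emph{smallest} index with $a_j>0$ and write $w=v_j w''$ with $w''=v_j^{a_j-1}v_{j+1}^{a_{j+1}}\cdots v_{n-1}^{a_{n-1}}$, a PBW monomial of degree $A-1$, so $w''\in M$ by induction. Since $j\le n-1$, \Cref{Psi formula} gives $\Psi_n(e_j)=t^{j+1}\del+v_j+\sum_{l=0}^{j-1}\binom{j+1}{l+1}t^{j-l}v_l$; solving for $v_j$ and substituting,
\[ w=\Psi_n(e_j)\,w''-t^{j+1}\del\,w''-\sum_{l=0}^{j-1}\binom{j+1}{l+1}\,t^{j-l}v_l\,w'' . \]
I would then verify each term lies in $M$. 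The first is in $B_n M\subseteq M$. For the second, since $t$ and $\del$ commute with $w''\in\Ua(\g_n)$ we may bring $w''$ to the front and write $w''=\sum_k b_k t^k$ with $b_k\in B_n$; the Weyl relation $t^m\del=\del t^m-mt^{m-1}$ turns $b_k t^{k+j+1}\del$ into $(b_k\del)t^{k+j+1}-(k+j+1)b_k t^{k+j}\in M$ because $b_k\del\in B_n$. For the third, $t$ commutes with $v_l$ and $w''$, so $t^{j-l}v_l w''=(v_l w'')t^{j-l}$; the crucial point is that $l<j$ is strictly below every index occurring in $w''$, so $v_l w''$ is itself a PBW monomial, of degree $A$ but with $j$-th exponent $a_j-1<a_j$ and higher exponents equal to those of $w$, hence $\mathbf a$-lexicographically smaller than $w$ — so $v_l w''\in M$ by induction and $(v_l w'')t^{j-l}\in M\cdot\kk[t]\subseteq M$. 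This closes the induction; then $L\subseteq M$ gives $T_n=\bigoplus_a Lt^a\subseteq M$, as wanted.

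The only genuinely nonroutine step is setting up the induction correctly: substituting $v_j=\Psi_n(e_j)-t^{j+1}\del-\cdots$ reintroduces the lower-index generators $v_0,\dots,v_{j-1}$ \emph{without} decreasing the total degree, so a plain induction on degree does not work; the remedy is the two-tiered order (degree, then lexicographic order on exponent vectors read from the highest index down), together with the observation that always stripping off the \emph{smallest}-index generator keeps every product in PBW form, so one never has to reorder inside $\Ua(\g_n)$, which is noncommutative once $n\ge 2$. Everything else — that $\del=\Psi_n(e_{-1})\in B_n$, the Weyl relation for the $t^{j+1}\del$ term, and the bookkeeping with the left-$B_n$ / right-$\kk[t]$ module structure of $M$ — is routine.
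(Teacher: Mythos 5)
Your proof is correct, but it takes a genuinely different and considerably longer route than the paper's. The paper proves the statement for $T_\infty$ first: the commutation $\Psi_\infty(e_m)t = t\Psi_\infty(e_m) + t^{m+1}$ immediately shows that $B_\infty\{1,t,t^2,\dots\}$ and $\{1,t,t^2,\dots\}B_\infty$ both equal the subalgebra $A$ of $T_\infty$ generated by $B_\infty$ and $t$, and a one-line induction on $m$ (using that $\Psi_\infty(e_m) - e_m$ lies in the subalgebra generated by $t,\del,e_0,\dots,e_{m-1}$) shows $A = T_\infty$; the result for $T_n$ then follows by applying the quotient $T_\infty \to T_n$. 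Your argument works entirely inside $T_n$, decomposes $T_n=\bigoplus_a Lt^a$ with $L=\kk[\del]\otimes\Ua(\g_n)$, and establishes $L\subseteq M$ via a two-tiered induction on PBW monomials of $\Ua(\g_n)$ (first on total degree, then lexicographically from the highest index down). This is sound --- in particular the choice to strip off the \emph{smallest}-index generator is exactly right, since it keeps $v_l w''$ in PBW form and strictly decreases the lex-order at position $j$ --- but it is substantially more bookkeeping than the paper's detour through $T_\infty$, which sidesteps PBW reordering entirely because $\Psi_\infty$ sends each $e_m$ to $e_m$ plus terms involving only $t$, $\del$, and lower-index Lie generators. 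One small caution: the lemma is later invoked in \emph{both} its left- and right-module forms (for instance in showing $T_nv_{n-1}^2$ is not finitely generated as a left or as a right ideal of $B_n$), so ``left or right'' should be read as ``both''; you should therefore carry out the right-module version rather than dismissing it, though as you correctly note it is only a modest adaptation once one inserts the requisite reordering step in $\Ua(\g_n)$.
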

\begin{proof}
    We show the corresponding statement for $T_\infty$, and the lemma follows by taking homomorphic images.
    
    First notice that for any $m \geq -1$, $\Psi_\infty(e_m)t = t\Psi_\infty(e_m) +t^{m+1}$, from which it follows that the subalgebra of $T_\infty$ generated by $B_\infty$ and $t$ is equal to
    \[ A = B_\infty\{1,t,t^2, \dots\} = \{1,t,t^2, \dots\}B_\infty.\]
    Thus, we show $A=T_\infty$. Clearly $t, \del \in A$. If $e_0, \dots, e_{m-1} \in A$, then the defining formula of \Cref{Psi formula} implies
    \[ \Psi_\infty(e_m) - e_m \in A,\]
    and hence $e_m \in A$. It follows by induction that $A=T_\infty$, as required.
\end{proof}

\Cref{two-sided ideal image prop} follows immediately from \Cref{t element lemma,T_n generation lemma}. We thus deduce \Cref{Bn large ideal thm}.

\begin{proof}[Proof of \Cref{Bn large ideal thm}]
    By \Cref{image of Omega thm}, ${v_{n-1}^2  \in \kk\Psi_n (\oo{2n}{2n+1}{-1}) \subseteq B_n}$. Hence $v_{n-1}^2$ satisfies the hypothesis of \Cref{two-sided ideal image prop}, therefore
    \[ B_nv_{n-1}^2B_n = T_nv_{n-1}^2 \subseteq B_n,\]
    is a principal two-sided ideal of $B_n$, as required.
\end{proof}

Despite the ideal $T_nv_{n-1}^2$ is generated by a single element as a two-sided ideal, we now demonstrate that it cannot be finitely-generated as a left or right ideal of $B_n$.

\begin{theorem} \label{non fg image ideal}
    Let $n \geq 2$. The ideal $T_nv_{n-1}^2 \subseteq B_n$ is not finitely-generated as a left ideal or as a right ideal.
\end{theorem}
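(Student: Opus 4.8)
The plan is to show that $T_n v_{n-1}^2$, as a left ideal of $B_n$, cannot be generated by finitely many elements; the right-handed case is symmetric. The key structural fact to exploit is that $T_n = \kk[t,\del]\otimes \Ua(\g_n)$ is a polynomial ring in $t$ over $L_n := \kk[\del]\otimes\Ua(\g_n)$, and by \Cref{T_n generation lemma} we have $T_n = B_n\{1,t,t^2,\dots\}$, so $T_n v_{n-1}^2 = \sum_{i\geq 0} B_n t^i v_{n-1}^2$. I would first record the $t$-degree (i.e. degree as a polynomial in $t$ with coefficients written on the left, say) of elements of $B_n$: since $\Psi_n(e_i) = t^{i+1}\del + \sum_{j=0}^{n-1}\binom{i+1}{j+1}t^{i-j}v_j$ and products of the generators only increase $t$-degree additively, every element of $B_n$ lies in $\bigoplus_{i\geq 0} L_n t^i$; this gives an ascending ``$t$-degree'' filtration on $B_n$ (and on $T_n$), and I would check that $B_n v_{n-1}^2$ sits inside this filtration in a controlled way.

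The heart of the argument is a lower-bound estimate: a finitely-generated left sub-$B_n$-module of $T_n v_{n-1}^2$ can only reach bounded $t$-degree ``from below'' in a precise sense, whereas $T_n v_{n-1}^2$ contains elements of the form $v_{n-1}^2 t^i$ (equivalently $t^i v_{n-1}^2$, since $v_{n-1}$ is central in $\Ua(\g_n)$ and commutes with $t,\del$) for all $i$, and left-multiplication by an element of $B_n$ of bounded $t$-degree cannot produce a pure top term $\ell\, t^i v_{n-1}^2$ with $\ell \in L_n$ of degree $0$ in $\del$ and high $t$-degree without also producing lower-order ``error'' terms whose leading parts obstruct membership. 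Concretely, I would pass to the associated graded of the $t$-degree filtration. The associated graded of $T_n$ is again $T_n$ (it's already $\ZZ_{\geq 0}$-graded by $t$-degree if we use the left-coefficient normal form), the associated graded of $B_n$ is some graded subalgebra $\gr B_n \subseteq T_n$, and a finitely-generated left $B_n$-submodule of $T_n v_{n-1}^2$ has finitely-generated associated graded over $\gr B_n$. So it suffices to show $T_n v_{n-1}^2$ is not finitely generated as a left module over $\gr B_n$. Here the point is that $\gr B_n$ is spanned by leading terms $t^{i+1}\del$ (from $e_i$, $i\geq 0$, whose $v_0$-part $\binom{i+1}{1}t^i v_0$ has strictly lower $t$-degree) together with $1$, $\del = \Psi_n(e_{-1})$; that is, $\gr B_n = \kk\langle \del, t^i\del : i \geq 1\rangle$-ish — a relatively small subalgebra of $\kk[t,\del]$ tensored trivially with $\Ua(\g_n)$. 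I would then observe that left-multiplying $t^j v_{n-1}^2$ by $t^i \del$ gives $t^i\del \cdot t^j v_{n-1}^2 = t^{i+j}\del\, v_{n-1}^2 + j\, t^{i+j-1}v_{n-1}^2$, so modulo lower $t$-degree the new leading term carries an extra factor of $\del$. Iterating, any element of $\gr B_n \cdot \{t^j v_{n-1}^2 : j \leq N\}$ that is homogeneous of $t$-degree $M \gg N$ must carry $\del$-degree at least $M - N$; but $T_n v_{n-1}^2$ contains $t^M v_{n-1}^2$ which has $\del$-degree $0$. Hence no finite generating set suffices.

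To make this rigorous I would introduce the bigrading of $T_n$ by ($t$-degree, $\del$-degree) — note $\Ua(\g_n)$-factors don't interfere since $v_{n-1}$ is normal and we can work in the localized or graded setting — and phrase the obstruction as: for any finite set $G \subseteq T_n v_{n-1}^2$ there is a bound $N$ so that every homogeneous-in-$t$-degree element of $\sum_{g\in G} B_n g$ of $t$-degree $M$ has $\del$-degree $\geq M - N$ once $M$ is large; then $t^{M}v_{n-1}^2 \in T_n v_{n-1}^2$ for $M > N$ is not in $\sum_{g\in G}B_n g$, a contradiction. The main obstacle I anticipate is bookkeeping the ``$\del$-degree lower bound propagates under left multiplication by $\gr B_n$'' claim cleanly — in particular handling the generator $\del$ itself, which does \emph{not} raise $t$-degree, so one must track a two-variable invariant like $(t\text{-degree}) - (\del\text{-degree})$ and show it is bounded above on $\sum_{g\in G} B_n g$ while unbounded on $T_n v_{n-1}^2$; getting the right invariant so that every generator $\Psi_n(e_i)$ acts non-increasingly on it, and verifying it on the finitely many generators $g$, is where the real care is needed. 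The symmetric statement for right ideals follows by the analogous argument with $t$-coefficients on the right, or by applying the principal anti-automorphism of $T_n$.
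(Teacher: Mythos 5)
Your reduction to showing that $T_n$ (or $T_nv_{n-1}^2$) is not finitely generated over $B_n$, via \Cref{T_n generation lemma} and the normality of $v_{n-1}$, matches the paper exactly, and your instinct to obstruct this with a ``$t$-degree'' filtration is in the same spirit as the paper's argument. But the specific obstruction you propose does not go through, and the gap is exactly where you flag that the real care is needed.

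The crux of your argument is the claim that the associated graded $\gr B_n$ with respect to the $t$-degree filtration is ``a small subalgebra of $\kk[t,\del]$ tensored trivially with $\Ua(\g_n)$'' -- essentially that every leading term of positive $t$-degree is a multiple of $\bar\del$. This is false. The leading terms of the generators $\Psi_n(e_i)$ are indeed $\bar{t}^{i+1}\bar\del$, but $\gr B_n$ is not generated by these: when leading terms cancel in a $\kk$-linear combination of products, the sub-leading terms (which carry $v_0, v_1,\dots$) surface. A concrete example for $n=2$: the element
\[
\Psi_2(e_2)\Psi_2(e_0) - \Psi_2(e_1)^2 + \Psi_2(e_2)
\;=\;
t^2\bigl(v_0 - v_0^2 + v_1\del\bigr) + t\bigl(v_1v_0 - 2v_0v_1 + 3v_1\bigr) - v_1^2
\]
lies in $B_2$ and has leading $t$-degree $2$, but its leading term $\bar{t}^2(\bar v_0 - \bar v_0^2 + \bar v_1\bar\del)$ has a nonzero $\del$-degree-zero part $\bar{t}^2(\bar v_0 - \bar v_0^2)$. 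So $\gr B_n \not\subseteq \kk + \bar\del\,\gr T_n$, and the invariant ``$t$-degree minus $\del$-degree'' is \emph{not} bounded on $\sum_g B_n g$ in the way your plan requires. (There is also a secondary issue: the step ``a finitely-generated $B_n$-submodule has finitely-generated associated graded'' is not automatic for the induced filtration -- it requires a good filtration -- but the failure of the $\gr B_n$ claim is already fatal.)

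What goes wrong is that you are only quotienting by $\del$, and the $v_0$-part of $\Psi_n(e_i)$ is a genuine obstruction. The paper resolves this by projecting to $T_1$ (killing $v_1,\dots,v_{n-1}$ for free) and using the right ideal $J = \del T_1 + (v_0-1)T_1$ instead of just $\del T_1$. The computation $\Psi_1(e_j) = \del t^{j+1} + (j+1)(v_0-1)t^j$ shows $B_1 \subseteq \kk + J$, so $B_1$ acts trivially on $T_1/J \cong \kk[t]$ modulo scalars; and since $J$ is a \emph{right} ideal, $B_1\{1,\dots,t^N\} \subseteq \sum_{j\le N}\kk t^j + J$, from which $t^{N+1}\notin B_1\{1,\dots,t^N\}$ follows by taking homogeneous components. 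The $(v_0-1)$-correction is exactly the missing ingredient that absorbs the $v_0$-leakage your counterexample exhibits. For the right-module statement the paper in fact uses a genuinely different (and simpler) argument: project all the way to $T_0 = A_1$, where $B_0 = \kk + A_1\del$ and $A_1\del$ is a \emph{left} ideal, so $\{1,\dots,t^N\}B_0 \subseteq \sum_{j\le N}\kk t^j + A_1\del$. Your remark that the right case ``follows by the analogous argument or by an anti-automorphism'' is also too glib: $\del T_n$ is not a left ideal and there is no proper right ideal of $A_1$ containing $A_1\del$, which is why the paper's two one-sided arguments land in different quotients $T_1$ and $T_0$.
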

\begin{proof}
Because $T_n$ is a domain, it is straightforward that $T_nv_{n-1}^2 = v_{n-1}^2T_n \cong T_n$ as a left or right $B_n$-module. Therefore, it is sufficient to show that $T_n$ is not a finitely-generated left or right $B_n$-module. By \Cref{T_n generation lemma},
\[ T_n = B_n\{1,t,t^2, \dots\} = \{1,t,t^2, \dots\}B_n,\]
and hence it is enough to show that for any $N \geq 0$, $T_n$ is not generated by $\{1, \dots, t^N\}$ as a left $B_n$-module or a right $B_n$-module.

For the left module structure, suppose $T_n$ is generated by $\{1, \dots, t^N\}$ as a left $B_n$-module. Then $T_1$ is generated by $\{1, \dots, t^N\}$ as a left $B_1$-module. The following argument is essentially given in \cite[Proposition 2.5.6]{buzaglo2024lie}.  Consider the right ideal of $T_1$ given by $J = \del T_1 + (v_0-1)T_1$. Then
\[ \Psi_1(e_j) = \del t^{j+1} + (j+1)(v_0-1)t^j \in J\]
for all $j \geq -1$. So it follows that $B_1 \subseteq \kk + J$. It follows that the left $B_1$-submodule of $T_1$ generated by $\{1, \dots, t^N\}$ is contained in 
\[ \sum_{j=0}^N (\kk + J)t^j = \sum_{j=0}^N \kk t^j + J.\]
But, $t^{N+1} \not \in J$, and it then follows by taking homogeneous components that $t^{N+1}$ is not contained in the above. Hence $T_1$ is not generated by $\{1, \dots, t^N\}$ as a left $B_1$-module, giving the required contradiction.

For the right module structure, suppose $T_n$ is generated by $\{1, \dots, t^N\}$ as a right $B_n$-module. Then $T_0 = A_1 = k[t, \del]$ is generated by $\{1, \dots, t^N\}$ as a right $B_0$-module. It is straightforward to see that $B_0 = \kk + \kk[t,\del]\del$, and then
\[ \sum_{j=0}^N t^j(\kk + \kk[t,\del]\del) = \sum_{j=0}^N \kk t^j + \kk[t,\del]\del.\]
Clearly $t^{N+1}$ is not contained in the above, giving a contradiction.
\end{proof}

Combining \Cref{Bn large ideal thm} and \Cref{non fg image ideal} thus gives the first main result of this article.

\largeimage*

Similarly to $T_nv_{n-1}^2$, we expect that every two-sided ideal of $B_n$ has a finite generating set.

\begin{conjecture} \label{Bn ACC conjecture}
    For any $n \geq 0$, the two-sided ideals of $B_n$ satisfy the ascending chain condition.
\end{conjecture}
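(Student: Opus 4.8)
We outline a strategy for \Cref{Bn ACC conjecture}, by induction on $n$. The idea is to peel off the large two-sided ideal $T_n v_{n-1}^2$ supplied by \Cref{Bn large ideal thm} and to control it as a bimodule. The base case $n = 0$ is immediate, since $B_0 = \kk \oplus A_1\del$ is left and right Noetherian; the case $n = 1$ falls outside the range of \Cref{Bn large ideal thm}, and we expect it to be tractable by the methods of \cite{sierra2016maps}, where the corresponding statement is proved for the subalgebra $\Psi_1(\Ua(\W{1}))$ of $B_1$. For the inductive step, fix $n \geq 2$, assume the conjecture holds for $B_{n-1}$, and set $J = T_n v_{n-1}^2$, a two-sided ideal of $B_n$ by \Cref{Bn large ideal thm}. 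The plan is then to invoke the standard extension principle: if a ring $R$ has a two-sided ideal $N$ such that $R/N$ satisfies the ACC on two-sided ideals and $N$ satisfies the ACC on $R$-sub-bimodules, then $R$ satisfies the ACC on two-sided ideals. Applied to $R = B_n$ and $N = J$, this reduces the inductive step to showing: (a) $B_n/J$ satisfies the ACC on two-sided ideals; and (b) $J$ satisfies the ACC as a $B_n$-bimodule.

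For (a), the quotient map $T_n \to T_{n-1}$, $v_{n-1}\mapsto 0$, restricts by the commutative diagram following \Cref{Psi formula} to a surjection $B_n \to B_{n-1}$ with kernel $B_n \cap v_{n-1}T_n$, which contains $J$. Hence $B_n/J$ has a two-sided ideal $N' = (B_n \cap v_{n-1}T_n)/J$ with $(B_n/J)/N' \cong B_{n-1}$, and the inductive hypothesis gives the ACC on two-sided ideals of this quotient. Since $N'$ is a $B_n$-sub-bimodule of $v_{n-1}T_n/v_{n-1}^2T_n$, a second application of the extension principle reduces (a) to showing that $N'$ has the ACC as a $B_n$-bimodule. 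Thus both (a) and (b) come down to a single statement, which is the crux of the whole argument.

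\textbf{Key claim (the main obstacle).} Every $B_n$-sub-bimodule of $J = T_n v_{n-1}^2$, and every $B_n$-sub-bimodule of $v_{n-1}T_n/v_{n-1}^2 T_n$, is in fact a $T_n$-sub-bimodule. Granting the key claim, the desired ACC is immediate: the $T_n$-sub-bimodules of $J$ are precisely the two-sided ideals of $T_n$ contained in $v_{n-1}^2 T_n$, and the $T_n$-sub-bimodules of $v_{n-1}T_n/v_{n-1}^2 T_n$ correspond bijectively to the two-sided ideals of $T_{n-1}$; in both cases the ACC holds because $T_n$ and $T_{n-1}$ are Noetherian, being iterated Ore extensions of the Noetherian enveloping algebras $\Ua(\g_n)$ and $\Ua(\g_{n-1})$.

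To establish the key claim I would run a bootstrapping argument in the spirit of the proofs of \Cref{t element lemma,T_n generation lemma}. Because $v_{n-1}$ is normal and $T_n = B_n\{1,t,t^2,\dots\} = \{1,t,t^2,\dots\}B_n$, a $B_n$-sub-bimodule $M$ (of $J$, or of the section $v_{n-1}T_n/v_{n-1}^2 T_n$) is a $T_n$-sub-bimodule exactly when it is closed under left and right multiplication by $t$. Now for $x \in M$ one has $\Psi_n(e_j)x \in M$ and $x\Psi_n(e_j) \in M$ for every $j \geq -1$; since $\Psi_n(e_j)$ equals $t^{j+1}\del$ plus terms involving strictly lower powers of $t$, and since $\del = \Psi_n(e_{-1}) \in B_n$, one can take $\kk$-linear combinations of the elements $\Psi_n(e_j)x$, $x\Psi_n(e_j)$, $\del^a x$, $x\del^a$ that isolate $t^i x$ and $x t^i$ up to terms which, by careful tracking of the order filtration and the degree grading, already lie in $M$. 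Making this extraction precise and uniform — in particular, choosing the right combinations and verifying that all error terms are absorbed — is the essential difficulty; it is exactly the phenomenon underlying the contrast in \Cref{Main result kernel}, where $\ker\Psi_n$ is infinitely generated as a one-sided ideal yet principal as a two-sided ideal. Iterating the inductive step would then recover the chain of ideals of \Cref{B_n kernel thm} and prove the conjecture for all $n$, leaving the key claim — together with a self-contained treatment of $n = 1$ — as the remaining work.
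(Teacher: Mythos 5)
This statement is a \emph{conjecture} in the paper, not a theorem: the authors leave it open, remarking only that it would follow from \Cref{UW ACC conjecture} and that the cases $n\leq 1$ are essentially known. There is therefore no paper proof to compare against, and your proposal has to stand on its own.

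As written, it is an outline rather than a proof, and you say so yourself: everything is reduced to a ``key claim'' which you acknowledge is ``the essential difficulty'' and which, together with $n=1$, you leave as remaining work. The bimodule extension principle and the decomposition into $J=T_nv_{n-1}^2$, $N'$, and $B_{n-1}$ are fine, and if the key claim held, the rest would indeed follow from Noetherianity of $T_n$ and $T_{n-1}$. But the key claim carries the entire weight of the argument, and it is substantially stronger than anything proved in the paper. In particular it would imply that for \emph{every} $z\in T_nv_{n-1}^2$ the two-sided ideal $B_nzB_n$ equals the two-sided $T_n$-ideal of $z$. The paper establishes exactly this for the very special elements $z=v_{n-1}^m$ (\Cref{two-sided ideal image prop}), and the proof of \Cref{t element lemma} depends crucially on $v_{n-1}^m$ commuting with $t,\del,v_1,\dots,v_{n-1}$, so that $[\Psi_n(e_i),v_{n-1}^m]$ collapses to the single term $(i+1)(n-1)m\,t^iv_{n-1}^m$. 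For a general $x\in J$ the commutators $[\Psi_n(e_i),x]$ do not collapse like this, and the ``careful tracking of the order filtration and the degree grading'' you invoke to isolate $tx$ from $\Psi_n(e_j)x$, $\del^ax$, $x\Psi_n(e_j)$, $x\del^a$ is precisely the step that is missing; it is far from clear that it can be carried out uniformly, and I would treat the key claim as a separate open problem that may be false rather than a technicality. In its present form the proposal does not settle \Cref{Bn ACC conjecture}, and leaves it exactly as open as the paper does.
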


Naturally, \Cref{Bn ACC conjecture} would be implied by \Cref{UW ACC conjecture}. The cases $n \leq 1$ of \Cref{Bn ACC conjecture} are essentially known, for $B_0$ is Noetherian, whilst \cite[Proposition 6.6]{sierra2016maps} proves the conjecture for $\Psi_1(\Ua(\W{1}))$.

A similar result to \Cref{Main result large image} holds for $n=1$, originally due to \cite{conley2007}.

\begin{lemma} \label{B_1 large image lemma}
    The image $B_1$ contains $T_1(v_0^2-v_0)$, which is not finitely-generated as a left or right ideal of $B_1$.
\end{lemma}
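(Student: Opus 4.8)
The plan is to establish this as the $n=1$ analogue of \Cref{Main result large image}, with the element $v_0^2-v_0 = v_0(v_0-1) \in T_1$ playing the role of the normal element $v_{n-1}^2$ in the cases $n\geq 2$. Since $\g_1 = \kk v_0$ is abelian, $v_0^2-v_0$ is a \emph{central} non-zero-divisor of the domain $T_1 = A_1 \otimes \kk[v_0]$; in particular $T_1(v_0^2-v_0) = (v_0^2-v_0)T_1$, and once we know this lies in $B_1$ it is automatically a two-sided ideal of $B_1$. So there are two things to prove: the containment $T_1(v_0^2-v_0) \subseteq B_1$, and the failure of finite generation as a one-sided ideal.

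For the containment, I would first record the $n=1$ analogue of \Cref{image of Omega thm} together with its ``$t$-shifted'' versions: a direct computation using $\Psi_1(e_j) = t^{j+1}\del + (j+1)t^j v_0$ shows
\[ \Psi_1\!\big(\oo{2}{k}{-1}\big) \in \kk^\times\, t^{k-1}(v_0^2-v_0) \qquad \text{for all } k \geq 1. \]
(The case $k=1$ is the differentiator $\oo{2}{1}{-1}$, the $n=1$ instance of the family $\oo{2n}{2n-1}{-1}$ from \Cref{image of Omega thm}; it can alternatively be read off from the identity $\Psi_1(e_0^2 - e_1 e_{-1} - e_0) = v_0^2-v_0$, which follows from two short computations in $T_1$ together with the relation $\oo{2}{1}{-1} = -2(e_0^2 - e_1 e_{-1} - e_0)$.) In particular $(v_0^2-v_0)t^i \in B_1$ for every $i \geq 0$. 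Combining this with \Cref{T_n generation lemma}, which gives $T_1 = B_1\{1,t,t^2,\dots\}$, and with the centrality of $v_0^2-v_0$,
\[ T_1(v_0^2-v_0) = \sum_{i \geq 0} B_1\, t^i(v_0^2-v_0) = \sum_{i \geq 0} B_1\,(v_0^2-v_0)t^i \subseteq B_1, \]
which is the $n=1$ substitute for the chain \Cref{t element lemma} $\Rightarrow$ \Cref{two-sided ideal image prop}.

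For the failure of finite generation, I would repeat the argument in the proof of \Cref{non fg image ideal}: since $v_0^2-v_0$ is a central non-zero-divisor of the domain $T_1$, multiplication by it is an isomorphism of left (resp.\ right) $B_1$-modules $T_1 \cong (v_0^2-v_0)T_1 = T_1(v_0^2-v_0)$, so it suffices to show $T_1$ is not finitely-generated as a left or as a right $B_1$-module. Both of these statements already appear inside the proof of \Cref{non fg image ideal}: the left case is handled directly using the right ideal $\del T_1 + (v_0-1)T_1$ of $T_1$, and the right case reduces, under the quotient maps $T_1 \twoheadrightarrow T_0$ and $B_1 \twoheadrightarrow B_0$, to the fact that $A_1 = T_0$ is not finitely-generated as a right $B_0$-module, where $B_0 = \kk + \kk[t,\del]\del$. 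Combining the two parts yields the lemma.

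The only genuinely new input is the displayed formula for $\Psi_1(\oo{2}{k}{-1})$, and the point requiring care is that \Cref{t element lemma} itself does \emph{not} carry over to $n=1$: its proof relies on $[v_0, v_{n-1}^m] = m(n-1)v_{n-1}^m \neq 0$, whereas $[v_0, v_0^2-v_0]=0$ because $\g_1$ is abelian, so the commutators of the $\Psi_n(e_i)$ with $v_0^2-v_0$ all vanish. The powers of $t$ must therefore be produced directly, via the differentiators $\oo{2}{k}{-1}$, $k \geq 1$.
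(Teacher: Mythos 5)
Your proposal is correct and follows essentially the same route as the paper's proof: a direct computation that $\Psi_1(\oo{2}{k}{-1}) \in \kk^\times t^{k-1}(v_0^2-v_0)$ for all $k \geq 1$, combined with \Cref{T_n generation lemma} and centrality of $v_0^2-v_0$ to get the containment, and then the non-finite-generation argument already given inside the proof of \Cref{non fg image ideal}. Your explicit remark that \Cref{t element lemma} fails for $n=1$ because $[v_0,v_0^2-v_0]=0$ is a useful clarification the paper leaves implicit, but the underlying argument is the same.
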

\begin{proof}
    We can compute that $\Psi_1(\oo{2}{k}{-1}) \in \kk^\times t^{k-1}(v_0^2-v_0) \in B_1$ for all $k \geq 1$, therefore
    $T_1(v_0^2-v_0) = (v_0^2-v_0)T_1$ is an ideal of $B_1$ by \Cref{T_n generation lemma}. The proof of \Cref{non fg image ideal} shows that $T_1$ is not a finitely-generated left or right $B_1$-module, giving the result.
\end{proof}

We now deduce from \Cref{Main result large image} that $B_n$ and $T_n$ are birational and of the same GK-dimension.

\birationalimage*
   
\begin{proof}
    First notice that $T_n$ is a domain, and hence so is the subalgebra $B_n$, with
    \[ \GKdim B_n \leq \GKdim T_n = \GKdim A_1 + \dim \g_n = n+2,\]
    by Theorem 1 of \cite{warfield1984gk}. So $B_n$ and $T_n$ are domains of finite GK-dimension.
    Thus $B_n$ and $T_n$ are left and right Ore domains, and hence have unique left and right quotient rings $Q(T_n)$, $Q(B_n)$, by Corollary 8.1.21 and Corollary 2.1.14 of \cite{mcconnell2001noncommutative}. If $n \geq 2$, then since ${v_{n-1}^{-2} \in Q(B_n)}$ by \Cref{image of Omega thm}, we have
    \[ T_n = (T_n v_{n-1}^2) v_{n-1}^{-2}  \subseteq Q(B_n), \]
    hence $Q(T_n) \subseteq Q(B_n)$. In fact we have equality because $B_n$ is a subring of $T_n$, so the two algebras are birational. Very similar reasoning applies for $n=1$, by \Cref{B_1 large image lemma}.
    
    We now deduce that $B_n$, $T_n$ have equal GK-dimension. To do this, we recall the Gelfand-Kirillov transcendence degree $\mathrm{Tdeg}$, studied in \cite{zhang1996}. Then, \cite[Theorem 1.1]{zhang1996} implies
    \[ \mathrm{Tdeg}(\Ua(\mf{g}_n)[\del]) = \GKdim(\Ua(\mf{g}_n))+1=n+1,\]
    and next \cite[Proposition 9.3]{zhang1996} implies $\mathrm{Tdeg}(T_n) \geq n+2$. Because $\UW$ is generated by $\{e_{-1},e_0,e_1,e_2\}$ as a $\kk$-algebra, $B_n$ is also a finitely-generated $\kk$-algebra. Thus \cite[Corollary 1.2]{zhang1996} implies
    \[ \GKdim B_n \geq \mathrm{Tdeg}(T_n) \geq n+2,\]
    because $B_n$ is birational to $T_n$. Since $\GKdim T_n =n+2$, equality must hold.
    
    Finally, $B_n$ is not left or right Noetherian by \Cref{non fg image ideal} and \Cref{B_1 large image lemma}.
\end{proof}

\subsection{The subring \texorpdfstring{$(B_n)_0$}{Bn0} is Noetherian} \label{The subring Bn0 is Noetherian}
Although $\UW$ is known to be non-Noetherian, it is not known whether the subring of degree zero elements $(\UW)_0$ remains non-Noetherian. In this subsection, we answer the question for $B_n$: we show that $(B_n)_0$ is Noetherian for all $n$. Recall the definition of degree from \Cref{def: degree U and S,T grading def}.

\degreezeroNoetherian*

We first show that $(\Ua(\W{-1}))_0$ is finitely generated as an algebra.

\begin{lemma}\label{lem: UW0 generators}
    The subring of degree zero elements $(\Ua(\W{-1}))_0$ is generated as an algebra by $\{e_{-1}^k e_{k} \mid k \geq 0\}$.
\end{lemma}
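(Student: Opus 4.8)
The plan is to exhibit an explicit PBW-type normal form for homogeneous degree-zero elements and then extract generators from it. Recall that by the PBW theorem, $\Ua(\W{-1})$ has a $\kk$-basis consisting of ordered monomials $e_{i_1} e_{i_2} \cdots e_{i_r}$ with $-1 \leq i_1 \leq i_2 \leq \cdots \leq i_r$. Such a monomial lies in $(\Ua(\W{-1}))_0$ precisely when $i_1 + \cdots + i_r = 0$; so it is enough to show that every such ordered monomial lies in the subalgebra $R$ generated by $\{e_{-1}^k e_k \mid k \geq 0\}$ (noting $e_{-1}^0 e_0 = e_0$, and the empty monomial $1 \in R$).

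First I would set up an induction. Fix a degree-zero ordered monomial $x = e_{i_1} \cdots e_{i_r}$ with $i_1 \leq \cdots \leq i_r$ summing to $0$; since the total is zero and the entries are weakly increasing, we have $i_1 = \cdots = i_a = -1$ for some $a \geq 0$ (possibly $a = 0$) and $i_{a+1}, \dots, i_r \geq 0$. Let $k = i_r$ be the largest index. The key move is to ``pair off'' one copy of $e_{-1}$ with $e_k$: consider the element $e_{-1}^k e_k \in R$ and expand it in the PBW basis. Using $[e_{-1}, e_j] = (j+1)e_{j-1}$ repeatedly, one checks that $e_{-1}^k e_k = c\, e_{-1}^{k+1} e_{-1}^{\,\cdots}\! \cdots$; more precisely, $e_{-1}^k e_k$ equals a nonzero scalar multiple of the ordered monomial $e_{-1}^{k} e_{k}$ itself (that \emph{is} the leading/ordered term when $k \geq 1$, since $e_{-1} \leq e_k$) plus lower terms, where ``lower'' means monomials of strictly smaller order (length) $r$ or the same order but with a smaller largest index, or with the multiplicity of the largest index reduced. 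This gives a filtration on degree-zero monomials — order by the pair $(r, k, \text{multiplicity of }e_k)$ lexicographically, say — and the above shows that modulo lower terms in this filtration, $e_{-1}^k e_k \in R$ recovers the ``top'' piece.

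The main obstacle, and the part requiring care, is making the induction genuinely work: after splitting off $e_{-1}^k e_k$ one needs the ``remaining'' part $e_{-1}^{a-k} \cdot e_{i_{a+1}} \cdots e_{i_{r-1}}$ to again be degree zero and of strictly smaller complexity, which requires $a \geq k$, i.e.\ enough copies of $e_{-1}$ to pair with the top index. This is automatic: since $\sum i_j = 0$ and every $i_j \geq -1$ with $i_r = k$, the number of $-1$'s is at least $i_r = k$ (in fact at least the sum of the positive entries, which is $\geq k$). So one can always extract $e_{-1}^k e_k$, and the leftover monomial $e_{-1}^{a-k} e_{i_{a+1}} \cdots e_{i_{r-1}}$ is a degree-zero ordered PBW monomial of strictly smaller length. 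By the inductive hypothesis it lies in $R$, and then $x \in R e_{-1}^k e_k + (\text{lower-complexity degree-zero monomials}) \subseteq R$ by a second application of the induction hypothesis to absorb the correction terms (which are themselves degree-zero elements of strictly smaller complexity in the chosen well-ordering).

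Finally I would note the base case is trivial (the empty monomial and $e_0 = e_{-1}^0 e_0$), and that one should double-check the scalars $c$ appearing are nonzero — this follows since $\Char \kk = 0$ and the relevant scalars are products of positive integers of the form $(j+1)$ arising from the bracket $[e_{-1}, e_j] = (j+1)e_{j-1}$. This completes the argument that $(\Ua(\W{-1}))_0 = R$ is generated by $\{e_{-1}^k e_k \mid k \geq 0\}$.
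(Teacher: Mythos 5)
Your plan — write out degree-zero PBW monomials and induct on length by peeling off a generator factor — is the right kind of argument and is close to the paper's, with one structural difference: the paper peels off $e_{-1}^{k_0}e_{k_0}$ where $k_0$ is the \emph{smallest} non-negative index appearing (this gives an exact left factor, with the commutator correction controlled by the auxiliary fact that $[e_{-1}^a, e_b] \in \kk\{e_{-1}^k e_{b-a+k}\mid k\le a\}$), whereas you peel off $e_{-1}^{k}e_{k}$ where $k = i_r$ is the \emph{largest}. Your choice has the merit that the commutator corrections strictly decrease total length, so a plain induction on length can close.

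However, the key step in your write-up is garbled and does not establish the factorization you need. You propose to ``expand $e_{-1}^k e_k$ in the PBW basis,'' but $e_{-1}^k e_k$ \emph{is} already PBW-ordered, and the ensuing claim that it ``equals a nonzero scalar multiple of $e_{-1}^k e_k$ itself plus lower terms'' is vacuous (scalar $1$, no lower terms); the displayed ``$e_{-1}^k e_k = c\,e_{-1}^{k+1}\cdots$'' is not a meaningful identity. What you must actually compare is $x = e_{-1}^{a} e_{i_{a+1}}\cdots e_{i_r}$ against $y = \bigl(e_{-1}^{a-k}e_{i_{a+1}}\cdots e_{i_{r-1}}\bigr)\bigl(e_{-1}^{k}e_{k}\bigr)$, whose first factor is a strictly shorter degree-zero ordered monomial (here your observation $a\ge k$ is used). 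Commuting the $k$ trailing copies of $e_{-1}$ leftward in $y$ via $e_j e_{-1} = e_{-1}e_j - (j+1)e_{j-1}$ shows $y = x + (\text{degree-zero PBW terms of strictly smaller length})$, since each bracket collapses two factors into one. Only with this identity in place does the induction run: the first factor of $y$ and the corrections lie in $R$ by the induction hypothesis on length, $e_{-1}^k e_k$ is a generator, hence $x\in R$. Your proposal asserts the outcome of this computation without carrying it out — and also misdescribes the scalar: the leading coefficient is $1$, not a product of $(j+1)$'s, which instead appear in the corrections. That is the gap to fill.
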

\begin{proof}
    Firstly, for all $a,b \geq 0$, an easy induction on $a$ implies that 
    \begin{equation}\label{UW0 generators help}
        [e_{-1}^a, e_b] \in \kk \{e_{-1}^{k} e_{b-a+k} \mid k \leq a\}.
    \end{equation}

    Now by the PBW theorem, a $\kk$-basis of $(\UW)_0$ is
    \[\{e_{-1}^{k_0 + \dots k_{\ell}} e_{k_0} \dots e_{k_\ell} \mid \ell \in \NN, 0 \leq k_0 \leq \dots \leq k_\ell\}.\]
    Thus, it is enough to show that $e_{-1}^{k_0 + \dots k_{\ell}} e_{k_0} \dots e_{k_\ell}$ can be written as a linear combination of products of $\{e_{-1}^ke_{k} \mid k \geq 0\}$. We proceed by induction on the length $\ell$, with the base case $\ell = 1$ being trivial. Since 
    \begin{align*}
        e_{-1}^{k_0 + \dots + k_{\ell+1}} e_{k_0} \dots e_{k_{\ell+1}} = e_{-1}^{k_0} e_{k_0} e_{-1}^{k_1 + \dots + k_{\ell+1}} e_{k_1} \dots e_{k_{\ell+1}} + e_{-1}^{k_0}[e_{-1}^{k_1 + \dots + k_{\ell+1}}, e_{k_0}] e_{k_1} \dots e_{k_{\ell+1}},
    \end{align*}
    induction, using \eqref{UW0 generators help} in the second term, implies the result.
\end{proof}

\begin{lemma}\label{lem: UW0 finite generators}
    The subring of degree zero elements $(\Ua(\W{-1}))_0$ is finitely generated as an algebra by 
    \[\{e_0, e_{-1} e_{1}, e_{-1}^2 e_{2}\}.\]
\end{lemma}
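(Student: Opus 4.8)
The plan is to bootstrap from \Cref{lem: UW0 generators}, which says $(\Ua(\W{-1}))_0$ is generated by the infinite set $\{e_{-1}^k e_k \mid k \geq 0\}$, and show that all of these generators already lie in the subalgebra generated by the three elements $e_0 = e_{-1}^0 e_0$, $e_{-1}e_1$, and $e_{-1}^2 e_2$. So the goal reduces to: for every $k \geq 3$, express $e_{-1}^k e_k$ as a noncommutative polynomial in $\{e_0, e_{-1}e_1, e_{-1}^2e_2\}$. I would prove this by induction on $k$, the base cases $k = 0,1,2$ being given.

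For the inductive step the natural tool is commutator bracketing against $e_{-1}$: from $[e_{-1}, e_{-1}^{k-1}e_{k-1}]$ one expects, by the relation $[e_{-1}, e_j] = (j+1)e_{j-1}$ together with \eqref{UW0 generators help}, to produce a $\kk$-linear combination of $\{e_{-1}^{k}e_{k-2}\,e_0$-type terms$\}$ and $e_{-1}^{k}e_k$ is not directly reached this way, so instead I would bracket \emph{up}: compute $[e_{-1}e_1, e_{-1}^{k-1}e_{k-1}]$ or, more cleanly, work with the $\g' = \kk\{e_{-1},e_0,e_1\} \cong \mathfrak{sl}_2$ action. The cleanest route is probably to observe that $\ad(e_1)$ raises the "$e$-index" and that $e_{-1}^k e_k$ can be obtained from $e_{-1}^{k-1}e_{k-1}$ by one application of $\ad(e_{-1}e_{\,\cdot})$-type operators built from the three generators; concretely, since $e_{-1}^2 e_2$ and $e_0$ and $e_{-1}e_1$ are available, and $[e_{-1}e_1, \,-\,]$, $[e_{-1}^2e_2,\,-\,]$ are inner derivations of $(\Ua(\W{-1}))_0$, one can try to show inductively that $e_{-1}^{k}e_k \in \kk\langle e_0, e_{-1}e_1, e_{-1}^2e_2\rangle$ by writing it (up to lower-$k$ correction terms handled by induction and \eqref{UW0 generators help}) as a bracket such as $\tfrac{1}{c_k}[\,e_{-1}^2 e_2,\, e_{-1}^{k-2}e_{k-2}\,]$ plus products of earlier generators — the point being that bracketing $e_{-1}^2 e_2$ against $e_{-1}^{k-2}e_{k-2}$ produces, among other terms, a nonzero multiple of $e_{-1}^{k}e_k$, with all other terms of strictly smaller index $k$ and hence in the subalgebra by induction.

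I would carry out the steps in this order: (1) record the needed commutator identities in $(\Ua(\W{-1}))_0$ — essentially $[e_{-1}^a, e_b]$ from \eqref{UW0 generators help}, and the resulting formula for $[e_{-1}^2e_2,\, e_{-1}^{j}e_j]$ and $[e_{-1}e_1,\, e_{-1}^{j}e_j]$ as explicit $\kk$-linear combinations of elements $e_{-1}^{j'}e_{j'}$; (2) check that the coefficient of the top term $e_{-1}^{k}e_k$ in the chosen bracket is a nonzero rational (using $\Char \kk = 0$); (3) run the induction on $k$, with base cases $k \leq 2$ explicit; (4) conclude via \Cref{lem: UW0 generators}.

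The main obstacle I anticipate is step (2)–(1): verifying that some bracket of the three generators against an already-constructed generator actually has a \emph{nonzero} coefficient on the desired new generator $e_{-1}^ke_k$, and that all the error terms genuinely have smaller index. This is a bookkeeping computation with the $\mathfrak{sl}_2$-weight structure (degree $0$ is preserved automatically, so the relevant invariant is really the order/index), and the binomial coefficients appearing from $[e_{-1}^a, e_b]$ must be shown not to conspire to vanish — which is where characteristic zero is used. If a single bracket does not suffice to isolate $e_{-1}^k e_k$, one falls back on a small linear-algebra argument: the span of $\{e_{-1}^j e_j : j \le k\}$ is finite-dimensional in each relevant piece, the available brackets act on it, and one shows the orbit of $e_0$ under the algebra generated by $\ad(e_{-1}e_1)$ and $\ad(e_{-1}^2e_2)$ (plus multiplication) exhausts it — essentially an $\mathfrak{sl}_2$-representation-theoretic statement that the three chosen elements generate, which is exactly the ``step algebra'' philosophy already invoked in \Cref{The step elements}.
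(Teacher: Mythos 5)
Your general strategy — induct on $k$ and obtain $e_{-1}^{k+1}e_{k+1}$ by bracketing an already-constructed element against one of the three generators — is the right one, and indeed it is the paper's approach. But the specific route you propose has two concrete problems.

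First, your primary proposed bracket, $[e_{-1}^2 e_2,\, e_{-1}^{k-2}e_{k-2}]$, has leading coefficient $k-4$ on the target term $e_{-1}^{k}e_{k}$ (coming from $[e_2,e_{k-2}]=(k-4)e_k$), which vanishes at $k=4$. So that single bracket cannot carry the induction. The paper instead computes $[e_{-1}^k e_k,\, e_{-1}e_1]$, whose leading coefficient on $e_{-1}^{k+1}e_{k+1}$ is $(1-k)$; this is nonzero for all $k\geq 2$ in characteristic zero, and the failure at $k=1$ is exactly why $e_{-1}^2e_2$ is included as a generator. You flagged the nonvanishing of coefficients as a worry, correctly, but you picked a generator for which it actually fails.

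Second, the assertion that all remaining terms ``have strictly smaller index and hence are in the subalgebra by induction'' is too optimistic, and the paper has to work to close this. Expanding $[e_{-1}^k e_k,\, e_{-1}e_1]$ produces the term $e_{-1}^k e_{k-1}e_1$, which is \emph{not} of the form $e_{-1}^j e_j$ and is not covered by the inductive hypothesis on its own; the paper disposes of it via a separate preliminary induction showing $e_{-1}^k e_{k-1}e_1 \in A$ for all $k$. It also produces $e_{-1}[e_{-1}^k,e_1]e_k$, which the paper handles by an order/degree count: an element of order $\leq k+1$ and degree $-k$ must lie in $\kk\{e_{-1}^k, e_0 e_{-1}^k\}$, so the whole term is a product of $e_0$ and $e_{-1}^k e_k$ and is in $A$ by induction. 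Neither of these steps is anticipated in your plan; your fallback to an $\mathfrak{sl}_2$-representation-theoretic argument is also not what the paper does — its argument is an elementary PBW-basis count plus one auxiliary inductive lemma. So the proposal correctly identifies the shape of the proof but leaves genuine gaps at precisely the two points the paper has to address.
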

\begin{proof}
    Let $A$ be the subalgebra of $(\Ua(\W{-1}))_0$ generated by $\{e_0, e_{-1} e_{1}, e_{-1}^2 e_{2}\}$. We first observe that $e_{-1}^k e_{k-1} e_{1} \in A$ for all $k \geq 1$, by an easy induction using the identity
    \[e_{-1}^{k+1} e_{k} e_{1} = e_{-1}^{k} e_{k} e_{-1} e_{1} + (k+1) e_{-1}^{k} e_{k-1} e_{1}.\]

    Now, by \Cref{lem: UW0 generators}, it is enough to show that $e_{-1}^k e_{k} \in A$ for all $k \geq 0$. We proceed by induction, with the base cases $k =0, 1, 2$ trivial. We can compute that 
    \begin{align*}
        [e_{-1}^k e_{k}, e_{-1} e_{1}] &= e_{-1} [e_{-1}^k e_{k},  e_{1}] + [e_{-1}^k e_{k}, e_{-1}] e_{1} \\ 
        &= e_{-1} [e_{-1}^k,  e_{1}] e_{k} + (1-k) e_{-1}^{k+1}  e_{k+1}  - (k+1) e_{-1}^k e_{k-1} e_{1},
    \end{align*}
    therefore by reordering the terms, and by the above, it suffices to show that $e_{-1} [e_{-1}^k,  e_{1}] e_{k} \in A$. Now, $e_{-1}[e_{-1}^k, e_1]$ is an element of $\Ua(\W{-1})$ of order $k+1$ and degree $-k$, which forces
    \[e_{-1}[e_{-1}^k, e_1] \in \kk \{e_0 e_{-1}^{k}, e_{-1}^{k}\}.\] 
    Thus by induction, $e_{-1} [e_{-1}^k,  e_{1}] e_{k} \in A$, so $A = (\Ua(\W{-1}))_0$ as required. 
\end{proof}

With \Cref{lem: UW0 finite generators}, we can now prove \Cref{Main result degree zero Noetherian} in the cases $n \leq 1$.

\begin{lemma} \label{B00 B10 Noetherian}
    The algebras $(B_0)_0$ and $(B_1)_0$ are commutative Noetherian rings.
\end{lemma}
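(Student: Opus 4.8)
The goal is to show that $(B_0)_0$ and $(B_1)_0$ are commutative Noetherian rings. My plan is to pin down these algebras very explicitly using the images of the generators of $(\Ua(\W{-1}))_0$ computed in \Cref{lem: UW0 finite generators}, namely $\{e_0, e_{-1}e_1, e_{-1}^2e_2\}$. Since $\Psi_n$ is a graded algebra homomorphism by \Cref{Psi graded map prop}, the subring $(B_n)_0$ is the image of $(\Ua(\W{-1}))_0$, hence is generated as a $\kk$-algebra by $\Psi_n(e_0)$, $\Psi_n(e_{-1}e_1)$, and $\Psi_n(e_{-1}^2e_2)$.

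First I would treat $n=0$. Here $T_0 = A_1 = \kk[t,\del]$ and $\Psi_0(f\del) = f\del$, so $\Psi_0(e_0) = t\del$, $\Psi_0(e_{-1}e_1) = \del \cdot t^2\del = t^2\del^2 + 2t\del$, and $\Psi_0(e_{-1}^2e_2) = \del^2 t^3\del = t^3\del^3 + 6t^2\del^2 + 6t\del$. Setting $x = t\del$, all three generators lie in the polynomial ring $\kk[x] \subseteq A_1$ (using $\del^k t^k = (x)(x+1)\cdots(x+k-1)$ type identities, equivalently $t^j\del^j = x(x-1)\cdots(x-j+1)$). In fact $\Psi_0(e_0) = x$ already generates $\kk[x]$, so $(B_0)_0 = \kk[x] = \kk[t\del]$, a commutative polynomial ring in one variable, which is Noetherian. (I should double-check that $(B_0)_0$ is not larger than $\kk[x]$: since $(B_0)_0$ is generated by the three explicit elements above and all lie in $\kk[x]$, we get $(B_0)_0 \subseteq \kk[x]$, and the reverse inclusion is clear from $\Psi_0(e_0)=x$.)

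For $n=1$, $T_1 = A_1 \otimes \Ua(\g_1) = \kk[t,\del][v_0]$ with $v_0$ central, and $\Psi_1(f\del) = f\del + f'v_0$. Thus $\Psi_1(e_0) = t\del + v_0$, and I would compute $\Psi_1(e_{-1}e_1) = \Psi_1(\del)\Psi_1(t^2\del) = \del(t^2\del + 2tv_0) = t^2\del^2 + 2t\del + 2t\del v_0 + 2v_0$ (being careful with the $\del t = t\del + 1$ relation), and similarly $\Psi_1(e_{-1}^2e_2)$. The key structural point is that $\del$ and $v_0$ commute, and one expects everything to land in the commutative subring $\kk[t\del, v_0] \subseteq T_1$; more precisely I would show that each generator, hence all of $(B_1)_0$, lies in $\kk[x, v_0]$ where $x = t\del$, a polynomial ring in two commuting variables. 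Conversely $\Psi_1(e_0) = x + v_0$ and one more combination should recover both $x$ and $v_0$ (e.g. from $\Psi_1(e_{-1}e_1)$ after subtracting a polynomial in $x + v_0$), so $(B_1)_0 = \kk[x,v_0]$, which is commutative Noetherian by the Hilbert basis theorem. If the generators do not quite generate all of $\kk[x,v_0]$, then $(B_1)_0$ is still a subalgebra of the Noetherian domain $\kk[x,v_0]$ that is finitely generated, hence Noetherian --- but I expect the cleaner statement $(B_1)_0 = \kk[t\del,v_0]$ to hold.

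\textbf{Main obstacle.} The only real work is the bookkeeping in the Weyl-algebra computation of $\Psi_1(e_{-1}e_1)$ and $\Psi_1(e_{-1}^2e_2)$: one must commute $\del$'s past $t$'s correctly and verify that the resulting expressions are polynomials in the two commuting elements $t\del$ and $v_0$ with no ``leftover'' non-commutative terms. This is routine but must be done carefully; the conceptual payoff is that commutativity of $(B_1)_0$ is not obvious a priori (it sits inside the noncommutative ring $T_1$) yet falls out once one sees the generators lie in $\kk[t\del, v_0]$. Once commutativity and finite generation are established, Noetherianity is immediate from Hilbert's basis theorem.
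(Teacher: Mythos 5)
Your proposal is correct and follows essentially the same route as the paper: both establish commutativity by locating $(B_n)_0$ inside the commutative ring $(T_1)_0 = \kk[t\del, v_0]$, both obtain finite generation from \Cref{lem: UW0 finite generators} together with the fact that $\Psi_n$ is a surjective graded homomorphism onto $B_n$, and both conclude via the Hilbert basis theorem. The only difference is that you carry out the Weyl-algebra computation of the generator images explicitly, whereas the paper shortcuts this: once one observes $(B_n)_0 \subseteq (T_1)_0$ is commutative and is a finitely generated $\kk$-algebra (being a homomorphic image of the finitely generated $(\Ua(\W{-1}))_0$), Noetherianity is immediate without identifying the subring precisely.
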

\begin{proof}
    Note that $(T_1)_0 = \kk[t\del, v_0]$ is commutative, so $(B_0)_0, (B_1)_0$ are commutative. Since $(\UW)_0$ is a finitely-generated $\kk$-algebra, so are $(B_0)_0, (B_1)_0$. It follows from the Hilbert basis theorem that $(B_0)_0, (B_1)_0$ are Noetherian. (In fact, $B_0 = \kk \oplus A_1 \del$ so $(B_0)_0 = \kk[t\del]$ is a polynomial ring.)
\end{proof}

Thus, we assume $n \geq 2$ for the rest of the subsection. We will prove \Cref{Main result degree zero Noetherian} by showing that the associated graded ring $\gr (B_n)_0$ (defined with respect to the order filtration) is Noetherian. The advantage of this approach is that we may work almost exclusively with commutative rings for the rest of this section. We define some homomorphisms between these rings.

\begin{definition}
    Let $m \leq n$ be non-negative integers. We define the $\kk$-algebra homomorphisms
    \[ \iota_{m,n} : \gr T_m \rightarrow \gr T_n, \quad \bar{\del} \mapsto \bar{\del},\; \bar{t} \mapsto \bar{t},\; \bar{v}_j \mapsto \bar{v}_j,\]
    for all $j \in \{0, \dots, m-1\}$, and
    \[ \pi_{n,m} = \gr p_{n,m}: \gr T_n \rightarrow \gr T_m\]
    where $p_{n,m}: T_n \rightarrow T_m$ is the algebra homomorphism induced by the quotient $\g_n \rightarrow \g_m$.
\end{definition}

The homomorphisms $\iota_{m,n}$, $\pi_{n,m}$ have good properties.

\begin{lemma}\label{gr is surj}
    Let $m \leq  n$ be positive integers. Then, $\iota_{m,n}$ is injective, $\pi_{n,m}$ is surjective and respects degree, and $\pi_{n,m} \circ \iota_{m,n} = \mathrm{id}_{\gr T_m}$. Moreover,
    \[\pi_{n,m}(\gr B_n) = \gr B_m.\]
\end{lemma}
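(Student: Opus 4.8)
The plan is to verify each of the five assertions in turn, most of which follow quickly from the explicit definitions and the commutative diagrams already established in the preliminaries.

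\textbf{Injectivity of $\iota_{m,n}$ and surjectivity of $\pi_{n,m}$.} First I would recall that $\gr T_m = \kk[\bar t, \bar\del]\otimes\Sa(\g_m)$ is a polynomial ring on the variables $\bar t, \bar\del, \bar v_0,\dots,\bar v_{m-1}$, and $\gr T_n$ is the polynomial ring on $\bar t,\bar\del,\bar v_0,\dots,\bar v_{n-1}$. So $\iota_{m,n}$ is the obvious inclusion of a polynomial subring, which is manifestly injective; and $\pi_{n,m}$, being the associated graded of the surjection $p_{n,m}$ induced by $\g_n\twoheadrightarrow\g_m$, sends $\bar v_j\mapsto \bar v_j$ for $j<m$ and $\bar v_j\mapsto 0$ for $m\le j\le n-1$, hence is the evident surjection of polynomial rings. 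That $\pi_{n,m}$ respects the degree grading is immediate since $p_{n,m}$ does (it kills homogeneous generators) and taking associated graded with respect to the order filtration commutes with the degree grading by \Cref{gradings commute}. The identity $\pi_{n,m}\circ\iota_{m,n}=\mathrm{id}_{\gr T_m}$ is then just a check on the generators $\bar t,\bar\del,\bar v_0,\dots,\bar v_{m-1}$, each of which is fixed.

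\textbf{The statement $\pi_{n,m}(\gr B_n)=\gr B_m$.} This is the only part requiring an argument beyond bookkeeping. The key input is the commutative triangle of graded homomorphisms $p_{n,m}\circ\Psi_n=\Psi_m$ from \Cref{Psi formula} (equivalently, its associated graded version $\pi_{n,m}\circ\Phi_n=\Phi_m$ from the diagram after \Cref{filtered algebra hom}). Applying $p_{n,m}$ to $B_n=\Psi_n(\UW)$ gives $p_{n,m}(B_n)=\Psi_m(\UW)=B_m$. Now I pass to associated graded rings. Since $\Psi_n$ is a filtered homomorphism (\Cref{filtered algebra hom}), $B_n$ inherits the order filtration from $T_n$, and by \Cref{gradings commute} its associated graded $\gr B_n$ is a graded subring of $\gr T_n$. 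The point is that $\gr$ of the surjection $p_{n,m}|_{B_n}\colon B_n\to B_m$ is exactly $\pi_{n,m}|_{\gr B_n}$: both $B_n$ and $B_m$ carry the filtrations induced from $T_n$, $T_m$, and $p_{n,m}$ respects these (it is filtered as $\gr p_{n,m}=\pi_{n,m}$ is defined). For a filtered surjection $f\colon A\to C$ between subspaces, $\gr f$ need not be surjective in general — but here $p_{n,m}\colon T_n\to T_m$ is a filtered surjection of $\kk$-vector spaces that is \emph{strict} (it sends the order-$r$ part onto the order-$r$ part, being induced by a graded-ring quotient), so its restriction to the filtered subspace $B_n$ is also strict onto $B_m$, and for a strict filtered surjection the associated graded map is surjective. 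Hence $\pi_{n,m}(\gr B_n)=\gr(p_{n,m}(B_n))=\gr B_m$.

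\textbf{Main obstacle.} The one genuinely delicate point is the strictness claim used above: one must confirm that $p_{n,m}$ carries $(B_n)\cap(\text{order}\le r)$ \emph{onto} $(B_m)\cap(\text{order}\le r)$, not merely into it. This follows because $p_{n,m}\colon T_n\to T_m$ itself is strict for the order filtration — an element of $T_m$ of order $\le r$ lifts to an element of $T_n$ of order $\le r$ simply by choosing any monomial preimage of the same total degree in the PBW generators — and because $\Psi_m$ is a filtered surjection onto $B_m$, so every order-$\le r$ element of $B_m$ is $\Psi_m(u)=p_{n,m}(\Psi_n(u))$ for some $u\in\UW$ with $\Psi_n(u)\in B_n$ of order $\le r$. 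Once strictness is in hand, surjectivity of $\pi_{n,m}$ on $\gr B_n$ is automatic, and the remaining assertions are formal. I would therefore organise the proof as: (i) polynomial-ring description giving injectivity, surjectivity, degree-compatibility and the retraction identity; (ii) the commuting triangle giving $p_{n,m}(B_n)=B_m$; (iii) strictness of $p_{n,m}$ and hence surjectivity of $\pi_{n,m}$ onto $\gr B_m$.
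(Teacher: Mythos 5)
Your first half (injectivity of $\iota_{m,n}$, surjectivity and degree-compatibility of $\pi_{n,m}$, and the retraction identity) is correct and matches the paper's short check on generators. The issue is in the second half. You rightly identify the crux as showing that $p_{n,m}|_{B_n}\colon B_n\to B_m$ is strict, but the inference you make first — ``$p_{n,m}\colon T_n\to T_m$ is strict, so its restriction to the filtered subspace $B_n$ is also strict onto $B_m$'' — is false in general: strictness does not restrict to filtered subspaces. For instance, $p\colon\kk[x,y]\to\kk[x]$, $y\mapsto 0$, with the order filtration, is strict, yet its restriction to the subspace $B=\kk\{1,x+y^2\}$ is not strict onto $p(B)=\kk\{1,x\}$, since $x$ has order $1$ in $\kk[x]$ but no preimage in $B$ of order $\leq 1$. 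Your ``main obstacle'' paragraph acknowledges a problem here, but the patch is circular: to produce, for each $x\in B_m$ of order $\leq r$ (in the subspace filtration from $T_m$), a $u\in\UW$ with $\Psi_m(u)=x$ and $\Psi_n(u)$ of order $\leq r$, you need to choose $u$ of order $\leq r$ in $\UW$ — and that is precisely strictness of $\Psi_m$, which you have not proved.

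The paper avoids this by making a different choice: it equips $B_n$ and $B_m$ with the filtrations \emph{induced from $\UW$} (i.e. the image filtrations $\Psi_n(\Ua_r(\W{-1}))$ and $\Psi_m(\Ua_r(\W{-1}))$), rather than the subspace filtrations from $T_n$ and $T_m$. With these, $p_{n,m}|_{B_n}$ is strict for free, since $p_{n,m}\bigl(\Psi_n(\Ua_r(\W{-1}))\bigr)=\Psi_m(\Ua_r(\W{-1}))$ exactly, by the commuting triangle $p_{n,m}\circ\Psi_n=\Psi_m$; the conclusion $\pi_{n,m}(\gr B_n)=\gr B_m$ then follows from \cite[Corollary D.III.7]{nastasescu1982}. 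If you wish to salvage your version with subspace filtrations, you would first have to establish that $\Psi_n$ and $\Psi_m$ are strict filtered maps, i.e. that the image and subspace filtrations on $B_n$, $B_m$ coincide — a nontrivial claim that your proposal takes for granted.
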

\begin{proof}
    It is straightforward to check from the definition that
    \[ \pi_{n,m}(\bar{\del})=\bar{\del},\; \pi_{n,m}(\bar{t})=\bar{t},\; \pi_{n,m}(\bar{v}_j) = \bar{v}_j,\]
    if $j \leq m-1$, whilst $\pi_{n,m}(\bar{v}_j)=0$ if $j \geq m$. Thus $\pi_{n,m}$ is surjective and respects degree, whilst $\pi_{n,m} \circ \iota_{m,n}$ is the identity on the $\kk$-algebra generators $\bar{t}, \bar{\del}, \bar{v}_0, \dots, \bar{v}_{m-1}$ of $\gr T_m$, hence must be the identity. In particular, $\iota_{m,n}$ must be injective.

    Since the order filtrations on $B_m$, $B_n$ are induced from that on $\UW$, the quotient map
    \[ p_{n,m}|_{B_n}: B_n \rightarrow B_m\]
    must be a strict map, so by \cite[Corollary D.III.7]{nastasescu1982}, it follows that $\pi_{n,m}(\gr B_n) = \gr B_m$.
\end{proof}

Because of \Cref{gr is surj}, the homomorphisms $\iota_{m,n}$ form a compatible system of injective ring homomorphisms. For this reason we identify $\gr T_m$ as a subring of $\gr T_n$, via $\iota_{m,n}$, for the remainder of this subsection.\\

We now recall an  important definition from commutative algebra. 

\begin{definition}
    An element $a$ of a commutative ring $R'$ is \emph {integral} over a subring $R$ if there exists a monic polynomial $f \in R[x]$ with $f(a) = 0$, that is,
    \[a^n +c_{n-1}a^{n-1} +\dots +c_0 = 0\]
    for some $n > 0$ and $c_0,\dots, c_{n-1} \in R$. We say that $R'$ is integral over $R$ if every element of $R'$ is integral over $R$.
\end{definition}

We can now show the central technical result needed for the proof of \Cref{Main result degree zero Noetherian}. Recall from \Cref{gradings commute} that the associated graded of the zero degree subring $\gr (B_n)_0$ coincides with $(\gr B_n)_0$, and the same for $T_n$.

\begin{theorem}\label{grTn0 integral}
    The ring $(\gr T_n)_0$ is integral over $(\gr B_n)_0$. 
\end{theorem}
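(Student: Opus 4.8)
The goal is to show every element of $(\gr T_n)_0$ satisfies a monic polynomial over $(\gr B_n)_0$. Since $\gr B_n$ contains $T_n v_{n-1}^2$ at the associated-graded level --- more precisely, $\Phi_n(\gr \Omega^{(2n)}_{2n-1,-1}) \in \kk^\times \bar v_{n-1}^2$ by \Cref{image of Omega thm} together with \Cref{filtered algebra hom} --- we know that $\bar v_{n-1}^2 \in \gr B_n$ and moreover $(\gr T_n) \bar v_{n-1}^2 \subseteq \gr B_n$ by the associated-graded version of \Cref{two-sided ideal image prop}. The key point is then that $(\gr T_n)_0$ is, up to the commutative ring $\kk[\bar t\bar\partial]$ and some localisation-free bookkeeping, generated over $(\gr B_n)_0$ by finitely many ``boundary'' monomials involving $\bar v_{n-1}$ to the first power, and each such monomial becomes integral once we can multiply by $\bar v_{n-1}^2$ to land back inside $\gr B_n$.

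\textbf{Key steps.} First I would write down explicit $\kk$-algebra generators of the commutative ring $(\gr T_n)_0 = (\kk[\bar t,\bar\partial]\otimes \Sa(\g_n))_0$: it is generated by $\bar t\bar\partial$ and by the degree-zero monomials $\bar t^{\,a}\bar\partial^{\,b}\bar v_0^{c_0}\cdots \bar v_{n-1}^{c_{n-1}}$ with $a - b + \sum_{j} j c_j = 0$. Using \Cref{lem: UW0 finite generators} and its image, I would argue $(\gr B_n)_0$ already contains $\bar t\bar\partial$, $\bar t\bar\partial + \sum_j \bar v_j(\ldots)$-type combinations, and in particular the images $\Phi_n$ of $e_{-1}^k e_k$, from which one extracts that $(\gr B_n)_0$ contains a ``large enough'' subring --- concretely, it contains $\bar v_{n-1}^2$ and, by multiplying the degree-$\pm(n-1)$ generators of $\gr T_n$ pairwise, all degree-zero monomials in which $\bar v_{n-1}$ appears to an even power. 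Next, for a generic degree-zero generator $g$ of $(\gr T_n)_0$ in which $\bar v_{n-1}$ appears to the power $c_{n-1}$: if $c_{n-1}$ is even, $g \in (\gr B_n)_0$ directly; if $c_{n-1}$ is odd, then $g^2$ has $\bar v_{n-1}$-exponent $2c_{n-1}$, hence $g^2 \in (\gr B_n)_0$, so $g$ satisfies the monic polynomial $x^2 - g^2$ over $(\gr B_n)_0$. Since integral elements form a subring and $(\gr T_n)_0$ is generated by such $g$, this gives integrality.

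\textbf{The likely obstacle.} The delicate step is verifying that $(\gr B_n)_0$ really does contain \emph{all} degree-zero monomials with $\bar v_{n-1}$ to an even power, not just $\bar v_{n-1}^2$ itself; one needs to know that multiplying $\bar v_{n-1}^2 \in \gr B_n$ by arbitrary elements of $\gr T_n$ (to reach $(\gr T_n)\bar v_{n-1}^2 \subseteq \gr B_n$) and then intersecting with degree zero yields exactly the degree-zero part of that ideal, which it does by \Cref{gradings commute} and the graded version of \Cref{two-sided ideal image prop}. The other subtlety is the case analysis on the parity of $c_{n-1}$: when $c_{n-1}$ is odd one must be sure that $g^2$, whose total degree is zero and whose $\bar v_{n-1}$-exponent is even, genuinely lies in $(\gr T_n)\bar v_{n-1}^2$ and hence in $(\gr B_n)_0$ --- here it helps that $g$ being degree zero with $c_{n-1}\ge 1$ forces $g^2$ to be divisible by $\bar v_{n-1}^2$ in the polynomial ring $\gr T_n$. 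I expect the bulk of the write-up to be the careful enumeration of generators and the even/odd bookkeeping rather than any conceptually hard argument; the conceptual content is entirely contained in \Cref{image of Omega thm} and \Cref{two-sided ideal image prop}.
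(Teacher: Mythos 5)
Your proposal has a genuine gap. The parity trick on $c_{n-1}$ is sound when $c_{n-1} \geq 1$: if $c_{n-1} \geq 2$ the generator lies in $(\gr T_n)\bar v_{n-1}^2 \subseteq \gr B_n$, and if $c_{n-1}$ is odd then $g^2$ is divisible by $\bar v_{n-1}^2$ and you can use $x^2 - g^2$. But you also assert that ``if $c_{n-1}$ is even, $g \in (\gr B_n)_0$ directly'', and this is false when $c_{n-1} = 0$. A degree-zero monomial with no $\bar v_{n-1}$ factor need not lie in $\gr B_n$, and squaring it does not introduce $\bar v_{n-1}$, so neither step of your argument reaches it. Concretely, $(\gr T_n)_0$ is generated as a $\kk$-algebra by $\bar t\bar\del$, $\bar v_0$, and $\bar\del^{\,j}\bar v_j$ for $1 \leq j \leq n-1$; of these, only $\bar\del^{\,n-1}\bar v_{n-1}$ involves $\bar v_{n-1}$. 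Elements such as $\bar\del\,\bar v_1$ (for $n \geq 3$) or $\bar v_0$ are not obviously in $\gr B_n$ --- one only knows, for instance, that $\bar t\bar\del + \bar v_0 = \gr\Psi_n(e_0)$ and $2(\bar\del\bar v_1 - \bar v_0^2) = \gr\Psi_n(\oo{2}{1}{-1})$ are --- so for these generators you have produced no monic polynomial at all.

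The paper handles exactly this difficulty by a strong downward induction: it shows $(\bar T_i\,\bar v_{i-1})_0$ lies in the integral closure $Q$ for $i = n, n-1, \dots, 2$. The base case $i = n$ is your quadratic trick. For $i < n$ one uses that $\pi_{n,i}\vert_{\bar B_n}: \bar B_n \to \bar B_i$ is surjective (\Cref{gr is surj}) and that $\bar T_i\,\bar v_{i-1}^2 \subseteq \bar B_i$ again by \Cref{Bn large ideal thm}; so $x\bar v_{i-1}^2$ lifts to $x\bar v_{i-1}^2 + y \in \bar B_n$ with $y \in \ker\pi_{n,i} = \sum_{j > i}\bar T_j\bar v_{j-1}$, and the inductive hypothesis forces $y \in Q$, hence $x\bar v_{i-1}^2 \in Q$. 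This step is the entire content that your approach is missing: it is what makes $\bar\del\bar v_1, \dots, \bar\del^{\,n-2}\bar v_{n-2}$ integral. After the induction, the remaining generators $\bar v_0$ and $\bar t\bar\del$ are handled using $\gr\Psi_n(\oo{2}{1}{-1})$ and $\gr\Psi_n(e_0)$. So the conceptual content is not ``entirely contained in \Cref{image of Omega thm} and \Cref{two-sided ideal image prop}'' as you suggest; the compatibility of the tower $\bar B_n \twoheadrightarrow \bar B_{n-1} \twoheadrightarrow \cdots$ with the large ideals $T_i v_{i-1}^2$ is an essential further ingredient.
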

\begin{proof}
For all $j \in \NN$, let $\bar{B}_j = \gr B_j$ and $\bar{T}_j = \gr T_j$, with the corresponding elements of degree zero, as defined by \Cref{gr T grading def}, denoted $(\bar{B}_n)_0 \leq (\bar{T}_n)_0$. Let $Q$ be the integral closure of $(\bar{B}_n)_0$ in $(\bar{T}_n)_0$, which is a subring of $(\bar{T}_n)_0$. We will show that $Q=(\bar{T}_n)_0$. 
    
We will prove, via strong downward induction, that
\begin{equation}\label{eq:grTn0 integral induction}
    (\bar{T}_{i} \bar{v}_{i-1})_0 \in Q \text{ for all } i = n, \dots, 2,
\end{equation}
recalling $\bar{T}_{i}$ is identified as a subring of $\bar{T}_n$ by \Cref{gr is surj}.

The base case that $(\bar{T}_{n} \bar{v}_{n-1})_0$ is integral over $(\bar{B}_n)_0$ follows by \Cref{Bn large ideal thm}, because $T_n v_{n-1}^2 \subseteq B_n$, and thus 
\[(\bar{T}_{n} \bar{v}_{n-1})_0 (\bar{T}_{n} \bar{v}_{n-1})_0 = (\bar{T}_{n} \bar{v}^2_{n-1})_0 \in (\bar{B}_n)_0.\]
So every element of $(\bar{T}_{n} \bar{v}_{n-1})_0$ satisfies a monic quadratic equation over $(\bar{B}_n)_0$.

By \Cref{gr is surj}, the homomorphism
\[ \pi_{n, i}|_{\bar{B}_n}: \bar{B}_n \to \bar{B}_{i}\]
is surjective, and by \Cref{Bn large ideal thm}, $\bar{T}_{i} \bar{v}_{i-1}^2 \subseteq \bar{B}_{i}$. Thus for all $x \in (\bar{T}_{i})_{-2(i-1)}$, there exists $y \in \ker \pi_{n, i}$ such that 
\[ x \bar{v}_{i-1}^2 + y \in \bar{B}_n, \]
and without loss of generality $y \in (\bar{T}_n)_0$. Now, $\ker \pi_{n, i}$ is the ideal of $\bar{T}_n$ generated by 
$\bar{v}_{i}, \dots, \bar{v}_{n-1}$. In fact, since $\bar{T}_n$ is commutative, we can rearrange terms to see that
\[ \ker \pi_{n, i} = \sum_{j=i+1}^{n} \bar{T}_{j} \bar{v}_{j-1}.\]
Since $y \in \ker \pi_{n, i}$, it follows that $y \in Q$ by induction. Thus 
\[x \bar{v}_{i-1}^2 = (x \bar{v}_{i-1}^2 +y)-y \in Q,\]
for all $x \in (\bar{T}_{i})_{-2(i-1)}$. So, using the same reasoning as in the base case, we obtain $(\bar{T}_{i} \bar{v}_{i-1})_0 \subseteq Q$ as desired.

Note that 
\[(\bar{T}_n)_0 = \kk[\bar{t}\bar{\del}, \bar{t}\bar{\del} + \bar{v_0}, \bar{\del} \bar{v}_1, \dots, \bar{\del} \bar{v}_{n-1}],\]
and by \eqref{eq:grTn0 integral induction}, $\bar{\del} \bar{v}_1, \dots, \bar{\del} \bar{v}_{n-1} \in Q$. Furthermore, $Q$ contains
\[\gr (\Psi_n(\oo{2}{1}{-1})) = 2(\bar{\del} \bar{v}_1 - \bar{v}_0^2),\]
thus $\bar{v}_0^2 \in Q$ and $\bar{v}_0 \in Q$. Since $\bar{t}\bar{\del} + \bar{v_0} = \gr(\Psi_n(e_{0})) \in Q$, it follows that $(\bar{T}_n)_0 \subseteq Q$, as required.
\end{proof}

A fundamental connection between integral and finite extensions is characterized by the following well-known Proposition.

\begin{proposition} \label{fg integral extension is finite}
    Let $R'$ be a commutative ring and $R$ be a subring, such that $R'$ is a finitely-generated $R$-algebra. Then, $R'$ is a finitely-generated $R$-module if and only if $R'$ is integral over $R$.
\end{proposition}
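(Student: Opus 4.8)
The plan is to prove both implications by two standard commutative-algebra arguments. For the backward direction, suppose $R'$ is integral over $R$ and write $R' = R[a_1, \dots, a_r]$ as an $R$-algebra; I would induct on $r$. When $r = 1$, the element $a_1$ satisfies a monic relation $a_1^d + c_{d-1}a_1^{d-1} + \dots + c_0 = 0$ with all $c_i \in R$, and using this relation repeatedly to rewrite $a_1^N$ for $N \geq d$ as an $R$-combination of lower powers shows that $R[a_1] = R\cdot 1 + Ra_1 + \dots + Ra_1^{d-1}$ is a finitely-generated $R$-module. For the inductive step, put $R'' = R[a_1, \dots, a_{r-1}]$, which is a finitely-generated $R$-module by the inductive hypothesis; since $a_r$ is integral over $R$ it is a fortiori integral over $R''$, so the case $r = 1$ shows $R' = R''[a_r]$ is a finitely-generated $R''$-module. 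As the composite of two module-finite extensions is module-finite, $R'$ is a finitely-generated $R$-module.

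For the forward direction, suppose $R'$ is generated as an $R$-module by $m_1, \dots, m_k$ and let $a \in R'$ be arbitrary. Multiplication by $a$ is an $R$-linear endomorphism of $R'$, so there is a $k \times k$ matrix $M = (c_{ij})$ with entries in $R$ such that $a m_i = \sum_{j=1}^{k} c_{ij} m_j$ for each $i$; equivalently, the matrix $a I_k - M$, whose entries lie in the commutative ring $R[a] \subseteq R'$, annihilates the column vector with entries $m_1, \dots, m_k$. Multiplying on the left by the adjugate of $a I_k - M$ and using the identity $\operatorname{adj}(N)\,N = \det(N)\,I_k$ over the commutative ring $R[a]$, one gets $\det(a I_k - M)\, m_i = 0$ for every $i$; writing $1 = \sum_i r_i m_i$ with $r_i \in R$ then forces $\det(a I_k - M) = \sum_i r_i \det(a I_k - M)\, m_i = 0$. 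Expanding this determinant yields a monic polynomial of degree $k$ in $a$ with coefficients in $R$, so $a$ is integral over $R$.

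There is no genuine obstacle, since the statement is classical; the one point that deserves care is that $R$ is an arbitrary commutative ring rather than a field, so the forward implication cannot be obtained by linear algebra over a field but instead needs the adjugate (Cayley--Hamilton) trick over $R$, as above. Note too that the hypothesis that $R'$ is a finitely-generated $R$-algebra is used only for the backward implication.
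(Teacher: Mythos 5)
Your proof is correct, and it is essentially the same standard argument that the paper invokes: the paper simply cites Matsumura (Theorem 9.1, which is exactly the determinant/Cayley--Hamilton criterion you spell out) plus induction, whereas you write out both the adjugate trick for the forward direction and the module-finite tower for the backward direction in full.
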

\begin{proof}
    This follows from \cite[Theorem 9.1]{Matsumura1989} and induction.
\end{proof}

Applying \Cref{fg integral extension is finite} to \Cref{grTn0 integral}, we obtain the following.

\begin{corollary}\label{grTn0 fg}
    The commutative ring $(\gr T_n)_0$ is a finitely-generated $(\gr B_n)_0$-module.
\end{corollary}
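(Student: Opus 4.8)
The plan is to obtain this as a formal consequence of \Cref{grTn0 integral} together with the commutative-algebra fact \Cref{fg integral extension is finite}. Applying the latter with $R = (\gr B_n)_0$ and $R' = (\gr T_n)_0$ requires three inputs: that $R \subseteq R'$ are commutative rings, that $R'$ is integral over $R$, and that $R'$ is a finitely-generated $R$-algebra. The first is immediate (both rings sit inside the commutative ring $\gr T_n$), the second is exactly \Cref{grTn0 integral}, so the only point that still needs an argument is the algebra-finiteness of $R'$ over $R$.

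To check that, I would reduce to showing $(\gr T_n)_0$ is a finitely-generated $\kk$-algebra, since $\kk \subseteq (\gr B_n)_0$ forces any finite $\kk$-algebra generating set to be a finite $(\gr B_n)_0$-algebra generating set as well. The fastest route is to quote the proof of \Cref{grTn0 integral} itself, which already exhibits an explicit finite list of $\kk$-algebra generators of $(\gr T_n)_0$ (the elements $\bar t\bar\del$, $\bar t\bar\del + \bar v_0$, and $\bar\del^{\,j}\bar v_j$ for $1 \le j \le n-1$, say); one verifies directly that every degree-zero monomial $\bar t^{a}\bar\del^{b}\bar v_0^{c_0}\cdots\bar v_{n-1}^{c_{n-1}}$, for which necessarily $b = a + \sum_{j \ge 1} j c_j$, factors as a product of these using commutativity of $\gr T_n$. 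An alternative, cleaner if one prefers not to track generators, is to observe that $(\gr T_n)_0$ is the ring of invariants of the polynomial ring $\gr T_n$ under the $\mathbb{G}_m$-action whose weights are the degrees of $\bar t, \bar\del, \bar v_0, \dots, \bar v_{n-1}$, and this invariant ring is finitely generated because $\mathbb{G}_m$ is linearly reductive over the characteristic-zero field $\kk$.

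With the algebra-finiteness in hand the argument concludes immediately: $(\gr T_n)_0$ is a commutative, finitely-generated $(\gr B_n)_0$-algebra which is integral over $(\gr B_n)_0$ by \Cref{grTn0 integral}, so \Cref{fg integral extension is finite} gives that $(\gr T_n)_0$ is a finitely-generated $(\gr B_n)_0$-module, as claimed. I do not expect any real obstacle here; all of the substance is contained in \Cref{grTn0 integral}, and this corollary is purely the bookkeeping step of feeding that result into the standard integral-implies-finite principle, the only spot requiring a sentence of justification being the finite generation of $(\gr T_n)_0$ as a $\kk$-algebra, which the proof of \Cref{grTn0 integral} already supplies.
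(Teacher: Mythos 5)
Your argument is correct and matches the paper's own proof: both exhibit a finite $\kk$-algebra generating set for $(\gr T_n)_0$, note this gives finite generation over $(\gr B_n)_0 \supseteq \kk$, and then apply \Cref{fg integral extension is finite} with the integrality from \Cref{grTn0 integral}. As a small bonus, your generating set $\bar t\bar\del$, $\bar t\bar\del + \bar v_0$, $\bar\del^{\,j}\bar v_j$ ($1 \le j \le n-1$) is the right one; the paper's printed list $\bar\del\bar v_1, \dots, \bar\del\bar v_{n-1}$ appears to have dropped the exponent $j$ on $\bar\del$, which is needed for those elements to actually lie in degree zero.
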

\begin{proof}
    Note that
    \[ (\gr T_n)_0 = \kk[\bar{t}\bar{\del}, \bar{t}\bar{\del} + \bar{v_0}, \bar{\del} \bar{v}_1, \dots, \bar{\del} \bar{v}_{n-1}]\]
    is a finitely-generated $\kk$-algebra, so must be a finitely-generated $(\gr B_n)_0$-algebra. Since $(\gr T_n)_0$ is integral over $(\gr B_n)_0$ by \Cref{grTn0 integral}, the result is immediate from \Cref{fg integral extension is finite}.
\end{proof}

We can now deduce that $(B_n)_0$ is Noetherian.

\begin{proof}[Proof of \Cref{Main result degree zero Noetherian}]
    Due to \Cref{B00 B10 Noetherian}, we assume $n \geq 2$. Note that $(\gr T_n)_0$ is a domain, hence is a finitely-generated faithful $(\gr B_n)_0$-module by \Cref{grTn0 fg}. Since $(\gr T_n)_0$ is a Noetherian ring, it follows from \cite[Theorem 3.6]{Matsumura1989} that $(\gr B_n)_0$ is a Noetherian ring. Moreover, $(\gr B_n)_0 = \gr (B_n)_0$ by \Cref{gradings commute}. Then \cite[Theorem 1.6.9]{mcconnell2001noncommutative} implies that $(B_n)_0$ is a (left and right) Noetherian ring.
\end{proof}

Recall that $\UW$ is not left or right Noetherian, and this can be proved by showing that the homomorphic images $B_n$ are not Noetherian for $n \geq 1$. When $n=1$, this is essentially the technique of \cite{SierraWalton}. In light of \Cref{Main result degree zero Noetherian}, we therefore make the following conjecture.

\Wittdegreezero*

We remark that $\gr (\Ua(\W{-1}))_0$ is a polynomial ring in infinitely many variables $\bar{e}_{-1}^n\bar{e}_n$, so is not Noetherian. Thus, the technique of lifting Noetherianity from an associated graded ring, as used in the proof of \Cref{Main result degree zero Noetherian}, cannot be used to prove (or indeed disprove) \Cref{Witt degree zero conjecture}.

However, the combination of \Cref{Main result degree zero Noetherian} with \Cref{Main result generic kernel} provides direct evidence for the conjecture. Namely, any ascending chain of left or right ideals of $\UW$ that eventually contains a two-sided ideal $(\Omega)_0 \ideal (\UW)_0$, for some non-zero differentiator $\Omega$ as in \Cref{Main result generic kernel}, must terminate.
\section{The kernel of an orbit homomorphism is principal}\label{sec:kernel}
Throughout this section, let $n$ be a positive integer unless otherwise stated. So far, we have computed properties of the image of the orbit homomorphisms $\Psi_n$. We now improve our understanding by also computing their kernels. In fact, we show that $\ker \Psi_n$ is a principal two-sided ideal: generated by a single element of $\Ua(\W{-1})$.

\begin{theorem} \label{kernel thm}
    For all $n \geq 1$, the kernel of the map $\Psi_n$ is the principal ideal of $\Ua(\W{-1})$ generated by $\oo{2n+2}{2n+1}{-1}$. In other words,
    \begin{align*}
        \ker \Psi_n = (\oo{2n+2}{2n+1}{-1}).
    \end{align*}
\end{theorem}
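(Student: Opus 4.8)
The inclusion $(\oo{2n+2}{2n+1}{-1}) \subseteq \ker\Psi_n$ is already \Cref{Omega in kernel cor}, so what remains is the reverse inclusion, which I would extract from the order filtration of \Cref{Witt filtration def}. Suppose for a contradiction that $\ker\Psi_n$ is not contained in $(\oo{2n+2}{2n+1}{-1})$, and pick $x \in \ker\Psi_n \setminus (\oo{2n+2}{2n+1}{-1})$ of smallest possible order $r$. The associated graded map $\Phi_n$ of $\Psi_n$ (\Cref{filtered algebra hom}) is graded for the order grading (\Cref{prop:phi map}), so the leading symbol $\gr_r(x)$ lies in the order-$r$ homogeneous component of $\ker\Phi_n$. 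Hence the theorem follows once one proves the inclusion of ideals of the commutative polynomial ring $\Sa(\W{-1}) = \kk[\bar e_{-1}, \bar e_0, \dots]$,
\[ \ker\Phi_n \ \subseteq\ \gr\big((\oo{2n+2}{2n+1}{-1})\big): \]
for then $\gr_r(x) = \gr_r(z)$ for some $z \in (\oo{2n+2}{2n+1}{-1})$ of order $r$, so that $x - z$ again lies in $\ker\Psi_n \setminus (\oo{2n+2}{2n+1}{-1})$ but has order $< r$ --- a contradiction.

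The ideal on the left is explicit. Factoring $\Phi_n$ through $\Phi_\infty$ and the quotient $\gr T_\infty \to \gr T_n$ that kills $\bar e_n, \bar e_{n+1}, \dots$, and checking that $\Phi_\infty$ is injective --- which follows by a leading-term argument, since $\Phi_\infty(\bar e_j) - \bar e_j$ involves only $\bar t$, $\bar\del$, and $\bar e_0, \dots, \bar e_{j-1}$ --- one sees that $\ker\Phi_n$ is the prime ideal of polynomials $g$ with $\Phi_\infty(g)$ in the monomial ideal $(\bar e_n, \bar e_{n+1}, \dots)$, and one reads off an explicit generating set. Its generators are linear combinations of the leading symbols $\gr\oo{m}{k}{s}$ of differentiators with $m \in \{2n+1, 2n+2\}$.

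The substance of the proof is therefore to show that each such generator occurs as the leading symbol of some element of the \emph{two-sided} ideal $(\oo{2n+2}{2n+1}{-1})$ of $\UW$; this is the step I expect to be the main obstacle, as it requires reconciling the combinatorics of differentiators with the order filtration. I would run it by induction on $n$, the case $n = 1$ (equivalently $n = 0$) being the theorem of Conley and Martin \cite{conley2007}. In the inductive step one has $\ker\Psi_n \subseteq \ker\Psi_{n-1} = (\oo{2n}{2n-1}{-1})$, because $\Psi_{n-1}$ factors through $\Psi_n$, and this feeds the previous case in to supply corrections of lower order. The required differentiators are produced from $\oo{2n+2}{2n+1}{-1}$ by three devices: \Cref{differentiator commutators lemma}, which exhibits $\oo{2n+2}{2n+1}{-1}$ as a lowest-weight vector for the adjoint $\W{-1}$-action (it is annihilated by $\ad e_{-1}$) whose orbit spans $\spn\{\oo{2n+2}{k}{s}\}$; the recursion $\oo{m+1}{k}{s} = \oo{m}{k}{s} - \oo{m}{k-1}{s+1}$; and the step-element identity $S(\oo{2n}{2n-1}{-1}) \in \kk^\times \oo{2n+2}{2n+1}{-1}$ of \Cref{S Omega lemma}. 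The delicate bookkeeping --- ensuring that top-order terms cancel so that $\gr((\oo{2n+2}{2n+1}{-1}))$ genuinely absorbs all of $\ker\Phi_n$ --- is precisely where the method of \cite{conley2025annihilators} does its work. Once it is in place, the minimal-order argument of the first paragraph closes the induction.
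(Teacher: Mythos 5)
The reduction you outline in the first paragraph is a valid scheme: if one could prove the inclusion of ideals $\ker\Phi_n \subseteq \gr\big((\oo{2n+2}{2n+1}{-1})\big)$ in $\Sa(\W{-1})$, then the minimal-order argument would upgrade it to $\ker\Psi_n \subseteq (\oo{2n+2}{2n+1}{-1})$. But all of the difficulty has been pushed into that one inclusion, which you do not prove. You assert that the generators of $\ker\Phi_n$ ``are linear combinations of the leading symbols $\gr\oo{m}{k}{s}$'' and that the necessary bookkeeping ``is precisely where the method of \cite{conley2025annihilators} does its work'' --- but that bookkeeping \emph{is} the proof. For a non-central generator, the associated graded of a principal two-sided ideal is generally \emph{not} the ideal generated by the symbol of that generator, so the claim that the (infinitely generated, prime) ideal $\ker\Phi_n$ is both order-$2$-generated and swallowed by $\gr\big((\oo{2n+2}{2n+1}{-1})\big)$ is essentially the associated-graded content of the theorem itself, not something one reads off. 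Your proposed induction on $n$ also does not obviously help: $\ker\Psi_n \subseteq \ker\Psi_{n-1} = (\oo{2n}{2n-1}{-1})$ places $x$ in a \emph{larger} ideal, which gives no purchase on landing it in the \emph{smaller} one, and indeed the paper's proof is self-contained for each $n$ rather than inductive.

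The paper's actual route through \Cref{conley goode method thm} is also structurally different from what you sketch. Instead of computing $\ker\Phi_n$ and comparing, the Conley--Goode method builds a ``complementary'' subspace $\mathcal{J} = \Ua_1(\W{-1})\sym(\kk[J])$, with $J = \kk\{e_{-1},\dots,e_{n-1}\}$, proves $\Psi_n$ is injective there (\Cref{Psi injective on J prop}, via the algebraic-independence argument for the elements $\bar{c}_i$ in \Cref{prop:phininjective}), and then shows that the order-$2$ slice $I = (\oo{2n+2}{2n+1}{-1}) \cap \Ua_2(\W{-1})$ projects onto $\Sa^2(\W{-1}/J)$ (\Cref{part d lemma}). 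That last step is where the real work happens: the $\mathfrak{sl}_2$-decomposition of $\sym(\Sa^2(\W{-1}))$ from \Cref{S2 decomposition}, the identification $I = H_{2n}$ using $S(\oo{2n}{2n-1}{-1}) \in \kk^\times\oo{2n+2}{2n+1}{-1}$, and a weight-space comparison with $\Sa^2(\mathcal{F}_n)$. You name the right devices (the $\ad$-action, the recursion, the step element), but without the framework organizing them into a precise injectivity-plus-surjectivity criterion, the proposal is a plan, not a proof.
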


The kernels of $\Psi_0$ and $\Psi_1$ were computed previously in \cite{conley2007}, where they used the result to calculate the generators of the annihilators of tensor density modules. We thus extend their result to all non-negative integer $n$.

In sub\cref{Conley-Goode method and injectivity,Basic properties of J,Representation theory and the projection of I,Proof of kernel thm} we prove \Cref{kernel thm} and deduce a number of consequences, including that $\ker \Psi_n$ is not finitely generated as a left or right ideal. As a result, the $\ker \Psi_n$ form an important class of examples of ideals of $\Ua(\W{-1})$ that are finitely generated as two-sided ideals, but not finitely generated as left or right ideals. We also obtain a fuller description of the image of $\Psi_n$, see \Cref{B_n kernel thm} and the discussion following.

We then go further by classifying all ideals generated by differentiators, in subsection \ref{Generation of ker Psi_n}. In fact, all such ideals are kernels of orbit homomorphisms.

\generickernel*

The generating set of $\ker \Psi_n$ above has the remarkable property that any non-zero element of the set generates $\ker \Psi_n$ as a two-sided ideal.

Finally in sub\cref{Primitive ideals}, we show that many of these ideals are primitive.

\subsection{The Conley-Goode method and injectivity} \label{Conley-Goode method and injectivity}

The crucial framework for proving \Cref{kernel thm} comes from \cite{conley2025annihilators}, where a method for determining annihilators of representations of Lie algebras of vector fields is described. In this article, we specialise their main result to compute kernels generated by elements of a single order. Let us briefly summarise this specialisation.

\begin{theorem}[{\cite[Proposition 3.1]{conley2025annihilators}}] \label{conley goode method thm}
Let $\g$ be a Lie algebra, $T$ an associative algebra, and $\Psi: \Ua(\g) \rightarrow T$ be a $\kk$-algebra homomorphism. Suppose that there exists a positive integer $d$, and vector subspaces $J \subseteq \g$, $\mathcal{J} \subseteq \Ua(\g)$, and $I \subseteq \ker \Psi \cap \Ua_d(\g)$, such that
\begin{enumerate}[label=(\alph*)]
    \item $\Psi$ is injective on $\mathcal{J}$, \label{injective condition}
    \item $\Ua_{d-1}(\g) \subseteq \mathcal{J}$, \label{inclusion condition}
    \item for all $m \geq d$, $\gr_m  \mathcal{J}= \Sa^{m-d+1}(J)\Sa^{d-1}(\g)$ as subsets of $ \Sa(\g)$, \label{associated graded condition}
    \item the projection $\proj_J: \Sa(\g) \rightarrow \Sa(\g/J)$ maps $\gr_d I$ onto $\Sa^d(\g/J)$. \label{rep theory condition}
\end{enumerate}
Then $\ker \Psi$ is generated by $I$ as a two-sided ideal of $\Ua(\g)$.
\end{theorem}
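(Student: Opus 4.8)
Write $K := (I)$ for the two-sided ideal of $\Ua(\g)$ generated by $I$. Since $I \subseteq \ker\Psi$ and $\ker\Psi$ is a two-sided ideal, we have $K \subseteq \ker\Psi$ automatically, so the whole problem is the reverse inclusion. The plan is to deduce $\ker\Psi \subseteq K$ from the single identity
\[ (\dagger)\qquad \Ua(\g) = \mathcal{J} + K. \]
Indeed, once $(\dagger)$ is known, any $x \in \ker\Psi$ may be written $x = u + k$ with $u \in \mathcal{J}$, $k \in K$; then $\Psi(u) = \Psi(x) - \Psi(k) = 0$, so hypothesis (a) (injectivity of $\Psi$ on $\mathcal{J}$) forces $u = 0$ and hence $x = k \in K$. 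Thus everything reduces to proving $(\dagger)$.

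To prove $(\dagger)$ I would pass to the associated graded $\gr\Ua(\g) = \Sa(\g)$ for the order filtration. Since that filtration is exhaustive with $\Ua_{-1}(\g) = 0$, a routine induction on order shows $(\dagger)$ follows once we know $\gr_m(\mathcal{J}+K) = \Sa^m(\g)$ for every $m \geq 0$, for which it suffices to show $\gr_m\mathcal{J} + \gr_m K = \Sa^m(\g)$. Three ingredients drive this. First, $\gr K$ is a graded two-sided ideal of the commutative ring $\Sa(\g)$ containing $\gr_d I$ (because $I \subseteq K \cap \Ua_d(\g)$), so $\gr_m K \supseteq \Sa^{m-d}(\g)\cdot\gr_d I$ for all $m \geq d$. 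Second, by (b) we have $\gr_m\mathcal{J} = \Sa^m(\g)$ for $m \leq d-1$, and by (c), $\gr_m\mathcal{J} = \Sa^{m-d+1}(J)\,\Sa^{d-1}(\g)$ for $m \geq d$. Third, hypothesis (d) unwinds precisely to the identity $\Sa^d(\g) = J\,\Sa^{d-1}(\g) + \gr_d I$, since $\ker(\proj_J)\cap\Sa^d(\g) = J\,\Sa^{d-1}(\g)$.

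The remaining claim $\gr_m\mathcal{J} + \gr_m K = \Sa^m(\g)$ is then proved by induction on $m$. The case $m \leq d-1$ is immediate, and $m = d$ is exactly (d) together with $\gr_d K \supseteq \gr_d I$. For $m \geq d+1$ I would take a monomial $g_1\cdots g_m \in \Sa^m(\g)$, apply (d) to $g_1\cdots g_d$ to land $g_1\cdots g_m$ in $J\,\Sa^{m-1}(\g) + \gr_d I\cdot\Sa^{m-d}(\g)$, absorb the second summand into $\gr_m K$, and treat a generator $j\,h_1\cdots h_{m-1}$ (with $j \in J$) of the first summand by the inductive hypothesis applied to $h_1\cdots h_{m-1} \in \Sa^{m-1}(\g)$: this puts $h_1\cdots h_{m-1}$ into $\Sa^{m-d}(J)\,\Sa^{d-1}(\g) + \gr_{m-1}K$, so multiplying by $j \in J$ and using that $\gr K$ is an ideal lands $j\,h_1\cdots h_{m-1}$ in $\Sa^{m-d+1}(J)\,\Sa^{d-1}(\g) + \gr_m K = \gr_m\mathcal{J} + \gr_m K$, closing the induction.

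The main obstacle I anticipate is the bookkeeping in the step $m \geq d+1$: the naive move only yields $\Sa^m(\g) \subseteq J\,\Sa^{m-1}(\g) + \gr_m K$, and the point of the induction is to trade one general factor of $J\,\Sa^{m-1}(\g)$ for a $J$-factor at a time so that the "overflow" always lands back inside $\gr_m\mathcal{J}$ rather than merely inside $J\,\Sa^{m-1}(\g)$; arranging the $m=d$ base case so it feeds hypothesis (d) correctly into this recursion is the delicate part. A secondary technical point that should be stated explicitly is the standard filtered-ring bookkeeping: that $\gr K$ really is a graded ideal of $\Sa(\g)$ with $\gr_d I \subseteq \gr_d K$, and that $\gr_m(\mathcal{J}+K) = \Sa^m(\g)$ for all $m$ genuinely lifts to $(\dagger)$.
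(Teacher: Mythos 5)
Your proof is correct: the reduction to $\Ua(\g) = \mathcal{J} + (I)$ via injectivity of $\Psi$ on $\mathcal{J}$, the translation of hypothesis (d) into $\Sa^d(\g) = J\,\Sa^{d-1}(\g) + \gr_d I$, and the graded induction using that $\gr(I)$ is a graded ideal of $\Sa(\g)$ all hold up, and the lift from the graded equality to $(\dagger)$ is the standard exhaustive-filtration induction you indicate. Note that the paper itself gives no proof of this statement — it is imported verbatim from Conley--Goode \cite[Proposition 3.1]{conley2025annihilators} — so there is nothing internal to compare against; your argument is the natural one and matches the strategy the hypotheses (a)--(d) are clearly designed for.
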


Recall from \Cref{Preliminaries} that for any $A \subseteq \UW$, we denote by $\gr_j A$ the $j$th-graded piece of $\gr A \subseteq \Sa(\W{-1})$, with respect to the order grading defined in \Cref{S Witt filtration def}.

We now define the specific ingredients we will be using in this section. Recall that the symmetrizer map $\sym : \Sa(\W{-1}) \to \Ua(\W{-1})$ is the right inverse of $\gr$ given in \Cref{symmetrizer definition}. 

\begin{definition} \label{conley goode setup def}
We define $d=2$, and the vector subspaces
\begin{align*} J &= \kk\{e_{-1}, e_0, \dots, e_{n-1}\} \subseteq \W{-1},\\
    \mc{J} &= \Ua_1(\W{-1}) \sym (\kk[J]),\\
    I &= (\oo{2n+2}{2n+1}{-1}) \cap \Ua_2(\W{-1}).
\end{align*}
\end{definition}

Of course, we will apply \Cref{conley goode method thm} to the situation $\Psi=\Psi_n$, $T = T_n$. It is clear that conditions \ref{inclusion condition} and \ref{associated graded condition} of \Cref{conley goode method thm} depend only on $\gr \mathcal{J}$ rather than $\mathcal{J}$, and we see below that condition \ref{injective condition} can also be checked on $\gr \mathcal{J}$. Hence, our choice of $\mathcal{J}$ is made simply to ensure
\begin{equation}\label{eq:grJ}
    \gr \mc{J} = \Sa^1(\W{-1})\Sa(J) + \Sa(J) = \kk[\bar{e}_{-1}, \dots, \bar{e}_{n-1}] \oplus \bigoplus_{j=n}^\infty \bar{e}_j \kk[\bar{e}_{-1}, \dots, \bar{e}_{n-1}].
\end{equation}
Notice that the conditions of \Cref{conley goode method thm} also only depend on $\gr I$ rather than $I$.

\begin{lemma} \label{graded injectivity lemma}
    If $\Phi_n$ is injective on $\gr \mathcal{J}$, then $\Psi_n$ is injective on $\mathcal{J}$.
\end{lemma}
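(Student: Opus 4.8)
The plan is to exploit the fact that $\Psi_n$ is a \emph{filtered} homomorphism, so that injectivity can be tested on associated graded pieces, combined with the explicit form of $\gr\mathcal{J}$ recorded in \eqref{eq:grJ}. Concretely, suppose $u \in \mathcal{J}$ is a nonzero element with $\Psi_n(u) = 0$; choose $u$ of minimal order $m$ among all such counterexamples. If $m = 0$ then $u$ is a nonzero scalar, which $\Psi_n$ clearly does not kill, so $m \geq 1$. Since $\Psi_n$ is filtered with associated graded map $\Phi_n$ (by \Cref{filtered algebra hom}), and since $u$ has order exactly $m$, we have $\gr_m u \neq 0$ in $\Sa^m(\W{-1})$, and moreover $\Phi_n(\gr_m u) = \gr_m(\Psi_n(u))$ provided $\Psi_n(u)$ has order $m$. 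The point is that $\Psi_n(u) = 0$ forces, in particular, $\gr_m(\Psi_n(u)) = 0$, so $\Phi_n(\gr_m u) = 0$.

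Next I would check that $\gr_m u$ actually lies in $\gr\mathcal{J}$: this is immediate, since $u \in \mathcal{J}$ implies $u \in \mathcal{J} \cap \Ua_m(\W{-1})$, whose image under $\gr$ is exactly $\gr_m\mathcal{J} \subseteq \gr\mathcal{J}$. Thus $\gr_m u$ is a nonzero element of $\gr\mathcal{J}$ in the kernel of $\Phi_n$, contradicting the hypothesis that $\Phi_n$ is injective on $\gr\mathcal{J}$. Hence no such $u$ exists and $\Psi_n$ is injective on $\mathcal{J}$.

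The one genuine subtlety — the main obstacle — is the passage from ``$\Psi_n(u)=0$'' to ``$\Phi_n(\gr_m u) = 0$''. This is a completely standard fact about filtered maps, but it does require care: one needs that $\gr$ of a nonzero order-$m$ element is nonzero (true in $\Sa(\W{-1})$, which is a domain, being a polynomial ring by PBW), and that the associated graded of $\Psi_n(u)$ in degree $m$ equals $\Phi_n(\gr_m u)$. Since $\Psi_n$ is a filtered homomorphism, $\Psi_n(\Ua_m(\W{-1})) \subseteq (T_n)_{\leq m}$ (order filtration), and the induced map on the order-$m$ graded quotient is by definition $\Phi_n$; so $\Psi_n(u) = 0$ in particular has zero image in $\gr_m T_n$, which is $\Phi_n(\gr_m u)$. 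I would state this as a one-line invocation of the definition of associated graded map rather than belabour it.

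To wrap up: I take $u \in \mathcal{J} \setminus \{0\}$ of minimal order $m$ with $\Psi_n(u) = 0$, note $m \geq 1$, observe $0 \neq \gr_m u \in \gr_m\mathcal{J} \subseteq \gr\mathcal{J}$, and compute $\Phi_n(\gr_m u) = \gr_m(\Psi_n(u)) = \gr_m(0) = 0$, contradicting injectivity of $\Phi_n$ on $\gr\mathcal{J}$. Therefore $\Psi_n$ is injective on $\mathcal{J}$, as claimed.
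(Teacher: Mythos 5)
Your argument is correct and is essentially the paper's own proof: both observe that for nonzero $z \in \mathcal{J}$ of order $m$, the leading term $\gr_m z$ is a nonzero element of $\gr\mathcal{J}$, so injectivity of $\Phi_n$ on $\gr\mathcal{J}$ forces $\Phi_n(\gr_m z) \neq 0$, which by definition of the associated graded map is the image of $\Psi_n(z)$ in $\gr_m T_n$, hence $\Psi_n(z) \neq 0$. The minimal-counterexample framing is unnecessary (minimality is never invoked, and the case $m=0$ also goes through directly since $\gr_0 u \neq 0$ already gives $\Phi_n(\gr_0 u) \neq 0$); the paper states it as a direct implication.
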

\begin{proof}
     As $\Phi_n$ is the associated graded map of $\Psi_n$, by \Cref{filtered algebra hom}, the diagram
    \[\begin{tikzcd}
        \Ua(\W{-1}) \arrow[d, "\gr" '] \arrow[r, "\Psi_n"] & T_n
        \arrow[d, "\gr"] \\
        \Sa(\W{-1}) \arrow[r, "\Phi_n"] & \gr T_n 
    \end{tikzcd}\]
    commutes. Let $z \in \mc{J}$ be nonzero. Then, $\gr(z) \neq 0$. So, if $\Phi_n$ is injective on $\gr \mathcal{J}$,
    \[ \Phi_n(\gr(z)) = \gr(\Psi_n(z)) \neq 0,\]
    implying $\Psi_n(z)$ is non-zero, as required.
\end{proof}

To show that $\Phi_n$ is indeed injective on $\gr \mc{J}$, we will use a change of coordinates described below.

\begin{notation} \label{ci notation}
    Let $\bar{c}_i = \Phi_n(\bar{e_i}) = \gr \Psi_n (e_i) \in \gr T_n =  \kk[\bar{t}, \bar{\del}] \otimes \Sa(\g_n)$, that is,
    \[ \bar{c}_i = \bar{t}^{i+1}\bar{\del} + \sum_{j= 0}^{n-1} \binom{i+1}{j+1} \bar{t}^{i-j} \bar{v}_j .\]
\end{notation}

\begin{notation}\label{not multi index}
    For every $\mb{a} = (a_{-1}, a_{0}, \dots, a_{n-1}) \in \NN^{n+2}$, we define 
    \begin{align*}
        e_\mb{a} &= e^{a_{-1}}_{-1} e^{a_{0}}_{0} \dots  e^{a_{n-1}}_{n-1} \in \Ua(\W{-1}), \\ 
        \bar{c}_\mb{a} &= \bar{c}^{a_{-1}}_{-1} \bar{c}^{a_{0}}_{0} \dots  \bar{c}^{a_{n-1}}_{n-1} \in \gr B_n,
    \end{align*}
    and $|\mb{a}| = a_{-1}+a_{0}+ \dots+ a_{n-1}$, which coincides with the order of $e_\mb{a}$ in $\Ua(\W{-1})$. 
\end{notation}

We now define a useful ring homomorphism and give its values on the $\bar{c}_i$.
\begin{lemma}\label{lem:usefulhom}
Let $\varphi: \gr T_n \to \kk[\bar{t}, \bar{\del}]$ be the algebra homomorphism given by
\[ \varphi(\bar{v_j}) = (-1)^{j+1}\bar{t}^{j+1}\bar{\del} \]
with $\varphi$ the identity on $\kk[\bar{t}, \bar{\del}]$. Then 
\begin{align*}
    \varphi(\bar{c}_i) = \begin{cases}
        \bar{\del} &\text{ if } i = -1 \\ 
        0 &\text{ if } i = 0, \dots, n-1 \\ 
        (-1)^n\binom{i}{n} \bar{t}^{i+1}\bar{\del} &\text{ otherwise.}
    \end{cases} 
\end{align*}
\end{lemma}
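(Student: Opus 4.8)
The plan is to first dispatch the well-definedness of $\varphi$, then compute $\varphi(\bar{c}_i)$ by a direct termwise calculation which collapses, via a classical binomial identity, to the claimed closed form. For well-definedness: by the PBW-type description in \Cref{Preliminaries}, $\gr T_n = \kk[\bar{t},\bar{\del}]\otimes\Sa(\g_n)$ is the commutative polynomial ring $\kk[\bar{t},\bar{\del},\bar{v}_0,\dots,\bar{v}_{n-1}]$, so prescribing the images of these generators determines a unique $\kk$-algebra homomorphism; this is the only content of "$\varphi$ is an algebra homomorphism". (Note $\varphi$ is not a map of order-graded rings — $\bar{v}_j$ has order $1$ while $\bar{t}^{j+1}\bar{\del}$ has order $j+2$ — but it does preserve the degree grading, both sides of $\varphi(\bar{v}_j)=(-1)^{j+1}\bar{t}^{j+1}\bar{\del}$ having degree $j$; we will not need this.)

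Next, the computation. The case $i=-1$ is immediate: $\binom{0}{j+1}=0$ for $j\geq 0$, so $\bar{c}_{-1}=\bar{\del}$ and $\varphi(\bar{c}_{-1})=\bar{\del}$. For $i\geq 0$, I would apply $\varphi$ termwise to $\bar{c}_i = \bar{t}^{i+1}\bar{\del} + \sum_{j=0}^{n-1}\binom{i+1}{j+1}\bar{t}^{i-j}\bar{v}_j$, noting that summands with $j>i$ already vanish since $\binom{i+1}{j+1}=0$. Each surviving summand becomes $\binom{i+1}{j+1}\bar{t}^{i-j}(-1)^{j+1}\bar{t}^{j+1}\bar{\del} = (-1)^{j+1}\binom{i+1}{j+1}\bar{t}^{i+1}\bar{\del}$, so factoring out $\bar{t}^{i+1}\bar{\del}$ gives
\[
\varphi(\bar{c}_i) \;=\; \bar{t}^{i+1}\bar{\del}\left(1+\sum_{j=0}^{n-1}(-1)^{j+1}\binom{i+1}{j+1}\right) \;=\; \bar{t}^{i+1}\bar{\del}\sum_{\ell=0}^{n}(-1)^{\ell}\binom{i+1}{\ell},
\]
after reindexing $\ell=j+1$ and absorbing the $1$ as the $\ell=0$ term. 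Now I would invoke the standard partial-alternating-sum identity $\sum_{\ell=0}^{n}(-1)^\ell\binom{N}{\ell}=(-1)^n\binom{N-1}{n}$, valid for all $N\geq 1$, which follows by an easy induction on $n$ using Pascal's rule $\binom{N}{\ell}=\binom{N-1}{\ell}+\binom{N-1}{\ell-1}$ (the two resulting sums telescope). With $N=i+1$ this yields $\varphi(\bar{c}_i)=(-1)^n\binom{i}{n}\bar{t}^{i+1}\bar{\del}$.

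Finally, I would split into the two remaining ranges: if $0\leq i\leq n-1$ then $\binom{i}{n}=0$, so $\varphi(\bar{c}_i)=0$; if $i\geq n$ the displayed formula is exactly the stated one. This completes the three cases. There is no substantive obstacle here: every step is formal except the binomial identity, which is elementary; the only thing to be a little careful about is not dividing by $\bar{t}^{i+1}$ when $i=-1$, which is why that case is handled first.
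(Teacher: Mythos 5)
Your proof is correct and follows essentially the same route as the paper: apply $\varphi$ termwise to $\bar c_i$, factor out $\bar t^{\,i+1}\bar\del$ to reduce to $\sum_{\ell=0}^{n}(-1)^\ell\binom{i+1}{\ell}$, and close with the partial alternating-sum identity $\sum_{\ell=0}^{n}(-1)^\ell\binom{N}{\ell}=(-1)^n\binom{N-1}{n}$ proved via Pascal's rule. The brief well-definedness remark and the separate handling of $i=-1$ are harmless additions; the substance matches the paper's argument.
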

\begin{proof}
    We have 
    \begin{align*}
        \varphi(\bar{c}_i) = \bar{t}^{i+1}\bar{\del} + \sum_{j= 0}^{n-1}(-1)^{j+1} \binom{i+1}{j+1} \bar{t}^{i+1}\bar{\del} = \biggl (\sum_{j=0}^{n}(-1)^{j}  \binom{i+1}{j} \biggr )\bar{t}^{i+1}\bar{\del}.
    \end{align*}
    It is easy to check by induction that the following combinatorial identity holds, 
    \[ \sum_{j=0}^{n}(-1)^{i}  \binom{i}{j} = (-1)^{n} \binom{i-1}{n},\]
    from which the lemma follows. 
\end{proof}

We are now prepared to prove the main result of this subsection.

\begin{proposition}\label{prop:phininjective}
    $\Phi_n$ is injective on $\gr(\mc{J})$. 
\end{proposition}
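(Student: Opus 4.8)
The plan is to pass to the coordinates $\bar c_{-1}, \bar c_0, \dots, \bar c_{n-1}$ of \Cref{ci notation} and then separate the ``pieces'' of $z$ using a $\bar t$-adic valuation on $\gr T_n$. First I would note that $\bar c_{-1} = \bar\partial$ and, for $0 \le k \le n-1$, that $\bar c_k = \bar v_k$ plus a $\kk[\bar t]$-linear combination of $\bar\partial, \bar v_0, \dots, \bar v_{k-1}$. Hence the change of variables from $(\bar\partial, \bar v_0, \dots, \bar v_{n-1})$ to $(\bar c_{-1}, \dots, \bar c_{n-1})$ is given by a unitriangular matrix over $\kk[\bar t]$, so it extends to a $\kk[\bar t]$-algebra automorphism of $\gr T_n = \kk[\bar t][\bar\partial, \bar v_0, \dots, \bar v_{n-1}]$. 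In particular $\bar t, \bar c_{-1}, \dots, \bar c_{n-1}$ are algebraically independent over $\kk$ and $\gr T_n = R[\bar t]$, where $R := \kk[\bar c_{-1}, \dots, \bar c_{n-1}]$ is a polynomial ring (hence a domain).

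Next I would pin down the shape of $\bar c_j$ for $j \ge n$. Each $\bar c_j$ is $\kk[\bar t]$-linear in $\bar\partial, \bar v_0, \dots, \bar v_{n-1}$ with zero constant term and is homogeneous of degree $j$; combined with the previous step this forces a relation $\bar c_j = \sum_{k=-1}^{n-1} c_{j,k}\, \bar t^{\,j-k} \bar c_k$ for unique scalars $c_{j,k} \in \kk$. Comparing the coefficient of $\bar v_{n-1}$ on both sides — among $\bar c_{-1}, \dots, \bar c_{n-1}$ only $\bar c_{n-1}$ involves $\bar v_{n-1}$ — gives $c_{j,n-1} = \binom{j+1}{n}$, which is nonzero for $j \ge n$ since $\kk$ has characteristic zero. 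Viewing $\bar c_j$ as a polynomial in $\bar t$ with coefficients in $R$, it therefore has $\bar t$-adic valuation exactly $j - n + 1 \ge 1$, with leading coefficient $\binom{j+1}{n}\bar c_{n-1}$, a nonzero element of $R$.

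With these facts, the conclusion is short. By \eqref{eq:grJ} I may write $z = P + \sum_{j \ge n} \bar e_j Q_j$ with $P, Q_j \in \kk[\bar e_{-1}, \dots, \bar e_{n-1}]$ and all but finitely many $Q_j$ zero; the hypothesis then reads $0 = P(\bar c_{-1},\dots,\bar c_{n-1}) + \sum_{j \ge n}\bar c_j\, Q_j(\bar c_{-1},\dots,\bar c_{n-1})$ in $R[\bar t]$. Since every $\bar c_j$ with $j \ge n$ has positive $\bar t$-valuation, the $\bar t^0$-coefficient of the right-hand side is $P(\bar c_{-1},\dots,\bar c_{n-1})$, so this is zero, whence $P = 0$ by algebraic independence of the $\bar c_k$. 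It remains to show all $Q_j = 0$; if not, let $j^\ast$ be minimal with $Q_{j^\ast} \ne 0$ and extract the coefficient of $\bar t^{\,j^\ast - n + 1}$ from $\sum_{j\ge n}\bar c_j Q_j(\bar c_{-1},\dots,\bar c_{n-1}) = 0$. Every summand with $j > j^\ast$ has strictly larger $\bar t$-valuation and those with $n\le j<j^\ast$ vanish, so this coefficient equals $\binom{j^\ast+1}{n}\,\bar c_{n-1}\, Q_{j^\ast}(\bar c_{-1},\dots,\bar c_{n-1})$, a product of nonzero elements of the domain $R$ — a contradiction. Hence $z = 0$.

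The only real content is the second step: verifying that after the coordinate change $\bar c_j$ genuinely involves $\bar c_{n-1}$ with an invertible leading $\bar t$-coefficient. Concretely, $c_{j,n-1} = \binom{j+1}{n}$ also drops out of the unitriangular linear system $\sum_{k} c_{j,k}\binom{k+1}{i+1} = \binom{j+1}{i+1}$ ($-1 \le i \le n-1$) obtained by equating coefficients of the $\kk[\bar t]$-module basis $\{\bar\partial,\bar v_0,\dots,\bar v_{n-1}\}$, so no deep input is needed. Once that is in hand the first step is routine bookkeeping and the last step is a one-line valuation/minimality argument.
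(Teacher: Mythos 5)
Your proof is correct, and it follows the same overall strategy as the paper (change coordinates to $\bar c_{-1},\dots,\bar c_{n-1}$, then separate the contributions of the $\bar c_j$ for $j \ge n$ using a grading by powers of $\bar t$). The interesting difference is in the technical crux. The paper filters $\gr T_n$ by $\bar t$-\emph{degree} (highest power of $\bar t$), and so must show that the coefficient $\alpha_{-1}$ of $\bar t^{\,j+1}\bar c_{-1}$ in $\bar c_j$ is non-zero for $j\ge n$; to extract this coefficient it introduces an auxiliary algebra homomorphism $\varphi:\gr T_n\to\kk[\bar t,\bar\partial]$ killing $\bar c_0,\dots,\bar c_{n-1}$ (\Cref{lem:usefulhom}), which requires a small combinatorial identity. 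You instead use the $\bar t$-adic \emph{valuation} (lowest power of $\bar t$), and the relevant coefficient $c_{j,n-1}$ can be read off directly by comparing the coefficient of $\bar v_{n-1}$ in the basis expansion of $\bar c_j$, since only $\bar c_{n-1}$ among $\bar c_{-1},\dots,\bar c_{n-1}$ involves $\bar v_{n-1}$. This bypasses the homomorphism $\varphi$ entirely and is a bit more elementary; what it costs is nothing, since the rest of the valuation argument is a one-line minimality check in the domain $R[\bar t]$, exactly parallel to the paper's direct-sum argument via the filtration $\mathcal F^m$. Your observation that the change of variables is unitriangular over $\kk[\bar t]$ is also a slightly cleaner way of establishing the algebraic independence of $\bar t,\bar c_{-1},\dots,\bar c_{n-1}$ than the paper's inductive containment argument. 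Both proofs are sound; yours trades one small piece of machinery for a direct coefficient computation.
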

\begin{proof}
Recall that the codomain of $\Phi_n$ is
\[ \gr T_n = \kk[\bar{t}, \bar{\del}, \bar{v}_0, \dots, \bar{v}_{n-1}]. \]
Now, $\bar{t} \in \kk[\bar{t}]\backslash \kk$,  and for $-1 \leq i \leq n-1$, we have
\[ \bar{c}_i \in \kk[\bar{t}, \bar{\del}, \bar{v}_0, \dots, \bar{v}_{i}] \backslash \kk[\bar{t}, \bar{\del}, \bar{v}_0, \dots, \bar{v}_{i-1}].\]
Also, $\bar{c}_{-1} = \bar{\del}$. It follows that $\{\bar{t}, \bar{c}_{-1}, \bar{c}_0, \dots, \bar{c}_{n-1}\}$ is an algebraically independent subset of $\gr T_n$. In fact, recalling \Cref{ci notation}, we may rewrite
\begin{align*}
    \gr T_n = \kk[\bar{t}, \bar{c}_{-1}, \dots, \bar{c}_{n-1}].
\end{align*}
In particular $\{\bar{c}_\mb{a}| \mb{a} \in \NN^{n+1}\}$ is a basis of the vector space
\[ V = \kk[\bar{c}_{-1}, \dots, \bar{c}_{n-1}] .\]

Now, $\gr T_n$ already has two natural gradings (the order grading from \Cref{T filtration def} and the degree grading from \Cref{T grading def}), and we will use an additional filtration in this proof. We tackle this potential clash of terminologies by using the language of degree functions, as follows.

The grading on $\gr T_n$ induced by the order filtration on $T_n$ is given by $\ord$, where
\[ \ord(\bar{t}) = 0, \quad \ord(\bar{\del})=1, \quad \ord(\bar{v}_i)=1, \quad \ord(\bar{c}_i)=1\]
The grading on $\gr T_n$ induced from the degree grading on $T_n$ is given by $\deg$, where
\[ \deg(\bar{t}) = 1, \quad \deg(\bar{\del})=-1, \quad \deg(\bar{v}_i)=i, \quad \deg(\bar{c}_i)=i.\]
We additionally give $\gr T_n$ the ascending filtration given by powers of $\bar{t}$. That is,
\[ \mc{F}^m = \sum_{j=0}^m \kk[\bar{c}_{-1}, \dots, \bar{c}_{n-1}]\bar{t}^j.\]
In particular $\mc{F}^0 = V$. Now, it follows from the description of $\gr \mc{J}$ in (\ref{eq:grJ}) that 
\[ \Phi_n(\gr \mc{J}) = V + \sum_{j=n}^\infty \bar{c}_j V, \]
and it is enough to prove that the right hand side is a direct sum.
Hence, we now describe $\bar{c}_{n+i}$ in terms of $\bar{t}, \bar{c}_{-1}, \dots, \bar{c}_{n-1}$, for $i \geq 0$. Since $\ord(\bar{c}_{n+i})=1$, we have that
\[ \bar{c}_{n+i} \in \kk[\bar{t}]\{\bar{c}_{-1}, \bar{c}_0, \dots, \bar{c}_{n-1}\}. \]
Since $\bar{c}_{j}$ and $\bar{t}$ are degree-homogeneous with $\deg(\bar{c}_{j})=j$ and $\deg(\bar{t})=1$, it follows that
\[ \bar{c}_{n+i} \in \kk \{ \bar{t}^{n+i+1}\bar{c}_{-1}, \bar{t}^{n+i}\bar{c}_0, \dots, \kk \bar{t}^{i+1}\bar{c}_{n-1}\},\]
so let
\[ \bar{c}_{n+i} = \sum_{m=-1}^{n-1} \alpha_m \bar{t}^{n+i-m}\bar{c}_{m},\]
for $\alpha_m \in \kk$. Now, $\varphi(\bar{c}_{n+i}) \neq 0$ but $\varphi(\bar{c}_0) = \dots = \varphi(\bar{c}_{n-1})=0$ by \Cref{lem:usefulhom}. It follows that $\alpha_{-1} \neq 0$, and hence $\bar{c}_{n+i} \in \mc{F}^{n+i+1} \backslash \mc{F}^{n+i} $. Therefore
\[  \bar{c}_{n+i} V \backslash \{0\} \subseteq \mc{F}^{n+i+1}\backslash \mc{F}^{n+i}. \]
It follows that for all $N \geq n$,
\[ V + \sum_{j=n}^N \bar{c}_j V \subseteq \mc{F}^{N+1}\]
but $\bar{c}_{N+1} V \cap \mc{F}^{N+1}=0$. This forces
\begin{align*}
     \left(V + \sum_{j=n}^N \bar{c}_j V \right) \cap \bar{c}_{N+1} V = 0
\end{align*}
for all $N \geq n$, which gives the desired direct sum. Hence $\Phi_n$ is injective on $\gr{\mc{J}}$.
\end{proof}

Combining \Cref{graded injectivity lemma} and \Cref{prop:phininjective} shows that part \ref{injective condition} of \Cref{conley goode method thm} holds.

\begin{proposition} \label{Psi injective on J prop}
    $\Psi_n$ is injective on $\mc{J}$. 
    \qed
\end{proposition}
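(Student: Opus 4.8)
The plan is to deduce this immediately from the two preceding results, so there is very little left to do. First I would invoke \Cref{graded injectivity lemma}, which reduces the claim to showing that the associated graded map $\Phi_n$ is injective on $\gr \mc{J}$: indeed, the canonical filtration on $\Ua(\W{-1})$ is separated, so any nonzero $z \in \mc{J}$ has $\gr(z) \neq 0$ with $\gr(z) \in \gr \mc{J}$, and the commuting square of \Cref{filtered algebra hom} gives $\gr(\Psi_n(z)) = \Phi_n(\gr(z))$; if $\Phi_n$ is injective on $\gr \mc{J}$ this forces $\Psi_n(z) \neq 0$.

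Next I would simply apply \Cref{prop:phininjective}, which establishes exactly that $\Phi_n$ is injective on $\gr \mc{J}$, and combining the two finishes the proof. The substantive work all lies inside \Cref{prop:phininjective}: rewriting $\gr T_n$ as the polynomial ring $\kk[\bar{t}, \bar{c}_{-1}, \dots, \bar{c}_{n-1}]$, using the homomorphism $\varphi$ to see that the top $\bar{t}$-power appearing in each $\bar{c}_{n+i}$ has nonzero coefficient, and thereby recognising $\Phi_n(\gr \mc{J}) = V \oplus \bigoplus_{j \geq n} \bar{c}_j V$ as a genuine direct sum via the $\bar{t}$-adic filtration $\mc{F}^\bullet$. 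That direct-sum decomposition is the real obstacle, and it has already been carried out; at the level of \Cref{Psi injective on J prop} itself there is nothing further to prove.
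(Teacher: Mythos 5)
Your proposal is correct and matches the paper's own argument exactly: the paper states the result with \verb|\qed| precisely because it follows immediately from combining \Cref{graded injectivity lemma} with \Cref{prop:phininjective}. Nothing further is needed.
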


\subsection{Basic properties of \texorpdfstring{$\mathcal{J}$}{J}} \label{Basic properties of J}
We now check parts \ref{inclusion condition} and \ref{associated graded condition} of \Cref{conley goode method thm}, which are more straightforward. Recall \Cref{conley goode setup def} for the data $d,J, \mathcal{J}$.

\begin{lemma} \label{part b lemma}
    $\Ua_{d-1}(\W{-1}) \subseteq \mathcal{J}$.
\end{lemma}
\begin{proof}
    Follows by definition of $\mathcal{J}$ and $d=2$.
\end{proof}

\begin{lemma} \label{part c lemma}
    For any $m \geq 2$, 
    \begin{align*}
        \gr_m \mathcal{J} = \Sa^{m-d+1}(J)\Sa^{d-1}(\W{-1}) \text{ as a subset of } \Sa(\W{-1}).
    \end{align*}
\end{lemma}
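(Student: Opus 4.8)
The plan is to compute $\gr \mathcal{J}$ directly from the definition $\mathcal{J} = \Ua_1(\W{-1})\sym(\kk[J])$, and then extract the graded piece in each order $m \geq 2$. First I would observe that $\sym(\kk[J])$ is a subspace of $\Ua(\W{-1})$ whose associated graded is exactly $\Sa(J) = \kk[\bar e_{-1}, \bar e_0, \dots, \bar e_{n-1}]$: indeed, since $\sym$ is a right inverse of $\gr$ (as recalled after \Cref{symmetrizer definition}), we have $\gr(\sym(x)) = x$ for homogeneous $x \in \Sa(J)$, so $\gr_j \sym(\kk[J]) = \Sa^j(J)$ for all $j \geq 0$. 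Next, because $\Psi_n$ (and hence the filtration) is compatible with multiplication, the product $\Ua_1(\W{-1})\sym(\kk[J])$ has the property that its order-$m$ graded piece is spanned by products $\gr(a)\gr(b)$ with $a \in \Ua_1(\W{-1})$, $b \in \sym(\kk[J])$ of orders summing to $m$. This yields $\gr_m \mathcal{J} \subseteq \Sa^1(\W{-1})\Sa^{m-1}(J) + \Sa^0(\W{-1})\Sa^m(J) = \Sa^{m-1}(J)\Sa^1(\W{-1})$, using $\Sa^0(\W{-1}) = \kk \subseteq \Sa^1(\W{-1})$ once $m \geq 2$ so that $m-1 \geq 1$; note this is precisely $\Sa^{m-d+1}(J)\Sa^{d-1}(\W{-1})$ with $d = 2$.

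For the reverse inclusion, I would argue that any monomial $\bar e_{i_1}\cdots \bar e_{i_{m-1}} \bar e_j$ with $i_1, \dots, i_{m-1} \in \{-1, 0, \dots, n-1\}$ and $j \geq -1$ arbitrary lies in $\gr_m \mathcal{J}$: it is the image under $\gr$ of $\sym(e_{i_1}\cdots e_{i_{m-1}}) \cdot e_j \in \sym(\kk[J])\cdot \Ua_1(\W{-1})$. A small subtlety is that $\mathcal{J}$ was defined as $\Ua_1(\W{-1})\sym(\kk[J])$ with the $\Ua_1$ factor on the \emph{left}, so strictly I should take the element $e_j \cdot \sym(e_{i_1}\cdots e_{i_{m-1}})$; since $\gr T_n$ is commutative and $\gr$ is multiplicative, this has the same associated graded class, so the order of factors is immaterial at the level of $\gr$. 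Taking $\kk$-linear combinations of such monomials shows $\Sa^{m-1}(J)\Sa^1(\W{-1}) \subseteq \gr_m \mathcal{J}$, giving equality.

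The only point requiring care — and the one I would single out as the main obstacle, though it is mild — is justifying the identity $\gr_m(\Ua_1 \cdot \sym(\kk[J])) = \sum_{a+b=m} (\gr_a \Ua_1)(\gr_b \sym(\kk[J]))$. This is the standard fact that for a filtered algebra, $\gr(VW) \supseteq \sum \gr(V)\gr(W)$ always, while the reverse inclusion here follows because $\Ua_1(\W{-1})$ and $\sym(\kk[J])$ are each spanned by elements that are "pure" in order (homogeneous for the order filtration in the sense of having a well-defined top-degree part that is their image under $\gr$), so no cancellation in leading terms can occur. Alternatively, one can sidestep this by simply verifying both inclusions at the level of spanning monomials as above, which is entirely elementary. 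I would present the monomial-level argument since it avoids any appeal to filtered-ring generalities.
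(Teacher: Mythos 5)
Your approach is genuinely different from the paper's. The paper does not compute $\gr \mathcal{J}$ from the definition inside the lemma's proof; instead it invokes the description $\gr \mathcal{J} = \kk[\bar{e}_{-1},\dots,\bar{e}_{n-1}] \oplus \bigoplus_{j \geq n} \bar{e}_j \kk[\bar{e}_{-1},\dots,\bar{e}_{n-1}]$ from equation~\eqref{eq:grJ} (asserted earlier in the text when $\mathcal{J}$ was chosen) and simply reads off the order-$m$ graded piece. You are effectively re-proving~\eqref{eq:grJ} from scratch, which is more ambitious and more informative, but one step of your argument is not sound.

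The gap is in the inclusion $\gr_m \mathcal{J} \subseteq \Sa^{m-1}(J)\Sa^1(\W{-1})$, which you justify by claiming that because $\Ua_1(\W{-1})$ and $\sym(\kk[J])$ are spanned by elements of pure order, ``no cancellation in leading terms can occur.'' This is false. Take $u_1 = e_0$, $p_1 = \bar{e}_{-1}^2$ and $u_2 = -e_{-1}$, $p_2 = \bar{e}_{-1}\bar{e}_0$, all pure of order $1$ and $2$ respectively. Each product $u_i\sym(p_i)$ has order $3$, yet the leading terms cancel since $\bar{e}_0\bar{e}_{-1}^2 - \bar{e}_{-1}\cdot\bar{e}_{-1}\bar{e}_0 = 0$, and a direct PBW computation gives $u_1\sym(p_1) + u_2\sym(p_2) = -\tfrac{3}{2}e_{-1}^2$, which has order $2$. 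So $\gr_2(\mathcal{J})$ receives contributions from order-$3$ products whose leading terms cancel, and the equality $\gr_m(VW) = \sum_{a+b=m}(\gr_a V)(\gr_b W)$ you appeal to cannot be established by ``no cancellation.'' Your proposed ``monomial-level'' alternative does not repair this: checking that each spanning product $e_j\cdot\sym(\bar{e}_{i_1}\cdots\bar{e}_{i_{m-1}})$ has the right $\gr$ only gives the \emph{reverse} inclusion $\Sa^{m-1}(J)\Sa^1(\W{-1}) \subseteq \gr_m\mathcal{J}$ (which you do correctly); it says nothing about linear combinations whose order drops.

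The conclusion is nonetheless correct, and your overall structure would go through if the forward inclusion were properly justified. One way: for $w \in \W{-1}$ and $q \in \Sa^j(J)$, the $\W{-1}$-equivariance of $\sym$ gives $w\sym(q) = \sym(q)w + \sym(\ad(w)q)$ with $\ad(w)q \in \Sa^{j-1}(J)\Sa^1(\W{-1})$, and tracking such commutator defects shows that any cancellation-induced drop in order still lands inside $\Sa^{m-1}(J)\Sa^1(\W{-1})$ at the new order $m$ (this is what happens in the example above, where $-\tfrac32\bar{e}_{-1}^2 \in \Sa^1(J)\Sa^1(\W{-1})$). Alternatively, one can argue that $\{\sym(\bar{e}_{\mb{a}})\} \cup \{e_j\sym(\bar{e}_{\mb{a}}) : j \geq n\}$ is a $\kk$-basis of $\mathcal{J}$ (using that their images under $\gr$ are distinct monomials and hence linearly independent) and compute $\gr\mathcal{J}$ from this basis. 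Either fix is fine; what you wrote is not.
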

\begin{proof}
    This is a straightforward check, recalling that $\gr_m \mathcal{J}$ refers to the order grading on $\Sa(\W{-1})$ in \Cref{S Witt filtration def}. Let $m \geq 2$. Recall from \eqref{eq:grJ} that 
    \[\gr \mc{J} = \kk[\bar{e}_{-1}, \dots, \bar{e}_{n-1}] \oplus \bigoplus_{j=n}^\infty \bar{e}_j \kk[\bar{e}_{-1}, \dots, \bar{e}_{n-1}].\]
    Thus a basis for $\gr_m \mathcal{J}$ is 
    \[ \{\bar{e}_r \bar{e}_{\mb{a}} \mid r \geq n, |\mb{a}| = m-1\} \cup \{\bar{e}_{\mb{a}} \mid |\mb{a}| = m\}.  \]
    The result follows because $J = \kk \{e_{-1}, \dots, e_{n-1}\}$. 
\end{proof}

\subsection{Representation theory and the projection of \texorpdfstring{$I$}{I}} \label{Representation theory and the projection of I}

We now verify the final condition \ref{rep theory condition} of \Cref{conley goode method thm}. We use analogous reasoning to that of Theorem 7.3 and Theorem 7.4 of \cite{conley2025annihilators}. This requires us to use the structure of tensor density modules and the decomposition of $\Sa^2(\W{-1})$ as a representation of the projective subalgebra $\mf{a} = \{e_{-1}, e_0, e_{1}\} \cong \mf{sl}_2$. A more detailed treatment, can be found in sections 4--6 of \cite{conley2025annihilators}.

\begin{definition}
For $\lambda \in \kk$, we define the tensor density module $\mc{F}_\lambda = \kk[t]$, a $\W{-1}$-representation with 
\begin{align*}
    f\del \cdot g = (fg'+\lambda f'g), \quad \text{ where } f\del \in \W{-1}, g \in \kk[t].
\end{align*}
\end{definition}

\begin{example} \label{adjoint example}
    The adjoint action of $\W{-1}$ on itself is isomorphic to $\mc{F}_{-1}$.
\end{example}

Furthermore, we can extend the adjoint action of \Cref{adjoint example} to make $\UW$ a $\W{-1}$-representation, and we consider this structure for the remainder of the subsection.

\begin{lemma}[{\cite[Proposition 6.3]{conley2025annihilators}}] \label{S2 decomposition}
    There is a unique $\mf{a}$-decomposition
    \begin{align*}
        \sym(\Sa^2(\W{-1})) = {G}_{-2} \oplus {G}_{0} \oplus G_{2} \oplus 
    G_{4} \dots,
    \end{align*}
    where $G_{2\ell}$ is an $\mf{a}$-submodule of $\Ua_2(\W{-1})$ which is $\mf{a}$-isomorphic to $\mc{F}_{2\ell}$.
    \qed
\end{lemma}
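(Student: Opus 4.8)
Since the statement involves only the $\mathfrak{a}\cong\mathfrak{sl}_2$-module structure, the plan is to reduce it to the decomposition of the symmetric square of a single tensor density module. For the reduction: the symmetrization map $\sym\colon\Sa(\W{-1})\to\Ua(\W{-1})$ is a $\W{-1}$-module isomorphism for the adjoint actions (it is built from the symmetric-group averaging, which commutes with the diagonal $\W{-1}$-action, so it intertwines the two derivation actions; this is standard), hence $\mathfrak{a}$-equivariant after restriction, and it carries $\Sa^2(\W{-1})$ $\mathfrak{a}$-isomorphically onto the $\mathfrak{a}$-submodule $\sym(\Sa^2(\W{-1}))\subseteq\Ua_2(\W{-1})$. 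As an $\mathfrak{a}$-module $\Sa^2(\W{-1})=\gr_2\Ua(\W{-1})$ is the symmetric square of $\gr_1\Ua(\W{-1})=\W{-1}$ with its adjoint action, and the latter is $\mc{F}_{-1}$ by \Cref{adjoint example}. So it suffices to decompose $M:=\mathrm{Sym}^2(\mc{F}_{-1})$ over $\mathfrak{a}$; the $G_{2\ell}$ are then the images under $\sym$ of the corresponding summands on the $\Sa^2$ side.

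For existence I would grade by $e_0$-weight (which is the degree grading) and combine the transvectant construction with a character count. From $\operatorname{ch}(\mathrm{Sym}^2 X)(q)=\tfrac{1}{2}\big(\operatorname{ch}(X)(q)^2+\operatorname{ch}(X)(q^2)\big)$ and $\operatorname{ch}(\mc{F}_{-1})(q)=q^{-1}/(1-q)$ one gets
\[
\operatorname{ch}(M)(q)=\frac{q^{-2}}{(1-q)(1-q^2)}=\sum_{\ell\ge-1}\frac{q^{2\ell}}{1-q}=\sum_{\ell\ge-1}\operatorname{ch}(\mc{F}_{2\ell})(q).
\]
For each $\ell\ge-1$ the $(2\ell+2)$-th $\mathfrak{sl}_2$-transvectant (Rankin--Cohen bracket) is an $\mathfrak{a}$-equivariant map $\tau_\ell\colon M\to\mc{F}_{2\ell}$ (the order is even, so it does not vanish on the symmetric part), and $\tau:=\bigoplus_\ell\tau_\ell$ is weight-preserving between modules whose weight spaces have equal, finite dimensions; hence $\tau$ is an isomorphism as soon as it is injective. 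Injectivity reduces to lowest-weight vectors, since any nonzero submodule of $M$ contains one; and a short elimination computation shows $\ker(e_{-1}|_M)$ is one-dimensional in each even weight $2\ell\ge-2$ and zero otherwise, so one needs only that each $\tau_\ell$ is nonzero on that single explicit vector. Transporting the summands of $\bigoplus_\ell\mc{F}_{2\ell}$ back through $\tau$ and then $\sym$ yields the $G_{2\ell}\cong\mc{F}_{2\ell}$. (Alternatively one can avoid transvectant formulas by dualizing: $\mc{F}_{-1}^\vee$ is the lowest-weight Verma $\Delta_{-1}$, so $M^\vee\cong\mathrm{Sym}^2(\Delta_{-1})$, which over $\mathfrak{sl}_2$ is a direct sum of Vermas $\bigoplus_{\ell\ge-1}\Delta_{2\ell}$ — the extensions among them vanishing because $2\ell\ne1-2\ell'$ — and dualizing back gives $\bigoplus_{\ell\ge-1}\Delta_{2\ell}^\vee=\bigoplus_{\ell\ge-1}\mc{F}_{2\ell}$.)

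For uniqueness, given two decompositions $M=\bigoplus_\ell G_{2\ell}=\bigoplus_\ell G'_{2\ell}$ with summands $\mathfrak{a}$-isomorphic to the $\mc{F}_{2\ell}$, the composite $G_{2j}\hookrightarrow M\twoheadrightarrow G'_{2\ell}$ lies in $\Hom_{\mathfrak{a}}(\mc{F}_{2j},\mc{F}_{2\ell})$, which vanishes for $j\ne\ell$: the image of a nonzero such map would be at once a nonzero submodule of $\mc{F}_{2\ell}$ — every such submodule has minimal $e_0$-weight exactly $2\ell$ — and a nonzero quotient of $\mc{F}_{2j}$ — whose minimal weight is $2j$, except in the atypical cases $j=-1,0$ where it lies in $\{-2,3\}$ resp.\ $\{0,1\}$ — and these weight sets are incompatible once $j\ne\ell$. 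Hence $G_{2j}\subseteq G'_{2j}$ for all $j$, and by symmetry equality; in fact this shows each $G_{2\ell}$ is the unique $\mathfrak{a}$-submodule of $M$ isomorphic to $\mc{F}_{2\ell}$.

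The main obstacle is the existence step, and within it the atypical modules $\mc{F}_{-2}$ and $\mc{F}_0$: unlike $\mc{F}_{2\ell}$ with $\ell\ge1$, these are not irreducible as $\mathfrak{a}$-modules ($\mc{F}_{-2}$ has a $5$-dimensional $\mathfrak{sl}_2$-submodule and $\mc{F}_0$ a trivial one), and correspondingly some low-order transvectants degenerate, so one must check directly that $\tau_{-1}$ and $\tau_0$ behave as claimed — or, in the dual picture, that the relevant $\mathrm{Ext}^1$-groups between Verma modules really vanish. The character identity performs the bookkeeping but not this non-degeneracy; the reduction and the uniqueness argument are otherwise routine.
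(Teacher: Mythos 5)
The paper provides no proof of this lemma --- it is quoted verbatim from \cite[Proposition 6.3]{conley2025annihilators} --- so there is no in-paper argument to compare against. Judged on its own terms, your reconstruction is correct in substance: the reduction to $\mathrm{Sym}^2(\mc{F}_{-1})$ via $\mf{a}$-equivariance of $\sym$, the character identity, the transvectant (or dual-Verma) construction of the summands, and the uniqueness via $\Hom_{\mf{a}}(\mc{F}_{2j},\mc{F}_{2\ell})=0$ for $j\neq\ell$ --- where you correctly treat the atypical $\ell\in\{-1,0\}$ cases by comparing the minimal weight $2\ell$ of any nonzero submodule of $\mc{F}_{2\ell}$ against the possible minimal weights of quotients of $\mc{F}_{2j}$ --- are all sound.

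The residual checks you flag are genuine but routine, and your plan closes them. For the kernel count, model $\mathrm{Sym}^2(\mc{F}_{-1})$ as $\kk[t_1,t_2]^{S_2}$ with $e_{-1}=\del_{t_1}+\del_{t_2}$: the kernel of $e_{-1}$ on $\kk[t_1,t_2]$ is $\kk[t_1-t_2]$, whose symmetric part is $\kk[(t_1-t_2)^2]$, giving exactly one lowest-weight vector $(t_1-t_2)^{2\ell+2}$ in each even weight $2\ell\geq-2$ and none in odd weights, and surjectivity of $e_{-1}$ between consecutive weight spaces then follows by a dimension count. Non-degeneracy of $\tau_{-1}$ and $\tau_0$ can be read off by evaluating the transvectant formula on $(t_1-t_2)^{2\ell+2}$. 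On the dual side, the Casimir scalar $2\lambda(\lambda-1)$ on $\Delta_\lambda$ separates $\Delta_{2\ell}$ and $\Delta_{2\ell'}$ into distinct central blocks precisely because $2\ell\neq 1-2\ell'$, which is the linkage condition you cite and is exactly what forces the Verma flag of $\mathrm{Sym}^2(\Delta_{-1})$ to split. Either route, filled in, gives a self-contained proof of the cited result.
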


\begin{definition}
    For all integer $\ell \geq -1$, let 
    \[ H_{2\ell} = G_{2\ell} \oplus G_{2\ell+2} \oplus \dots .\]
\end{definition}

\begin{lemma}\label{lem: H2n generated}
For $\ell \geq 1$, $H_{2\ell}$ is generated as a $\W{-1}$-module by $\oo{2\ell+2}{2\ell+1}{-1}$.
\end{lemma}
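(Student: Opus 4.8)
The plan is to locate $\oo{2\ell+2}{2\ell+1}{-1}$ precisely inside the $\mf{a}$-decomposition of \Cref{S2 decomposition}, and then to climb up the summands $G_{2\ell}, G_{2\ell+2}, \dots$ one at a time using the step operator $S$. First I would check that $\oo{2\ell+2}{2\ell+1}{-1}$ lies in $\sym(\Sa^2(\W{-1}))$: comparing \eqref{Omega long form eqn} with the symmetrizer, the correction $\oo{m}{k}{s} - \sym(\gr \oo{m}{k}{s})$ is a multiple of $e_{k+s}$ whose coefficient involves only $\sum_i (-1)^i\binom{m}{i}$ and $\sum_i (-1)^i\binom{m}{i}i$, both of which vanish for $m = 2\ell+2 \geq 2$; hence $\oo{2\ell+2}{2\ell+1}{-1} = \sym(\gr \oo{2\ell+2}{2\ell+1}{-1}) \in \sym(\Sa^2(\W{-1}))$, and it is nonzero since its leading term in $\Sa^2(\W{-1})$ involves the monomial $\bar{e}_{2\ell+1}\bar{e}_{-1}$. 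By \eqref{commutator with e0} it is homogeneous of degree (equivalently, $e_0$-weight) $2\ell$, and by \eqref{commutator with e-1} it is annihilated by $e_{-1}$, as both coefficients on the right there vanish at $(m,k,s) = (2\ell+2, 2\ell+1, -1)$. Writing $\oo{2\ell+2}{2\ell+1}{-1} = \sum_{m \geq -1} g_m$ with $g_m \in G_{2m}$, each $g_m$ then has $e_0$-weight $2\ell$ and is annihilated by $e_{-1}$ (as $e_{-1} \in \mf{a}$ preserves each $G_{2m}$); under the isomorphism $G_{2m} \cong \mc{F}_{2m} = \kk[t]$, the weight-$2\ell$ space is $\kk\,t^{2\ell - 2m}$, which is zero for $m > \ell$ and, for $m < \ell$, is not annihilated by $e_{-1} = \del$. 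Hence $g_m = 0$ for $m \neq \ell$, so $\oo{2\ell+2}{2\ell+1}{-1} = g_\ell$ spans the lowest weight line of $G_{2\ell}$; in particular $\oo{2\ell+2}{2\ell+1}{-1} \in H_{2\ell}$.

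For the climbing step I would set $M = \Ua(\W{-1})\oo{2\ell+2}{2\ell+1}{-1}$, the $\W{-1}$-submodule generated under the adjoint action, and, for $j \geq 0$, $x_j = \oo{2(\ell+j)+2}{2(\ell+j)+1}{-1}$, so $x_0 \in M$. Since $x_j$ has degree $2(\ell+j)$, formula \eqref{commutator with e0} gives $[e_0, x_j] = 2(\ell+j)x_j$, so the definition of $S$ yields
\[ S(x_j) = \bigl(8(\ell+j)+2\bigr)[e_2, x_j] - 3\,[e_1,[e_1,x_j]], \]
which lies in the submodule generated by $x_j$. On the other hand $x_j = \oo{2n}{2n-1}{-1}$ with $n = \ell + j + 1 \geq 2$, so \Cref{S Omega lemma} gives $S(x_j) \in \kk^\times x_{j+1}$; hence $x_{j+1} \in M$, and by induction $x_j \in M$ for all $j \geq 0$. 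Applying the first paragraph with $\ell$ replaced by $\ell+j$ (valid as $\ell + j \geq 1$), each $x_j$ is a nonzero lowest weight vector of $G_{2(\ell+j)}$; since $\mc{F}_{2(\ell+j)}$ is an irreducible lowest weight $\mf{a}$-module for $\ell + j \geq 1$, it is generated over $\mf{a}$ by its lowest weight vector, so $G_{2(\ell+j)} \subseteq M$. Summing, $H_{2\ell} = \bigoplus_{j \geq 0} G_{2(\ell+j)} \subseteq M$. Since conversely $H_{2\ell}$ is a $\W{-1}$-submodule of $\Ua_2(\W{-1})$ containing $\oo{2\ell+2}{2\ell+1}{-1}$ — these are the terms of the standard submodule filtration of $\sym(\Sa^2(\W{-1}))$, cf.\ \cite{conley2025annihilators} — we get $M \subseteq H_{2\ell}$, whence $M = H_{2\ell}$.

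The one genuinely substantive input is \Cref{S Omega lemma}: from \eqref{commutator with e2} on its own it is not visible that the $G_{2(\ell+j)+2}$-component of $[e_2, x_j]$ is nonzero, since $[e_2,x_j]$ is an a priori complicated combination of three differentiators; it is precisely the step operator $S$, engineered from the representation theory of the projective $\mf{sl}_2 = \mf{a}$, that isolates a nonzero multiple of the next differentiator and keeps the induction going. Everything else is routine bookkeeping: that $\oo{2\ell+2}{2\ell+1}{-1}$ is symmetric (hence lies in $\sym(\Sa^2(\W{-1}))$), that it is an $\mf{a}$-lowest weight vector of the correct weight, and that $\mc{F}_{2\ell}$ is $\mf{a}$-irreducible for $\ell \geq 1$ — this last point being exactly why the statement must exclude $\ell = 0$, where $\mc{F}_0$ contains a trivial $\mf{a}$-submodule and is not generated by its lowest weight vector.
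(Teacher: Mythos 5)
Your proof is correct, but it takes a genuinely different and more self-contained route than the paper. The paper's proof is two lines: cite Conley--Goode \cite[Corollary~6.5]{conley2025annihilators} to get that $H_{2\ell}$ is generated as a $\W{-1}$-module by $S^{\ell+1}(e_{-1}^2)$, then use \Cref{S Omega lemma} and $\oo{0}{-1}{-1}=e_{-1}^2$ to rewrite $S^{\ell+1}(e_{-1}^2)$ as a nonzero scalar multiple of $\oo{2\ell+2}{2\ell+1}{-1}$. You instead re-derive the generation claim from scratch: you locate $\oo{2\ell+2}{2\ell+1}{-1}$ as the lowest $\mf{a}$-weight vector of $G_{2\ell}$ (via the vanishing of both coefficients in \eqref{commutator with e-1} and the weight-space analysis of $\mc{F}_{2m}$), then climb up the summands $G_{2(\ell+j)}$ one at a time, using \Cref{S Omega lemma} to show the next lowest weight vector stays in the submodule and $\mf{a}$-irreducibility of $\mc{F}_{2(\ell+j)}$ for $\ell+j\geq 1$ to sweep up all of $G_{2(\ell+j)}$. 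The two arguments are not entirely independent of Conley--Goode: you still cite them for the fact that $H_{2\ell}$ itself is $\W{-1}$-stable (which you need for the inclusion $M\subseteq H_{2\ell}$), whereas the paper outsources both directions to their Corollary~6.5. What your version buys is transparency: it makes visible exactly \emph{where} each differentiator sits in the $\mf{a}$-decomposition, it explains why the hypothesis $\ell\geq 1$ is sharp (for $\ell=0$ the lowest weight vector of $G_0$ does not generate $\mc{F}_0$ over $\mf{a}$), and it isolates \Cref{S Omega lemma} as the single non-routine computational input. Minor remark: in verifying $\oo{2\ell+2}{2\ell+1}{-1}\in\sym(\Sa^2(\W{-1}))$, the correction term is $\frac12\sum_i(-1)^i\binom{m}{i}(s-k+2i)\,e_{k+s}$, and the two alternating-sum identities you invoke hold for $m\geq 2$, which is exactly $\ell\geq 0$; this is fine but worth stating, since the same identities reappear in \Cref{lem: mks part II,lem: mks part III pre}.
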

\begin{proof}
    It follows from \cite[Corollary 6.5]{conley2025annihilators} that $H_{2\ell}$ is generated as a $\W{-1}$-module by $S^{\ell+1}(e_{-1}^2)$. The lemma now follows from \Cref{S Omega lemma}.
\end{proof}

\begin{proposition} \label{part d lemma}
Let $\proj_J$ be the projection $\Sa(\W{-1}) \to \Sa(\W{-1}/J)$. Then 
\[ \proj_J (\gr_2 I) = \proj_J (\gr I) = \Sa^2(\W{-1}/J). \]
\end{proposition}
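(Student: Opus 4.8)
The plan is to show that $\proj_J(\gr_2 I) = \Sa^2(\W{-1}/J)$ by exploiting the $\mf{a}$-module structure; the inclusions $\proj_J(\gr_2 I) \subseteq \proj_J(\gr I) \subseteq \Sa^2(\W{-1}/J)$ are immediate (the middle one because $I \subseteq \Ua_2(\W{-1})$ implies $\gr I = \gr_2 I$ lives in orders $\le 2$, and the only order-$2$ piece projects into $\Sa^2$; lower order pieces project into $\Sa^{\le 1}$, but actually one should observe $\gr I$ is homogeneous of order $2$ since $\oo{2n+2}{2n+1}{-1}$ is quadratic), so the work is in the reverse inclusion. First I would use \Cref{lem: H2n generated}: the $\W{-1}$-submodule of $\Ua_2(\W{-1})$ generated by $\oo{2n+2}{2n+1}{-1}$ is $H_{2n+2} = G_{2n+2} \oplus G_{2n+4} \oplus \cdots$. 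Since $I = (\oo{2n+2}{2n+1}{-1}) \cap \Ua_2(\W{-1})$ contains this $\W{-1}$-submodule (indeed $\W{-1}\cdot \oo{2n+2}{2n+1}{-1} \subseteq (\oo{2n+2}{2n+1}{-1})$ and stays in order $\le 2$), we have $H_{2n+2} \subseteq I$, hence $\gr H_{2n+2} \subseteq \gr_2 I$.

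The key step is then to compute $\proj_J(\gr H_{2n+2})$ and show it is all of $\Sa^2(\W{-1}/J)$. Note $\Sa^2(\W{-1}/J)$ is spanned by the classes $\bar e_i \bar e_j$ with $i,j \geq n$, so it is the degree-$\geq 2n$ part (with respect to the $e_0$-degree/weight grading) of the polynomial ring on $\{\bar e_i : i \geq n\}$; as an $\mf{a}$-module via the quotient action this is an infinite sum of tensor density modules. The associated graded $\gr G_{2\ell} \subseteq \Sa^2(\W{-1})$ is an $\mf{a}$-submodule $\mf{a}$-isomorphic to $\mc{F}_{2\ell}$, and by the explicit construction via the step element $S$ (and the formula $\gr S = $ the symbol of the $\mf{sl}_2$-step operator), $\gr G_{2\ell}$ is the span of $\Phi$-images that one can identify with differentiators $\oo{2\ell}{k}{s}$ for varying $k,s$. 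I would show that the leading ($e_{-1}$-free, or rather highest-$e_j$) term of $\gr \oo{2\ell}{k}{s}$ survives under $\proj_J$ precisely when enough indices exceed $n-1$, and that as $\ell$ ranges over $\{n+1, n+2, \dots\}$ and we take the appropriate $\W{-1}$-translates (equivalently $k, s$ vary), these span every monomial $\bar e_i \bar e_j$ with $i, j \geq n$ in $\Sa^2(\W{-1}/J)$. Concretely: $\proj_J$ kills $\bar e_{-1}, \dots, \bar e_{n-1}$, so $\proj_J(\bar e_i \bar e_j) = \bar e_i \bar e_j$ if $i,j \geq n$ and $0$ otherwise; one checks that $\proj_J \gr \oo{m}{k}{s}$, expanded via \eqref{Omega long form eqn}, is a nonzero combination of such surviving monomials when $s+m \geq n$ and $k \geq n$, i.e.\ for $m \geq 2n+2$ we can choose the range $k, s$ freely enough.

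The main obstacle I expect is the combinatorial bookkeeping in the last step: verifying that the $\proj_J$-images of $\{\gr \oo{m}{k}{s}\}$ over $m \geq 2n+2$ (and the right ranges of $k \geq m-1$, $s \geq -1$) actually span all of $\Sa^2(\W{-1}/J)$, rather than just a proper submodule. The cleanest way around this is an $\mf{a}$-module (or even $\W{-1}$-module) argument: $\Sa^2(\W{-1}/J)$ decomposes as a direct sum of tensor density modules $\mc{F}_\mu$, and one identifies which $\mu$ occur; then it suffices to check that $\proj_J(\gr H_{2n+2})$ meets each such isotypic component nontrivially (since each $\mc{F}_\mu$ is simple as a $\W{-1}$-module, or at least generated by any nonzero weight vector), which reduces to a single nonvanishing computation per component — and for high enough weight $\mu$ this is guaranteed because $H_{2n+2}$ contains $G_{2\ell}$ for all $\ell \geq n+1$, whose top weight vectors $\oo{2\ell}{2\ell-1}{-1}$ have $\proj_J$-image easily seen to be nonzero (it is, up to scalar, $\bar e_{n-1+?}\cdots$ — one recomputes as in the proof of \Cref{image of Omega thm} but with all $\bar e_j$, $j \geq n$, retained rather than set to zero). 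Assembling these pieces, together with the observation that every simple constituent $\mc{F}_\mu$ of $\Sa^2(\W{-1}/J)$ has $\mu \geq$ the relevant bound, yields $\proj_J(\gr_2 I) = \Sa^2(\W{-1}/J)$, completing part \ref{rep theory condition} of \Cref{conley goode method thm}.
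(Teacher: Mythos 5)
Your plan diverges from the paper in a way that leaves a genuine gap, and it also contains an off-by-one error. Taking the second point first: with $\ell=n$ in \Cref{lem: H2n generated}, the $\W{-1}$-module generated by $\oo{2n+2}{2n+1}{-1}$ is $H_{2n}=G_{2n}\oplus G_{2n+2}\oplus\cdots$, not $H_{2n+2}$; this matters because your subsequent weight bookkeeping starts one rung too high.

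The more serious issue is your ``cleaner way around'' via $\mf{a}$-module (or $\W{-1}$-module) equivariance. The subspace $J=\kk\{e_{-1},\dots,e_{n-1}\}$ is \emph{not} $\mf{a}$-stable for $n\geq 2$ (e.g.\ $[e_1,e_{n-1}]=(n-2)e_n\notin J$), let alone $\W{-1}$-stable, so $\proj_J$ is only a $\mf{b}$-module map, where $\mf{b}=\kk\{e_{-1},e_0\}$. Consequently $\proj_J(\gr H_{2n})$ has no reason to be an $\mf{a}$- or $\W{-1}$-submodule of $\Sa^2(\W{-1}/J)$, and showing it meets each irreducible constituent nontrivially does not force it to be everything. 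This breaks the reduction you were hoping for, and throws you back onto the unbounded combinatorial computation you yourself flag as the main obstacle, which you do not carry out.

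The paper sidesteps this entirely by invoking the machinery you already set up: once parts (a)--(c) of \Cref{conley goode method thm} are verified (\Cref{Psi injective on J prop}, \Cref{part b lemma}, \Cref{part c lemma}), \cite[Lemma 3.2]{conley2025annihilators} gives for free that $\proj_J$ is \emph{injective} on $\gr_2 I$. Surjectivity onto $\Sa^2(\W{-1}/J)$ then reduces to a weight-space dimension count, and since only the $\mf{b}$-action is compatible with $\proj_J$, the paper constructs an explicit $\mf{b}$-isomorphism $\theta:\W{-1}/J\to\mc{F}_n$, whence $\Sa^2(\W{-1}/J)\cong\bigoplus_{j\geq n}\mc{F}_{2j}$ as $\mf{b}$-modules, matching the $\mf{a}$-decomposition $H_{2n}\cong\bigoplus_{j\geq n}\mc{F}_{2j}$ restricted to $\mf{b}$. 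You should rework your proof around that injectivity and the $\mf{b}$-module comparison; trying to prove surjectivity of $\proj_J$ on $\gr_2 I$ head-on, without first establishing injectivity, is both harder and not obviously finishable by the representation-theoretic shortcut you sketch.
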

\begin{proof}
Note that $\gr I = \gr_2 I$ because $I \subseteq \Ua_2(\W{-1})$ and $(\oo{2n+2}{2n+1}{-1})$ cannot contain non-zero elements of order less than 2. Notice that parts \ref{injective condition}, \ref{inclusion condition}, \ref{associated graded condition} of \Cref{conley goode method thm} hold by \Cref{Psi injective on J prop}, \Cref{part b lemma} and \Cref{part c lemma} respectively. Thus by \cite[Lemma 3.2]{conley2025annihilators}, $\proj_J$ is injective on $\gr_2 I$. Thus we show that
\[ \proj_J: \gr_2 I \rightarrow \Sa^2(\W{-1}/J)\]
is bijective. In fact, $\proj_J$ respects $e_0$-weight spaces because the degree grading on $\Ua(\W{-1})$ and $\Sa(\W{-1})$ coincides with the $e_0$-weight.

Now, for any given weight $w \in \kk$ there are only finitely many $l \geq -1$ such that $\mc{F}_{2l}$ has a non-zero $e_0$-weight space of weight $w$, and the weight space is always finite-dimensional. On the other hand, $\mc{F}_{2l}$ is spanned by its weight spaces. Thus by \Cref{S2 decomposition}, every weight space of $\Sa^2(\W{-1})$ is finite-dimensional, and together the weight spaces span $\Sa^2(\W{-1})$. The same must then hold for $\Sa^2(\W{-1}/J)$.

Therefore, it suffices to check that $\gr_2 I$ and $\Sa^2(\W{-1}/J)$ have the same weight space dimensions. Because $\gr$ respects $e_0$-weight spaces and $\gr I = \gr_2 I$, it is enough to check that $I$ has the same weight space dimensions as $\Sa^2(\W{-1}/J)$.

To see this, consider the Borel subalgebra $\mf{b} = \kk\{e_{-1}, e_0\}$ of the projective subalgebra $\mf{a}$. 
The $\kk$-linear map
\begin{align*}
    \theta: \W{-1}/J \to \mathcal{F}_n, \quad e_{n+m} +J \mapsto \binom{n+m+1}{m} t^m,
\end{align*}
is an isomorphism of $\mf{b}$-representations. Thus by \cite[Lemma 5.3]{conley2025annihilators}, we have an isomorphism of $\mathfrak{b}$-representations
\[ \Sa^2(\W{-1}/J) \cong \Sa^2(\mathcal{F}_n) \cong \bigoplus_{j \geq n} \mathcal{F}_{2j}.\]
Since $I = H_{2n}$ by \Cref{lem: H2n generated}, it follows that $\Sa^2(\W{-1}/J) \cong I$ as $\mf{b}$-representations, from which the result follows.
\end{proof}

\subsection{\texorpdfstring{\Cref{kernel thm}}{The kernel theorem} and its consequences} \label{Proof of kernel thm}
Our main result now follows by applying \Cref{conley goode method thm}.

\begin{proof}[Proof of \Cref{kernel thm}]
For $\Psi=\Psi_n$ and $T=T_n$, \Cref{Psi injective on J prop}, \Cref{part b lemma}, \Cref{part c lemma}, and \Cref{part d lemma} are conditions \ref{injective condition}-\ref{rep theory condition} of \Cref{conley goode method thm}, respectively.
Thus $\ker(\Psi_n)$ is the two-sided ideal generated by $I$, hence is generated by $\oo{2n+2}{2n+1}{-1}$.
\end{proof}

\Cref{kernel thm} has a range of consequences, the first of which is a straightforward calculation.

\begin{lemma} \label{image of kernel lemma}
    Let $n \geq 1$. The ideal $\Psi_{n+1}(\ker \Psi_n) = T_{n+1}v_n^2$.
\end{lemma}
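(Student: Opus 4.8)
The plan is to combine the previously-established facts about $B_n$ with the principality of $\ker \Psi_n$ proved in \Cref{kernel thm}. The key observation is that $\Psi_{n+1}(\ker \Psi_n)$ is a two-sided ideal of $B_{n+1} = \Psi_{n+1}(\UW)$, because $\ker \Psi_n$ is a two-sided ideal of $\UW$. Since $\ker \Psi_n = (\oo{2n+2}{2n+1}{-1})$ as a two-sided ideal by \Cref{kernel thm}, its image under $\Psi_{n+1}$ is the two-sided ideal of $B_{n+1}$ generated by the single element $\Psi_{n+1}(\oo{2n+2}{2n+1}{-1})$.

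First I would compute $\Psi_{n+1}(\oo{2n+2}{2n+1}{-1})$ explicitly. Applying \Cref{image of Omega thm} with $n$ replaced by $n+1$ (valid since $n+1 \geq 2$), we get $\Psi_{n+1}(\oo{2(n+1)}{2(n+1)-1}{-1}) = \Psi_{n+1}(\oo{2n+2}{2n+1}{-1}) \in \kk^\times v_n^2$. So the element generating $\Psi_{n+1}(\ker \Psi_n)$ as a two-sided ideal of $B_{n+1}$ is a non-zero scalar multiple of $v_n^2$.

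Next I would invoke \Cref{two-sided ideal image prop} (or equivalently \Cref{Bn large ideal thm}), applied to $B_{n+1}$ with $m = 2$: since $v_n^2 \in B_{n+1}$, the two-sided ideal of $B_{n+1}$ generated by $v_n^2$ equals $T_{n+1} v_n^2$. Combining these steps:
\[ \Psi_{n+1}(\ker \Psi_n) = B_{n+1} \Psi_{n+1}(\oo{2n+2}{2n+1}{-1}) B_{n+1} = B_{n+1} v_n^2 B_{n+1} = T_{n+1} v_n^2, \]
which is the claimed equality.

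I do not anticipate a serious obstacle here — the lemma is essentially a bookkeeping consequence of results already in hand. The one point requiring a little care is the very first step: that $\Psi_{n+1}$ of a two-sided ideal of $\UW$ is a two-sided ideal of the image $B_{n+1}$ (not merely of $T_{n+1}$), which is immediate since $\Psi_{n+1}$ is a surjective algebra homomorphism onto $B_{n+1}$. One should also double-check the index shift in applying \Cref{image of Omega thm}: the differentiator appearing in $\ker \Psi_n$ is $\oo{2n+2}{2n+1}{-1} = \oo{2(n+1)}{2(n+1)-1}{-1}$, so the hypothesis $n+1 \geq 2$ of that theorem is exactly $n \geq 1$, matching the hypothesis of the present lemma.
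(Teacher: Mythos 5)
Your proposal is correct and follows exactly the paper's own argument: apply \Cref{kernel thm} to identify the generator, \Cref{image of Omega thm} (with index $n+1$) to compute its image as a nonzero multiple of $v_n^2$, and then \Cref{two-sided ideal image prop} to identify the resulting two-sided ideal of $B_{n+1}$ with $T_{n+1}v_n^2$. The extra care you take with the index shift and with the surjectivity of $\Psi_{n+1}$ onto $B_{n+1}$ is sound but not a departure from the paper's reasoning.
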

\begin{proof}
    By \Cref{kernel thm} and \Cref{image of Omega thm}, $\Psi_{n+1}(\ker \Psi_n)$ is the two-sided ideal of $B_{n+1}$ generated by $v_n^2$. Thus by \Cref{two-sided ideal image prop}, $\Psi_{n+1}(\ker \Psi_n) = T_{n+1}v_n^2$, as required.
\end{proof}

This simple observation has two important consequences. The first is that despite $\ker \Psi_n$ being a principal two-sided ideal, it is never finitely generated as a left or right ideal of $\UW$.

\begin{corollary}\label{cor:not fg generated}
    For any $n \geq 1$, $\ker \Psi_n$ is not finitely generated as a left or right ideal of $\UW$.
\end{corollary}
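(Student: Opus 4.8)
The plan is to transport the non-finite-generation of the ideal $T_{n+1}v_n^2 \subseteq B_{n+1}$, established in \Cref{non fg image ideal}, back to $\ker \Psi_n$ via the orbit homomorphism $\Psi_{n+1}$. The crucial input is \Cref{image of kernel lemma}, which identifies $\Psi_{n+1}(\ker \Psi_n) = T_{n+1}v_n^2$.

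First I would argue by contradiction. Suppose $\ker \Psi_n$ is finitely generated as a left ideal of $\UW$, say $\ker \Psi_n = \UW\{x_1, \dots, x_k\}$ for some $x_1, \dots, x_k \in \UW$. Since $\Psi_{n+1}$ is a surjective ring homomorphism onto $B_{n+1}$, applying it gives
\[ \Psi_{n+1}(\ker \Psi_n) = B_{n+1}\{\Psi_{n+1}(x_1), \dots, \Psi_{n+1}(x_k)\}, \]
so $\Psi_{n+1}(\ker \Psi_n)$ is a finitely generated left ideal of $B_{n+1}$. But by \Cref{image of kernel lemma} this ideal equals $T_{n+1}v_n^2$, which by \Cref{non fg image ideal} (valid since $n+1 \geq 2$) is \emph{not} finitely generated as a left ideal of $B_{n+1}$ — a contradiction. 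The statement for right ideals follows by the same argument with ``left'' replaced by ``right'' throughout.

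I do not anticipate any serious obstacle here: the only elementary fact used is that a surjective ring homomorphism sends a finitely generated one-sided ideal to a finitely generated one-sided ideal, and the substantive content is entirely quoted from \Cref{image of kernel lemma} and \Cref{non fg image ideal}. If anything, the only point worth stating carefully is that $n \geq 1$ exactly guarantees $n+1 \geq 2$, so that \Cref{non fg image ideal} applies at the index $n+1$.
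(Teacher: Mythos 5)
Your argument is correct and matches the paper's proof exactly: both reduce via \Cref{image of kernel lemma} to the non-finite-generation of $T_{n+1}v_n^2 \subseteq B_{n+1}$ established in \Cref{non fg image ideal}, using the elementary fact that a surjective ring homomorphism carries a finitely generated one-sided ideal to a finitely generated one-sided ideal. You have merely unwound the paper's one-line ``it follows'' into an explicit contradiction, which is fine.
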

\begin{proof}
    By \Cref{non fg image ideal} and \Cref{image of kernel lemma}, $\Psi_{n+1}(\ker \Psi_n)$ is not finitely generated as a left or right ideal of $B_{n+1}$. It follows that $\ker \Psi_n$ also cannot be finitely generated as a left or right ideal of $\UW$.
\end{proof}

We thus obtain the first main result of this section from the combination of \Cref{kernel thm} and \Cref{cor:not fg generated}.

\kernel*

We compute an explicit set of generators of $\ker \Psi_n$ as a left ideal in the next subsection. Next, we can compute the kernel of the natural map $B_{n+1} \rightarrow B_n$.

\begin{corollary} \label{B_n kernel thm}
    Let $n \geq 1$. The natural surjection $B_{n+1} \rightarrow B_n$ has kernel $T_{n+1}v_{n}^2$.
\end{corollary}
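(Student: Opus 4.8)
The plan is to identify the kernel of the natural surjection $B_{n+1} \to B_n$ with $\Psi_{n+1}(\ker \Psi_n)$, and then quote \Cref{image of kernel lemma}. First I would make the surjection explicit: by the commutative diagram following \Cref{Psi formula}, the natural map $B_{n+1} \to B_n$ is the restriction to $B_{n+1}$ of the quotient $p_{n+1,n} \colon T_{n+1} \to T_n$ induced by $\g_{n+1} \to \g_n$ (equivalently $v_n \mapsto 0$), and $\Psi_n = p_{n+1,n} \circ \Psi_{n+1}$. Write $q$ for this restriction.

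Next I would run a short diagram chase. For $T_{n+1}v_n^2 \subseteq \ker q$: by \Cref{Bn large ideal thm} applied with $n+1$ in place of $n$ we have $T_{n+1}v_n^2 \subseteq B_{n+1}$, and since $p_{n+1,n}(v_n) = 0$ this ideal is killed by $q$. For the reverse inclusion: given $b \in B_{n+1}$ with $q(b) = 0$, choose $u \in \Ua(\W{-1})$ with $\Psi_{n+1}(u) = b$; then $\Psi_n(u) = p_{n+1,n}(\Psi_{n+1}(u)) = q(b) = 0$, so $u \in \ker \Psi_n$ and hence $b \in \Psi_{n+1}(\ker \Psi_n)$. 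This shows $\ker q = \Psi_{n+1}(\ker \Psi_n)$.

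Finally, \Cref{image of kernel lemma} gives $\Psi_{n+1}(\ker \Psi_n) = T_{n+1}v_n^2$, which completes the proof. There is no genuine obstacle at this stage: all of the substance sits in \Cref{kernel thm}, \Cref{image of Omega thm}, and \Cref{two-sided ideal image prop} (combined in \Cref{image of kernel lemma}), and the corollary is a purely formal consequence obtained by chasing the defining diagram of the orbit homomorphisms.
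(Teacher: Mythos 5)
Your proposal is correct and matches the paper's proof: both identify $\ker q = \Psi_{n+1}(\ker \Psi_n)$ from the relation $q \circ \Psi_{n+1} = \Psi_n$, and then invoke \Cref{image of kernel lemma}. The only difference is that you spell out the short diagram chase that the paper compresses into ``It follows that''.
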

\begin{proof}
    Let $q_n:B_{n+1} \rightarrow B_n$ be the natural surjection corresponding to $\mf{g}_{n+1} \rightarrow \mf{g}_n$. Then $q_n \circ \Psi_{n+1} = \Psi_n$. It follows that
    \[ \ker q_n = \Psi_{n+1}(\ker \Psi_n) = T_{n+1}v_{n}^2, \]
    by \Cref{image of kernel lemma}.
\end{proof}
Because of \Cref{B_n kernel thm}, we have a greater understanding of the image of $\Psi_n$ than is implied by \Cref{Main result large image}. In fact, we see that the image $B_n$ is a successive extension of $B_1$ by the ideals $T_2v_1^2, T_3v_2^2, \dots, T_nv_{n-1}^2$. For example, the general form of an element of $B_n$ is 
\[ s(x) + yv_{n-1}^2, \]
where $x \in B_{n-1}$, $y \in T_n$, and $s:T_{n-1} \rightarrow T_n$ is any homomorphism of vector spaces which is a left inverse to the natural quotient $T_n \rightarrow T_{n-1}$. We suggest that this relation between $B_n$ and $B_1$ is very weak evidence that \Cref{Bn ACC conjecture} should hold.\\

Finally, we can answer a question of Conley and Goode: on page 20 of \cite{conley2025annihilators}, it is asked whether the ideals generated by the elements $S^n(e_{-1}^2)$, generated by repeated application of the step element $S$, are all distinct. It is not clear $\textit{a priori}$ whether this chain of ideals should stabilise or not. However, from \Cref{kernel thm}, and using \Cref{S Omega lemma} as in the proof of \Cref{lem: H2n generated}, we obtain the following.

\begin{corollary}
    For any $n \geq 0$, $(S^n(e_{-1}^2)) = \ker \Psi_n$.
\end{corollary}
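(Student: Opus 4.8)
The plan is to deduce the corollary directly from \Cref{kernel thm} by identifying $S^n(e_{-1}^2)$, up to a nonzero scalar, with a single differentiator. Two facts are needed. First, $e_{-1}^2 = \oo{0}{-1}{-1}$, immediately from \Cref{def:oo}. Second, the computation carried out inside the proof of \Cref{S Omega lemma} gives not merely proportionality but the exact relation
\[ S(\oo{m}{m-1}{-1}) = (m^2 - 9m + 12)\,\oo{m+2}{m+1}{-1} \qquad \text{for all } m \geq 0, \]
and since the quadratic $m^2 - 9m + 12$ has no non-negative integer root and $\Char\kk = 0$, the scalar it produces is always a unit.

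From here the proof is a short induction on $n$. The base case $n=0$ is the identity $e_{-1}^2 = \oo{0}{-1}{-1}$, and the inductive step applies the displayed relation, tracking how the superscript and subscripts of $\oo{m}{k}{s}$ evolve under $S$ and using that each scalar introduced is nonzero; this shows that $S^n(e_{-1}^2)$ is a nonzero scalar multiple of the differentiator $\oo{2n+2}{2n+1}{-1}$ which generates $\ker\Psi_n$ by \Cref{kernel thm}. Consequently the two-sided ideals of $\UW$ satisfy $(S^n(e_{-1}^2)) = (\oo{2n+2}{2n+1}{-1}) = \ker\Psi_n$, which is the claim. (Alternatively, the last step can be read off \Cref{Main result generic kernel}, which asserts that any nonzero differentiator of this shape generates the same kernel.)

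There is essentially no obstacle beyond the index bookkeeping in the induction: one only has to be careful that the parameters of $\oo{m}{k}{s}$ line up correctly under iterated application of $S$ and that no vanishing scalar is ever introduced, both of which are transparent from the exact formula above. As a byproduct this settles the Conley--Goode question on page 20 of \cite{conley2025annihilators}: the chain of ideals $(S^n(e_{-1}^2))$ does not stabilise, since the kernels $\ker\Psi_n$ are strictly decreasing by \Cref{image of kernel lemma} together with \Cref{non fg image ideal}.
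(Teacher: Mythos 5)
Your overall route is the same as the paper's (iterate \Cref{S Omega lemma} starting from $e_{-1}^2=\oo{0}{-1}{-1}$ and then quote \Cref{kernel thm}), but the index bookkeeping you dismiss as transparent is exactly where the argument breaks. With your base case $S^0(e_{-1}^2)=\oo{0}{-1}{-1}$ and the step $S(\oo{m}{m-1}{-1})=(m^2-9m+12)\,\oo{m+2}{m+1}{-1}$, the induction yields $S^n(e_{-1}^2)\in\kk^\times\,\oo{2n}{2n-1}{-1}$, \emph{not} $\kk^\times\,\oo{2n+2}{2n+1}{-1}$ as you assert: applying $S$ once to $\oo{0}{-1}{-1}$ gives a multiple of $\oo{2}{1}{-1}$, twice gives $\oo{4}{3}{-1}$, and so on. Combined with \Cref{kernel thm}, what your argument actually proves is $(S^n(e_{-1}^2))=(\oo{2n}{2n-1}{-1})=\ker\Psi_{n-1}$, equivalently $(S^{n+1}(e_{-1}^2))=\ker\Psi_n$; this is also what \Cref{lem: H2n generated} records, where $H_{2\ell}$ is generated by $S^{\ell+1}(e_{-1}^2)$ and by $\oo{2\ell+2}{2\ell+1}{-1}$. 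Note in particular that the identification you claim fails already at $n=0$, where $S^0(e_{-1}^2)=e_{-1}^2$ is visibly not a multiple of $\oo{2}{1}{-1}$, and $\Psi_0(e_{-1}^2)=\del^2\neq 0$ shows $(e_{-1}^2)\neq\ker\Psi_0$.

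So as written the central identification is false and the proof does not establish the displayed equality; you must either shift the exponent (prove $(S^{n+1}(e_{-1}^2))=\ker\Psi_n$, which is what your induction supports and what the cited facts give), or explain where an additional application of $S$ enters — you cannot have both the base case $\oo{0}{-1}{-1}$ and the conclusion $\oo{2n+2}{2n+1}{-1}$ after only $n$ steps. Your closing remark is fine in substance: strictness of the chain does follow from \Cref{image of kernel lemma}, since $\Psi_{n+1}(\ker\Psi_n)=T_{n+1}v_n^2\neq 0$ while $\ker\Psi_{n+1}\subseteq\ker\Psi_n$ (here \Cref{non fg image ideal} is not needed), and this is a legitimate alternative to the paper's GK-dimension argument; but the distinctness conclusion is unaffected by, and does not repair, the off-by-one in the main claim.
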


The ideals $\ker \Psi_n$ form a descending chain which must be strict, for example because the images $\Psi_n(\Ua(\W{-1}))$ are all of different GK dimension by \Cref{Main result birational image}. In particular, the ideals $(S^n(e_{-1}^2))$ are indeed distinct.

Moreover, \Cref{Main result even odd annihilators} shows that the $(S^{n}(e_{-1}^2))$ are the (intersections) of annihilators of certain simple modules for $\W{-1}$ (corresponding to one-point local functions), see sub\cref{Primitive ideals}.

\subsection{Ideals generated by differentiators} \label{Generation of ker Psi_n}

In this subsection, we show a kind of converse to \Cref{kernel thm}; as well as the kernel of an orbit homomorphism being generated by a differentiator, any (non-zero) differentiator generates the kernel of an orbit homomorphism. As a consequence, we also determine an explicit set of generators of $\ker \Psi_n$ as a one-sided ideal. See \Cref{Main result generic kernel}.

Throughout this subsection, let $n \geq 1$. Since we already know a generator of $\ker \Psi_n$ by \Cref{kernel thm}, we first show an inclusion of ideals.

\begin{lemma}\label{lem: mks part I}
    If $m \geq 2n+1$, $k \geq 2n+1$ and $s \geq -1$, then
    \[ (\oo{m}{k}{s}) \subseteq (\oo{2n+2}{2n+1}{-1}).\]
\end{lemma}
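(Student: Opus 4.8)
The goal is to show that the two-sided ideal $(\oo{m}{k}{s})$ is contained in $\ker \Psi_n = (\oo{2n+2}{2n+1}{-1})$ whenever $m \geq 2n+1$, $k \geq 2n+1$, $s \geq -1$. By \Cref{kernel thm}, this is equivalent to showing $\Psi_n(\oo{m}{k}{s}) = 0$ under these hypotheses, so the plan is to verify directly that each such differentiator lies in the kernel. The natural strategy is to first establish the case $k=2n+1$, $s=-1$ and use the commutator relations of \Cref{differentiator commutators lemma} to move to general $k$ and $s$, keeping track of which differentiators stay inside $\ker\Psi_n$ at each step.

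First I would handle the ``anchor'' differentiators $\oo{m}{m-1}{-1}$ for $m \geq 2n+2$: these are exactly the generators of the kernels $\ker\Psi_{m/2-1}$ (for $m$ even) or lie in such a kernel, and by \Cref{image of Omega thm} together with \Cref{Omega in kernel cor} we have $\oo{2\ell+2}{2\ell+1}{-1} \in \ker\Psi_\ell \subseteq \ker\Psi_n$ for all $\ell \geq n$. More usefully, \Cref{S Omega lemma} and its proof compute $S(\oo{m}{m-1}{-1}) \in \kk^\times \oo{m+2}{m+1}{-1}$, and since the step operator $S$ preserves $\ker\Psi_n$ (because $\Psi_n(S(x)) = \widetilde{S}(\Psi_n(x))$-type reasoning, or simply because $S$ is built from commutators with elements of $\UW$, hence any kernel of an algebra homomorphism is an $\ad$-submodule and thus $S$-stable), an induction starting from $\oo{2n+2}{2n+1}{-1} \in \ker\Psi_n$ shows $\oo{m}{m-1}{-1} \in \ker\Psi_n$ for all even $m \geq 2n+2$; the odd cases $m = 2n+1$ and odd $m \geq 2n+3$ need a separate small argument, perhaps by applying a commutator $[e_{-1}, -]$ or $[e_1,-]$ to an even-index differentiator, or by the representation-theoretic fact (as in \Cref{lem: H2n generated}) that $H_{2n}$ contains all of $G_{2n}, G_{2n+2}, \dots$, which accounts for both parities of order. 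Then, to pass from $\oo{m}{m-1}{-1}$ to $\oo{m}{k}{s}$ for general $k \geq 2n+1$ and $s \geq -1$ with $k+s$ fixed: I would repeatedly apply $\ad(e_1)$, using \eqref{commutator with e1}, which sends $\oo{m}{k}{s} \mapsto (k-1)\oo{m}{k+1}{s} + (s-1+m)\oo{m}{k}{s+1}$, to climb from $(k,s)=(m-1,-1)$ up the line of constant $k+s+1 \ell$ and reach all pairs with $k \geq m-1$; and separately apply $\ad(e_{-1})$, using \eqref{commutator with e-1}, to increase $k+s$. Since $\ker\Psi_n$ is an $\ad(\W{-1})$-submodule of $\UW$, every differentiator reached this way lies in $\ker\Psi_n$, and one checks the reachable set covers all $(k,s)$ with $m \geq 2n+1$, $k \geq 2n+1$, $s \geq -1$.

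The main obstacle I anticipate is bookkeeping: making sure the commutator recursions actually reach \emph{every} required $(m,k,s)$ triple without accidentally needing a differentiator that is \emph{not} in the kernel, and handling the interplay between the two parameters $m$ (the order of differencing) and the shift $k+s$. In particular, increasing $m$ is the delicate part — the relations in \Cref{differentiator commutators lemma} preserve $m$, so one genuinely needs either the step operator $S$ (which raises $m$ by $2$) or a more refined identity to go from $m = 2n+1$ to $m = 2n+2$ and beyond, and to handle even versus odd $m$ uniformly. An alternative, possibly cleaner route that sidesteps some of this is to invoke the product formula for differentiators from \cite{billig2016classification} to express $\oo{m}{k}{s}$ directly in terms of $\oo{2n+2}{2n+1}{-1}$ (or elements of its ideal) when $m, k \geq 2n+1$; I would try that identity first, and fall back on the commutator-climbing argument if the explicit combinatorics prove unwieldy.
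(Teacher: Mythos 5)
Your proposal diverges from the paper's proof, which is very short and structural: the paper cites \cite{conley2025annihilators} (their equation (16)) for the fact that
\[ H_{2n} = \kk\{\oo{m}{k}{s} \mid k \geq m-1,\; s \geq -1,\; m \geq 2n+1\}, \]
then invokes \Cref{lem: H2n generated} (already proved) to say $H_{2n}$ is the $\W{-1}$-submodule of $\UW$ generated by $\oo{2n+2}{2n+1}{-1}$, and concludes that each $\oo{m}{k}{s}$ in the span lies in the two-sided ideal $(\oo{2n+2}{2n+1}{-1})$. Your fallback suggestion (``the representation-theoretic fact ... that $H_{2n}$ contains all of $G_{2n}, G_{2n+2}, \dots$'') points in exactly this direction, but you never commit to the one fact that makes it work — that $H_{2n}$ is \emph{precisely} the span of the differentiators of differencing order $\geq 2n+1$. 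Without that, the reduction to \Cref{lem: H2n generated} is not complete.

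Your primary strategy, by contrast, has a genuine gap. You observe correctly that $\ker \Psi_n$ is $\ad$-stable (being a two-sided ideal) and that the operator $S$ therefore preserves it, and you observe correctly that the commutator relations in \Cref{differentiator commutators lemma} preserve the parameter $m$. But this is exactly the problem: starting from the generator $\oo{2n+2}{2n+1}{-1}$ with $m = 2n+2$, every tool you list either preserves $m$ (the $\ad(e_j)$ relations) or raises it ($S$ raises $m$ by $2$; the recursion $\oo{m+1}{k}{s} = \oo{m}{k}{s} - \oo{m}{k-1}{s+1}$, which you don't mention, raises $m$ by $1$). There is no mechanism in your toolkit to \emph{decrease} $m$, so you cannot reach any differentiator with $m = 2n+1$, which the lemma explicitly covers. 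Your proposed fix — ``applying a commutator $[e_{-1}, -]$ or $[e_1,-]$ to an even-index differentiator'' — does not work, since those commutators also preserve $m$. Reaching $m = 2n+1$ requires a special linear relation between differentiators of adjacent orders, such as $\oo{2n+1}{2n+1}{-1} = \tfrac{1}{2}\oo{2n+2}{2n+1}{-1}$; the paper proves this separately (\Cref{lem: mks part II}) from the closed-form expansion and two binomial identities, and it is not a consequence of the commutator lemma. Until you either supply such an identity or genuinely invoke the Conley--Goode characterization of $H_{2n}$, the argument does not close. (Your alternative route via the Billig--Futorny product formula is also left uncommitted, so it cannot be credited here.)
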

\begin{proof}
    By the equation (16) on page 21 of \cite{conley2025annihilators}, we have
    \[ H_{2n} = \kk\{\oo{m}{k}{s} \mid k \geq m-1, s \geq -1, m \geq 2n+1\}.\]
    By \Cref{lem: H2n generated}, $H_{2n}$ is generated by $\oo{2n+2}{2n+1}{-1}$ as a $\W{-1}$-module. Thus for all $m \geq 2n+1$, $k \geq s$ and $s \geq -1$, $\oo{m}{k}{s}$ is in the $\W{-1}$-module generated by $\oo{2n+2}{2n+1}{-1}$, thus in the ideal of $\Ua(\W{-1})$ generated by $\oo{2n+2}{2n+1}{-1}$.
\end{proof}

In proving the main result of this subsection, we will need some special linear relations between certain differentiators, which the next two lemmas describe.

\begin{lemma}\label{lem: mks part II}
    We have
    \begin{align*}
         \oo{2n+1}{2n+1}{-1} = -\oo{2n+1}{2n}{0} = \frac{1}{2} \oo{2n+2}{2n+1}{-1}.
    \end{align*}
\end{lemma}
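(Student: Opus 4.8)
The statement is a purely combinatorial identity in $\Ua(\W{-1})$ relating three differentiators of very close indices. Recall the closed form from \eqref{Omega long form eqn},
\[ \oo{m}{k}{s} = \sum_{i=0}^{m} (-1)^i \binom{m}{i} e_{k-i} e_{s+i}, \]
and the defining recursion $\oo{m+1}{k}{s} = \oo{m}{k}{s} - \oo{m}{k-1}{s+1}$ from \Cref{def:oo}. The plan is to prove the two claimed equalities separately, each by a short direct manipulation.

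**First equality $\oo{2n+1}{2n+1}{-1} = -\oo{2n+1}{2n}{0}$.** I would expand both sides using \eqref{Omega long form eqn}. On the left, the $i$th term is $(-1)^i\binom{2n+1}{i} e_{2n+1-i} e_{i-1}$; on the right (after the sign) the $j$th term is $-(-1)^j\binom{2n+1}{j} e_{2n-j} e_{j}$. Reindex the right-hand sum by $i = j+1$, so its $i$th term becomes $-(-1)^{i-1}\binom{2n+1}{i-1} e_{2n+1-i} e_{i-1} = (-1)^{i}\binom{2n+1}{i-1} e_{2n+1-i} e_{i-1}$. Thus the difference of the two sides is $\sum_i (-1)^i\left(\binom{2n+1}{i} - \binom{2n+1}{i-1}\right) e_{2n+1-i} e_{i-1}$, where $i$ ranges so that both products make sense. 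This is not literally zero term-by-term, but the key point is the symmetry $e_{k-i}e_{s+i}$: in the product $e_{a}e_{b}$ with $a+b = 2n$ fixed, swapping the two basis vectors costs a bracket, $e_a e_b = e_b e_a + (a-b) e_{2n}$ (using $[e_b,e_a] = (a-b)e_{a+b}$). Pairing the term with index $i$ against the term with index $2n+2-i$ (which has the factors $e_{i-1}e_{2n+1-i}$, i.e. the same pair reversed) and using $\binom{2n+1}{i}-\binom{2n+1}{i-1} = -\left(\binom{2n+1}{2n+2-i}-\binom{2n+1}{2n+1-i}\right)$ from the symmetry of binomial coefficients, the "commuting" parts cancel in pairs and only the bracket corrections $(a-b)e_{2n}$ survive; summing these against the antisymmetrised coefficients gives zero by a standard Vandermonde-type identity. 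Alternatively, and perhaps more cleanly, one observes directly that $\oo{m}{k}{s} + \oo{m}{s}{k}$ lies in $\kk e_{k+s}$ together with the antisymmetry of the coefficient pattern, reducing the claim to the scalar identity $\sum_{i=0}^{2n+1} (-1)^i \binom{2n+1}{i}\bigl((2n+1-i)-(i-1)\bigr) = 0$, i.e. $\sum_i (-1)^i\binom{2n+1}{i}(2n+2-2i) = (2n+2)\cdot 0 - 2\sum_i (-1)^i i \binom{2n+1}{i} = 0$, both sums vanishing since $m = 2n+1 \geq 1$.

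**Second equality $\oo{2n+1}{2n+1}{-1} = \tfrac12 \oo{2n+2}{2n+1}{-1}$.** This follows immediately from the recursion: $\oo{2n+2}{2n+1}{-1} = \oo{2n+1}{2n+1}{-1} - \oo{2n+1}{2n}{0}$, and by the first equality $\oo{2n+1}{2n}{0} = -\oo{2n+1}{2n+1}{-1}$, so $\oo{2n+2}{2n+1}{-1} = 2\,\oo{2n+1}{2n+1}{-1}$, which is the assertion.

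**Main obstacle.** Everything hinges on the first equality; the second is a one-line consequence. The only subtlety in the first is that $\oo{2n+1}{2n+1}{-1}$ and $-\oo{2n+1}{2n}{0}$ are \emph{not} equal monomial-by-monomial — the identity genuinely uses the Lie bracket relations $[e_i,e_j]=(j-i)e_{i+j}$ to reconcile the reordered products, so the bookkeeping of which commutator corrections appear is the part to be careful with. I expect that the slickest route is to note that for any $m \geq 1$ the element $\oo{m}{k}{s} + \oo{m}{s}{k}$, when expanded and symmetrised, is forced into $\kk e_{k+s} = \Ua_1(\W{-1})$ by a weight/order count, and then pin down the scalar; applying this with $k = 2n+1$, $s = -1$, together with the vanishing scalar sums above, closes the argument without heavy computation.
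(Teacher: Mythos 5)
Your ``alternative'' route in the last paragraph is essentially the paper's proof, and the second equality is handled identically (via the recursion). The paper reindexes $\oo{2n+1}{2n+1}{-1}$ by $i \mapsto 2n+1-i$, which (using that $2n+1$ is odd and $\binom{2n+1}{2n+1-i}=\binom{2n+1}{i}$) turns it into $-\sum_i (-1)^i\binom{2n+1}{i}e_ie_{2n-i}$, so that $\oo{2n+1}{2n+1}{-1}+\oo{2n+1}{2n}{0} = \sum_i(-1)^i\binom{2n+1}{i}[e_{2n-i},e_i] = \bigl(\sum_i(-1)^i\binom{2n+1}{i}(2i-2n)\bigr)e_{2n}$, which vanishes by the two binomial identities you cite. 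One caution on notation: the expression $\oo{m}{s}{k}$ you invoke, with $m=2n+1$, $k=2n+1$, $s=-1$, is \emph{not} a well-defined differentiator (its closed form would involve $e_{-1-i}$ for $i\geq 1$, which are not in $\W{-1}$, and the constraint $k'\geq m-1$ fails). What you actually want is the order-reversed sum $\sum_i(-1)^i\binom{m}{i}e_{s+i}e_{k-i}$, which after the reindex $i\mapsto m-i$ (using $m$ odd and $k-s=m+1$) equals $-\oo{2n+1}{2n}{0}$; this is exactly the manipulation the paper performs. Finally, your first attempt via the shift $i=j+1$ and pairing $i$ with $2n+2-i$ would work out to the same scalar sum, but as written it is hand-waved at the key step (``a standard Vandermonde-type identity''), whereas the reindex $i\mapsto 2n+1-i$ makes the terms line up one-for-one and avoids any pairing argument.
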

\begin{proof}
    By the description of differentiators in \eqref{Omega long form eqn} and reindexing, 
    \begin{align*}
        \oo{2n+1}{2n+1}{-1} &= - \sum_{i=0}^{2n+1} (-1)^i \binom{2n+1}{2n+1-i} e_{i}e_{2n-i}, \quad  
        \oo{2n+1}{2n}{0} &= \sum_{i=0}^{2n+1} (-1)^i \binom{2n+1}{i} e_{2n-i}e_{i}.
    \end{align*}
    Hence 
    \begin{align*}
        \oo{2n+1}{2n+1}{-1} + \oo{2n+1}{2n}{0} 
        &= \sum_{i=0}^{2n+1} (-1)^i \binom{2n+1}{i} ({e}_{2n-i}{e}_{i}-{e}_i{e}_{2n-i}) \\
        &=  \Bigl (\sum_{i=0}^{2n+1} (-1)^i \binom{2n+1}{i} (2i-2n) \Bigr ) e_{2n},
    \end{align*}
    which is zero because of the combinatorial identities 
    \begin{align}\label{comb identity}
        \sum_{i=0}^{2n+1} (-1)^i \binom{2n+1}{i} = 0, \quad \sum_{i=0}^{2n+1} (-1)^i \binom{2n+1}{i}i = 0.
    \end{align}
    Thus 
    \begin{align*}
        \oo{2n+2}{2n+1}{-1} = \oo{2n+1}{2n+1}{-1} - \oo{2n+1}{2n}{0} = 2 \oo{2n+1}{2n+1}{-1} = -2 \oo{2n+1}{2n}{0}.
    \end{align*}
\end{proof}

\begin{lemma}\label{lem: mks part III pre}
    For all $s \geq -1$, we have 
    $$\oo{2n+1}{2n+1+s}{s} = 0.$$
\end{lemma}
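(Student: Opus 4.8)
The plan is to mimic the averaging argument used in the proof of \Cref{lem: mks part II}. First I would expand the differentiator using its closed form \eqref{Omega long form eqn}: since the upper index is $2n+1$ and the lower indices are $k=(2n+1)+s$ and $s$,
\[ \oo{2n+1}{2n+1+s}{s} = \sum_{i=0}^{2n+1} (-1)^i \binom{2n+1}{i} e_{2n+1+s-i}\, e_{s+i}, \]
a sum of products $e_a e_b$ with $a,b \in \{s, s+1, \dots, 2n+1+s\}$ and $a+b = 2n+1+2s$ fixed.

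Next I would reindex by the substitution $i \mapsto 2n+1-i$. The key point is that $2n+1$ is odd, so $(-1)^{2n+1-i} = -(-1)^i$, while $\binom{2n+1}{2n+1-i} = \binom{2n+1}{i}$; this rewrites the same element as
\[ \oo{2n+1}{2n+1+s}{s} = -\sum_{i=0}^{2n+1} (-1)^i \binom{2n+1}{i} e_{s+i}\, e_{2n+1+s-i}. \]
Adding the two displays, the products collapse into Witt brackets, and using $[e_a,e_b]=(b-a)e_{a+b}$ with $b-a = (s+i)-(2n+1+s-i) = 2i-2n-1$ I obtain
\[ 2\,\oo{2n+1}{2n+1+s}{s} = \sum_{i=0}^{2n+1} (-1)^i \binom{2n+1}{i}\,[e_{2n+1+s-i},e_{s+i}] = \Bigl(\sum_{i=0}^{2n+1}(-1)^i\binom{2n+1}{i}(2i-2n-1)\Bigr) e_{2n+1+2s}. \]

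Finally I would note that the scalar coefficient equals $2\sum_{i}(-1)^i\binom{2n+1}{i}i - (2n+1)\sum_{i}(-1)^i\binom{2n+1}{i}$, and both of these sums vanish by the combinatorial identities \eqref{comb identity} (which hold because $n \geq 1$, so $2n+1 \geq 3$). Hence $2\,\oo{2n+1}{2n+1+s}{s}=0$, and since $\kk$ has characteristic zero, $\oo{2n+1}{2n+1+s}{s}=0$. There is no genuine obstacle here: the argument is structurally identical to that of \Cref{lem: mks part II}, and the only points requiring attention are the sign in the reindexing (which is precisely where oddness of $2n+1$ is used) and confirming that the combinatorial identities apply throughout the stated range $n \geq 1$.
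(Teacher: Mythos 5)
Your proof is correct and takes essentially the same approach as the paper: both use the closed form \eqref{Omega long form eqn}, reindex by $i \mapsto 2n+1-i$ (where the oddness of $2n+1$ provides the crucial sign flip), add the two expressions to collapse the products into Witt brackets, and then apply the combinatorial identities \eqref{comb identity}. The paper's proof is just terser, compressing these steps by pointing to the analogous argument in \Cref{lem: mks part II}; your version spells out the reindexing and the commutator computation, which is helpful but not a new idea.
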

\begin{proof}
    Similarly to the proof of \Cref{lem: mks part II}, by \eqref{Omega long form eqn} and reindexing, we have
    \begin{align*}
        2\oo{2n+1}{2n+1+s}{s} &= \sum_{i=0}^{2n+1}(-1)^i\binom{2n+1}{i} (e_{2n+1+s-i}e_{s+i} - e_{s+i}e_{s+ 2n+1-i})\\ 
        &= \Bigl (\sum_{i=0}^{2n+1}(-1)^i\binom{2n+1}{i} (2i-2n-1) \Bigr ) e_{2n+2s+1}, 
    \end{align*}
    which is zero again by \eqref{comb identity}.
\end{proof}

We now prove the main technical fact of this subsection.

\begin{proposition}\label{lem: mks part III}
    Let $m \in \{2n+1, 2n+2\}$, $k \geq m-1$ and $s \geq -1$. 
    Then $\oo{2n+2}{2n+1}{-1}$ lies in the $\W{-1}$-module generated by $\oo{m}{k}{s}$ unless $m=k-s=2n+1$.
\end{proposition}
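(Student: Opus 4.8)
The plan is to reduce to a small number of cases according to the pair $(m,k,s)$, using the commutator relations of \Cref{differentiator commutators lemma} to climb from an arbitrary $\oo{m}{k}{s}$ down to $\oo{2n+2}{2n+1}{-1}$. I will treat the two values $m=2n+2$ and $m=2n+1$ separately. When $m=2n+2$: the element $\oo{2n+2}{2n+1}{-1}$ is already the ``smallest'' differentiator of this $m$, so I would apply \eqref{commutator with e-1} repeatedly to decrease $k$ and $s$ towards $k=2n+1$, $s=-1$. The key point is that in \eqref{commutator with e-1} the coefficient of $\oo{m}{k-1}{s}$ is $k+1-m$ and the coefficient of $\oo{m}{k}{s-1}$ is $s+1$, so as long as we are not at $k=m-1$ (making $k+1-m=0$) or $s=-1$ (making $s+1=0$) we can strictly decrease one of the two indices while staying inside the $\W{-1}$-module generated by $\oo{m}{k}{s}$. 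A short induction on $k+s$ then shows $\oo{2n+2}{2n+1}{-1}$ lies in the module generated by any $\oo{2n+2}{k}{s}$ (using \Cref{lem: mks part II} if one first lands on $\oo{2n+1}{2n}{0}$-type elements — but for $m=2n+2$ we never drop $m$, so this is not needed).

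For $m=2n+1$: here the answer must genuinely exclude the diagonal $k-s=2n+1$, since by \Cref{lem: mks part III pre} those differentiators vanish, so they certainly cannot generate anything nonzero. For the non-diagonal case I would again use \eqref{commutator with e-1} to push $(k,s)$ towards the extreme values. If $s\geq 0$, apply the bracket with $e_{-1}$ to decrease $s$; the coefficient $s+1$ is nonzero, and this produces $\oo{2n+1}{k-1}{s}$ and $\oo{2n+1}{k}{s-1}$. Iterating, we reach $s=-1$, i.e. some $\oo{2n+1}{k}{-1}$ with $k\geq 2n$. If $k>2n+1$ we can decrease $k$ via \eqref{commutator with e-1} again (coefficient $k+1-m=k-2n>0$), landing eventually on $\oo{2n+1}{2n+1}{-1}$ or $\oo{2n+1}{2n}{0}$, both of which equal $\pm\tfrac12\oo{2n+2}{2n+1}{-1}$ by \Cref{lem: mks part II}. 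The one subtlety is the boundary $k=2n$ (so $k=m-1$), where the relation with $e_{-1}$ degenerates on the $k$-index; there one instead uses the relation with $e_1$, \eqref{commutator with e1}, whose coefficients $(k-1)$ and $(s-1+m)$ are nonzero in the relevant ranges, to move back up. Throughout, one must check that the ``target'' differentiator never accidentally lands on the diagonal $k-s=2n+1$ before reaching $\oo{2n+1}{2n+1}{-1}$ — but decreasing $s$ while holding $k$ fixed, or decreasing $k$ while holding $s$ fixed, always moves $k-s$ away from or keeps it at a value $\neq 2n+1$ given that the starting point satisfies $k-s\neq 2n+1$, so this is automatic.

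The main obstacle I anticipate is bookkeeping the inductive parameter so that the degeneration loci $\{k=m-1\}$ and $\{s=-1\}$ of the various commutator formulas are navigated cleanly — in particular making sure that from an arbitrary off-diagonal $\oo{2n+1}{k}{s}$ one can always reach $\oo{2n+1}{2n+1}{-1}$ without being forced onto the diagonal. I would organize this as a double induction on $(s+1)$ and then on $(k - 2n - 1)$, handling the base cases $s=-1$ and $k\in\{2n,2n+1\}$ directly via \Cref{lem: mks part II}, and in the inductive step always brackets with $e_{-1}$ to lower $s$ (valid since $s+1\neq 0$) or, once $s=-1$, with $e_{-1}$ to lower $k$ (valid since $k-2n\neq 0$ when $k>2n+1$). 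The excluded case $m=k-s=2n+1$ is then exactly the locus where every such descent is blocked, consistent with \Cref{lem: mks part III pre}.
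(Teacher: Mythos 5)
Your high-level plan — descend by iterating $\ad(e_{-1})$, organize by cases on $m$ and the sign of $k-s-(2n+1)$, and use \Cref{lem: mks part II} and \Cref{lem: mks part III pre} to finish — is the same as the paper's, but there is a gap in the descent step as you have described it. You write that you will ``decrease one of the two indices while staying inside the $\W{-1}$-module generated by $\oo{m}{k}{s}$.'' The difficulty is that \eqref{commutator with e-1} gives
\[
[e_{-1},\oo{m}{k}{s}] = (k+1-m)\,\oo{m}{k-1}{s} + (s+1)\,\oo{m}{k}{s-1},
\]
which is a \emph{linear combination} of two differentiators, and knowing that this combination lies in the module does not let you isolate either summand. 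An induction whose hypothesis is merely ``the target lies in the module generated by $\oo{m}{k'}{s'}$'' for smaller $k'+s'$ therefore does not close: you would be applying the hypothesis to two different generators simultaneously, which says nothing about the module generated by their particular combination.

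The paper repairs exactly this by strengthening the inductive claim: it proves that the full iterated bracket $\ad(e_{-1})^{\,k+s-2n}(\oo{m}{k}{s})$ is a \emph{positive integer multiple} of $\oo{2n+2}{2n+1}{-1}$ (in Case I), or of $\oo{2n+1}{2n}{0}$ resp.\ $\oo{2n+1}{2n+1}{-1}$ (in Cases IIa, IIb). Because the coefficients $k+1-m$ and $s+1$ in \eqref{commutator with e-1} are non-negative and at least one is strictly positive once $k+s>2n$, and because every descent path eventually terminates at the same corner, the combination collapses to a single nonzero scalar times the target rather than some opaque element of the module. This stronger formulation is what the induction actually needs, and it is the key idea missing from your write-up. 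A second, smaller difference: at the boundary in Case IIa where one of the descended differentiators would land on the diagonal $k-s=2n+1$, you propose to switch to bracketing with $e_1$; the paper does not need this — it simply observes, via \Cref{lem: mks part III pre}, that the offending term vanishes identically, so only the other summand survives, with coefficient $s+1>0$. That keeps the whole argument inside the single relation \eqref{commutator with e-1}.
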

\begin{proof}
    We give a proof by induction, divided into cases depending on the parity of $m$.
    
    \textbf{Case I}: Suppose $m = 2n+2$. We show that
    \begin{align*}
        \ad(e_{-1})^{k+s-2n} (\oo{m}{k}{s}) \in \mathbb{Z}_{> 0} \oo{2n+2}{2n+1}{-1},
    \end{align*}
    where $\mathbb{Z}_{> 0}$ is the set of positive integers, giving the result since $\kk$ is of characteristic zero. We prove this by induction on $k+s$. The base case $k+s=2n$ is trivial, as we must have $k=2n+1, s=-1$. Applying $\ad(e_{-1})^{k+s-2n-1}$ to the commutation relation \eqref{commutator with e-1} from \Cref{differentiator commutators lemma} gives
    \begin{align*}\ad(e_{-1})^{k+s-2n}( \oo{m}{k}{s}) = &\;(k+1-m)\ad(e_{-1})^{k+s-2n-1}(\oo{m}{k-1}{s}) \\ &+ (s+1) \ad(e_{-1})^{k+s-2n-1}(\oo{m}{k}{s-1}).\end{align*}
    Both coefficients $k+1-m,s+1$ are non-negative integers, and when $k+s > 2n$, at least one is positive. Thus the result follows by induction.

    \textbf{Case II}: Suppose $m = 2n+1$, so $k-s \neq 2n+1$ by assumption. We show that 
    \[
    \ad(e_{-1})^{k+s-2n}(\oo{2n+1}{k}{s}) \in
    \begin{cases*}
    \ZZ_{>0} \oo{2n+1}{2n}{0} & \text{ if } $k-s > 2n+1$ \text{ (Case IIa)},\\
    \ZZ_{>0} \oo{2n+1}{2n+1}{-1} & \text{ if } $k-s < 2n+1$ \text{ (Case IIb)},\\
    \end{cases*}
    \]
    which gives the result by \Cref{lem: mks part II}. We proceed by induction on $k+s$, with trivial base cases $(k,s)=(2n,0), (2n+1, -1)$ respectively.
    
    In the case IIa, suppose $k+s > 2n$. If $(k-1) - s \neq 2n+1$, then by identical reasoning to Case I, induction gives the result. If $(k-1) - s=2n+1$, then  $\oo{m}{k-1}{s}=0$ by \Cref{lem: mks part III pre}, so we have
    \[\ad(e_{-1})^{k+s-2n}( \oo{m}{k}{s}) = (s+1) \ad(e_{-1})^{k+s-2n-1}(\oo{m}{k}{s-1}),\]
    but $s+1 > 0$ because $k+s > 2n$. Thus induction using the equation above from \eqref{commutator with e-1} again gives the result. A very similar argument demonstrates case IIb.
\end{proof}

To illustrate the iterative nature of the above induction proof, the figures below show the effect of successive applications of $\ad(e_{-1})$ to a differentiator $\oo{m}{k}{s}$, by showing the points $(k',s')$ on the $j$th diagonal line $k'+s'=k+s-2n-j$ such that $\oo{m}{k'}{s'}$ appears as a non-zero summand of $\ad(e_{-1})^j \oo{m}{k}{s}$.

\begin{figure}[H]
\centering
\begin{minipage}{.45\textwidth}
  \centering
  \begin{tikzpicture}[scale=0.5]
	\draw[draw=black, -latex, thin, solid] (-3.00,-2.00) -- (-3.00,4.00);
	\draw[draw=black, -latex, thin, solid] (-3.00,-2.00) -- (3.00,-2.00);
	\draw[draw=black, fill=black, thin, solid] (-3.00,-2.00) circle (0.1);
	\draw[draw=black, fill=black, thin, solid] (-3.00,-1.00) -- (-2.00,-2.00);
	\draw[draw=black, fill=black, thin, solid] (-3.00,0.00) -- (-1.00,-2.00);
	\draw[draw=black, fill=black, thin, solid] (-3.00,1.00) -- (0.00,-2.00);
	\draw[draw=black, fill=black, thin, solid] (-3.00,2.00) -- (0.00,-1.00);
	\draw[draw=black, fill=black, thin, solid] (-2.00,2.00) -- (0.00,0.00);
	\draw[draw=black, fill=black, thin, solid] (-1.00,2.00) -- (0.00,1.00);
	\draw[draw=black, fill=black, thin, solid] (0.00,2.00) -- (-3.00,2.00);
	\draw[draw=black, fill=black, thin, solid] (0.00,2.00) -- (0.00,-2.00);
	\draw[draw=black, fill=black, thin, solid] (-3.00,-1.00) circle (0.1);
	\draw[draw=black, fill=black, thin, solid] (-2.00,-2.00) circle (0.1);
	\draw[draw=black,fill=black, thin, solid] (-1.00,-2.00) circle (0.1);
	\draw[draw=black, fill=black,thin, solid] (-2.00,-1.00) circle (0.1);
	\draw[draw=black,fill=black, thin, solid] (-3.00,0.00) circle (0.1);
	\draw[draw=black,fill=black, thin, solid] (-3.00,1.00) circle (0.1);
	\draw[draw=black,fill=black, thin, solid] (-2.00,0.00) circle (0.1);
	\draw[draw=black,fill=black, thin, solid] (-1.00,-1.00) circle (0.1);
	\draw[draw=black,fill=black, thin, solid] (0.00,-2.00) circle (0.1);
	\draw[draw=black,fill=black, thin, solid] (0.00,-1.00) circle (0.1);
	\draw[draw=black,fill=black, thin, solid] (-1.00,0.00) circle (0.1);
	\draw[draw=black,fill=black, thin, solid] (-2.00,1.00) circle (0.1);
	\draw[draw=black,fill=black, thin, solid] (-3.00,2.00) circle (0.1);
	\draw[draw=black,fill=black, thin, solid] (-2.00,2.00) circle (0.1);
	\draw[draw=black,fill=black, thin, solid] (-1.00,1.00) circle (0.1);
	\draw[draw=black,fill=black, thin, solid] (0.00,0.00) circle (0.1);
	\draw[draw=black,fill=black, thin, solid] (0.00,1.00) circle (0.1);
	\draw[draw=black,fill=black, thin, solid] (-1.00,2.00) circle (0.1);
	\draw[draw=black, fill=black, thin, solid] (0.00,2.00) circle (0.1);
    \node[black] at (1,2.25){\tiny $(k, s)$};
    \node[black] at (-1.00,2.5) {\tiny $(k-1, s)$};
    \node[black] at (1.5,1.00) {\tiny $(k, s-1)$};
    \node[black] at (-3.5,-2.5) {\tiny $(2n+1, -1)$};
    \node[black] at (3,-2.5) {\tiny $k$}; 
    \node[black] at (-3.5,4)  {\tiny $s$}; 
\end{tikzpicture}
  \caption*{Case I}
\end{minipage}
\begin{minipage}{.45\textwidth}
  \centering
  \begin{tikzpicture}[scale=0.5]
	\draw[draw=black, -latex, thin, solid] (-3.00,-2.00) -- (3.00,-2.00);
	\draw[draw=black, -latex, thin, solid] (-3.00,-2.00) -- (-3.00,4.00);
    
	\draw[draw=black, -latex, thin, dashed] (-3.00,-2.00) -- (3.00,4.00);
	\draw[draw=black, fill=lightgray, thin, solid] (-3.00,-2.00) circle (0.1);
	\draw[draw=black, fill=lightgray,thin, solid] (-2.00,-1.00) circle (0.1);
    \draw[draw=black,fill=lightgray, thin, solid] (-1.00,0.00) circle (0.1);
    \draw[draw=black,fill=lightgray, thin, solid] (0.00,1.00) circle (0.1);

	\draw[draw=black, fill=black, thin, solid] (1.00,0.00) circle (0.1);
	\draw[draw=black, fill=black, thin, solid] (0.00,0.00) circle (0.1);
	\draw[draw=black, fill=black, thin, solid] (1.00,-1.00) circle (0.1);
	\draw[draw=black, fill=black, thin, solid] (0.00,-1.00) circle (0.1);
	\draw[draw=black, fill=black, thin, solid] (1.00,-2.00) circle (0.1);
	\draw[draw=black, fill=black, thin, solid] (0.00,-2.00) circle (0.1);
	\draw[draw=black, fill=black, thin, solid] (-1.00,-1.00) circle (0.1);
	\draw[draw=black, fill=black, thin, solid] (-1.00,-2.00) circle (0.1);
	\draw[draw=black, fill=black, thin, solid] (-2.00,-2.00) circle (0.1);
	\draw[draw=black, thin, solid] (0.00,0.00) -- (1.00,-1.00);
	\draw[draw=black, thin, solid] (0.00,-1.00) -- (1.00,-2.00);
	\draw[draw=black, thin, solid] (-1.00,-1.00) -- (0.00,-2.00);
	\draw[draw=black, thin, solid] (1.00,0.00) -- (1.00,-2.00);
	\draw[draw=black, thin, solid] (1.00,0.00) -- (0.00,0.00);

    \node[black] at (1.5,0.5){\tiny $(k, s)$};
    \node[black] at (-3.5,-2.5) {\tiny $(2n, -1)$};
    \node[black] at (3,-2.5) {\tiny $k$};
    \node[black] at (-3.5,4)  {\tiny $s$};
    \node[black] at (4.5,4.5)  {\tiny $k-s = 2n+1$};
\end{tikzpicture}
  \caption*{Case IIa}
\end{minipage}
\end{figure}

We are now ready to prove the main result of this subsection -- in short, if $m \in \{2n+1, 2n+2\}$, then $\ker \Psi_n$ is generated as a left ideal by \emph{all} $m$th order differentiators, but is generated as a two-sided ideal by any one of them.
    
\generickernel*

\begin{proof}
    Notice that \Cref{differentiator commutators lemma} shows that if $i \in \{-1,0,1,2\}$, then
    \[ [e_i, \oo{m}{k_0}{s_0}] \in \sum_{k', s'} \kk \oo{m}{k'}{s'},\]
    and hence
    \[ \oo{m}{k_0}{s_0}e_i \in \UW \{\oo{m}{k}{s} \mid k \geq m-1, s \geq -1\}.\]
    Since $\UW$ is generated by $\{e_{-1}, e_0, e_1, e_2\}$ as a $\kk$-algebra, it follows that
    \[ (\oo{m}{k_0}{s_0}) \subseteq \UW \{\oo{m}{k}{s} \mid k \geq m-1, s \geq -1\}. \]
    However, \Cref{lem: mks part I}, \Cref{lem: mks part III} and \Cref{kernel thm} show that $\ker \Psi_n = (\oo{m}{k}{s})$ for any $k,s$ with $\oo{m}{k}{s} \neq 0$. Thus the above inclusion must be an equality.
\end{proof}

In fact, a careful look at \Cref{lem: mks part III} and \Cref{lem: mks part I} gives us the following result.

\begin{corollary}
    Let $m \in \{2n+1, 2n+2\}$, $k \geq m-1$ and $s \geq -1$. Then $H_{2n}$ is generated by $\oo{m}{k}{s}$ as a $\W{-1}$-module unless $m=k-s=2n+1$.
\end{corollary}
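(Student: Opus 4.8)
The plan is to assemble this corollary from \Cref{lem: H2n generated}, \Cref{lem: mks part III}, and the explicit spanning description of $H_{2n}$ already used in the proof of \Cref{lem: mks part I}. First I would record the easy inclusion. By equation (16) of \cite{conley2025annihilators} (as invoked in the proof of \Cref{lem: mks part I}),
\[ H_{2n} = \kk\{\oo{m'}{k'}{s'} \mid k' \geq m'-1,\ s' \geq -1,\ m' \geq 2n+1\}, \]
so in particular $\oo{m}{k}{s} \in H_{2n}$ for every $m \in \{2n+1, 2n+2\}$, $k \geq m-1$ and $s \geq -1$. Since $H_{2n}$ is a $\W{-1}$-submodule of $\UW$ under the adjoint action — this is exactly what makes \Cref{lem: H2n generated} meaningful — the $\W{-1}$-module generated by $\oo{m}{k}{s}$ is contained in $H_{2n}$.

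For the reverse inclusion I would apply \Cref{lem: mks part III}: unless $m = k-s = 2n+1$, the element $\oo{2n+2}{2n+1}{-1}$ lies in the $\W{-1}$-module generated by $\oo{m}{k}{s}$, hence that module contains the $\W{-1}$-module generated by $\oo{2n+2}{2n+1}{-1}$, which is $H_{2n}$ by \Cref{lem: H2n generated}. Combining the two inclusions yields the claimed equality outside the excluded case. Note that for $m = 2n+2$ the excluded case never occurs, while for $m = 2n+1$ it is precisely the locus $k = s + 2n+1$.

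To confirm the exclusion is genuinely necessary, I would observe that if $m = k-s = 2n+1$ then $\oo{m}{k}{s} = \oo{2n+1}{2n+1+s}{s} = 0$ by \Cref{lem: mks part III pre}, whereas $H_{2n} = G_{2n} \oplus G_{2n+2} \oplus \cdots \neq 0$ since each $G_{2\ell} \cong \mc{F}_{2\ell} = \kk[t]$ is nonzero. So in that case $\oo{m}{k}{s}$ cannot generate $H_{2n}$, and the statement is sharp.

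Since every ingredient is already in place, I do not anticipate a real obstacle; the proof is essentially a bookkeeping assembly of \Cref{lem: H2n generated}, \Cref{lem: mks part III}, \Cref{lem: mks part III pre}, and the spanning description from \Cref{lem: mks part I}. The only point warranting a sentence of care is making explicit that $H_{2n}$ is stable under the full adjoint action of $\W{-1}$, not merely under the projective subalgebra $\mf{a}$, so that "the $\W{-1}$-module generated by $\oo{m}{k}{s}$" is sandwiched between $\langle \oo{2n+2}{2n+1}{-1}\rangle_{\W{-1}}$ and $H_{2n}$, which here coincide.
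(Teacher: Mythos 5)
Your proof is correct and follows essentially the same route as the paper: use the spanning description of $H_{2n}$ (via the proof of \Cref{lem: mks part I}) for the inclusion $\langle \oo{m}{k}{s}\rangle_{\W{-1}} \subseteq H_{2n}$, then \Cref{lem: mks part III} together with \Cref{lem: H2n generated} for the reverse inclusion. The closing remark about sharpness via \Cref{lem: mks part III pre} is a pleasant extra observation, but is not required since the corollary only asserts generation away from the excluded locus.
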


\begin{proof}
    The proof of \Cref{lem: mks part I} implies that $\oo{m}{k}{s} \in H_{2n}$ for all such $m,k, s$. Moreover, by \Cref{lem: mks part III}, $\oo{2n+2}{2n+1}{-1}$ lies in the $\W{-1}$-module generated by $\oo{m}{k}{s}$. But $H_{2n}$ is the $\W{-1}$-module generated by  $\oo{2n+2}{2n+1}{-1}$, thus $H_{2n}$ is the $\W{-1}$-module generated by  $\oo{m}{k}{s}$.
\end{proof}

An important class of representations of the Witt algebra are the \emph{cuspidal} modules, which are those with weight spaces of bounded dimension, see \cite[Definition 2.4]{billig2016classification}. A consequence of \Cref{Main result generic kernel} is a connection between cuspidal modules for $\W{-1}$ and the homomorphic images $B_n$.

\begin{corollary} \label{cuspidal module cor}
    The action of $\UW$ on any finite length cuspidal $W$-module factors through some $\Psi_n$. That is, any finite length cuspidal $W$-module is naturally a $B_n$-module for some $n$.
\end{corollary}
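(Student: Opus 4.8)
The plan is to combine the structural result of \cite{billig2016classification} with \Cref{Main result generic kernel}. Recall that \cite[Theorem 5.2]{billig2016classification} shows that every finite length cuspidal $W$-module is annihilated by some differentiator $\oo{m}{k}{s}$, where the differentiator is regarded as an element of $\Ua(W)$ (or even of $\Ua(W_{\geq -1})$ after restriction). The key observation is that a differentiator $\oo{m}{k}{s}$ annihilating a module is a \emph{non-zero} element of $\Ua(W_{\geq -1})$ for all but a controlled set of indices, so the annihilator contains a non-zero differentiator of order $m' \in \{2n+1, 2n+2\}$ for a suitable $n$.

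First I would reduce to $W_{\geq -1}$: any $W$-module restricts to a $W_{\geq -1}$-module, and a finite length cuspidal $W$-module remains cuspidal (weight spaces of bounded dimension) on restriction, so it suffices to show the $\Ua(W_{\geq -1})$-action factors through some $\Psi_n$. Next, by \cite[Theorem 5.2]{billig2016classification}, there is a differentiator $\Omega = \oo{m}{k}{s}$ with $m \geq 1$ annihilating the module $M$; choose $n \geq 1$ with $m \in \{2n+1, 2n+2\}$ (enlarging $m$ if necessary using \Cref{def:oo}, since $\oo{m+1}{k}{s} = \oo{m}{k}{s} - \oo{m}{k-1}{s+1}$ shows a module killed by all $m$th order differentiators is killed by all $(m+1)$th order ones, so we may always increase $m$ to land in such an interval). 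We then need a non-zero differentiator $\oo{m'}{k_0}{s_0}$ of order $m' = 2n+1$ or $2n+2$ in $\Ann_{\Ua(W_{\geq -1})}(M)$: the simplest route is to replace $\Omega$ by $[e_i, \Omega]$'s and ordinary products until we reach an order-$m'$ differentiator that is visibly non-zero — e.g. $\oo{m'}{m'-1}{-1}$ with $m'$ even, which is non-zero by the computation in the proof of \Cref{image of Omega thm} (its image under $\Psi_{n}$ is $(-1)^n\binom{2n}{n} v_{n-1}^2 \neq 0$, so a fortiori $\oo{2n}{2n-1}{-1} \neq 0$ in $\Ua(W_{\geq -1})$, and likewise $\oo{2n+2}{2n+1}{-1}\neq 0$). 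Once a non-zero differentiator of order $m' \in \{2n+1,2n+2\}$ lies in $\Ann(M)$, \Cref{Main result generic kernel} gives $\ker \Psi_n = (\oo{m'}{k_0}{s_0}) \subseteq \Ann(M)$, so the $\Ua(W_{\geq -1})$-action on $M$ factors through $\Psi_n$, making $M$ a $B_n$-module.

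The main obstacle is bookkeeping: \cite{billig2016classification} produces \emph{some} differentiator, of some order $m$ and indices $(k,s)$, and one must be careful that the order lands in $\{2n+1,2n+2\}$ for the \emph{same} $n$ and that the chosen representative is non-zero in $\Ua(W_{\geq -1})$ (the differentiators $\oo{2n+1}{k}{s}$ with $k-s=2n+1$ vanish by \Cref{lem: mks part III pre}, which is precisely the excluded case in \Cref{Main result generic kernel}). The cleanest fix is: given any $m$th order differentiator in $\Ann(M)$, pass to the larger order $2\lceil (m+1)/2 \rceil$, which is even and at least $m+1$, writing $n := \lceil (m+1)/2 \rceil - 1$ so that $2n+2 = 2\lceil(m+1)/2\rceil$, and use that $M$ is then killed by \emph{every} differentiator of that order, in particular by the non-zero element $\oo{2n+2}{2n+1}{-1}$; apply \Cref{Main result generic kernel} (with the case $m=2n+2$, which has no vanishing exceptions) to conclude $\ker\Psi_n \subseteq \Ann(M)$.

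\begin{proof}[Proof of \Cref{cuspidal module cor}]
    Let $M$ be a finite length cuspidal $W$-module. Restricting to $\W{-1}$, $M$ remains a module with weight spaces of bounded dimension, so it suffices to show that the $\Ua(\W{-1})$-action on $M$ factors through some $\Psi_n$. By \cite[Theorem 5.2]{billig2016classification}, $M$ is annihilated by some differentiator, and hence, by the recursion in \Cref{def:oo}, by every differentiator $\oo{m}{k}{s}$ for all sufficiently large $m$ and all $k \geq m-1$, $s \geq -1$. In particular, choosing $n \geq 1$ large enough, $M$ is annihilated by $\oo{2n+2}{2n+1}{-1}$. This element is non-zero in $\Ua(\W{-1})$: by the proof of \Cref{image of Omega thm}, $\Psi_{n+1}(\oo{2n+2}{2n+1}{-1}) = (-1)^{n+1}\binom{2n+2}{n+1} v_n^2 \neq 0$. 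Therefore, by \Cref{Main result generic kernel} applied with $m = 2n+2$, $k_0 = 2n+1$, $s_0 = -1$,
    \[ \ker \Psi_n = (\oo{2n+2}{2n+1}{-1}) \subseteq \Ann_{\Ua(\W{-1})}(M). \]
    Hence the $\Ua(\W{-1})$-action on $M$ factors through $\Psi_n$, so $M$ is naturally a $B_n$-module.
\end{proof}
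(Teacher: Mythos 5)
Your proof follows essentially the same strategy as the paper's: combine the Billig--Futorny annihilation result for cuspidal modules with \Cref{Main result generic kernel}. One detail to tighten: you cite \cite[Theorem 5.2]{billig2016classification} and paraphrase it as ``$M$ is annihilated by some differentiator,'' then assert that the recursion $\oo{m+1}{k}{s} = \oo{m}{k}{s} - \oo{m}{k-1}{s+1}$ propagates this to all higher-order differentiators. But the recursion only gives $\oo{m+1}{k}{s} \in \Ann(M)$ if \emph{both} $\oo{m}{k}{s}$ and $\oo{m}{k-1}{s+1}$ already do, so a single annihilating differentiator is not enough for this step. What you actually need (and what the paper cites, namely \cite[Corollary 3.4]{billig2016classification}) is that there is an $m$ such that \emph{every} differentiator $\oo{m}{k}{s}$ of order $m$ kills $M$; then the recursion does propagate to all orders $\geq m$, and your argument closes cleanly. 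Your plan paragraph reveals you had the correct version in mind, so this is a misstatement in the write-up rather than a conceptual gap. Beyond that, your route is slightly more explicit than the paper's: you verify non-vanishing of $\oo{2n+2}{2n+1}{-1}$ and use the two-sided-ideal characterization in \Cref{Main result generic kernel}, whereas the paper uses the left-ideal characterization directly (once all differentiators of order $2n+1$ kill $M$, the left ideal they generate — which is $\ker\Psi_n$ — kills $M$), sidestepping the need to exhibit a specific non-zero representative. Both variants are valid.
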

\begin{proof}
    Let $M$ be a cuspidal $W$-module of finite length. Then $M$ is naturally a $\Ua(W)$-module, and hence a $\Ua(\W{-1})$-module by restriction. By Corollary 3.4 of \cite{billig2016classification}, there exists $m$ such that for all $k,s$, the differentiator $\oo{m}{k}{s}$ acts by zero on $M$. Thus if $2n \geq m-1$, \Cref{Main result generic kernel} implies
    \[ (\ker \Psi_n) M =0, \]
    and hence the action of $\UW$ factors through $\Psi_n$, making $M$ naturally a $B_n$-module.
\end{proof}

\subsection{Primitive ideals} \label{Primitive ideals}

Recall that a two-sided ideal is called \emph{primitive} if it is the annihilator of a simple module, or \emph{semi-primitive} if it is an intersection of primitive ideals. In this subsection we show that the kernels of orbit homomorphisms are always primitive or semi-primitive.

First, let us recall the definition of a one-point local function from \cite[Definition 3.0.2]{petukhov2023poisson}.
\begin{definition}\label{def:local function}
    Let $x, \alpha_0, \alpha_1, ...,\alpha_n \in \kk$ with $\alpha_n \neq 0$. We define \emph{a one-point local function} $\chi_{x; \alpha_0, ...,\alpha_n} \in \W{-1}^\ast$ by  
\[
    \chi_{x; \alpha_0, ...,\alpha_n}:  \W{-1} \mapsto \kk, \quad f \partial \mapsto \alpha_0 f(x) + \alpha_1 f'(x) + ... +\alpha_n f^{(n)}(x),
\]
    where $f^{(i)}$ denotes taking the $i$th-derivative of $f$. We call $n$ the \emph{order} of $\chi_{x; \alpha_0, ...,\alpha_n}$. 
\end{definition}

One of the main results of \cite{pham2025orbit} is that there is a ``strong'' Dixmier map from the space of Poisson primitive ideals of $\Sa(\W{-1})$ to the space of primitive ideals of $\Ua(\W{-1})$, \cite[Theorem 1.1]{pham2025orbit}. In fact, this map descends from a natural association of a primitive ideal $Q_\chi$ to every (one-point) local function $\chi$. This means that the primitive ideals $Q_\chi$ are fundamental to understanding of the ring structure of $\UW$.

One application of \Cref{Main result kernel} is to calculate the generators of many of the $Q_\chi$.

\begin{corollary} \label{primitive generators cor}
    If $\chi$ is a one-point local function on $\W{-1}$ of positive even order $2m$, then 
    $$Q_\chi = \ker \Psi_{2m} = (\oo{4m+2}{4m+1}{-1}).$$
    Thus $(\oo{4m+2}{4m+1}{-1})$ is a primitive ideal of $\Ua(\W{-1})$.
\end{corollary}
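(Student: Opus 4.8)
The plan is to combine \Cref{kernel thm} with the description of the primitive ideals $Q_\chi$ via the orbit method recalled in the Introduction. Applying \Cref{kernel thm} with $n=2m$ immediately gives $\ker\Psi_{2m}=(\oo{4m+2}{4m+1}{-1})$, so all that remains is to prove the equality $Q_\chi=\ker\Psi_{2m}$. By the intersection formula from the Introduction, $\ker\Psi_{2m}=\bigcap\{Q_\eta\mid \eta\text{ a one-point local function of order }2m\}$, so $\ker\Psi_{2m}\subseteq Q_\chi$ is automatic; for the reverse inclusion it suffices to show that $Q_\eta$ is the same ideal for \emph{every} one-point local function $\eta$ of order $2m$, since then $Q_\chi=\bigcap_\eta Q_\eta=\ker\Psi_{2m}$.

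Next I would reduce this independence to a statement about coadjoint orbits of the solvable Lie algebra $\g_{2m}$. Using the lifting result $Q_\eta=\Psi_{2m}^{-1}(A_1\otimes Q_{\bar\eta})$, where $\bar\eta\in\g_{2m}^{*}$ is the descent of $\eta$, together with the fact (Dixmier--Conze--Duflo--Rentschler, \cite{dixmier1996enveloping}) that $Q_{\bar\eta}$ depends only on the coadjoint orbit of $\bar\eta$, it is enough to check that all of the $\bar\eta$ coming from one-point local functions of order $2m$ lie in a single coadjoint orbit. By construction of the descent (\cite{pham2025orbit,petukhov2023poisson}), $\eta$ having order exactly $2m$ forces $\bar\eta(v_{2m-1})\neq 0$. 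The key point is then that the coadjoint orbit through any such $\bar\eta$ is open in $\g_{2m}^{*}$: the orbit dimension equals the rank of the skew matrix $K_{ij}=\bar\eta([v_i,v_j])=(j-i)\bar\eta(v_{i+j})$, $0\le i,j\le 2m-1$, where $\bar\eta(v_k)=0$ for $k\ge 2m$. Thus $K_{ij}=0$ whenever $i+j\ge 2m$, whereas its anti-diagonal entries $K_{i,\,2m-1-i}=(2m-1-2i)\,\bar\eta(v_{2m-1})$ are all nonzero — here it is crucial that the order $2m$ is even, so that $2m-1-2i$ is odd and hence nonzero. An anti-triangular matrix with nonzero anti-diagonal is invertible, so $K$ has full rank $2m$ and the orbit is open. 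Since $\g_{2m}^{*}\cong\Aa^{2m}$ is irreducible it can contain at most one open coadjoint orbit; hence all of the relevant $\bar\eta$ lie in that single orbit, so $Q_{\bar\eta}$, and therefore $Q_\eta$, is independent of $\eta$.

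Combining the two steps gives $Q_\chi=\ker\Psi_{2m}=(\oo{4m+2}{4m+1}{-1})$, and since $Q_\chi$ is by construction the annihilator of a simple $\UW$-module, this ideal is primitive. The main obstacle I expect is making the orbit argument fully rigorous: one must confirm that the descent genuinely lands in $\{\bar\eta(v_{2m-1})\neq 0\}$ and that the fibres of the Dixmier correspondence are orbits of an algebraic group, so that a full-dimensional orbit really is open — both should be available from \cite{pham2025orbit,petukhov2023poisson}, perhaps more cleanly through their explicit classification of Poisson primitive ideals than through the matrix computation sketched above. It is worth recording where this breaks down for odd order: for a one-point local function of order $2m+1$, the analogous middle anti-diagonal entry of $K$ is $(2m-2i)\bar\eta(v_{2m})$, which vanishes at $i=m$, so $K$ is singular, there is a positive-dimensional family of coadjoint orbits, and one recovers only the weaker (semi-primitivity) conclusion of \Cref{Main result even odd annihilators}.
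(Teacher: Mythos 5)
Your proof is correct in its essentials, but it takes a genuinely different — and considerably more detailed — route than the paper. The paper's entire proof is a one-line citation: by \cite[Corollary 6.15]{pham2025orbit}, $Q_\chi = \ker\Psi_{2m}$, and then \Cref{kernel thm} (which you also use) finishes. You instead re-derive the content of that cited corollary from more primitive facts: the intersection formula $\ker\Psi_{2m}=\bigcap_\eta Q_\eta$ from \cite[Corollary 6.11]{pham2025orbit}, the lifting result $Q_\eta=\Psi_{2m}^{-1}(A_1\otimes Q_{\bar\eta})$, and the Dixmier--Conze--Duflo--Rentschler correspondence whose fibres are coadjoint orbits. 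The heart of your argument is the explicit rank computation for the skew form $K_{ij}=(j-i)\bar\eta(v_{i+j})$ on $\g_{2m}$, showing that it is anti-triangular with nonzero anti-diagonal entries $(2m-1-2i)\bar\eta(v_{2m-1})$ precisely because $2m-1$ is odd, and hence that the coadjoint orbit through every relevant $\bar\eta$ is the unique open orbit. That computation is correct, and it buys you something the paper's citation does not: a concrete explanation of why even order is special and why the argument degrades to mere semi-primitivity for odd order (the anti-diagonal acquires a zero at $i=m$). The trade-off is that you shoulder the burdens you yourself flag — verifying that the descent of an order-$2m$ one-point local function really satisfies $\bar\eta(v_{2m-1})\neq0$ (true, and clearest after translating the base point to $0$), and that the adjoint group of $\g_{2m}$ is connected so that a full-dimensional orbit is open and unique — all of which the paper outsources wholesale to \cite{pham2025orbit}. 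As a self-contained exposition your version is more informative; as a proof in this paper, the citation is the intended and shorter path.
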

\begin{proof}
    By \cite[Corollary 6.15]{pham2025orbit}, $Q_\chi = \ker \Psi_{2m}$, thus the result follows from \Cref{Main result kernel}.
\end{proof}

However, for a one-point local function $\chi$ of odd order $2m+1$, $Q_\chi$ need not be equal to $\ker \Psi_{2m+1}$. Nevertheless, $\Psi_{2m+1}$ is still the ``universal homomorphism'' for all local functions of order at most $2m+1$, in the following sense. 

\begin{corollary} \label{semi-primitive generators cor}
    Let $m \geq 0$. Then
    $$\bigcap \{Q_\chi \mid \chi \text{ a one-point local function of order\,} \leq 2m+1  \} = \ker \Psi_{2m+1} = (\oo{4m+4}{4m+3}{-1}).$$
    Thus $(\oo{4m+4}{4m+3}{-1})$ is a semi-primitive ideal of $\Ua(\W{-1})$.
\end{corollary}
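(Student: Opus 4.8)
The plan is to reduce the claim to two results already established: the kernel computation of \Cref{Main result kernel}, and the ``universality'' of the orbit homomorphisms for one-point local functions recalled in the introduction from \cite[Corollary 6.11]{pham2025orbit}.

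First I would apply \cite[Corollary 6.11]{pham2025orbit} with $n = 2m+1$; since $2m+1 \geq 1$ for every $m \geq 0$, this gives
$$\bigcap\{Q_\chi \mid \chi \text{ a one-point local function of order} \leq 2m+1\} = \ker \Psi_{2m+1}.$$
The indexing family here is non-empty --- for instance it contains every one-point local function of order exactly $2m+1$ --- so the left-hand side is genuinely an intersection of ideals of the form $Q_\chi$.

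Next I would substitute $n = 2m+1$ into \Cref{Main result kernel}. Since $2(2m+1)+2 = 4m+4$ and $2(2m+1)+1 = 4m+3$, this yields $\ker \Psi_{2m+1} = (\oo{4m+4}{4m+3}{-1})$, which together with the previous display gives the asserted chain of equalities.

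Finally, for each one-point local function $\chi$ the ideal $Q_\chi$ is a primitive ideal of $\UW$ by the orbit-method construction of \cite{pham2025orbit}; hence $(\oo{4m+4}{4m+3}{-1})$ is an intersection of primitive ideals, i.e.\ a semi-primitive ideal of $\UW$, as claimed. There is no substantive obstacle here --- the content sits entirely in \Cref{Main result kernel} and \cite[Corollary 6.11]{pham2025orbit} --- the only steps requiring a moment of care being the arithmetic in the differentiator indices and checking that the intersection ranges over a non-empty family, so that it is honestly semi-primitive rather than vacuously all of $\UW$.
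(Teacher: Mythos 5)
Your proposal is correct and matches the paper's own proof exactly: both invoke \cite[Corollary 6.11]{pham2025orbit} for the first equality and then substitute $n=2m+1$ into \Cref{Main result kernel} for the second. Your added remark that the intersection ranges over a non-empty family is a reasonable sanity check but is not needed by the paper.
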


\begin{proof}
    By \cite[Corollary 6.11]{pham2025orbit},
    \[ \ker \Psi_{2m+1} = \bigcap \{Q_\chi \mid \chi \text{ a one-point local function of order\,}\leq 2m+1  \},\]
    so the result follows from \Cref{Main result kernel}.
\end{proof}

The combination of \Cref{primitive generators cor,semi-primitive generators cor} is \Cref{Main result even odd annihilators}. Of course, \Cref{Main result generic kernel} also gives a multitude of alternative generators for the primitive and semi-primitive ideals in these results.

A natural question for further research is whether one can calculate the primitive ideal $Q_\chi$ when $\chi$ is a one-point local function of odd order. Motivated by \cite[Theorem 10.1]{conley2025annihilators}, we conjecture that the primitive ideal $Q_\chi$ will be generated by elements of order $2$ and $3$.

\begin{conjecture}
    Let $\chi$ be a one-point local function on $\W{-1}$ of odd order. Then $Q_\chi$ is generated as a two-sided ideal by two elements of order 2,3 respectively.
\end{conjecture}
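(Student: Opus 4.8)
The natural line of attack is the Conley--Goode method (\Cref{conley goode method thm}), now run with $d=3$ and applied to a homomorphism cutting out $Q_\chi$ itself. Write $n$ for the odd order of $\chi$, let $\bar\chi\in\g_n^\ast$ be its descent, and let $Q_{\bar\chi}\ideal\Ua(\g_n)$ be the primitive ideal attached to the coadjoint orbit of $\bar\chi$ by the orbit method for solvable Lie algebras. By \cite[Theorem 1.4]{pham2025orbit} the composite
\[
\Psi_\chi:\ \UW\ \xrightarrow{\ \Psi_n\ }\ T_n\ \longrightarrow\ A_1\otimes\bigl(\Ua(\g_n)/Q_{\bar\chi}\bigr)
\]
has kernel exactly $Q_\chi$. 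By \Cref{Main result even odd annihilators}, $Q_\chi\supseteq\ker\Psi_n=(\oo{2n+2}{2n+1}{-1})$, so a generator of order $2$ is already in hand; the task then reduces to producing a single further generator, of order $3$, for the residual ideal $Q_\chi/\ker\Psi_n$ inside $B_n$, together with a check (by a weight-space or GK-dimension argument) that $Q_\chi$ contains no order-$2$ element outside $\ker\Psi_n$, so that $2$ and $3$, rather than $2$ and $2$, are the correct orders. The statement is the odd-order counterpart of \Cref{primitive generators cor}, where in the even case only \emph{one} (quadratic) generator is needed.

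For \Cref{conley goode method thm} I would keep $J=\kk\{e_{-1},\dots,e_{n-1}\}$ and take $\mc J=\Ua_2(\W{-1})\,\sym(\kk[J])$, so that conditions \ref{inclusion condition} and \ref{associated graded condition} hold by the computations of \Cref{part b lemma,part c lemma} with $d=3$, now giving $\gr_m\mc J=\Sa^{m-2}(J)\Sa^2(\W{-1})$. Condition \ref{injective condition}, injectivity of $\Psi_\chi$ on $\mc J$, should follow by the argument of \Cref{graded injectivity lemma,prop:phininjective}: on associated graded $\Psi_\chi$ becomes $\Phi_n$ followed by the quotient by $\gr\!\bigl(A_1\otimes Q_{\bar\chi}\bigr)$, and since $\gr Q_{\bar\chi}$ is a Poisson primitive ideal of $\Sa(\g_n)$ in the classification of \cite{petukhov2023poisson}, one checks that the change of coordinates of \Cref{prop:phininjective} still forces $\gr\mc J$ to inject. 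The substantive condition is \ref{rep theory condition}: with $I=Q_\chi\cap\Ua_3(\W{-1})$ one must prove $\proj_J(\gr_3 I)=\Sa^3(\W{-1}/J)$. I would attack this by decomposing $\Sa^3(\W{-1})$ under the projective subalgebra $\mf{sl}_2$ (an order-$3$ analogue of \Cref{S2 decomposition}), using the $\mf{b}$-isomorphism $\W{-1}/J\cong\mc{F}_n$ of \Cref{part d lemma} to decompose $\Sa^3(\W{-1}/J)\cong\Sa^3(\mc{F}_n)$ into tensor density modules, and then identifying $\gr_3 I$ as an explicit $\mf{b}$-submodule via a distinguished cubic element of $Q_\chi$.

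The main obstacle is precisely this last step: exhibiting the cubic generator and proving its image under $\proj_J$ exhausts $\Sa^3(\W{-1}/J)$. In the even case $Q_\chi=\ker\Psi_n$ and everything is controlled by the clean $\mf{sl}_2$-structure of $\Sa^2$ and by the differentiators; in odd order there is a genuine residual ideal $Q_\chi/\ker\Psi_n$, the $\mf{sl}_2$-module $\Sa^3(\W{-1})$ is more intricate (higher multiplicities, and the natural order-$3$ ``differentiator-type'' elements obey more complicated commutator identities than those in \Cref{differentiator commutators lemma}), and which submodule of $\Sa^3$ is hit by $\gr_3 Q_\chi$ depends on the coadjoint orbit of $\bar\chi$, so the cubic generator is genuinely $\chi$-dependent. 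I would first settle $n=1$: there $\g_1=\kk v_0$, $Q_{\bar\chi}=(v_0-\lambda)$ for some $\lambda\in\kk$, and $Q_\chi$ is the annihilator of a tensor density module, so the cubic generator should match \cite[Theorem 10.1]{conley2025annihilators} and the Conley--Martin computation in \cite{conley2007}; this fixes the shape of the extra generator. Then I would try to propagate to all odd $n$ using the step operator $S$ and its lift $\PsiS$, exactly as \Cref{S Omega lemma,PsiS preserve prop,image of Omega thm} were used for $\oo{2n}{2n-1}{-1}$, hoping that $S$ applied to the cubic generator for $n$ yields that for $n+2$ and that \Cref{conley goode method thm} then closes the argument.
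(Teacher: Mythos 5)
This statement is a \emph{conjecture} in the paper, not a theorem: the authors do not prove it, and explicitly leave it as a direction for future research motivated by \cite[Theorem 10.1]{conley2025annihilators} and the $n=1$ case from \cite{conley2007}. So there is no proof in the paper to compare against. Your proposal is an honest and plausible research plan rather than a proof, and you say so yourself; I'll flag the specific gaps.

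Your overall strategy is reasonable: you correctly note that $Q_\chi = \ker \Psi_\chi$ for the composite through $A_1\otimes(\Ua(\g_n)/Q_{\bar\chi})$, that $\ker\Psi_n \subseteq Q_\chi$ already supplies a quadratic element, and that the Conley--Goode criterion \Cref{conley goode method thm} with $d=3$ is the natural framework. However, several steps are genuinely open, not merely technical. First, condition \ref{injective condition} for $\Psi_\chi$ is not a formal consequence of \Cref{prop:phininjective}: there $\Phi_n$ was shown injective on $\gr\mc{J}$ by an explicit coordinate change inside the polynomial ring $\gr T_n$, but for $\Psi_\chi$ one must compose with the quotient by $\gr(A_1\otimes Q_{\bar\chi})$, and whether $\gr\mc{J}$ still injects depends on the actual Poisson primitive $\gr Q_{\bar\chi}$ (which varies with $\chi$); you gesture at this but do not close it. Second, condition \ref{rep theory condition} at $d=3$ requires an $\mf{sl}_2$-decomposition of $\Sa^3(\W{-1})$ with higher multiplicities, cubic analogues of the differentiator commutator identities, and --- most importantly --- a description of $\gr_3 Q_\chi$, none of which you supply. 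Third, even granting all of \Cref{conley goode method thm}, the conclusion would only be that $I = Q_\chi\cap\Ua_3(\W{-1})$ generates $Q_\chi$; an additional argument (some analogue of \Cref{lem: H2n generated} and \Cref{lem: mks part III} for the cubic piece, combined with the existing results for the quadratic piece) is required to cut $I$ down to exactly two generators, one of order $2$ and one of order $3$, and to rule out a single quadratic generator. You identify this cluster of issues as ``the main obstacle,'' which is accurate, but it means the proposal does not establish the conjecture; it matches the paper's own position that the statement is currently open.
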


\bibliographystyle{amsalpha}
\bibliography{bibliography.bib}
\vspace{20pt}
{\small \noindent School of Mathematics, The University of Edinburgh, Edinburgh EH9 3FD, United Kingdom\\
\textit{Email address}: \texttt{tuan.pham@ed.ac.uk}\\ \textit{Email address}: \texttt{james.timmins@ed.ac.uk}
}

\end{document}